\documentclass[11pt,a4paper]{amsart}
\usepackage[utf8]{inputenc}
\usepackage{amsmath,amssymb,amsthm,amscd,amsfonts,mathrsfs,verbatim,stmaryrd,graphicx,mathtools}
\usepackage{float}
\usepackage{lmodern}
\usepackage{tikz}
\usepackage[normalem]{ulem}
\usetikzlibrary{cd,decorations.pathmorphing}
\usepackage{ytableau}
\usepackage{enumitem}
\usepackage{blkarray}
\usepackage{epigraph}

\usepackage[mono=false,osf]{libertine}
\usepackage{libertinust1math}
\usepackage[T1]{fontenc}
\usepackage[scaled=0.85]{beramono}
\renewcommand{\mathsf}[1]{\text{\normalfont\sffamily#1}}
\usepackage[cal=euler,scr=boondoxo,bb=libus]{mathalfa}

\usepackage{xcolor-material}
\usepackage[unicode, psdextra, colorlinks=true, linkcolor=GoogleBlue, citecolor=GoogleGreen, urlcolor=GoogleBlue, linktocpage]{hyperref}
\usepackage[noabbrev,capitalise]{cleveref}
\usepackage[margin=3cm]{geometry}
\usepackage{setspace}
\makeatletter
\newtheorem*{rep@theorem}{\rep@title}
\newcommand{\newreptheorem}[2]{%
\newenvironment{rep#1}[1]{%
 \def\rep@title{#2 \ref{##1}}%
 \begin{rep@theorem}}%
 {\end{rep@theorem}}}
\makeatother

\newtheorem{thm}{Theorem}[section]
\newtheorem{thmintro}{Theorem}

\newtheorem{cor}[thm]{Corollary}
\newreptheorem{cor}{Corollary}
\newtheorem{prop}[thm]{Proposition}
\newtheorem{lm}[thm]{Lemma}
\newtheorem{con}[thm]{Conjecture}

\theoremstyle{definition}
\newtheorem{de}[thm]{Definition}
\newtheorem{ex}[thm]{Example}
\theoremstyle{remark}
\newtheorem{rmk}[thm]{Remark}
\newtheorem{ansatz}{Ansatz}

\AddToHook{env/prop/begin}{\crefalias{thm}{prop}}
\AddToHook{env/lm/begin}{\crefalias{thm}{lm}}
\AddToHook{env/cor/begin}{\crefalias{thm}{cor}}
\AddToHook{env/con/begin}{\crefalias{thm}{con}}
\AddToHook{env/de/begin}{\crefalias{thm}{de}}
\AddToHook{env/rmk/begin}{\crefalias{thm}{rmk}}
\AddToHook{env/ex/begin}{\crefalias{thm}{ex}}

\Crefname{prop}{Proposition}{Propositions}
\Crefname{con}{Conjecture}{Conjectures}
\Crefname{ansatz}{Ansatz}{Ansätzen}

\DeclareFontFamily{U}{mathx}{}
\DeclareFontShape{U}{mathx}{m}{n}{<-> mathx10}{}
\DeclareSymbolFont{mathx}{U}{mathx}{m}{n}

\def\Z{\mathbb Z}
\def\Q{\mathbb Q}
\def\R{\mathbb R}
\def\C{\mathbb C}
\def\e{e}
\def\Irr{\mathrm{Irr}}
\def\Core{\mathcal{C}}
\def\Hitch{\mathcal{M}}
\def\Base{\mathbb{A}}
\def\Fix{\mathsf{Fix}}
\def\Hom{\mathrm{Hom}}
\def\RHom{\mathrm{RHom}}

\def\FF{\mathscr{F}}

\def\Coh{\mathrm{Coh}}
\def\Hilb{\mathrm{Hilb}}
\def\IHilb{\mathrm{IHilb}}
\def\Higgs{\mathrm{Higgs}}
\def\CHiggs{\mathcal{H}\mathrm{iggs}}
\def\LG{{\widecheck{G}}}

\def\rk{\operatorname{rk}}

\def\Spec{\operatorname{Spec}}

\def\Irr{\operatorname{Irr}}
\def\hk{hyperk\"{a}hler}

\def\AB{Atiyah--Bott}

\def\BB{Bia\l{}ynicki-Birula }

\def\T{\mathbb{T}}
\def\Part{\mathcal{P}}
\def\Fbot{\mathcal{N}}
\def\WG{\mathrm{WG}}
\def\BM{\mathrm{BM}}
\def\UF{B}
\def\loc{\mathrm{loc}}
\def\sp{\mathrm{sp}}
\def\Supp{\mathrm{Supp}}
\def\gr{\operatorname{gr}}
\def\Coh{\operatorname{Coh}}
\def\th{\theta}
\def\ulambda{{\underline{\lambda}}}
\def\KH{K\"{a}hler}
\DeclareFontFamily{U}{mathb}{\hyphenchar\font45}
\DeclareFontShape{U}{mathb}{m}{n}{
<-6> mathb5 <6-7> mathb6 <7-8> mathb7
<8-9> mathb8 <9-10> mathb9
<10-12> mathb10 <12-> mathb12
}{}
\DeclareSymbolFont{mathb}{U}{mathb}{m}{n}
\DeclareMathSymbol{\llcurly}{\mathrel}{mathb}{"CE}
\DeclareMathSymbol{\ggcurly}{\mathrel}{mathb}{"CF}

\newcommand{\simto}{\xrightarrow{\raisebox{-.8ex}[0ex][0ex]{$\sim$}}}

\newcommand{\qnom}{\genfrac{[}{]}{0pt}{}}

\DeclareMathOperator{\Sym}{Sym}
\DeclareMathOperator{\wt}{wt}
\DeclareMathOperator{\Span}{Span}

\title[Equivariant multiplicities and mirror symmetry]
{Equivariant multiplicities and mirror symmetry \\for Hilbert schemes}
\author{Alexandre Minets}
\address{A. Minets, Max Planck Institute for Mathematics, Vivatsgasse 7, 53111 Bonn, Germany}
\email{minets@mpim-bonn.mpg.de}
\curraddr{Mathematisches Institut, University of Bonn, Endenicher Allee 60, 53115 Bonn, Germany}
\email{aminets@math.uni-bonn.de}
\author{Filip Živanović} 
\address{F. T. Živanović, 
Simons Center for Geometry and Physics, 
Stony Brook, NY 11794-3636, U.S.A.}
\email{fzivanovic@scgp.stonybrook.edu} 

\begin{document}

\begin{abstract}
Following Hausel--Hitchin, we investigate core Lagrangians and upward flows in Hilbert schemes of points on elliptic surfaces. 
We compute the scheme-theoretic multiplicities of core Lagrangians, as well as the equivariant multiplicities of the very stable ones.
Furthermore, we extend the notion of equivariant multiplicity to wobbly components and compute it for Hilbert schemes of two points.
Inspired by Eisenstein series functor in Dolbeault Langlands correspondence, we propose that upward flows of very stable ideals are mirror dual to modified Procesi bundles, and justify this claim through numerical checks.
Finally, we make some conjectures about extending this picture to wobbly upward flows. 
\end{abstract}

\maketitle
\setcounter{secnumdepth}{3}
\setcounter{tocdepth}{1}

\epigraph{…er frage sich nach der Bedeutung des Satzes, demzufolge unsere Gebeine und Leiber von den Engeln dereinst übertragen werden in das Gesichtsfeld Ezechiels. Antworten habe er keine gefunden, aber es genügten ihm eigentlich auch schon die Fragen.}{\textit{Schwindel. Gefühle.\\W.G. Sebald}}

\tableofcontents 

\section{Introduction}
\subsection{Background}
Let $C$ be a smooth projective curve of genus $g>1$, and consider the moduli $\Hitch_n$ of stable Higgs bundles of rank $n$ on $C$.
Writing $\T = \mathbb{G}_m$, the space $\Hitch_n$ comes equipped with a $\T$-action by scaling the Higgs field, and $\T$-equivariant Hitchin fibration $\theta:\Hitch_n\to \Base$.
We also fix a Hitchin section $\kappa: \Base\to \Hitch_n$, and denote by $0\in \Base$ the origin.
As a consequence of \BB theory, connected components $F$ of $\Hitch_n^{\T}$ are in bijection with irreducible components $\Core_F$ of the global nilpotent cone $\Core = \theta^{-1}(0)$.

Let $\mathcal{E} = (E,\Phi)\in \Hitch_n^{\T}$ be a stable $\T$-fixed nilpotent Higgs bundle of rank $n$ and type $(1,\ldots,1)$.
In other words, $E = L_0\oplus\ldots \oplus L_{n-1}$ is a direct sum of line bundles, and the Higgs field $\Phi$ satisfies $\Phi(L_{i-1})\subset L_i\otimes K_C$.
Consider the number 
\[
\prod_{i=1}^{n-1} \binom{n}{n-i}^{\deg L_i-\deg L_{i-1}+(2g-2)}.
\]
Hausel--Hitchin proved~\cite{hausel2022very} that if the map of line bundles $\Phi^{n-1}:L_0\to L_{n-1}\otimes K_C$ has no multiple zeroes (in their language, $\mathcal{E}$ is \textit{very stable}), this number coincides with four quantities $\gamma_{\mathcal{E}}$, $l_{\mathcal{E}}$, $r_{\mathcal{E}}$, $m_{\mathcal{E}}$ associated to $(E,\Phi)$:
\begin{itemize}
    \item[($\gamma_{\mathcal{E}}$)] Number of points $(E',\Phi')$ in a general fiber of Hitchin map, satisfying $\lim\limits_{t\to 0} (E',t\Phi') = \mathcal{E}$;
    \item[($l_{\mathcal{E}}$)] Length of the sheaf $\theta_*(\mathcal{O}_{W^+_{\mathcal{E}}})|_0$, where $W^+_{\mathcal{E}}$ is the upward flow from $\mathcal{E}$;
    \item[($r_{\mathcal{E}}$)] Rank of the vector bundle $\Lambda_{\mathcal{E}} = \bigotimes_{i=1}^{n-1}\bigotimes_{j=1}^{d_i} \Lambda^{n-i}(\mathbb{E}_{c_{ij}})$\footnote{To simplify the exposition we omit the $i=0$ factor, which contributes an additional line bundle.}, where $\{c_{ij}\}$ is the set of zeroes of $\Phi|_{L_{i-1}}$, and $\mathbb{E}$ the tautological vector bundle on $\Hitch_n\times C$;
    \item[($m_{\mathcal{E}}$)] Scheme-theoretic multiplicity of the irreducible component $\Core_F$ of the global nilpotent cone corresponding to $\mathcal{E}$.
\end{itemize}
In particular, the equality $l_{\mathcal{E}} = r_{\mathcal{E}}$ is interpreted as a shadow of Donagi--Pantev's mirror symmetry for Higgs bundles~\cite{donagi2012langlands}, exchanging $\mathcal{O}_{W^+_{\mathcal{E}}}$ with $\Lambda_{\mathcal{E}}$.
For a very stable $\mathcal{E}$, the number $l_{\mathcal{E}}$ coincides with the value of an explicit rational function $\mu_{\mathcal{E}}(t)$, defined in terms of the tangent space of $\Hitch_n$ at $\mathcal{E}$, at $t=1$.
This function turns out to be a polynomial, which can be further interpreted as the image of either $\theta_*(\mathcal{O}_{W^+_{\mathcal{E}}})|_0$ or $\Lambda_{\mathcal{E}}|_{\kappa(0)}$ in $K^{\T}(\mathrm{pt})$.

Hausel--Hitchin do not explain the meaning of the function $\mu_{\mathcal{E}}(t)$ purely in terms of the irreducible component $\Core_F$.
They also define the notion of very stable Higgs bundles beyond type $(1,\ldots,1)$, but many $\T$-fixed components of $\mathcal{M}_n$ fail to have any very stable points.
This begs the following questions:
\begin{enumerate}[label={(\roman*)}, ref={(\roman*)}]
    \item What is the geometric meaning of $\mu_{\mathcal{E}}(t)$ in terms of the global nilpotent cone?\label{ques:m-meaning}
    \item How do the quantities above differ when $\mathcal{E}$ is not very stable?\label{ques:difference}
    \item What is the analogue of mirror duality between $\mathcal{O}_{W^+_{\mathcal{E}}}$ and $\Lambda_{\mathcal{E}}$ when $\mathcal{E}$ is not very stable?\label{ques:MS}
\end{enumerate}

\subsection{Equivariant multiplicities}
Instead of answering these questions for moduli of Higgs bundles, we pass to the more general setup of pre-integrable systems.
Roughly speaking, these are $\T$-equivariant proper fibrations over affine spaces $\theta:\Hitch\to \Base$, which satisfy some properties resembling Hitchin fibration; see \cref{ssec:PIS} for details.
We call $\Core = \theta^{-1}(0)$ the \textit{core} of $\Hitch$.
As before, we have a bijection between $\pi_0(\Hitch^{\T})$ and $\mathrm{Irr}(\Core)$ up to some technicalities, see \cref{prop:irr-equal-pi0} for a precise statement.
The definitions of very stable and wobbly points also immediately extend to this setting.

To address question~\labelcref{ques:m-meaning}, we introduce two kinds of equivariant multiplicities, \textit{virtual} $\mu_F(t)$ and \textit{genuine} $m_F(t)$, for a fixed component $F\in \pi_0(\Hitch^{\T})$; see \cref{em-components}. 
The virtual equivariant multiplicity is a rational function, and should be understood as a $K$-theoretic version of equivariant multiplicity of Brion--Rossmann~\cite{brion1997equivariant}.
As in~\cite{hausel2022very}, it is easy to write $\mu_F(t)$ down explicitly, but it is typically not a polynomial.
On the other hand, the genuine equivariant multiplicity is much more subtle and harder to compute.
In a sentence, it is defined by removing the contributions of irreducible components of of $\Core$ which intersect $F$.
It is always a Laurent polynomial in $t$, and it specializes to the scheme-theoretic multiplicity of $\Core_F$ at $t=1$ (\cref{prop:non-eq-mult}), while both properties can fail for $\mu_{F}(t)$.
For $F$ without very stable points, these functions are always different.

\begin{thmintro}[{\cref{prop:HH-mult,prop:v-stable-HHvsMZ}}]
    Let $F\in \pi_0(\Hitch^{\T})$ be a fixed component, and $p\in F$.
    We have $\gamma_p \leq \mu_F(1)$, and the equality holds if and only if $p$ is very stable.
    $F$ contains a very stable point if and only if $m_F(t)=\mu_F(t)$.
\end{thmintro}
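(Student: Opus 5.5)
We have not yet seen the precise definitions of $\mu_F(t)$ and $m_F(t)$, so we fix the setup we expect. The upward flow $W^+_p$ of a fixed point $p\in F$ is a $\T$-stable affine space of dimension $\dim\Base$. The restriction $\theta|_{W^+_p}\colon W^+_p\to\Base$ is a $\T$-equivariant map between affine spaces with positive weights. It is quasi-finite at $p$ (since $W^+_p\cap\Core$ is finite near $p$), hence finite, because all weights are positive.

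The virtual multiplicity should be the $K$-theoretic ratio
\[
\mu_F(t)=\frac{\prod_{\text{wts } b \text{ of } \Base}(1-t^{b})}{\prod_{\text{wts } w \text{ of } T^+_pW^+_p}(1-t^{w})},
\]
computed from the positive part of $T_p\Hitch$. In other words, $\mu_F(t)$ is the formal equivariant degree of $W^+_p\to\Base$ at $0$. The genuine multiplicity $m_F(t)$ should be the equivariant multiplicity of $\Core$ along $\Core_F$, which is $\mu_F$ with the contributions of the other components of $\Core$ through $F$ removed.

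\textbf{First claim: $\gamma_p\le\mu_F(1)$, with equality iff $p$ is very stable.}

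1. Since $\theta|_{W^+_p}$ is finite and flat (miracle flatness: both sides are smooth of the same dimension), $\theta_*\mathcal{O}_{W^+_p}$ is a graded free $\mathcal{O}_\Base$-module. Its graded character at $0$ is the Hilbert series of $\mathcal{O}(W^+_p)/\mathfrak{m}_0$, which equals $\mu_F(t)$ by the usual Koszul/Hilbert series computation. Hence $\mu_F(1)=l_p=\deg(\theta|_{W^+_p})$.

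2. The number $\gamma_p$ counts the points of a general fiber lying on $W^+_p$. Because the map is flat and finite and the general fiber is reduced, this count equals the degree. However, some of those points may have limits in $\Core$ other than $p$.

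3. We should therefore interpret $\gamma_p$ as the number of points of $W^+_p\cap\theta^{-1}(a)$ with $a$ generic. The degree $\deg=\mu_F(1)$ counts these only if $W^+_p\cap\Core=\{p\}$ scheme-theoretically modulo nilpotents, that is, only if there is no positive-dimensional $\T$-invariant piece going off to infinity.

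The cleaner route is properness. Since $\theta$ is proper, the closure $\overline{W^+_p}$ maps properly onto $\Base$. Its degree $\mu_F(1)$ splits as $\gamma_p$ (points inside $W^+_p$) plus the contribution of the boundary $\overline{W^+_p}\setminus W^+_p$, and the boundary meets $\Core$ exactly when $W^+_p\cap\Core\neq\{p\}$. So $\gamma_p\le\mu_F(1)$, with equality iff $W^+_p\cap\Core=\{p\}$, which is the definition of very stable.

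The main obstacle is step 3: we must show that a nontrivial intersection $W^+_p\cap\Core\ni q\neq p$ forces a strict drop. The plan is to note that at such a $q$ the finite map is not étale over $0$ in the right sense, so the length at $p$ is strictly less than the total. We would compare $l_p$, computed as a local length at $p$, with the full degree.

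\textbf{Second claim: $F$ has a very stable point iff $m_F=\mu_F$.}

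By the definition of $m_F$ as $\mu_F$ minus the contributions of the other components $\Core_{F'}$ meeting the attracting locus of $F$, equality holds iff all those correction terms vanish. A correction term is nonzero exactly when some $\Core_{F'}$ meets $W^+_p$ outside $p$ for generic $p\in F$.

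The set of very stable points is open in $F$. Each correction term is a nonzero Laurent polynomial with positive coefficients, so there is no cancellation, and the terms vanish iff a generic $p$ has $W^+_p\cap\Core=\{p\}$. That holds iff very stable points exist in $F$.
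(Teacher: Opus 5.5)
Both halves of your argument fail in the wobbly case, which is where the real content lies.

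\textbf{First claim.} The map $\theta|_{W^+_p}$ is \emph{not} quasi-finite at $p$ unless $p$ is very stable. All weights on $T^+_p$ are positive, so every $\T$-orbit in $W^+_p\simeq T^+_p$ has $p$ in its closure. Hence $W^+_p\cap\Core\neq\{p\}$ forces this intersection to be positive-dimensional at $p$. In fact finiteness of $\theta|_{W^+_p}$ is equivalent to very stability (\cref{prop:v-stable-lf-push}), and the map need not even be dominant. So your steps 1--2 only prove $\gamma_p=\mu_F(1)$ for very stable $p$.

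The properness repair in step 3 is incorrect. The boundary $\overline{W^+_p}\setminus W^+_p$ has dimension $<n$, hence so does its image in $\Base$. Over a general $a\in\Base$ the fibers of $\overline{W^+_p}$ and $W^+_p$ therefore agree, and the generic degree of $\overline{W^+_p}\to\Base$ is exactly $\gamma_p$; there is no boundary term to split off. Indeed $\mu_F(1)$ need not be an integer: it equals $3/2$ for the intermediate fixed point of $S_{\mathbb{Z}/3}$, so it cannot be a degree. Likewise the ``length at $p$'' you want to compare with is infinite.

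The missing idea is to regard $\C[T^+_p]$ as a graded, non-finitely generated $\C[\Base]$-module. Its generic rank is $\gamma_p$, while $\mu_F(1)$ is the value at $t=1$ of $\chi_t(\C[T^+_p])/\chi_t(\Sym\Base)$. The paper proceeds as follows:
\begin{itemize}
\item It takes the torsion submodule $N$ of $M=\C[T^+_p]/\C[\Base]$.
\item It embeds $M/N$ in a free module of the same rank; the finitely generated cokernel only has a pole of order $<n$ at $t=1$.
\item This gives $\gamma_p=\mu_F(1)-(\chi_t(N)/\chi_t(\Sym\Base))|_{t=1}\le\mu_F(1)$.
\item For strictness at wobbly $p$, a $\T$-orbit in $W^+_p\cap\Core$ gives a homogeneous $f$ with $\C[f]\hookrightarrow M$. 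Powers of $f$ times monomials in the remaining coordinates produce enough torsion to give $\chi_t(N)$ a pole of order $n$ at $t=1$.
\end{itemize}
The non-dominant case is trivial, since then $\gamma_p=0<\mu_F(1)$.

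\textbf{Second claim.} Your step ``each correction term is a nonzero Laurent polynomial with positive coefficients, so there is no cancellation'' is false. The difference $\mu_F(t)-m_F(t)$ is in general a rational function, not a Laurent polynomial, and it can be negative. For the same point $p\in S_{\mathbb{Z}/3}$ (weights $-1|2$, base weight $3$) one has $\mu_p(t)=(1-t^3)/(1-t^2)$ and $m_p(t)=1+t^2$, so $\mu_p(t)-m_p(t)=-t^3/(1+t)$.

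Also, ``$\mu_F$ minus the contributions of the other components'' is only a slogan. The actual definition goes through the torsion-free quotient of $\sp_{W^-_F}(\mathcal{O}_\Core)$. This gives $\mu_F-m_F=\rk\loc_{W^-_F}[\mathcal{T}]$, where $\mathcal{T}$ is the $\mathcal{O}_{W^-_F}$-torsion subsheaf.

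The paper shows this is nonzero for wobbly $F$ by reducing, via \cref{lem:rank-restr} and \eqref{eq:loc-smooth}, to $\rk i^*[\mathcal{T}]\neq0$ for $i\colon F\to N_{W^-_F}\Hitch$. If every point of $F$ is wobbly, the support of $\mathcal{T}$ contains all of $F$. Over a dense open subset of $F$ where $\mathcal{T}$ is locally free, the restriction then has positive rank.

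Your other direction is essentially the paper's. By locality one restricts to the open set of very stable points of $F$, over whose downward flow no other core component passes. There $\sp_{W^-_F}(\mathcal{O}_\Core)$ is already torsion-free, so $m_F=\mu_F$.
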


We conjecture that for any components without very stable points, the inequality $\mu_F(t)<m_F(t)$ holds for all $t\geq 1$.
This holds in all cases where we managed to compute genuine equivariant multiplicities.

\subsection{Hilbert schemes}
After establishing the general theory, we restrict our attention to the case of Hilbert schemes of points over elliptic surfaces $S\to \C$.
In this case, the pre-integrable system structure is given by the natural map $\theta:\Hilb^n S\to \Hilb^n \C$.
The enormous advantage of this example is that all non-equivariant multiplicities of core components can be explicitly computed.
The $\T$-fixed components of $\Hilb^n(S)$ are parameterized by multipartitions $\underline{\lambda}$ of $n$, coloured by (a subset of) $\pi_0(S^\T)$; see \cref{cor:Hilb-fixed-loci}.
\begin{thmintro}[{\cref{cor:HilbTE-mult}, \cref{prop:vst-eq-mult-parab,prop:mult-pain}}]\label{thmA}
    Let $\underline{\lambda}$ be a multipartition as above.
    In the notations of \cref{sec:mult-for-Hilb}, we have
    \[
        \qquad m_\lambda(t) = \qnom{n}{\lambda'}_t\frac{[e]_t^{(n)}}{\prod_{p\in \Fix(S)}[w_p]_t^{(|\lambda(p)|)}}\quad\text{for $\underline{\lambda}$ very stable};\qquad m_{\underline{\lambda}} = \binom{n}{\underline{\lambda}'}\prod_{p\in \Fix(S)} m_p^{|\lambda(p)|}.
    \]
    Very stable points and components are classified by \cref{prop:par-stable-descr,cor:v-st-points}.
\end{thmintro}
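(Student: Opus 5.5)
The plan has two parts, matching the two formulas: the equivariant one for very stable $\underline{\lambda}$, and the non-equivariant one for all $\underline{\lambda}$. Throughout, $\theta:\Hilb^n S\to \Hilb^n\C=\C^n/S_n$ is induced by the elliptic fibration $\pi:S\to\C$, and $\Core=\theta^{-1}(n\cdot 0)$ parametrizes length-$n$ subschemes of $S$ whose pushforward cycle is concentrated at $0$.

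\textbf{Very stable case.} By \cref{prop:v-stable-HHvsMZ}, if $\underline{\lambda}$ contains a very stable point then $m_{\underline{\lambda}}(t)=\mu_{\underline{\lambda}}(t)$. So it suffices to compute the virtual multiplicity, which is defined explicitly from the $\T$-character of the tangent space at a fixed point.

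1. I pick a $\T$-fixed ideal $I_{\underline{\lambda}}$ in the component. It is a union over $p\in\Fix(S)$ of monomial ideals of shape $\lambda(p)$ in the local weight coordinates $(x,y)$ at $p$. Here $x$ has weight $-w_p$ and $y$ is the pullback-related coordinate.
2. I write the character of $T_{I}\Hilb^n S$ by the standard arm--leg formula at each $p$, and the character of $T_0\Hilb^n\C$, whose weights are $e,2e,\dots,ne$.
3. By the definition of $\mu$, the weights tangent to the fixed locus cancel. The positive part then contributes $\prod_i[ie]_t=[e]^{(n)}_t$ in the numerator, and the attracting weights contribute the denominator.
4. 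The arm--leg products over a column-strict configuration telescope. The contribution of each column of length $k$ at $p$ combines into $q$-integers $[w_p]^{(k)}$ and symmetrization factors, which assemble into the $q$-multinomial $\qnom{n}{\lambda'}_t$.

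I expect this telescoping to be routine but fiddly. The classification \cref{prop:par-stable-descr,cor:v-st-points} is used only to know which $\underline{\lambda}$ qualify; in particular, one needs that for very stable shapes the parts sit in the attracting directions with no extra cancellation.

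\textbf{Non-equivariant formula.} Here I compute the scheme-theoretic multiplicity at a generic point $Z$ of $\Core_{\underline{\lambda}}$. By \cref{prop:non-eq-mult} this equals $m_{\underline{\lambda}}(1)$, which is consistent with the first formula.

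1. Identify the generic point $Z$ using \cref{prop:irr-equal-pi0}: the generic point of the closure of the upward/downward flow of the fixed component. I expect $Z$ to be a disjoint union of curvilinear subschemes, one for each column of $\lambda(p)$, lying at distinct general points of the fibre component $D_p\subset\pi^{-1}(0)$ attached to $p$. Here $D_p$ has multiplicity $m_p$ in $\pi^*(0)$.
2. Étale locally near $Z$, $\Hilb^n S$ is a product of local Hilbert schemes at these points, and $\theta$ factors as a product of local maps followed by the symmetrization $\prod_j\C^{k_j}/S_{k_j}\to\C^n/S_n$. This last map is étale at points with distinct supports, but because all supports collapse to $0$ it contributes the multinomial $\binom{n}{\lambda'}$. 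I would verify this by comparing lengths of $\C[\mathbf{x}]^{S_n}/(\text{positive degree})$ against the product of the $\C[\mathbf{x}_j]^{S_{k_j}}$-analogues, that is, $n!/\prod k_j!$.
3. Each local factor is $\Hilb^{k}$ of a neighbourhood with coordinates in which $\pi=y^{m_p}$. The fibre over $0$, along the component of length-$k$ curvilinear subschemes transverse to $D_p$, has multiplicity $m_p^{k}$. To see this, I restrict to a transverse slice, namely subschemes of a fixed curve $x=c$. This reduces the question to the map $\Hilb^k\C_y\to\Hilb^k\C$, $y\mapsto y^{m_p}$, which is finite of degree $m_p^k$ and has a single point over $0$.

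\textbf{Main obstacle.} I expect the hard step to be showing that the local product description is \emph{scheme-theoretically} correct at the generic point. Concretely, one must show that the multiplicity of $\Core$ along the component equals the product of the multiplicities of the local factors and of the symmetrization. This requires flatness, or at least a Cohen--Macaulay/transversality argument, of the slice. I would first try to exhibit $\theta$ near $Z$ as a flat finite map on a smooth transverse slice, and then use that the multiplicity of a fibre of a flat map equals the length of the fibre on the slice.
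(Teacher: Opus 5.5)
Your plan is sound and close to the paper's, but the step carrying the real geometric content is left as an expectation, and the very stable computation contains two slips.

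\textbf{Very stable formula.} Your argument here is the paper's. \cref{prop:v-stable-HHvsMZ} gives $m_{\underline{\lambda}}(t)=\mu_{\underline{\lambda}}(t)=\prod_{i=1}^n[ie]_t/\prod_s[a_s]_t$, with the $a_s$ read off from \eqref{eq:Tplus-tangent}. The two slips:
\begin{itemize}
\item The numerator is $\prod_{i=1}^n[ie]_t=[n]!_t\,[e]_t^{(n)}$, since $[ie]_t=[i]_t[e]_{t^i}$. It is not $[e]_t^{(n)}$. That $[n]!_t$ is the numerator of $\qnom{n}{\underline{\lambda}'}_t$, which in your bookkeeping has nowhere else to come from.
\item $x$ has weight $-l_p$, and $y$ has weight $w_p=k+l_p$.
\end{itemize}
The denominator is clean neither because of a telescoping nor because fixed-locus weights cancel ($\mu$ only sees $T^+$ and $\Base$). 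It is clean because very stable shapes are single columns $(1^{k})$ at isolated $p$. Such a column has positive weights $w_p,2w_p,\dots,kw_p$, giving $[k]!_t[w_p]_t^{(k)}$. On $C$ one has $l_C=0$, and the weights $a(s)+1$ give $\prod_i[\lambda(C)'_i]!_t$.

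\textbf{Non-equivariant formula: comparison of routes.} Here you take a genuinely different and more elementary route.
\begin{itemize}
\item The paper splits only by fixed points of $S$. For each isolated $p$ it changes the $\T$-action to $(x,ty)$ and moves the support to a generic point of $D_p$, where the fibration is $y^{m_p}$. It then imports $\binom{k}{\lambda(p)'}$ from the equivariant result \cref{cor:HilbTE-mult} on $T^*E$, and multiplies by $m_p^k$ via \cref{prop:cover-mult}.
\item You split all the way down to single columns at the generic point of $\Core_{F_{\underline{\lambda}}}$. The whole factor $\binom{n}{\underline{\lambda}'}$ then appears as the length of the fibre of $\prod_j\Sym^{k_j}\C\to\Sym^n\C$ over $0$, with no equivariant input.
\end{itemize}
The step you call the main obstacle is already settled. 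Since $\theta$ is flat (\cref{cor:flatness}), lengths at generic points multiply under this finite base change, exactly as in the proof of \cref{prop:cover-mult}. For a single column you need no slice or Cohen--Macaulay argument either. The open set of subschemes $Z$ with $\C[y]\to\mathcal{O}_Z$ onto is $\{(x-f(y),g(y))\}\simeq\C^k\times\Sym^k\C$. On it, $\theta$ is the second projection followed by $\Sym^k(z\mapsto z^{m_p})$. So the core there is $\C^k$ times an Artinian scheme of length $m_p^k$.

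\textbf{The genuine gap.} It is the identification you only ``expect'': that the generic point of $\Core_{F_{\underline{\lambda}}}=\overline{W^-_{F_{\underline{\lambda}}}}$ is a union of transverse curvilinear columns, of lengths $\lambda(p)'_i$, at distinct general points of $D_p$.
\begin{itemize}
\item \cref{prop:irr-equal-pi0} says nothing about this.
\item It is exactly where the transpose comes from: columns versus rows distinguishes $\binom{n}{\underline{\lambda}'}$ from $\binom{n}{\underline{\lambda}}$.
\item For isolated $p$ the fixed component is a single punctual ideal, so a proof is needed.
\end{itemize}
The paper gets this from the change of action. The sign splitting in \eqref{eq:Tplus-tangent} does not depend on $k>0$, $l\geq 0$, so $W^-$ is unchanged. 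For the action $(x,ty)$, the downward flow of $I_c^{(1^k)}$ is $\{(x-c-b_1y-\dots-b_{k-1}y^{k-1},y^k)\}$, as in the proof of \cref{lem:dom-loc-cst}.

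A direct alternative runs as follows.
\begin{enumerate}
\item The locus $\mathcal{L}_{\underline{\lambda}}$ of such configurations is irreducible of dimension $n$ and lies in $\Core$.
\item As $t\to\infty$, the columns on $D_p$ become $(x-\phi_i(y),y^{k_i})$ with $\phi_i\to 0$. Since $\prod_{i:k_i>j}(x-\phi_i(y))\,y^j$ lies in the ideal, the limit contains $x^{\#\{i:k_i>j\}}y^j$ for every $j$. Comparing colengths, the limit equals $I^{\lambda}_p$, where $\lambda'$ is $(k_i)$ sorted.
\item Columns on $C$ simply converge to $\bigcap_i I^{(1^{k_i})}_{c_i}$.
\item Hence $\mathcal{L}_{\underline{\lambda}}\subset W^-_{F_{\underline{\lambda}}}$, and equidimensionality of $\Core$ forces $\overline{\mathcal{L}_{\underline{\lambda}}}=\Core_{F_{\underline{\lambda}}}$.
\end{enumerate}
With this supplied, your argument goes through.
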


Let $S = T^*E \simeq E\times \C$, where $E$ is an elliptic curve.
The fixed components of $\Hilb^n(E\times \C)$ are parameterized by partitions $\lambda \vdash n$, all of them are very stable, and $m_\lambda=\binom{n}{\lambda'}$, $m_\lambda(t) = \qnom{n}{\lambda'}_t$ by the above.
In this case, we can prove more; namely, a certain generalization of~\cite[Th.~3.10]{hausel2022enhanced}, which answers a question of T. Hausel.
\begin{repcor}{cor:mult-alg-ell}
    Let $\lambda\vdash n$, and $I\in (\Hilb^n(E\times \C))^\T$ a very stable point in the corresponding $\T$-fixed component.
    Then the equivariant multiplicity algebra $Q_I$, as defined in~\cite{hausel2022enhanced}, is isomorphic to the cohomology ring of the partial flag variety $H^*(\mathscr{F}_{\lambda'})$.
\end{repcor}

As another byproduct of \cref{thmA}, we show that the multiplicities of core components do not have to be monotone with respect to the $\T$-orbits (\cref{prop:list-tidal}).
This result does not seem to have been anticipated by the experts.

Genuine equivariant multiplicities of wobbly components are much harder to get a hold of.
We compute them for Hilbert schemes of two points with plenty of help from \texttt{macaulay2}.
As a result, we obtain many examples where $m_F(t)$ has negative coefficients. 

\begin{thmintro}
    Genuine equivariant multiplicities for $\Hilb^2 S$ are given by \eqref{eq:eq-mult-Hilb2-sep-easy}, \eqref{eq:eq-mult-Hilb2-sep}, and \cref{table:eqmultH2}.
\end{thmintro}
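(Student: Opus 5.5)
This statement is a summary of explicit computations. I would establish it by computing genuine equivariant multiplicities separately for each type of $\T$-fixed component of $\Hilb^2 S$ and then assembling the results.

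\textbf{Classification of components.} By \cref{cor:Hilb-fixed-loci}, the fixed components of $\Hilb^2 S$ are indexed by coloured multipartitions of $2$. There are three types:
\begin{itemize}
\item[(a)] two distinct coloured single boxes, giving $\Sym^2$ of one fixed curve or a product of two distinct fixed components;
\item[(b)] the partition $(2)$ at a point of $\Fix(S)$;
\item[(c)] the partition $(1,1)$ at a point of $\Fix(S)$.
\end{itemize}

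\textbf{Very stable components.} For each type I first determine, via \cref{prop:par-stable-descr,cor:v-st-points}, whether the component contains a very stable point. Whenever it does, \cref{prop:v-stable-HHvsMZ} gives $m_F(t)=\mu_F(t)$. The right-hand side is then read off from the very stable formula of \cref{thmA}, or equivalently computed directly from the $\T$-weights of the tangent space at a fixed point. I expect this to produce the easy separated-points formula \eqref{eq:eq-mult-Hilb2-sep-easy}.

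\textbf{Wobbly components.} For the remaining components, typically those where both points lie on the same fixed curve, or the punctual ones at fixed points with nontrivial weights $w_p$, I would work étale-locally. Near the relevant fixed locus, $S\to\C$ is modelled $\T$-equivariantly by a toric chart $\C^2\to\C$, $(x,y)\mapsto x^ay^b$, or by a product chart $E\times\C$-type neighbourhood. Then:
\begin{enumerate}
\item In this chart, $\Hilb^2$ is covered by explicit affine charts, namely the blow-up of $\Sym^2$ along the diagonal. The core is cut out by the pullback of the ideal of $0\in\Hilb^2\C$, i.e. by $e_1(\theta),e_2(\theta)$.
\item Using \texttt{macaulay2}, I would compute a primary decomposition of the core ideal, isolate the primary component supported on $\Core_F$, and take its $\T$-graded Hilbert series along the attracting direction.
\item Following the definition in \cref{em-components}, I would subtract the contributions of the other irreducible components meeting $F$. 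This yields a Laurent polynomial.
\item I would check that its value at $t=1$ agrees with the non-equivariant formula $m_{\underline{\lambda}}=\binom{2}{\underline{\lambda}'}\prod m_p^{|\lambda(p)|}$ of \cref{thmA}, using \cref{prop:non-eq-mult}.
\end{enumerate}
The general separated case \eqref{eq:eq-mult-Hilb2-sep} should come out of this via a product decomposition: for points on distinct components, the local structure is a product of two copies of $S$, so $m_F(t)$ factorises as a product of single-point multiplicities, with a correction from the $\Sym^2$ when both points share a colour. The punctual cases, organised by the local weights, fill \cref{table:eqmultH2}.

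\textbf{Main obstacle.} The hardest step is the wobbly, punctual case, where several core components meet $F$ nontransversally and the primary decomposition has embedded structure. Correctly attributing the length to $\Core_F$ versus its neighbours is the delicate part. My first approach is to localize at the generic point of $F$ inside the normal slice given by the upward-flow coordinates, which reduces the problem to a finite-dimensional graded algebra. I would then cross-check the resulting answer by specializing at $t=1$ and comparing with \cref{thmA}.
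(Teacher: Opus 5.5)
Your overall shape (case split, étale-local charts, the blow-up of $\Sym^2$ along the diagonal for punctual ideals, computer algebra) matches the paper. However, two steps would fail.

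\textbf{The separated case.} Near $I=I_p^{(1)}\cap I_q^{(1)}$ the space $\Hilb^2 S$ is étale-locally $S\times S$. The integrable system, though, is \emph{not} a product, even when $p,q$ lie in different fixed components. By \cref{prop:Hilb-etale} the base map is $\theta^*(z_1)=x_1^{a_1}y_1^{b_1}+x_2^{a_2}y_2^{b_2}$, $\theta^*(z_2)=x_1^{a_1}x_2^{a_2}y_1^{b_1}y_2^{b_2}$, i.e.\ $\theta\times\theta$ followed by the degree-$2$ map $\C^2\to\Sym^2\C$. So \cref{prop:product-mult} does not apply, and $m_I(t)\neq m_p(t)m_q(t)$ already at $t=1$, since $m_I=2b_1b_2$ by \cref{prop:mult-parab}. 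The actual correction factor is $1+t^{b_2w_2}$ if $b_1<b_2$ and $1+t^{bw_1}+t^{bw_2}-t^e$ if $b_1=b_2$. \cref{prop:cover-mult} only controls its value at $t=1$, and no general result predicts it (cf.\ \cref{conj:finite-cover}).

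The paper computes it directly. For $b_1<b_2$ one has $\mathcal{J}=(f,x_2^{2a_2}y_2^{2b_2})$, and the separated monomials give a surjection $(\bigoplus_d\gr^d)_{\mathrm{tf}}\twoheadrightarrow\bigoplus_{i<b_1,\,j<2b_2}\mathbf{R}y_1^iy_2^j$, which is an isomorphism by the rank count. For $b_1=b_2$ it analyses $\gr^d$ degree by degree.

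Relatedly, \eqref{eq:eq-mult-Hilb2-sep-easy} is not a very stable formula. Both separated formulas concern wobbly components (those containing a wobbly isolated point), where $m_F\neq\mu_F$. Components with both points on $C$ are very stable by \cref{prop:par-stable-descr}, so your sorting of components is also off.

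\textbf{The recipe for wobbly components.} Your procedure does not compute $m_F(t)$. By \cref{def:gen-em-class}, $m_F(t)$ is the class of the torsion-free part of $\bigoplus_d\gr^d$, with $\gr^d=\mathcal{I}^d/(\mathcal{I}^d\cap\mathcal{J}+\mathcal{I}^{d+1})$, as a module over the coordinate ring $\mathbf{R}$ of $\Core_F\cap U$.

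Replacing $\mathcal{J}$ by its $\Core_F$-primary component $Q$ is harmless, since the kernel only contributes torsion to the associated graded. But the Hilbert series of $A/Q$ computes $\mu(W^-_F;\mathcal{O}_{\Spec A/Q})$. This equals $m_F(t)$ only if the $\mathcal{I}$-adic associated graded of $A/Q$ is itself torsion-free, e.g.\ when the equations are homogeneous in the upward coordinates.

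That fails in exactly the hard charts. For $I_p^{(1^2)}$ with $\mathscr{E}=xy^2$, the equation $f=sw_0+st^2+2\eta w_0t$ makes $w_0$ a nonzero element of $\gr^1$ annihilated by $s$. This persists in $A/Q$, because it comes from the non-homogeneity of $f$, not from a neighbouring component. So ``subtracting the other components'' does not remove it.

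Localizing at the generic point of $W^-_F$ does not help either: there every torsion-free module becomes free, so you only recover $m_F=m_F(1)$. The negative terms such as $-t^e$ in \eqref{eq:eq-mult-Hilb2-sep} come precisely from rank-one, torsion-free but non-locally-free summands $(\mathbf{R}y_1^iy_2^{d-i}\oplus\mathbf{R}y_1^{b+i}y_2^{d-b-i})/\mathbf{R}fy_1^iy_2^{d-b-i}$. These are invisible generically, and no check at $t=1$ can detect them.

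What is needed is an explicit computation, degree by degree, of $\mathcal{I}^d\cap\mathcal{J}$ modulo $\mathcal{I}^{d+1}$: via $\bar f$ while only $f$ contributes, and via Buchberger's algorithm once $g$ does. Then remove the $\mathbf{R}$-torsion, stopping when $\gr^d$ becomes entirely torsion. This is what the paper does by hand and in \texttt{macaulay2}.
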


\subsection{Mirror symmetry}
Hilbert schemes of points on certain elliptic surfaces can be interpreted as moduli of parabolic Higgs bundles~\cite{groechenig2014hilbert}.
This allows us to put Hilbert schemes into the framework of Donagi--Pantev's mirror symmetry, and ask about mirror duals of sheaves $\mathcal{W}_I \coloneqq \mathcal{O}_{W^+_I}$.
Our proposed answer is \textit{modified Procesi}\footnote{In the main text, we use the neologism ``Procesque'' instead.} bundles $\mathcal{P}_I$.
These are defined as partial symmetrizations, via the medium of isospectral Hilbert scheme $\IHilb^n S$, of tautological bundles on $S$; see \cref{ssec:Procesque} for details.
While Hausel--Hitchin motivate their guess by compatibility of mirror symmetry with Hecke operators, our guess arises from compatibility with parabolic induction.
We check that $\mathcal{W}_I$ is exchanged with $\mathcal{P}_I$ over the general locus of $\Hilb^n \C$, where mirror symmetry restricts to a Fourier--Mukai transform.
More importantly, we verify that their equivariant pairings agree, which is a highly non-generic check.

\begin{thmintro}[{\cref{thm:eq-index}}]
    Denote $\chi_t(\mathcal{F}_1,\mathcal{F}_2) \coloneqq \sum (-1)^k \chi_t(\mathrm{Ext}^k(\mathcal{F}_1,\mathcal{F}_2))$.
    Let $S$ be a surface of parabolic family (see \cref{ex:2d-int}).
    For any two very stable ideals $I,J\in \Hilb^n S$, we have 
    \[
        \chi_t(\mathcal{W}^\vee_I,\mathcal{P}_J) = \chi_t(\mathcal{P}^\vee_I,\mathcal{W}_J).
    \]
\end{thmintro}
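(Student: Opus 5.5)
Both sides will be evaluated by $\T$-equivariant localization on $\Hilb^n S$. Since $S$ is of parabolic family, $S^\T$ is finite, so $(\Hilb^n S)^\T$ is a finite set of monomial ideals indexed by multipartitions. Neither sheaf pairing is a pairing of compactly supported objects. However, $\mathcal{W}_I$ is supported on the upward flow $W^+_I$, which is proper over $\Base$ and meets the fixed locus only at $I$ itself, modulo the attracting structure. So $\mathrm{Ext}^\bullet(\mathcal{W}_I^\vee,\mathcal{P}_J)=H^\bullet(W^+_I,\mathcal{P}_J|_{W^+_I})$ has finite-dimensional weight spaces, and $\chi_t$ is a well-defined rational function. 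The same holds for $\chi_t(\mathcal{P}_I^\vee,\mathcal{W}_J)=\chi_t(W^+_J,\mathcal{P}_I^\vee|_{W^+_J})$. Because $W^+_I\cong T^+_I$ is an affine space attracted to $I$, localization should give
\[
\chi_t(\mathcal{W}^\vee_I,\mathcal{P}_J)=\frac{\mathcal{P}_J|_I}{\Lambda_{-1}\big((T^+_I)^\vee\big)},
\qquad
\chi_t(\mathcal{P}^\vee_I,\mathcal{W}_J)=\frac{\mathcal{P}_I^\vee|_J}{\Lambda_{-1}\big((T^+_J)^\vee\big)},
\]
where $\mathcal{P}|_I$ denotes the $\T$-character of the fibre. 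The first task is to justify these formulas. I would use the smoothness of very stable upward flows and their closedness, which follows from very stability exactly as in \cref{prop:v-stable-HHvsMZ}.

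The second step is to compute the ingredients explicitly. For a very stable $I$ labelled by $\underline{\lambda}$, the tangent weights at $I$ are given by the arm–leg formula at each fixed point of $S$, with the local weights $(w_p,e-w_p)$ twisted by the symplectic weight $e$. From these I would extract the positive part $T^+_I$. I expect very stable ideals to be precisely those for which $T^+_I$ looks like the "Lagrangian half" that is fibred over $\Base$, so that $\Lambda_{-1}((T^+_I)^\vee)$ essentially equals $\prod$ of factors $(1-t^{k})$ attached to $\Base=\Hilb^n\C$, adjusted by the $[w_p]$-factors appearing in \cref{prop:vst-eq-mult-parab}. For the Procesque bundle, its definition as a partial symmetrization of the Procesi bundle through $\IHilb^n S$ gives $\mathcal{P}_J|_I$ as a partial symmetrization of Haiman's fibre character. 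The Procesi fibre at a monomial ideal is the regular representation graded by a modified Macdonald polynomial, and here the relevant one-parameter specialization becomes a $t$-Kostka/Hall–Littlewood-type expression. Taking $\mathfrak{S}_{\lambda'}$-invariants paired with the induced character should produce $q$-multinomial-like expressions in the pattern of $\qnom{n}{\lambda'}_t$.

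The third step is the comparison. Multiplying both sides by the common factor, the identity reduces to a symmetry
\[
\mathcal{P}_J|_I\cdot\Lambda_{-1}\big((T^+_J)^\vee\big)=\mathcal{P}_I^\vee|_J\cdot\Lambda_{-1}\big((T^+_I)^\vee\big).
\]
I expect this to follow from a symmetry of Hall–Littlewood/Kostka–Foulkes pairings: $\langle h_\mu, \tilde{H}_\nu\rangle$ against the appropriate inner product, together with $t\mapsto t^{-1}$ duality coming from the $\vee$. The factorization over fixed points of $S$ should reduce the problem to the single-point case, $S$ locally $\C^2$ with the antidiagonal-type weights. There both sides are computed by Haiman's theory, and the symmetry becomes the statement that the Hall–Littlewood Gram matrix is symmetric.

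The main obstacle is the identification of $\mathcal{P}_J|_I$ and the resulting symmetric-function identity, since it mixes the Procesi character with the upward-flow denominators in a nontrivial way. To handle it, I would first try to rewrite both sides as the same Hall inner product $\langle \cdot,\cdot\rangle_t$ of two complete-homogeneous-type symmetric functions, whose symmetry is then manifest.
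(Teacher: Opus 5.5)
The first step is correct and is the paper's reduction, following Hausel--Hitchin, Th.~6.9. $W^+_I\simeq T^+_I$ is closed, equivariant bundles on it are trivial, and $\chi_t(\Sym T^+_I)=m_I(t)\chi_t(\Sym\Base)$ because $\mu_I=m_I$ for very stable $I$. Hence $\chi_t(\mathcal{W}^\vee_I,\mathcal{P}_J)=\chi_t(\mathcal{P}_J|_I)\,m_I(t)\,\chi_t(\Sym\Base)$.

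On the other side, however, you dualize the wrong object. Since $\mathcal{P}_I$ is locally free, $\mathrm{Ext}^\bullet(\mathcal{P}^\vee_I,\mathcal{W}_J)=H^\bullet(W^+_J,\mathcal{P}_I|_{W^+_J})$. So the numerator is the character of $\mathcal{P}_I|_J$, not of $\mathcal{P}_I^\vee|_J$, and no $t\mapsto t^{-1}$ duality enters. The identity to prove is
\[
\chi_t(\mathcal{P}_J|_I)\,m_I(t)=\chi_t(\mathcal{P}_I|_J)\,m_J(t).
\]
Also, $S^\T$ is not finite for the parabolic family: it contains the curve $C$, and for instance every fixed ideal of $\Hilb^n T^*E$ is supported on it. This is harmless on $W^+_I$, which has the single fixed point $I$.

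The genuine gap is in your second and third steps. By \cref{de:nice-Procesque}, $\mathcal{P}_I=\mathcal{P}_\lambda(\boxtimes_i V_{p_i}^{\boxtimes\lambda_i})$ is twisted by the tautological bundles $V_p$ of the parabolic Higgs description. Your plan treats $\mathcal{P}_I$ as a partial symmetrization of the Procesi bundle alone, but the twists are exactly what makes the identity hold. Without them it fails already for $n=1$. In $S_{\Z/2}$, take $I=q_2$ a general point of $C$ and $J=q_1$ an outer fixed point, in the notation of \cref{table:fibers}, so $m_I=[2]_t$ and $m_J=1$.
\begin{itemize}
\item Untwisted, $\mathcal{P}_I=\mathcal{P}_J=\mathcal{O}$, and the two sides of the displayed identity are $[2]_t$ and $1$.
\item With the twists, $\mathcal{P}_I=V_{q_2}$ and $\mathcal{P}_J=V_{q_1}$, and the sides become $\chi_t(V_{q_1}|_{q_2})\,[2]_t=[2]_t$ and $\chi_t(V_{q_2}|_{q_1})\cdot 1=[2]_t$.
\end{itemize}

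For the same reason, reducing to a single point of $S$ (locally $\C^2$) and invoking symmetry of a Hall--Littlewood Gram matrix cannot work. By \eqref{eq:Procesque-fiber-general}, $\chi_t(\mathcal{P}_I|_J)$ is a sum over $A\in\Theta(\lambda,\mu)$. It couples the support points $p_i$ of $I$ with the support points $q_j$ of $J$ through the global data $V_{p_i}|_{q_j}$. Moreover, the Procesi fibres at different $q_j$ are coinvariant rings $R_{\mu_j}(t^{w_j})$ with different grading parameters.

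Your Hall-inner-product idea does suffice for $S=T^*E$, where all $V_p|_q$ are trivial and both sides equal $\sum_{A}\qnom{n}{A}_t$. For $e>1$ the missing input is the explicit fibre data, in three parts:
\begin{itemize}
\item One needs $V_{p_i}|_{q_j}=c_{ij}[b_{ij}]_{t^{w_j/b_{ij}}}$, so that \cref{cor:nice-renormalization} applies.
\item One needs $m_{q_j}(t)[b_{ij}]_{t^{w_j/b_{ij}}}=[eb_{ij}/w_j]_{t^{w_j/b_{ij}}}$. Together with the previous point, this turns $\chi_t(\mathcal{P}_I|_J)m_J(t)$ into $\sum_{A}\qnom{n}{A}_{t^e}\prod_{i,j}([eb_{ij}/w_j]_{t^{w_j/b_{ij}}})^{(a_{ij})}G_{a_{ij},c_{ij}}(t^{w_j/b_{ij}})$.
\item This sum is symmetric under $A\mapsto A^{\top}$ (exchanging $I$ and $J$) because one checks from \cref{table:fibers} that $w_j/b_{ij}$ and $c_{ij}$ are symmetric in $(i,j)$.
\end{itemize}
That last point is a case-by-case property of the tautological bundles, not a formal symmetric-function identity, and nothing in your plan produces it.
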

We compute these pairings explicitly.
Compared to the analogous check in~\cite{hausel2022very}, a new behaviour emerges: $\chi_t(\mathcal{W}^\vee_I,\mathcal{P}_J)$ cannot be expressed purely in terms of $m_I(t)$ and $m_J(t)$.

\begin{rmk}
The sheaves $\mathcal{W}_I$ are by construction \textit{BAA-branes}, that is flat unitary vector bundles supported on holomorphic Lagrangians.
In line with Kapustin--Witten~\cite{KW}, we expect the modified Procesi bundles $\mathcal{P}_I$ to be \textit{BBB-branes}, that is hyperholomorphic vector bundles on $\Hilb^n(S)$. 
This is supported by the recent work of Markman~\cite{markman2024rational}, where he proved hyperholomorphicity of closely related bundles on {\hk} varieties of $K3^{[n]}$ type.
\end{rmk}

The definition of modified Procesi bundles $\mathcal{P}_I$ can be naturally extended to \textit{any} $\T$-fixed ideal $I$, and not just a very stable one.
In this case, the mirror $\mathcal{W}_I$ is not known, but we propose how to construct a candidate (\cref{rem:dual-Procesi}); in particular, we predict that the mirror of the usual Procesi bundle $\mathcal{P}_n$ is given by
\[
    \mathcal{P}_n^\vee = \rho_*\mathcal{O}_{\kappa(\C)^n\times_{S^n}\IHilb^n S},\qquad \rho : \IHilb^n S \to \Hilb^n S.
\]
To our knowledge, this is the first precise conjecture addressing question~\labelcref{ques:MS} in the literature.
Based on explicit computations of fibers of $\mathcal{P}_I$, we also conjecture a certain relation between the sheaves $\mathcal{W}_I$ and the core components $\Core_F$ (\cref{conj:reckless}).
While the actual statement is involved, and recklessly formulated for general integrable systems, it should in particular imply the inequalities
\[
    \mu_I(t) \leq \chi_t(\theta_*\mathcal{W}_I|_0) = \chi_t(\mathcal{P}_I|_{\kappa(0)}) \leq m_I(t)
\]
for all $t>1$, and $I$ wobbly.
This provides a conjectural answer to question~\labelcref{ques:difference}.

\subsection{Organization}
In \cref{sec:core-lag} we introduce the notion of pre-integrable systems, recall stratifications arising from \BB theory, and prove some basic results about very stable points.
In \cref{sec:mult} we introduce virtual and genuine equivariant multiplicities in great generality, and study their properties.
We restrict our attention to Hilbert schemes in \cref{sec:mult-for-Hilb}; there, we classify very stable components, and compute non-equivariant multiplicities for all components.
We also explicitly compute the flow order on fixed components.
This is leveraged in \cref{sec:coh-filtr}, where we show that multiplicity is rarely compatible with this order.
On top of that, we point out some curious numerical coincidences with perverse filtration. 
\cref{sec:ex-Hilb2} is dedicated to the study of genuine equivariant multiplicities for Hilbert schemes of two points, which we compute essentially by hand and/or computer.
Finally, in \cref{sec:MS} we recall the basics of mirror symmetry/Dolbeault Langlands of Donagi--Pantev, extract the prediction for mirror duals of upward flows on parabolic Hilbert schemes, and check it by computing equivariant pairings.
We conclude with a speculation on how to extend this duality beyond very stable points, and eventually beyond Hilbert schemes.

\subsection*{Acknowledgments}
It is our pleasure to thank Tamás Hausel, Tony Pantev, Sam Raskin and Junliang Shen for illuminating discussions.
A.M. would like to thank the organizers of QAMGAST workshop at SIMIS, where this work was presented; the time spent preparing for the talk heavily influenced the shape of \cref{sec:MS}.
The authors are also grateful to Max Planck Institute for Mathematics in Bonn and Simons Center for Geometry and Physics for their hospitality and financial support.

\section{Core Lagrangians}\label{sec:core-lag}
In this section, we set up some useful axiomatics describing the spaces we want to consider in the sequel.
\subsection{Pre-integrable systems}\label{ssec:PIS}
Let ${\T} \coloneqq \mathbb{G}_{m}$.
We denote a $1$-dimensional vector space with ${\T}$-action of weight $i$ by $\mathbb{C}_i$.
Given a $\T$-vector space $V$, we denote its Hilbert series by $\chi_t(V)$:
\[  
    \chi_t(V) \coloneqq \sum_{i\in \mathbb{Z}} \dim \Hom_\T(\mathbb{C}_i, V) t^i.
\]
Writing $\Sym V \coloneqq \bigoplus_{k\geq 0} \Sym^k V$ for a positively graded $V$, we have 
\[
    \chi_t(\Sym \mathbb{C}_i) = \frac{1}{1-t^i},\qquad \chi_t(\Sym (V\oplus W)) = \chi_t(\Sym V)\chi_t(\Sym W).
\]
We will often use quantum numbers to express such series:
\[
    [n]_t \coloneqq \frac{1-t^n}{1-t} = 1+t+\ldots + t^{n-1}.
\]
For instance, $\chi_t(\Sym \C_i) = [i]_t \chi_t(\Sym \C_1)$ for $i>0$.

The following definition is not standard.
It would perhaps be more honest to carry around the adjectives ``proper, graded'', but we sacrifice them on the altar of brevity.
\begin{de}\label{de: pre-integrable system}
    A \textit{(2n-dimensional) pre-integrable system} of weight $k>0$ is a map $\theta: \Hitch\to \Base$, where:
    \begin{enumerate}
        \item $\Hitch$ is a normal connected quasi-projective $2n$-dimensional complex algebraic variety, equipped with a (holomorphic) symplectic form\footnote{Defined and non-degenerate on the smooth locus of $\Hitch$.} $\omega$, and an action ${\T}\curvearrowright \Hitch$, which scales $\omega$ by a positive weight $k$;
        \item $\Base = \bigoplus_{i=1}^n \mathbb{C}_{e_i}$ is an $n$-dimensional $\T$-vector space, where $0<e_1\leq\ldots\leq e_n$;
        \item The map $\theta$ is surjective, proper and ${\T}$-equivariant.
    \end{enumerate}
    We call $\Core \coloneqq \theta^{-1}(0)\subset \Hitch$ the \textit{core} of $\theta$.
    A pre-integrable system $\theta$ is \textit{smooth} if $\Hitch$ is smooth, and \textit{connected} if each fiber $\theta^{-1}(a)$, $a\in\Base$ is.
    We say that $\theta$ is an \textit{integrable system} if the symplectic form vanishes along the smooth locus of each fiber $\theta^{-1}(a)$.
\end{de}

It follows from the definitions that any 2-dimensional pre-integrable system is an integrable system.
While the central fiber of a pre-integrable system is always Lagrangian (see \cref{prop:flows-Lagr}), it is not clear to the authors whether any pre-integrable system is automatically integrable.
While all examples we are interested in are integrable, we will not be interested in this condition until \cref{sec:MS}.
Note also that any integrable system satisfies the following property, see e.g.~\cite[Lm.~1.5]{huybrechts2022lagrangian}: 
\begin{equation*}\tag{AB}\label{eq:ab-cond}
    \begin{aligned}
        &\text{Generically in $\Base$, $\theta$ is a fibration in abelian varieties.}
    \end{aligned}
\end{equation*}

\begin{ex}\label{ex:2d-int}
    We have two families of $2$-dimensional integrable systems:
    \begin{enumerate}
        \item \label{parabolic2dimSpaces} (Parabolic family) Let $E$ be an elliptic curve with a faithful action of a finite group $\Gamma = \mathbb{Z}/\e$, $\e\in\{1,2,3,4,6\}$ by automorphisms.
        The quotient $(T^*E)/\Gamma$ has orbifold singularities and so admits a crepant resolution $S_\Gamma$. Using the isomorphism $T^*E \simeq E\times \mathbb{C}$, we obtain a map
        \[
            \theta: S_\Gamma \to T^*E/\Gamma \to \mathbb{C}/\Gamma \simeq \mathbb{C}.
        \] 
        The weight-$1$ ${\T}$-action on $T^*E$ by scaling the cotangent direction induces a ${\T}$-action on $S_\Gamma$. The latter admits a symplectic form of ${\T}$-weight $1$. Equipping $\C$ with weight-$\e$ action, the map $\theta: S_\Gamma\to \C$ becomes equivariant. We have $\theta^{-1}(v)\simeq E$ for each $v\neq 0$, and $\theta^{-1}(0)$ is a collection of $(-2)$-projective lines, intersecting transversally (unless $\e=1$, when $\theta^{-1}(0)=E$).
        \item \label{Painleve2dimSpaces} (Painlevé family) The map $S_\Gamma\to \mathbb{C}$ can be naturally compactified:
        \[
            \overline{S_\Gamma} \coloneqq (S_\Gamma \times\mathbb{C}\setminus \theta^{-1}(0)\times\{0\})/\mathbb{G}_{m} \to \mathbb{P}^1; 
        \]
        here $\mathbb{G}_{m}$ acts on $S_\Gamma$ via ${\T}$, and on $\mathbb{C}$ with weight $\e$.
        The complement $\overline{S_\Gamma}\setminus \theta^{-1}(0)\to \mathbb{P}^1\setminus\{0\}$ inherits a ${\T}$-action, and can be shown to possess a symplectic form of weight $\e-1$.
        One denotes this integrable system by $S^{\mathrm{I}}$, resp. $S^{\mathrm{II}}$, $S^{\mathrm{IV}}$, $S^{\mathrm{VI}}$ for $\e=6$, resp. $4$, $3$, $2$.
        The singular fiber of $S^{\mathrm{I}}$, resp. $S^{\mathrm{II}}$, $S^{\mathrm{IV}}$ is a $\mathbb{P}^1$ with a cuspidal point, resp. two $\mathbb{P}^1$'s meeting at a double point, three $\mathbb{P}^1$'s meeting transversally at a single point.
    \end{enumerate}
    One can check that $S_{\mathbb{Z}/2} \simeq S^{\mathrm{VI}}$.
    The combinatorics of these integrable systems are summarized in~\cref{table:surfaces}.
    This is the full list of smooth connected $2$-dimensional integrable systems~\cite[Prop.~3.15]{SZZ}.
    \begin{table}[!htbp]
        \[
        \begin{array}[c]{|c||c|c||c|c|c||c|c|c|}
            \hline
            \e & \textup{Equation} & \textup{$\Gamma$-action} & \textup{Parab.} & \textup{Core} & k & \textup{Painlevé} & \textup{Core} & k\\ \hline 
            1 & \textup{any} & - & T^*E & E & 1 & T^*E & E & 1 \\ \hline 
            2 & \textup{any} & p\mapsto -p & S_{\mathbb{Z}/2} & 
            \tikz[thick,xscale=.3,yscale=.3,baseline]{
                \draw (0,0) -- (3,0);
                \draw (0.3,-0.5) -- (0.3,1.5);
                \draw (1.1,-0.5) -- (1.1,1.5);
                \draw (1.9,-0.5) -- (1.9,1.5);
                \draw (2.7,-0.5) -- (2.7,1.5);
                \draw [draw opacity=0] (0,1.7) -- (3,-0.7);
		    }
            & 1 & S^{\mathrm{VI}} & 
            \tikz[thick,xscale=.3,yscale=.3,baseline]{
                \draw (0,0) -- (3,0);
                \draw (0.3,-0.5) -- (0.3,1.5);
                \draw (1.1,-0.5) -- (1.1,1.5);
                \draw (1.9,-0.5) -- (1.9,1.5);
                \draw (2.7,-0.5) -- (2.7,1.5);
                \draw [draw opacity=0] (0,1.7) -- (3,-0.7);
		    } & 1
             \\ \hline 
            3 & y^2 + y = x^3 & (x,y)\mapsto (\zeta_3 x,y) & S_{\mathbb{Z}/3} & 
            \tikz[thick,xscale=.3,yscale=.3,baseline]{
                \draw (0,0) -- (3,0);
                \draw (0.5,-0.5) -- (0.5,1.5);
                \draw (1.5,-0.5) -- (1.5,1.5);
                \draw (2.5,-0.5) -- (2.5,1.5);
                \draw (0.15,1) -- (0.85,1);
                \draw (1.15,1) -- (1.85,1);
                \draw (2.15,1) -- (2.85,1);
                \draw [draw opacity=0] (0,1.7) -- (3,-0.7);
		    }
            & 1 & S^{\mathrm{IV}} & 
            \tikz[thick,xscale=.3,yscale=.3,baseline]{
                \draw (0,0.5) -- (3,0.5);
                \draw (0.6,-0.5) -- (2.4,1.5);
                \draw (2.4,-0.5) -- (0.6,1.5);
                \draw [draw opacity=0] (0,1.7) -- (3,-0.7);
		    } & 2
             \\ \hline 
            4 & y^2 = x^3+x & (x,y)\mapsto (-x,iy) & S_{\mathbb{Z}/4} & 
            \tikz[thick,xscale=.3,yscale=.3,baseline]{
                \draw (0,0) -- (3,0);
                \draw (1,-0.5) -- (1,1.5);
                \draw (0.3,0.2) -- (0.3,1.5);
                \draw (0,0.5) -- (1.25,0.5);
                \draw (0,1.2) -- (0.7,1.2);
                \draw (1.5,-0.5) -- (1.5,1.5);
                \draw (2,-0.5) -- (2,1.5);
                \draw (2.7,0.2) -- (2.7,1.5);
                \draw (1.75,0.5) -- (3,0.5);
                \draw (2.3,1.2) -- (3,1.2);
                \draw [draw opacity=0] (0,1.7) -- (3,-0.7);
		    }
             & 1 & S^{\mathrm{II}} & 
             \tikz[thick,xscale=.3,yscale=.3,baseline]{
                \draw (0,-0.5) .. controls (0.9,0.8) and (2.1,0.8) .. (3,-0.5);
                \draw (0,1.5) .. controls (0.9,0.2) and (2.1,0.2) .. (3,1.5);
                \draw [draw opacity=0] (0,1.7) -- (3,-0.7);
		    } & 3
             \\ \hline 
            6 & y^2 + y = x^3 & (x,y)\mapsto -(\zeta_3 x,y) & S_{\mathbb{Z}/6} & 
            \tikz[thick,xscale=.3,yscale=.3,baseline]{
                \draw (0,0) -- (3,0);
                \draw (0.35,-0.5) -- (0.35,1.5);
                \draw (0,0.9) -- (0.7,0.9);
                \draw (0.9,-0.5) -- (0.9,1.5);
                \draw (1.5,-0.5) -- (1.5,1.5);
                \draw (2,0.2) -- (2,1.5);
                \draw (2.5,0.2) -- (2.5,1.5);
                \draw (1.3,0.5) -- (2.2,0.5);
                \draw (1.8,1.2) -- (2.6,1.2);
                \draw (2.3,0.5) -- (3,0.5);
                \draw [draw opacity=0] (0,1.7) -- (3,-0.7);
		    }
             & 1 & S^{\mathrm{I}} & 
             \tikz[thick,xscale=.3,yscale=.3,baseline]{
                \draw (0.5,0.5) .. controls (1.5,0.3) .. (2.5,-0.5);
                \draw (0.5,0.5) .. controls (1.5,0.7) .. (2.5,1.5);
                \draw [draw opacity=0] (0,1.7) -- (3,-0.7);
		    } & 5
              \\ \hline 
        \end{array}
        \]
        \caption{$2$-dimensional integrable systems}
        \label{table:surfaces}
    \end{table}
\end{ex}

\begin{ex}\label{ex:Hilb}
    Let $\theta: S\to \C$ be a smooth two-dimensional integrable system as in \cref{ex:2d-int}, and consider the Hilbert scheme of $n$ points $\Hilb^n S$.
    It is symplectic by a theorem of Beauville~\cite[Th.~1.17]{Nak99}, with symplectic form having the same positive ${\T}$-weight as the one on $S$. 
    Furthermore, the map $\theta$ induces a proper surjection $\Hilb^n S\to \Hilb^n\mathbb{C} \simeq \mathbb{C}^n$.
    Since ${\T}$ acts on $\mathbb{C}^n$ with weights $\e,2\e,\ldots,n\e$ (where $\e$ is the $\T$-weight of the codomain $\C$ of $\theta$),
    this is a smooth $2n$-dimensional pre-integrable system.
    All fibers have correct dimension, and moreover Lagrangian, since the symplectic form on $\Hilb^nS$ is generically induced by the form on $S^n$. 
    Hence $\Hilb^n S\to \mathbb{C}^n$ is integrable.
\end{ex}

\begin{ex}
    Let $C$ be a smooth projective curve of genus $g>1$, and $K_C$ the canonical line bundle on $C$.
    Consider the coarse moduli space $\Higgs^{r,d}(C)$ of semistable Higgs bundles $(E,\Phi)$, where $E$ is a vector bundle of rank $r$ and degree $d$ on $C$, and $\Phi\in \Hom(E,E\otimes K_C)$.
    It has the natural $\T$-action by scaling the Higgs field $\Phi$.
    The pre-integrable system structure is given by the Hitchin map $\theta: \Higgs^{r,d}(C) \to \bigoplus_{i=1}^r H^0(C,K_C^{\otimes i})$, which sends $(E,\Phi)$ to the coefficients of the characteristic polynomial of $\Phi$. 
    This is an integrable system of weight $1$ and dimension $2(r^2(g-1) + 1)$, and it is smooth when $\gcd(r,d)=1$.

    When $g=0$ or $1$, one can still construct well-behaved moduli spaces by fixing a simple divisor $D\subset C$, considering Higgs fields valued in $K_C(D)$, and fixing a flag of vector spaces on $E_p$ compatible with $\Phi$ at each point $p\in D$.
    These are called \textit{parabolic} Higgs bundles.
    In particular, Hilbert schemes on surfaces from the parabolic family \eqref{parabolic2dimSpaces} can be realized as moduli of parabolic Higgs bundles~\cite{groechenig2014hilbert}.
    We will return to this point in \cref{sec:MS}.
\end{ex}

\begin{rmk}
    It is expected that Hilbert schemes of points on Painlevé systems are isomorphic to moduli of wildly ramified Higgs bundles, see~\cite[Conj.~11.3]{boalch2012simply}.
\end{rmk}

\subsection{Flows and Lagrangians}
Let us recall some notions of \BB theory, following~\cite[Sec.~2]{hausel2022very}.
From now on, we only consider \textit{smooth pre-integrable systems} $\theta: \Hitch\to \Base$ for simplicity; see \textit{loc.cit.} for the correct statements in the general case.

Since $\Hitch$ is smooth, its ${\T}$-fixed locus $\Hitch^{\T}$ is also smooth.
As ${\T}$ acts on $\Base$ with positive weights, we have $\Hitch^{\T}\subset \Core$, and so by properness of $\theta$ the ${\T}$-fixed locus $\Hitch^{\T}$ is projective.
For any $p\in\Hitch^{\T}$, the tangent space $T_p\Hitch$ has a natural ${\T}$-action. This allows us to decompose it:
\begin{equation*}
    T_p\Hitch = T_p^+ \oplus T_p\Hitch^{\T} \oplus T_p^-,
\end{equation*}
where $T_p^+$, resp. $T_p^-$ is the sum of ${\T}$-characters of positive, resp. negative weight.
We define (see~\cite[Sec.~1]{drinfeld2013algebraic} for details) the spaces of algebraic $\T$-equivariant maps
\begin{equation*}
    \Hitch^+ \coloneqq \mathsf{Maps}^{\T}(\mathbb{C}_{1}, \Hitch), \quad \Hitch^- \coloneqq \mathsf{Maps}^{\T}(\mathbb{C}_{-1}, \Hitch).
\end{equation*}
Note that $\Hitch^{\T} = \mathsf{Maps}^{\T}(\mathrm{pt}, \Hitch)$. The natural ${\T}$-equivariant maps
\[
    \begin{tikzcd}
        \mathbb{C}_{1}\ar[r,two heads,shift left=.5ex] & \mathrm{pt}\ar[l,hook',shift left=.5ex]\ar[r,hook,shift right=.5ex] & \mathbb{C}_{-1}\ar[l,two heads,shift right=.5ex]
    \end{tikzcd}
\]
give rise to morphisms
\[
    \begin{tikzcd}
        \Hitch^+\ar[r,two heads,shift right=.5ex,"\lim\limits_{t\to 0}"'] & \Hitch^{\T}\ar[l,hook',shift right=.5ex]\ar[r,hook,shift left=.5ex] & \Hitch^-.\ar[l,two heads,shift left=.5ex,"\lim\limits_{t\to \infty}"]
    \end{tikzcd}
\]

Let us denote $\Fix(\theta) \coloneqq \pi_0(\Hitch^{\T})$; we write $\Fix = \Fix(\theta)$ when $\theta$ is clear from the context.
For any $p\in \Hitch^{\T}$, denote the corresponding connected component by $F_p\in \Fix$.
We call
\[
    W^+_p \coloneqq \sideset{}{^{-1}}\lim\limits_{t\to 0}(p),\qquad W^-_p \coloneqq \sideset{}{^{-1}}\lim\limits_{t\to \infty}(p)
\]
the \textit{upward flow}, resp. \textit{downward flow} of $p$.
Similarly, for each fixed locus component $F\in \Fix$, we call
\[
    W^+_F \coloneqq \sideset{}{^{-1}}\lim\limits_{t\to 0}(F),\qquad W^-_F \coloneqq \sideset{}{^{-1}}\lim\limits_{t\to \infty}(F)
\]
the \textit{attracting cell}, resp. \textit{repelling cell} of $F$.
The projections $W^+_F\to F$, $W^-_F\to F$ are affine fibrations.
In particular, each $W^+_p$, $W^-_p$ is an affine ${\T}$-space, and we have isomorphisms $W^+_p\simeq T^+_p$, $W^-_p\simeq T^-_p$.

It is known that upward/downward flows, as well as attracting/repelling cells, are locally closed smooth subvarieties of $\Hitch$; with inclusions induced by the evaluation maps
\[
    \mathsf{Maps}^{\T}(\mathbb{C}_{\pm 1}, \Hitch)\to \Hitch, \qquad f\mapsto f(1).
\]
As $\T$ acts on $\Base$ with positive weights, all repelling cells $W^-_F$ belong to $\Core$.
We have \BB decomposition of $\Hitch$ into attracting cells, and of the core $\Core$ into repelling cells:
\begin{equation*}
    \Hitch = \bigsqcup_{F\in \Fix} W^+_F,\qquad \Core^{\mathrm{red}} = \bigsqcup_{F\in \Fix} W^-_F.
\end{equation*}
We write $\Core^{\mathrm{red}}$ in the statement above because $\Core$ is typically not reduced as a scheme; this will become important in \cref{sec:mult}.
Until then, we write $\Core = \Core^{\mathrm{red}}$ by abuse of notation.

Recall that ${\T}$ scales the symplectic form on $\Hitch$ with weight $k$.
We say that $F\in\Fix$ satisfies the \textit{weight gap condition} if the following holds:
\begin{equation*}\tag{WG}\label{eq:weight-gap}
    \begin{aligned}
        &\text{For $p\in F$, weights below $k$ do not occur in $T^+_p$,}
    \end{aligned}
\end{equation*}
and denote the set of all such components by $\Fix^{\WG}\subset \Fix$. 
When $k=1$, we have $\Fix^{\WG}= \Fix$. 
\begin{prop}\label{prop:flows-Lagr}
    Let $F\in \Fix$, and $p\in F$ a point.
    The subvariety $W^-_{F}$, resp. $W^+_p$ is isotropic, resp. coisotropic in $\Hitch$.
    Moreover, they are both Lagrangian iff $F\in \Fix^{\WG}$.
\end{prop}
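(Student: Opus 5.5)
The plan is to reduce everything to linear algebra on $T_p\Hitch$, using that $\T$ scales $\omega$ with weight $k$. Since $\omega$ has weight $k$, the pairing $\omega_p: T_p\Hitch\otimes T_p\Hitch\to \C$ is $\T$-equivariant up to the character $\C_k$. Hence, for weight spaces $V_a, V_b\subset T_p\Hitch$, we have $\omega_p(V_a,V_b)=0$ unless $a+b=k$, and $\omega_p$ induces a perfect pairing $V_a\otimes V_{k-a}\to\C$ (by non-degeneracy). With the paper's convention, $\lambda\cdot\omega = \lambda^k\omega$, so the relevant condition is $a+b=k$.

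\textbf{Step 1: tangent spaces at $p$.} Here $T_pW^+_p=T^+_p=\bigoplus_{a>0}V_a$ and $T_pW^-_F = T_pF\oplus T^-_p=\bigoplus_{a\le 0}V_a$. Since $k>0$, two weights $\le 0$ never sum to $k$, so $T_pW^-_F$ is isotropic. The $\omega$-orthogonal of $T^+_p$ is $\bigoplus_{a:\,k-a\le 0}V_a=\bigoplus_{a\ge k}V_a$, which lies inside $T^+_p$; so $T^+_p$ is coisotropic. Moreover, equality $(T^+_p)^\perp=T^+_p$ holds iff $V_a=0$ for $0<a<k$, which is exactly \eqref{eq:weight-gap}. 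Dually, $T_pW^-_F$ is Lagrangian iff its dimension is $n$, i.e.\ iff $\dim\bigoplus_{a\le0}V_a=\dim\bigoplus_{a\ge k}V_a$ (using the perfect pairing $V_a\leftrightarrow V_{k-a}$), which again holds iff no weights lie in $(0,k)$. Note that this also uses that the weights of $V_a$, $a\le 0$, pair with weights $\ge k$.

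\textbf{Step 2: propagate from $p$ to all points of the flows.} This is the main obstacle, since isotropy and coisotropy are pointwise conditions along the whole subvariety, not just at the fixed point. For $q\in W^-_F$ we have $\lim_{t\to\infty}t\cdot q\in F$. I would consider $f(q)=\omega_q(u,v)$ for tangent vectors $u,v\in T_qW^-_F$. Translating by $t$ gives $\omega_{tq}(t_*u,t_*v)=t^k\omega_q(u,v)$. Rather than working with individual vectors, I would use the 2-form $\omega|_{W^-_F}$: it is a $\T$-eigenform of weight $k>0$ on the smooth variety $W^-_F$. Since $W^-_F\to F$ is a $\T$-equivariant affine fibration with fibers having only negative weights, the $\T$-linearisation near $F$ (equivariantly, $W^-_F$ is locally $F\times T^-$) shows that regular functions and forms of weight $k>0$ must vanish: all coordinates have weights $\le 0$, so their differentials and products have weights $\le 0$. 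Hence $\omega|_{W^-_F}=0$.

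For $W^+_p\simeq T^+_p$, coisotropy is the statement that $(T_qW^+_p)^\perp\subset T_qW^+_p$ for all $q$. This condition is closed, and $\T$-invariant in $q$. Because every $q\in W^+_p$ has $p$ in the closure of its orbit, the condition is also open: the locus where $\operatorname{rank}(\omega|_{T W^+_p})$ is minimal, equal to $2\dim W^+_p-2n$, is open. By semicontinuity, the rank of $\omega|_{TW^+_p}$ at $q$ is at least its rank at $p$, and it is constant along orbits; since the maximal-rank locus is open, nonempty, $\T$-stable and contains orbits accumulating to $p$, the rank is constant and equal to the rank at $p$. This equals $2\dim T^+_p-2n$, which is exactly coisotropy. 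Lagrangianity then follows by dimension count from Step 1, since $\dim W^+_p=\dim T^+_p$ and $\dim W^-_F=\dim T_pW^-_F$.
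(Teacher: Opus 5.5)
Your Step 1 and the $W^-_F$ half of Step 2 are fine. The vanishing argument there (functions and $2$-forms on $W^-_F$ over an affine open of $F$ have weights $\le 0$, while $\omega$ has weight $k>0$) is essentially the homogeneity argument the paper delegates to \cite[Prop.~2.10]{hausel2022very}.

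The gap is in propagating coisotropy along $W^+_p$. Write $d=\dim W^+_p$ and $r_q=\operatorname{rank}(\omega|_{T_qW^+_p})$. The bound $r_q\ge 2d-2n$ holds at every $q$ automatically. Coisotropy at $q$ is the \emph{upper} bound $r_q\le 2d-2n$. Lower semicontinuity, $\T$-invariance and $p\in\overline{\T q}$ give only $r_q\ge r_p$, which is the inequality you write down. Since $r_p=2d-2n$, that is just the trivial bound again. The locus $\{r=2d-2n\}$ is closed, not open. Your final sentence is a non sequitur: an open $\T$-stable set whose orbits accumulate at $p$ need not contain $p$ (take $W^+_p\setminus\{p\}$). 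Arguments of this topological kind can only carry a closed condition from $q$ down to $p$, never from $p$ up to $q$. So coisotropy of $W^+_p$ away from $p$ is unproved, and with it your Lagrangian claim for $W^+_p$.

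The missing input is the same homogeneity you used for $W^-_F$. Choose homogeneous linear coordinates $y_i$ on $W^+_p\simeq T^+_p$, of weights $a_i>0$. Write $\omega|_{W^+_p}=\sum_{i<j}f_{ij}\,dy_i\wedge dy_j$. Since $\omega$ has weight $k$ and regular functions on $W^+_p$ have weights $\ge 0$, $f_{ij}\neq 0$ forces $a_i+a_j\le k$. So no $dy_j$ with $a_j\ge k$ occurs.

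Hence at every $q\in W^+_p$ the vectors $\partial/\partial y_j$ with $a_j\ge k$ lie in $\ker(\omega|_{T_qW^+_p})=T_qW^+_p\cap (T_qW^+_p)^\perp$. By your Step 1 pairing, their number is $\dim\bigoplus_{a\ge k}V_a=\dim\bigoplus_{a\le 0}V_a=2n-d=\dim (T_qW^+_p)^\perp$. Therefore $(T_qW^+_p)^\perp\subset T_qW^+_p$ at every $q$.

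Under \eqref{eq:weight-gap} all $a_i\ge k$, so $\omega|_{W^+_p}=0$ outright. Together with $d=n$, this gives the Lagrangian statement for $W^+_p$ directly.
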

\begin{proof}
    Let $v,w\in T_p\Hitch$ be homogeneous vectors of weight $i$, $j$ respectively.
    It is easy to see that $\omega(v,w) = 0$ unless $i+j=k$, so that $T_p(W^-_{F}) = T^-_p\oplus T_p F$ is isotropic and $T^+_p$ is coisotropic in $T_p\Hitch$.
    The space $T_p(W_F^-)$ has weights $0,-1,\dots$, which are $\omega$-dual to weights $k,k+1,\dots$. Thus $F\in \Fix^{\WG}$ is equivalent to
    $T_p(W_F^-)$ being half-dimensional, hence Lagrangian. 
    For the non-fixed points, one concludes as in \cite[Prop.~2.10]{hausel2022very}.
\end{proof}

As before, denote $2n\coloneqq\dim \Hitch$.
By the above, $\dim W^-_F = n$ if and only if $F\in \Fix^{\WG}$.
Let us denote $\Core_F\coloneqq \overline{W^-_F}$.
Since the dimensions of irreducible components of $\Core$ are bounded below by $\dim \Hitch - \dim \Base = n$, a simple dimension count shows the following:
\begin{prop}\label{prop:irr-equal-pi0}
$\Irr(\Core)=\{\Core_F \mid F\in\Fix^{\WG} \}.$ 
In particular, $\Core$ is equidimensional of dimension $n$.\qed
\end{prop}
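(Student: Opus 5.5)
The plan is a dimension count based on the \BB decomposition of the core. First, every irreducible component of $\Core$ has dimension at least $n$. Since $\theta$ is surjective and proper with $\Hitch$ irreducible of dimension $2n$ and $\Base$ of dimension $n$, the fibre $\Core=\theta^{-1}(0)$ is cut out by $n$ equations (the pullbacks of the coordinates of $\Base$). By Krull's principal ideal theorem, each irreducible component of $\Core$ therefore has dimension $\geq 2n-n=n$. Note that $\Core$ is nonempty, because $\Hitch^{\T}\subset\Core$ and $\Hitch^{\T}\neq\emptyset$: the limit $\lim_{t\to0}$ of any point exists by properness.

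Second, every cell has dimension at most $n$. We use the decomposition $\Core^{\mathrm{red}}=\bigsqcup_{F\in\Fix}W^-_F$ into finitely many locally closed irreducible pieces. Each $W^-_F$ is an affine bundle over the connected smooth $F$, hence irreducible. By \cref{prop:flows-Lagr}, $W^-_F$ is isotropic, so $\dim W^-_F\leq n$, with equality exactly when $F\in\Fix^{\WG}$. Consequently
\[
\Core=\bigcup_{F\in\Fix}\overline{W^-_F}=\bigcup_{F\in\Fix}\Core_F,
\]
and each $\Core_F$ is irreducible of dimension $\dim W^-_F\leq n$.

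Third, we identify the components. Let $Z$ be an irreducible component of $\Core$. Since $Z$ lies in the finite union of the closed sets $\Core_F$, it lies in some $\Core_F$. By maximality $Z=\Core_F$, and then $n\leq\dim Z=\dim W^-_F\leq n$ forces $F\in\Fix^{\WG}$.

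Conversely, suppose $F\in\Fix^{\WG}$. Then $\Core_F$ is irreducible of dimension $n$, so it lies in some component $Z=\Core_{F'}$. Comparing dimensions gives $\Core_F=\Core_{F'}$, so $\Core_F$ is a component. Distinct $F$ yield distinct $\Core_F$: the dense open cell $W^-_F$ of $\Core_F$ is disjoint from $W^-_{F'}$, so an equality $\Core_F=\Core_{F'}$ would force their generic points to lie in both cells. This gives the bijection, and equidimensionality follows from the first step.

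The only mildly delicate point is the lower bound $\dim\geq n$ for components of a fibre. It needs $\Hitch$ irreducible, which holds since it is normal and connected, and it is applied to $\theta^{-1}(0)$ as the zero locus of $n$ functions. Everything else is formal given \cref{prop:flows-Lagr}.
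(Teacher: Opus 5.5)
Your proposal is correct and is essentially the paper's argument: the paper compresses it to one line (components of $\Core$ have dimension at least $\dim\Hitch-\dim\Base=n$, while the repelling cells $W^-_F$ are isotropic of dimension at most $n$, with equality exactly for $F\in\Fix^{\WG}$). You just write out the details the paper leaves implicit, namely irreducibility of $\Hitch$, the maximality argument, and injectivity of $F\mapsto\Core_F$.
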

\begin{cor}\label{cor:flatness}
    The map $\theta$ is flat.
\end{cor}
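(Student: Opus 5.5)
We will deduce flatness from miracle flatness: a morphism from a Cohen--Macaulay scheme to a regular scheme whose fibers all have dimension $\dim \Hitch - \dim \Base$ is flat. Since $\Hitch$ is smooth, it is Cohen--Macaulay. The target $\Base$ is an affine space, hence regular. So everything reduces to showing that every nonempty fiber $\theta^{-1}(a)$ has pure dimension $n = \dim\Hitch - \dim\Base$.

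First, the lower bound. Every irreducible component of any fiber $\theta^{-1}(a)$ has dimension at least $2n - n = n$; this is just upper semicontinuity of fiber dimension, and it is the same bound already used before \cref{prop:irr-equal-pi0}.

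Second, the upper bound at the central fiber. By \cref{prop:irr-equal-pi0}, the core $\Core = \theta^{-1}(0)$ is equidimensional of dimension $n$.

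The main point is to propagate the upper bound from $0$ to an arbitrary $a \in \Base$, and we will use the $\T$-action to do this.
\begin{enumerate}
    \item Consider the locus $Z = \{x \in \Hitch : \dim_x \theta^{-1}(\theta(x)) > n\}$. By upper semicontinuity of the local fiber dimension (Chevalley), $Z$ is closed in $\Hitch$.
    \item $Z$ is $\T$-stable, because $\theta$ is $\T$-equivariant, so $\T$ maps fibers isomorphically onto fibers.
    \item Since $\theta$ is proper, $\theta(Z)$ is a closed $\T$-stable subset of $\Base$.
    \item Because $\T$ acts on $\Base$ with strictly positive weights $e_i$, every $\T$-orbit closure in $\Base$ contains $0$. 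Hence if $\theta(Z)$ is nonempty, then $0 \in \theta(Z)$.
    \item If $0 \in \theta(Z)$, some point of $\Core$ lies on a component of $\Core$ of dimension greater than $n$. This contradicts \cref{prop:irr-equal-pi0}, so $Z$ is empty.
\end{enumerate}

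Combining the two bounds, all fibers are equidimensional of dimension $n$. Miracle flatness (e.g. Matsumura, Thm.~23.1, applied to the local rings $\mathcal{O}_{\Base,\theta(x)} \to \mathcal{O}_{\Hitch,x}$) then gives that $\theta$ is flat.

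The only delicate step is the semicontinuity in item 1: we need the set of points where the local fiber dimension exceeds $n$ to be closed in $\Hitch$, not merely constructible. This is the standard upper semicontinuity of $x \mapsto \dim_x \theta^{-1}(\theta(x))$ for morphisms of finite type (EGA IV, 13.1.3), so no real obstacle is expected. Note that surjectivity of $\theta$ is not even needed for flatness.
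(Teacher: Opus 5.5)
Your proposal is correct and follows the same route as the paper: equidimensionality of every fiber from the lower bound $\dim\geq n$, the equidimensionality of $\Core$ (\cref{prop:irr-equal-pi0}), and the fact that every $\T$-orbit closure in $\Base$ contains $0$, followed by miracle flatness. Your closed, $\T$-stable locus $Z$ (via Chevalley semicontinuity and properness of $\theta$) is a careful spelling-out of the paper's one-line remark that fiber dimensions cannot drop under specialization.
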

\begin{proof}
    The argument is essentially due to Ginzburg~\cite{ginzburg2001global}.
    Since the closure of every $\T$-orbit in $\Base$ contains zero, and the dimensions of irreducible components of fibers cannot decrease under restriction to special points, the fiber $\theta^{-1}(a)$ is equidimensional of dimension $n$ for every $a\in\Base$.
    We conclude by miracle flatness. 
\end{proof}

\subsection{Products and finite covers}\label{subs:products-and-covers}
Let $\theta_1:\Hitch_1\to\Base_1$, $\theta_2:\Hitch_2\to\Base_2$ be two pre-integrable systems of weight $k$.
Their product $\theta = \theta_1\times \theta_2:\Hitch_1\times \Hitch_2 \to \Base_1\times \Base_2$, equipped with the diagonal $\T$-action, is clearly a pre-integrable system of the same weight.
Furthermore, we have $(\Hitch_1\times \Hitch_2)^\T = \Hitch_1^\T \times \Hitch_2^\T$, $\Fix(\theta) = \Fix(\theta_1)\times \Fix(\theta_2)$, $\Core_{F_1\times F_2} = \Core_{F_1}\times \Core_{F_2}$ for any $F_i\in \Fix(\theta_i)$, and $F_1\times F_2$ satisfies~\eqref{eq:weight-gap} if and only if both $F_1$ and $F_2$ do.

Similarly, let $\theta:\Hitch\to\Base$ be an pre-integrable system, $\Base'$ another $n$-dimensional $\T$-vector space, and $\pi:\Base\to \Base'$ a finite surjective $\T$-equivariant map.
The latter implies that $\Base'$ has positive $\T$-weights, and so $\theta' = \pi\circ\theta$ is a pre-integrable system.
Since set-theoretically $\pi^{-1}(0) = 0$, passing from $\theta$ to $\theta'$ does not change $\Hitch^\T$, so that $\Fix(\theta) = \Fix(\theta')$.

\subsection{Flow order}\label{subs:Flow order}
Equip $\mathbb{P}^1 = \C \cup \{\infty\}$ with the $\T$-action extending the standard weight-$1$ action on $\C$, and call a finite $\T$-equivariant map $f:\mathbb{P}^1\to\Hitch$ a \textit{$\T$-curve}.
Given $F,F'\in \Fix$, we write $F \preceq^+ F'$ if $\overline{W^+_F} \cap F' \neq \varnothing$, and $F \preceq^- F'$ if $F\cap \overline{W^-_{F'}} \neq \varnothing$.

\begin{lm}\label{lm:adherence-spreads}
    Let $p\in \overline{W^+_F}$.
    Then $\lim\limits_{t\to 0} (p)\in \overline{W^+_F}$.
    Moreover, if $p\in F'$ for $F'\neq F$, then $\overline{W^+_F}\cap W^-_p\neq \{ p\}$.
\end{lm}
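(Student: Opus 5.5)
The first claim follows from $\T$-invariance of the closure together with properness. Since $W^+_F$ is $\T$-stable, so is $\overline{W^+_F}$. The orbit map $\T\to\Hitch$, $t\mapsto t\cdot p$, lands in $\overline{W^+_F}$. Because $\theta$ is proper and $\T$-equivariant, and $\T$ acts on $\Base$ with positive weights, $\lim_{t\to0}\theta(t\cdot p)=0$ exists. By the valuative criterion of properness, the orbit map then extends to $\C_1\to\Hitch$, so $\lim_{t\to 0}(t\cdot p)$ exists in $\Hitch$. This limit lies in the closure of the orbit, hence in the closed set $\overline{W^+_F}$.

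For the second claim, let $p\in F'\cap\overline{W^+_F}$ with $F'\neq F$. I will choose a curve in $\overline{W^+_F}$ through $p$ and push it to its $\T\to\infty$ limit. Since $p\in\overline{W^+_F}\setminus W^+_F$ (because $p\in W^+_{F'}$, which is disjoint from $W^+_F$), there is an irreducible curve germ $\gamma:(\Delta,0)\to\overline{W^+_F}$ with $\gamma(0)=p$ and $\gamma(s)\in W^+_F$ for $s\neq 0$. Consider the $\T$-stable surface-like family $Z=\overline{\T\cdot\gamma(\Delta^*)}\subset\overline{W^+_F}$, which is irreducible of dimension $\le 2$. Working equivariantly near $p$ is cleaner: by the local linearization of the $\T$-action at the fixed point $p$ (Sumihiro, plus smoothness of $\Hitch$), an étale/analytic $\T$-equivariant chart identifies a neighbourhood of $p$ with $T_p\Hitch=T^+_p\oplus T_pF'\oplus T^-_p$. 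In this chart, $W^-_p$ corresponds to $T^-_p$ and $W^+_{F'}$ to the points with vanishing $T^-_p$-component.

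The key step is to show that $\overline{W^+_F}$ contains points near $p$ with nonzero $T^-_p$-component. Points of $W^+_F$ cannot lie in $W^+_{F'}$, since their $t\to0$ limits are in $F\neq F'$. Hence, in the chart, every point $\gamma(s)$, $s\neq0$, has nonzero $T^-_p$-coordinate $x^-(s)$. Now act by $t\to\infty$ on $\gamma(s)$. The $T^-_p$-coordinates have negative weights, so they shrink, while the $T^+_p$-coordinates grow. We want to rescale so that the $T^+$ and $T_pF'$ parts tend to $0$ while the $T^-$ part converges to a nonzero limit. I choose $t=t(s)$ that normalizes the largest weighted size of $x^-(s)$ to be of order one, i.e.\ $\max_i |x^-_i(s)|\,|t|^{w_i}\sim 1$ with $w_i<0$. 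Since $x^-(s)\to0$, this forces $t(s)\to 0$. Along this sequence the $T^+$-coordinates $t^{w}x^+(s)$, $w>0$, tend to $0$, and the $F'$-coordinates tend to the $F'$-coordinate of $p$. After passing to a subsequence, the $T^-$-part converges to a nonzero vector. The limit is therefore a point of $(p+T^-_p)\setminus\{p\}$ in the chart, i.e.\ of $W^-_p\setminus\{p\}$. It lies in $\overline{W^+_F}$ because the latter is closed and $\T$-stable.

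The main obstacle is making this chart argument rigorous algebraically. Either one uses an honest $\T$-equivariant étale linearization, which exists for smooth quasi-projective varieties with torus action, or one replaces it by a $\T$-stable affine open neighbourhood $U\ni p$ (Sumihiro) and works with a $\T$-equivariant closed embedding $U\hookrightarrow V$ into a representation, where $W^-_p$ and $W^+_{F'}$ become intersections of $U$ with the corresponding weight subspaces through $p$. One must also make sure that the limit of the rescaled sequence stays inside $U$. This holds because the $T^-$-component is bounded in a neighbourhood and the other components converge, possibly after shrinking $\gamma$. Alternatively, one can argue more cleanly via a $\T$-curve: choose the rescaling along an algebraic arc and apply the valuative criterion, then check that the resulting limit point is not $p$ by the normalization of its $T^-$-component.
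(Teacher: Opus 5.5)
Your first claim is exactly the paper's argument: the orbit lies in the $\T$-stable closed set $\overline{W^+_F}$, and the limit lies in the orbit closure. You also justify existence of the limit via properness of $\theta$, which the paper leaves implicit.

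For the second claim the paper gives no argument of its own. It cites \cite[Lem.~9]{BB76}, which is stated for isolated fixed points, and asserts that the proof carries over. Your route is a self-contained local rescaling argument, and it is correct in substance:
\begin{itemize}
\item Points of $W^+_F$ close to $p$ must have nonzero $T^-_p$-component. Otherwise they would flow, inside the linear model, to a fixed point near $p$, hence in $F'$.
\item Rescaling by $t_k\to 0$ expands this component to a fixed size, contracts the $T^+_p$-part, and leaves the $T_pF'$-part unchanged. The $T_pF'$-part tends to that of $p$.
\item So the limit lands in $W^-_p\setminus\{p\}$, not merely in $W^-_{F'}$.
\end{itemize}
This handles non-isolated $F'$ explicitly, which is exactly the point the paper passes over with ``the same proof applies''.

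Some repairs are needed.
\begin{itemize}
\item The sentence ``act by $t\to\infty$ on $\gamma(s)$'' has the direction backwards. You need $t\to 0$, which expands negative-weight and contracts positive-weight coordinates. Your subsequent formulas already use $t\to0$.
\item In an analytic or \'etale chart, normalize the $T^-_p$-part to a small $\epsilon$, not to size one. Otherwise the rescaled points may leave the domain where the action is linear.
\item Along a merely \'etale equivariant map, limits need not lift unless you use Luna's strongly \'etale slice.
\end{itemize}

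The cleanest rigorous version is the one you mention at the end. Take a $\T$-stable affine open $U\ni p$ with a closed equivariant embedding $U\hookrightarrow V$ into a representation, viewing $p\in V^{\T}$. Then:
\begin{itemize}
\item $W^-_p=U\cap(p+V^-)$ exactly, with $V^-$ the sum of the negative-weight spaces of $V$.
\item A point $p+v\in W^+_F\cap U$ near $p$ must have $v^-\neq 0$. Otherwise $\lim_{t\to 0}$ would be the nearby fixed point $p+v^0\in F'$.
\item Because $U$ is closed in $V$, you can normalize $t_k\cdot v_k^-$ to norm one, and the limit stays in $U$.
\end{itemize}
In this form smoothness of $\Hitch$ plays no role. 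The curve germ $\gamma$ and the surface $Z$ are superfluous; a sequence in $W^+_F$ converging to $p$ suffices, since the analytic and Zariski closures of a constructible set agree.
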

\begin{proof}
    By $\T$-equivariance, the whole $\T$-orbit $\mathbb{O}_p$ of $p$ lies in $\overline{W^+_F}$.
    Since $\lim\limits_{t\to 0} (p) \in \overline{\mathbb{O}_p}$, the first claim follows.
    The second claim is proven in~\cite[Lem.~9]{BB76} under the assumption that $\Hitch^\T$ is finite, however the same proof applies to our situation.
\end{proof}

Let us denote the transitive closure of $\preceq^\pm$ with the same symbol by abuse of notation.

\begin{lm}
    Both $\preceq^+$ and $\preceq^-$ are antisymmetric, and hence partial orders.
\end{lm}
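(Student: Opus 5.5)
The plan is to construct a strictly monotone function on $\Fix$, so that every relation $F\preceq^{\pm}F'$ with $F\neq F'$ forces the function to change strictly in a fixed direction. Antisymmetry of the transitive closure then follows at once: a chain $F=F_0\preceq F_1\preceq\dots\preceq F_m=F$ with some consecutive terms distinct would give a strict inequality from the function's value at $F$ to itself, which is impossible.

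\textbf{Choice of function.} Since $\Hitch$ is quasi-projective, I will fix a $\T$-equivariant embedding $\Hitch\hookrightarrow\mathbb{P}(V)$ into a projectivised $\T$-representation. This is possible after passing to a power of an ample line bundle, using the standard linearisation argument for normal varieties with a torus action. I will take the moment-map value, that is, the $\T$-weight of the line at a fixed point. Concretely, each $F\in\Fix$ is connected and lies in $\mathbb{P}(V)^\T=\bigsqcup_i \mathbb{P}(V_i)$. Hence $F\subset\mathbb{P}(V_{\phi(F)})$ for a unique weight $\phi(F)$.

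\textbf{Key step.} The relation $\preceq^+$ is easier to handle directly via $\T$-curves, so I will treat it through the $\T$-curve connecting fixed components. Suppose $F\preceq^+F'$ with $F\neq F'$, so there is $p\in\overline{W^+_F}\cap F'$. By \cref{lm:adherence-spreads}, there is a point $q\neq p$ in $\overline{W^+_F}\cap W^-_p$. Its orbit closure is a $\T$-curve with $\lim_{t\to\infty}q=p\in F'$ and $\lim_{t\to 0}q\in\overline{W^+_F}$. In $\mathbb{P}(V)$, write $q=[\sum v_i]$. The limit at $0$ lies in $\mathbb{P}(V_{i_{\min}})$ and the limit at $\infty$ in $\mathbb{P}(V_{i_{\max}})$, where $i_{\min}<i_{\max}$ because $q$ is not fixed. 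This yields $\phi(\lim_0 q)<\phi(F')$.

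It remains to relate $\lim_0 q$ back to $F$. Here I will argue by descending induction on $\phi$. Points of $W^+_F$ have $0$-limit in $F$, and by upper semicontinuity of the minimal weight appearing, every point $x\in\overline{W^+_F}$ has minimal weight at least $\phi(F)$. Therefore $\phi(\lim_0 x)\ge\phi(F)$, and so $\phi(F)\le\phi(\lim_0 q)<\phi(F')$.

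The case $\preceq^-$ is symmetric, replacing $t\mapsto t^{-1}$ and exchanging minimal with maximal weight. In that case one can also use \cref{lm:adherence-spreads} applied to the inverted action.

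\textbf{Main obstacle.} The delicate point is the semicontinuity claim: that for $x\in\overline{W^+_F}$ the lowest weight occurring in a lift of $x$ is at least $\phi(F)$. I would prove it by observing that the set $\{[v]:\ v\in\bigoplus_{i\ge\phi(F)}V_i\}$ is closed and contains $W^+_F$. Indeed, $x\in W^+_F$ with $\lim_0 x\in\mathbb{P}(V_{\phi(F)})$ forces the lowest weight of $x$ to equal $\phi(F)$. Since this set is closed, it also contains the closure $\overline{W^+_F}$.

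Then $\lim_0 q$ lies in some $\mathbb{P}(V_j)$ with $j\ge\phi(F)$, which gives the required strict inequality.
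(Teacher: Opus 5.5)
Your proof is correct, but it takes a different route from the paper. The paper quotes \cite[Th.~3]{BB76}, made applicable by Sumihiro's theorem. That theorem gives total orders $\leq^\pm$ on the fixed components for which the unions $\bigsqcup_{F'\leq^\pm F}W^\pm_{F'}$ are closed, so the closure relations $\preceq^\pm$ are refined by total orders and are therefore antisymmetric. You instead exhibit an explicit strictly monotone function: the weight $\phi(F)$ of the tautological line along $F$. Up to sign, this is the function $w_L$ of \cref{ex:MO-tidal} for $L=\mathcal{O}(1)$ with its induced linearisation. Your two ingredients are sound: $\overline{W^+_F}\subset\mathbb{P}(\bigoplus_{i\geq\phi(F)}V_i)$, and a non-fixed $q$ flows from weight $i_{\min}(q)$ to weight $i_{\max}(q)>i_{\min}(q)$. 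Combined with \cref{lm:adherence-spreads}, they give $\phi(F)\leq i_{\min}(q)<i_{\max}(q)=\phi(F')$. The paper's route buys brevity. Yours is self-contained and proves more, namely that this weight function is strictly tidal; this is essentially what happens inside the proof of \cite[Th.~3]{BB76}.

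Two small remarks. First, the announced ``descending induction on $\phi$'' is never used; your argument is direct. Second, for $\preceq^-$ you should say why \cref{lm:adherence-spreads} applies to the inverted action, for instance because $\overline{W^-_{F'}}\subset\Core$ is proper, so the relevant limits exist.

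In fact you can drop \cref{lm:adherence-spreads} entirely. Consider the open set $U=\{[v]:v_{\phi(F)}\neq 0\}$ of $\mathbb{P}(\bigoplus_{i\geq\phi(F)}V_i)$. On $U$ the retraction $[v]\mapsto[v_{\phi(F)}]$ is a morphism, and it sends $W^+_F$ into $F$, which is closed because it is projective. So any fixed point of $\overline{W^+_F}$ lying in $\mathbb{P}(V_{\phi(F)})$ is in $U$, is its own image, and hence lies in $F$. Thus $F\preceq^+F'$ with $F\neq F'$ already forces $\phi(F)<\phi(F')$, and the same argument works for $\preceq^-$.
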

\begin{proof}
    Let $\mathbb{P}^N$ be a projective space, equipped with linear $\T$-action.
    It was shown in~\cite[Th.~3]{BB76} that for any locally closed $\T$-invariant subvariety $X\subset \mathbb{P}^N$, there are total orders $\leq^\pm$ on $\pi_0(X^\T)$ such that $\bigsqcup_{F'\leq^\pm F} W^\pm_{F'}$ is closed for all $F\in \pi_0(X^\T)$.
    As $\Hitch$ is normal and quasi-projective, this result applies to $\Hitch$ by Sumihiro's theorem.
    Since $\preceq^\pm$ are closure preorders, both of them must be refined by the total orders above.
    In particular, they are antisymmetric. 
\end{proof}

\begin{prop}\label{prop:flow-order-descr}
    The transitive closures of $\preceq^+$ and $\preceq^-$ coincide, and define a partial order $\preceq$ on $\Fix$.
    Moreover, $F \preceq F'$ if and only if there exists a collection of $\T$-curves $f_0,\ldots, f_{k-1}$ and components $F_0,\ldots,F_k\in\Fix$, such that $f_i(0) \in F_i$, $f_i(\infty) \in F_{i+1}$ for all $1\leq i\leq k$, and $F_0=F$, $F_k=F'$.
\end{prop}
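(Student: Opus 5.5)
The strategy is to show that each of $\preceq^+$ and $\preceq^-$ generates the same relation as the "chain of $\T$-curves" relation. Write $F\to F'$ if there is a $\T$-curve $f$ with $f(0)\in F$ and $f(\infty)\in F'$; since everything is transitive, it suffices to show that $\to$ generates both preorders.

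\emph{Step 1: a $\T$-curve gives both relations.} Let $f$ be a $\T$-curve with $f(0)\in F$ and $f(\infty)\in F'$. If $f$ is constant, then $F=F'$ and there is nothing to prove. Otherwise pick $q=f(1)$. By equivariance, $\lim_{t\to 0}t\cdot q = f(0)\in F$, so $q\in W^+_F$. Likewise $\lim_{t\to\infty} t\cdot q = f(\infty)\in F'$, so $q\in W^-_{F'}$. Since the orbit closure $\overline{\T\cdot q}$ contains $f(\infty)$, we get $f(\infty)\in\overline{W^+_F}\cap F'$, hence $F\preceq^+F'$. Dually, $f(0)\in F\cap\overline{W^-_{F'}}$, so $F\preceq^- F'$. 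Thus the transitive closure of $\to$ is contained in both.

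\emph{Step 2: the converse for $\preceq^+$.} Suppose $\overline{W^+_F}\cap F'\ni p$ with $F'\neq F$. By \cref{lm:adherence-spreads}, there is $q\in \overline{W^+_F}\cap W^-_p$ with $q\neq p$, so $q$ is not fixed. The closure of its orbit is a $\T$-curve $f$ with $f(\infty)=p\in F'$ and $f(0)=\lim_{t\to 0}q=:p_1$. (Here I use that limits at $0$ exist, since $\theta$ is proper and $\Base$ has positive weights, and that an orbit map $\C^*\to\Hitch$ extends to $\mathbb{P}^1$; composing with $t\mapsto t^m$ if the stabilizer is finite, or normalizing, produces the required finite equivariant map.) By \cref{lm:adherence-spreads} again, $p_1\in\overline{W^+_F}$, say $p_1\in F_1$, and we have $F_1\to F'$.

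Now iterate with $F_1$ in place of $F'$: if $F_1\neq F$, repeat. This yields a chain $\cdots\to F_2\to F_1\to F'$, with every $F_i$ meeting $\overline{W^+_F}$. The main obstacle is termination: the chain must eventually hit $F$. For this, use the total order from the proof of the antisymmetry lemma. Step 1 shows that $F_{i+1}\to F_i$ implies $F_{i+1}\preceq^+ F_i$, so the $F_i$ strictly decrease in the total order refining $\preceq^+$. Since $\Fix$ is finite (as $\Hitch^\T$ is projective), the process must stop, which can only happen when $F_i=F$. Hence $F\preceq^+F'$ implies a chain $F\to\cdots\to F'$.

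\emph{Step 3: the converse for $\preceq^-$.} This is the dual argument. If $p\in F\cap\overline{W^-_{F'}}$, apply the $t\mapsto t^{-1}$ version of \cref{lm:adherence-spreads}: $\overline{W^-_{F'}}$ is $\T$-stable and contains $\lim_{t\to\infty}$ of its points, and there is a non-fixed $q\in \overline{W^-_{F'}}\cap W^+_p$. Its orbit closure gives a curve from $F$ to some $F_1$ with $F_1$ meeting $\overline{W^-_{F'}}$. Iterating and terminating by finiteness and the total order as in Step 2, we obtain the chain. Combining Steps 1–3, the transitive closures of $\preceq^+$ and $\preceq^-$ both coincide with the chain relation, and antisymmetry was proved in the preceding lemma, so $\preceq$ is a partial order.
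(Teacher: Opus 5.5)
Your proof is correct and follows the paper's route. A single $\T$-curve trivially gives both $\preceq^+$ and $\preceq^-$. Conversely, you iterate \cref{lm:adherence-spreads} to walk back along $\T$-curves from $F'$ to $F$, and you treat $\preceq^-$ dually.

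Two points need tightening:
\begin{itemize}
\item The strict decrease in your termination step requires that no $\T$-curve joins a fixed component to itself. This holds by \cref{ex:MO-tidal}: the weight of an ample linearized line bundle differs at the two endpoints but is constant on each component. The paper's ``finitely many components'' argument also relies on this implicitly.
\item The inverse-action version of \cref{lm:adherence-spreads} in your Step 3 should be justified by working in the projective core $\Core$, or in a projective closure of $\Hitch$ as the paper does, since limits at $\infty$ need not exist in $\Hitch$.
\end{itemize}
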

\begin{proof}
    We say that $F'$ is chained to $F$ if the condition in the second statement holds.
    It is clear from definitions that for any $F'$ chained to $F$ we have $F \preceq^+ F'$, $F \preceq^- F'$.
    Thus it suffices to show the reverse implication.
    
    Suppose that $F \preceq^+ F'$, so that there exists a point $p\in F'$ in $\overline{W^+_F}\cap F'$.
    By \cref{lm:adherence-spreads}, the closure $\overline{W^+_F}$ contains a whole $\T$-curve $f:\mathbb{P}^1\to\Hitch$ with $f(\infty) = p$.
    Now replace $p$ with $f(0)$, and repeat this procedure recursively.
    Since $\Hitch$ has finitely many fixed components, this process will eventually terminate.
    However, since \cref{lm:adherence-spreads} applies as long as $F'\neq F$, we must terminate with a $\T$-curve starting in $F$.
    Thus we have shown that $F'$ is chained to $F$.

    For $F \preceq^- F'$, the proof is almost entirely analogous; the only difference being that in order to apply \cref{lm:adherence-spreads} to the opposite $\T$-action, we need to work with a closure of $\Hitch$ inside an arbitrary projective space $\mathbb{P}^N$ with linear $\T$-action.
\end{proof}

We call the partial order $\preceq$ the \textit{flow order} on $\Fix$.
By the first statement of \cref{lm:adherence-spreads}, the following subvarieties are closed:
\begin{align*}
    W^\ggcurly_F \coloneqq \bigsqcup_{F'\succeq F} W^+_{F'} \subset \Hitch,\qquad 
    W^\llcurly_F \coloneqq \bigsqcup_{F'\preceq F} W^-_{F'} \subset \Core.
\end{align*}
Since \BB decomposition $\Hitch=\sqcup W_F^+$ contains the unique element of maximal dimension, we have the least element $\Fbot$ of $\Fix$ under $\preceq$.
By the above, we have $W^\ggcurly_\Fbot = \Hitch$ and $\Fbot = W^\llcurly_\Fbot$; hence we call $\Fbot$ the \textit{bottom component} of $\Fix$.
Note that $\Fbot$ does not have to belong to $\Fix^\WG$; when it does, the corresponding Lagrangian $\Core_\Fbot = \Fbot$ is fixed by $\T$.
\begin{rmk}
    Unlike here, in the setup of conical symplectic resolutions one typically has a collection of commuting weight-$1$ $\T$-actions, which give rise to a family of Lagrangians $\Fbot$.
    These ``minimal Lagrangians'' were the main object of study in~\cite{vzivanovic2022exact}.
\end{rmk}
\begin{de}
    We call a function $w: \Fix \to \mathbb{R}$ \textit{tidal} if $w(F)\geq w(F')$ for any $F\preceq F'$.
\end{de}

In other words, a tidal function refines the (reversed) partial order $\preceq$ to a total preorder.

\begin{ex}\label{ex:MO-tidal}
    Following~\cite[3.2.4]{MO12}, pick an ample $\T$-equivariant line bundle $L$ on $\Hitch$, and define $w_L(F)$ to be the equivariant weight of $L$ at $F$.
    Let $f$ be a $\T$-curve going from $F$ to $F'$, so that $F\preceq F'$.
    By equivariant localization, the degree of the restriction of $L$ to any $f$ is precisely $w_L(F) - w_L(F')$, which has to be positive since $L$ is ample.
    Thus $w_L$ is tidal. 

    Given a line bundle $L$ as above, we can consider the associated embedding of $\Hitch$ into a projective space.
    This induces an $S^1$-invariant {\KH} structure on $\Hitch$ such that the $S^1$-action is Hamiltonian, i.e. has a moment map $H_L:\Hitch\to \R$ (see e.g. \cite[Lm.~4.1]{vzivanovic2022exact} for details).
    The $\R_+$-flow coincides with the gradient flow of $H_L$, so that the value of $H_L$ increases along it, i.e. $w\coloneqq-H_L$ is tidal.
    Up to a positive multiple $\lambda >0$, the value of $H_L$ at a fixed point $p\in \Hitch^\T$ is precisely $w_L(p)$, so that $w = \lambda w_L$.
\end{ex}

Given a tidal function $w$, for any $s\in \mathbb{R}$ we can define the closed subvarieties
\[
    W^{\leq s} \coloneqq \bigsqcup_{w(F)\leq s} W^+_{F} \subset \Hitch,\qquad 
    W^{>s} \coloneqq \bigsqcup_{w(F)> s} W^-_{F} \subset \Core.
\]

Note that $(W^{> s})_s$ is an ascending flag of subvarieties, while $(W^{\leq s})_s$ is descending.
Since \BB decomposition induces (non-canonical) direct sum decomposition of Borel--Moore homology, these flags induce ascending filtration on $H^*(\Hitch)$ and descending filtration on $H_*(\Core)$:
\[
    T_s \coloneqq H^\BM_*(W^{\leq s}) \subset H^\BM_*(\Hitch) 
    \simeq  H^{4n-*}(\Hitch),
    \qquad 
    T^s \coloneqq H^\BM_*(W^{> s}) \subset H^\BM_*(\Core) \equiv H_*(\Core).
\]
Under the duality $H^*(\Hitch)^\vee = H^*(\Core)^\vee = H_*(\Core)$, we have $T_s^\vee = T^s$.

\begin{de}\label{def:tidal-filt}
    We call an (ascending) filtration on $H^*(\Hitch)$ \textit{tidal} if it arises from a tidal function $w$.
\end{de}

Let us momentarily restrict to cohomological degree $2n$.
By the discussion after \cref{prop:flows-Lagr}, 
$H_{2n}(\Core)$ has a natural basis $\{ [\Core_F] : F\in\Fix^\WG \}$.
Dually, \BB decomposition implies that $H^{2n}(\Hitch)$ has a basis $\{ \UF_F : F\in\Fix^\WG \}$, where $\UF_F\coloneqq[\overline{W^+_p}]$, and $p\in F$ is a general point. 

\begin{prop}\label{prop:tidal-spans}
    For a tidal filtration $(T_s)$, we have $T_s H^{2n}(\Hitch) = \Span\{ \UF_F : w(F)\leq s \}$.
\end{prop}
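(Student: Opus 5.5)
We argue by a dimension count: first show that the classes $\UF_F$ with $w(F)\leq s$ lie in $T_s H^{2n}(\Hitch)$, and then show that $T_s H^{2n}(\Hitch)$ has dimension exactly $\#\{F\in\Fix^\WG : w(F)\leq s\}$. Since the $\UF_F$ form a basis of $H^{2n}(\Hitch)$, they are linearly independent, and the two facts together give the claimed equality.

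\emph{Containment.} Let $F\in\Fix^\WG$ with $w(F)\leq s$, and let $p\in F$ be general. The support of $\UF_F=[\overline{W^+_p}]$ is contained in $\overline{W^+_F}$. By the first statement of \cref{lm:adherence-spreads} and the discussion after \cref{prop:flow-order-descr}, we have $\overline{W^+_F}\subset W^\ggcurly_F=\bigsqcup_{F'\succeq F}W^+_{F'}$. Since $w$ is tidal, $F'\succeq F$ implies $w(F')\leq w(F)\leq s$, so $W^\ggcurly_F\subset W^{\leq s}$. Hence the fundamental class of the closed $n$-dimensional subvariety $\overline{W^+_p}\subset W^{\leq s}$ lies in $H^\BM_{2n}(W^{\leq s})$, and it maps to $\UF_F$ under pushforward to $H^\BM_{2n}(\Hitch)\simeq H^{2n}(\Hitch)$. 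This also shows that each $\UF_F$ is an $n$-dimensional class, consistent with $\overline{W^+_p}$ being Lagrangian for $F\in\Fix^\WG$ by \cref{prop:flows-Lagr}.

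\emph{Dimension count.} Refine $w$ to a total order on $\Fix$ compatible with closures, as in the proof of antisymmetry of $\preceq^\pm$. This gives a filtration of $W^{\leq s}$ by closed unions of attracting cells. Each cell $W^+_F$ is an affine bundle of rank $r^+_F=\dim T^+_p$ over the smooth projective $F$ of dimension $f$. Hence
\[
H^\BM_{2n}(W^+_F)\simeq H^\BM_{2n-2r^+_F}(F)\simeq H^{2(f+r^+_F-n)}(F).
\]
Write $r^-_F=\dim T^-_p$, so that $f+r^+_F+r^-_F=2n$. Then $f+r^+_F-n=n-r^-_F$. By \cref{prop:flows-Lagr}, $\dim W^-_F=f+r^-_F\leq n$, with equality if and only if $F\in\Fix^\WG$. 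Therefore the degree $2(n-r^-_F)$ is at least $2f$, with equality exactly for $F\in\Fix^\WG$. So this group is one-dimensional (spanned by the fundamental class) if $F\in\Fix^\WG$, and vanishes otherwise.

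\emph{Splitting and injectivity.} The long exact sequences in Borel--Moore homology for the cell filtration split; this is the non-canonical direct sum decomposition already invoked before \cref{def:tidal-filt}, and it can be justified by purity, since every piece is pure. Consequently $\dim H^\BM_{2n}(W^{\leq s})=\#\{F\in\Fix^\WG: w(F)\leq s\}$. The same splitting, applied to the filtration of $\Hitch$ extending that of $W^{\leq s}$, shows that pushforward $H^\BM_*(W^{\leq s})\to H^\BM_*(\Hitch)$ is injective. This injectivity is implicit in the notation $T_s\subset H^\BM_*(\Hitch)$, and it is the step I expect to need the most care.

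Combining the containment with this dimension count, the linearly independent classes $\{\UF_F: w(F)\leq s\}$ lie in a space of the same dimension, so they span $T_sH^{2n}(\Hitch)$.
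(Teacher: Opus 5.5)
Your proof is correct, and it takes a genuinely different route from the paper. The paper argues dually, on the core. It notes that $T^sH_{2n}(\Core)=\Span\{[\Core_F] : w(F)>s\}$ and uses the asserted duality $T_s^\vee=T^s$ to identify $T_sH^{2n}(\Hitch)$ with $\Span\{[\Core_F]^\vee : w(F)\le s\}$. It then uses that the intersection matrix is unitriangular ($[\Core_F]\cap\UF_{F'}=0$ unless $F'\preceq F$, and $[\Core_F]\cap\UF_F=1$), so that $[\Core_F]^\vee=\UF_F+\sum_{F'\succ F}a_{F'F}\UF_{F'}$, and tidality turns one span into the other.

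You never leave $\Hitch$. Tidality enters only through $\overline{W^+_F}\subseteq W^\ggcurly_F\subseteq W^{\le s}$, and the rest is a cell-by-cell count of $H^\BM_{2n}$. The paper's version is shorter, and it simultaneously identifies $T_sH^{2n}(\Hitch)$ with the annihilator of the span of the $[\Core_F]$ with $w(F)>s$. However, the duality $T_s^\vee=T^s$ is stated there without proof. Justifying it needs the disjointness $W^{\le s}\cap W^{>s}=\varnothing$ plus a dimension count of exactly the kind you perform. Your argument avoids the core, the pairing and the triangularity. The price is that you take the linear independence of the $\UF_F$ as input; the paper states this before the proposition, and its natural proof is again the triangular pairing.

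Two remarks on your write-up.

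First, the step you flagged as delicate (splitting via purity, and injectivity) is unnecessary. Exactness of $H^\BM_{2n}(Z_{j-1})\to H^\BM_{2n}(Z_j)\to H^\BM_{2n}(W^+_{F_j})$ at the middle term already gives $\dim H^\BM_{2n}(Z_j)\le \dim H^\BM_{2n}(Z_{j-1})+\dim H^\BM_{2n}(W^+_{F_j})$. Hence $\dim T_sH^{2n}(\Hitch)\le \dim H^\BM_{2n}(W^{\le s})\le \#\{F\in\Fix^\WG : w(F)\le s\}$ with no purity input. Together with your containment, all these inequalities are equalities, so injectivity in degree $2n$ comes for free.

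Second, the cell ordering should not be the Bia\l{}ynicki-Birula order from the antisymmetry proof, which need not be compatible with $w$. Instead, order the cells by increasing $w$, breaking ties so that $F'$ precedes $F$ whenever $F'\succ F$. Closedness of the partial unions then follows from $\overline{W^+_F}\subseteq W^\ggcurly_F$ and tidality, and $W^{\le s}$ is one of the steps.
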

\begin{proof}
    It is clear from definitions that $T^s H_{2n}(\Core) = \Span\{ [\Core_F] : w(F) >s \}$.
    By definition of the flow order we have $[\Core_F]\cap \UF_{F'} = 0$ unless $F'\preceq F$, and $[\Core_F]\cap \UF_F = 1$.
    This implies that $[\Core_F]^\vee = \UF_F + \sum_{F'\succ F} a_{F'F} \UF_{F'}$ for some $a_{F'F}\in \mathbb{Z}$.
    By the unitriangularity of the transition matrix we have
    \[
        T_s H^{2n}(\Hitch) = \Span\{ [\Core_F]^\vee : w(F)\leq s \} = \Span\{ \UF_F : w(F)\leq s \},
    \]
    and so we may conclude.
\end{proof}

\begin{ex}
The filtration associated to the tidal function $w=-H$ in \cref{ex:MO-tidal} is called the {\AB} filtration.
\end{ex}

\begin{ex}\label{ex:perv-not-tidal}
    Recall that $H^*(\Hitch)$ can be equipped with perverse Leray filtration $P_\bullet$, associated to the map $\theta$.
    For the parabolic family of two-dimensional integrable systems, $P_\bullet$ has been computed in~\cite{zhang2017multiplicativity}.
    For instance, $H^2(S_{\mathbb{Z}/2})$ acquires a two-step filtration, with the first step being given by the span of fundamental classes of the four non-central $\mathbb{P}^1$'s.
    In terms of the basis $\{ \UF_F\}$, these classes are expressed as $C - 2\UF_{p_i}$, where $C$ is the central $\mathbb{P}^1$, and $p_i$, $1\leq i\leq 4$ are the other fixed points.
    \cref{prop:tidal-spans} then implies that the perverse filtration is not tidal.
\end{ex}

\subsection{Very stable points}\label{subs:v-stable}
Following~\cite{hausel2022very}, we say that $p\in\Hitch^{\T}$ is \textit{very stable} if the upward flow $W^+_p$ is closed in $\Hitch$, and \textit{wobbly} otherwise.

\begin{lm}\label{lem:gen-finite}
    Let $F\in \Fix^\WG$, and $p\in F$.
    For a generic $a\in\Base$, the intersection $W^+_p\cap \theta^{-1}(a)$ is transversal of finite cardinality $\gamma_p$.
\end{lm}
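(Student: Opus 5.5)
The plan is to study the restriction $\theta|_{W^+_p}\colon W^+_p\to\Base$ and show that it is quasi-finite, so that generic smoothness yields transversality at generic $a$. By \cref{prop:flows-Lagr}, since $F\in\Fix^\WG$, the upward flow $W^+_p$ is Lagrangian. Hence $\dim W^+_p = n = \dim\Base$. Moreover $W^+_p\simeq T^+_p$ is a $\T$-equivariant affine space with only positive weights, and $\theta|_{W^+_p}$ is $\T$-equivariant.

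The first and key step is to show that $\theta|_{W^+_p}^{-1}(0) = W^+_p\cap\Core$ is the single point $\{p\}$.
\begin{itemize}
\item Any point $q\in W^+_p\cap\Core$ lies in some repelling cell $W^-_{F'}$. Since $\Core$ is $\T$-stable and proper (it is a fiber of the proper map $\theta$), both limits $\lim_{t\to0}$ and $\lim_{t\to\infty}$ of $q$ exist.
\item By construction $\lim_{t\to0}q = p$. The $\T$-orbit closure of $q$ is therefore either the point $p$ (if $q$ is fixed, then $q\in W^+_p\cap\Hitch^\T=\{p\}$) or a $\T$-curve from $p$ to a point of some $F'$.
\item To exclude the curve, I use the symplectic form. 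At $p$, the tangent direction of such a curve lies in $T^+_p$. But $W^+_p\cap\Core$ is contained in $\Core\cap W^+_p$, and near $p$ we have $T_p\Core\subset T_p(W^-_F)$ generically.
\end{itemize}
The cleaner route is homogeneity. $\theta|_{W^+_p}$ is a $\T$-equivariant polynomial map between positively graded affine spaces, so $(\theta|_{W^+_p})^{-1}(0)$ is a $\T$-stable closed subvariety of $W^+_p$. Any $\T$-stable closed subvariety of a positively graded affine space that is proper, being contained in the proper $\Core$, must be a finite union of closed $\T$-orbits, i.e. $\{p\}$. Indeed, a positive-dimensional conical set contains the orbit closure $\C^\times q\cup\{p\}$, which is not proper in $W^+_p$ and hence not closed in $\Core$. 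The subtle point is that $W^+_p$ is only locally closed in $\Hitch$, so properness of $W^+_p\cap\Core$ must come from its being closed in the proper variety $\Core$. This fails exactly for wobbly points.

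So I expect the main obstacle to be the claim that the zero fiber is finite even when $W^+_p$ is not closed. I would handle it via dimension instead. Every fiber of $\theta|_{W^+_p}$ over a point of $\Base$ has dimension at least $\dim W^+_p-\dim\Base=0$. Also, the locus where the fiber dimension is positive is closed and conical in $W^+_p$, so if nonempty it contains $p$ in its closure. By upper semicontinuity, the fiber through $p$, namely $W^+_p\cap\Core$, would then be positive-dimensional. That fiber is a conical subvariety of $W^+_p\cap\Core$. Its tangent cone at $p$ lies in $T^+_p\cap T_p\Core$, while $T_p\Core$ is built from the isotropic tangent spaces $T_p W^-_{F}$, which contain no positive weights. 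This contradiction gives finiteness.

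Given quasi-finiteness, $\theta|_{W^+_p}$ is a dominant morphism between irreducible $n$-dimensional varieties, since its image is constructible of full dimension. By generic smoothness in characteristic zero, there is a dense open $U\subset\Base$ over which the fibers are finite and reduced, with surjective differential. Surjectivity of $d(\theta|_{W^+_p})$ at $q$ means $T_qW^+_p + T_q\theta^{-1}(a)=T_q\Hitch$, which is transversality. Here $a$ is a regular value, so $\theta^{-1}(a)$ is smooth of dimension $n$ near $q$, using (AB) or flatness from \cref{cor:flatness} together with generic smoothness of $\theta$. Finally, the cardinality $\gamma_p$ is constant on the (connected) open set where the map is finite étale, which I obtain by shrinking $U$.
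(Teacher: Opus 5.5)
There is a genuine gap. Your key step is finiteness of the zero fiber $W^+_p\cap\Core$. By your own semicontinuity reduction this is equivalent to quasi-finiteness of $\theta|_{W^+_p}$, and it is exactly condition (2) of \cref{prop:v-stable-lf-push}. So it holds if and only if $p$ is very stable.

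The lemma, however, is asserted for every $p\in F$ with $F\in\Fix^\WG$. It is used in the proof of \cref{prop:HH-mult} precisely to treat wobbly $p$. Your tangent-space contradiction is where the argument breaks: near a wobbly $p$ the core is not just $\Core_F$. If $f$ is a $\T$-curve with $f(0)=p$ and $f(\infty)\in F'$, then $f(\mathbb{C}^*)\subset W^+_p\cap W^-_{F'}\subset\Core$. Hence the tangent cone of $\Core$ at $p$ does contain positive-weight directions. If your argument were valid, every fixed point would be very stable.

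Even dominance of $\theta|_{W^+_p}$ can fail. Take the intermediate wobbly point of $S_{\mathbb{Z}/3}$, with local equation $xy^2$. Its upward flow is the $\T$-curve to the next fixed point, so $\theta|_{W^+_p}\equiv 0$ and $\gamma_p=0$.

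The repair is to argue only generically, as the paper does.
\begin{itemize}
\item If $\theta|_{W^+_p}$ is not dominant, the generic fiber is empty and the statement holds with $\gamma_p=0$.
\item If it is dominant, set $\Base^\circ\coloneqq\Base\setminus\theta(\overline{W^+_p}\setminus W^+_p)$. It is open because $\theta$ is proper. It is dense because the image of the boundary $\overline{W^+_p}\setminus W^+_p$ has dimension $<n$.
\item Over $\Base^\circ$, the map $W^+_p\to\Base^\circ$ is a base change of the proper map $\overline{W^+_p}\to\Base$, hence proper.
\item A proper dominant map between smooth irreducible $n$-dimensional varieties is finite étale over a dense open subset, by generic finiteness and generic smoothness in characteristic zero. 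Alternatively, any dominant map between irreducible varieties of equal dimension is generically finite, with no properness needed.
\end{itemize}
From there your final paragraph goes through unchanged: transversality follows from surjectivity of the differential at points where $\theta$ is also smooth, and the cardinality is constant over the étale locus.
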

\begin{proof}
    We follow the proof of~\cite[Lm.~4.6]{hausel2022very}.
    Without loss of generality, we can assume that the projection $\th|_{W^+_p}: W^+_p\to\Base$ is dominant, otherwise $\gamma_p=0$.
    Let $\Base^\circ$ be the open complement of $\theta(\overline{W^+_p}\setminus W^+_p)$ in $\Base$.
    Then $\theta$ restricts to a map $W^+_p\to \Base^\circ$.
    As it is a proper dominant map between smooth equidimensional varieties, it is generically étale.
    The claim immediately follows.
\end{proof}

\begin{prop}\label{prop:v-stable-lf-push}
    Let $p\in \Hitch^\T$.
    The following conditions are equivalent:
    \begin{enumerate}
        \item $p$ is very stable;
        \item $W^+_p\cap \Core = \{p\}$;
        \item $\theta_*\mathcal{O}_{W^+_p}$ is a locally free sheaf of rank $\gamma_p$.
    \end{enumerate}
\end{prop}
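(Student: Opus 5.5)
The strategy is to prove (1)$\Rightarrow$(3)$\Rightarrow$(2)$\Rightarrow$(1). Throughout, $W^+_p\simeq T^+_p$ is an affine $\T$-space with positive weights, and $p$ is its unique $\T$-fixed point, lying in the closure of every $\T$-orbit in $W^+_p$.

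\emph{(1)$\Rightarrow$(3).} If $W^+_p$ is closed, then $\theta|_{W^+_p}$ is proper, being the restriction of a proper map to a closed subvariety. Next we show it is quasi-finite. The fiber $W^+_p\cap\theta^{-1}(0)$ is a closed $\T$-stable subset of $W^+_p$, and it is proper because $\theta$ is proper. Any proper $\T$-stable closed subvariety of the affine space $T^+_p$ with positive weights is contained in the fixed locus: the coordinate functions are non-constant weight vectors, hence bounded on it only if they are constant, and the limits at $0$ force the value $p$. So the central fiber is $\{p\}$. By upper semicontinuity of fiber dimension, together with the fact that every point of $\Base$ flows to $0$, all fibers are then finite. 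Thus $\theta|_{W^+_p}$ is finite. Since $\dim W^+_p=\dim\Base$ in the (WG) case, it is a finite map from a smooth (Cohen--Macaulay) variety to a smooth variety of the same dimension, hence flat by miracle flatness. Therefore $\theta_*\mathcal{O}_{W^+_p}$ is locally free, and its rank equals the generic degree, which is $\gamma_p$ by \cref{lem:gen-finite}. If $F\notin\Fix^\WG$, then $\dim W^+_p<n$ and finiteness makes $\theta(W^+_p)$ a proper closed subset of $\Base$. The pushforward is then torsion and locally free only when it is zero, which would contradict $W^+_p\neq\varnothing$. So one should note that very stable points automatically lie in $\Fix^\WG$; equivalently, we adopt the implicit assumption $F\in\Fix^\WG$, matching \cref{lem:gen-finite}.

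\emph{(3)$\Rightarrow$(2).} A locally free sheaf is coherent, so $\theta|_{W^+_p}$ is affine with coherent pushforward, i.e.\ finite. Its fiber over $0$ is then a finite $\T$-stable subset of $W^+_p$. Each point of this fiber has a finite $\T$-orbit, hence is $\T$-fixed, and $p$ is the only fixed point of $W^+_p$. So $W^+_p\cap\Core=\{p\}$.

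\emph{(2)$\Rightarrow$(1).} This is the step needing care. Suppose $q\in\overline{W^+_p}\setminus W^+_p$. Its $\T$-orbit closure lies in $\overline{W^+_p}$, so by \cref{lm:adherence-spreads} the limit $q_0=\lim_{t\to0}q$ lies in $\overline{W^+_p}\cap\Core$ and is fixed. I would then use a one-parameter degeneration argument: because $\theta$ is $\T$-equivariant with positive weights, the map $\theta|_{\overline{W^+_p}}$ is proper. Hence $\theta|_{W^+_p}$ fails to be proper exactly when $\overline{W^+_p}\cap\theta^{-1}(0)$ is strictly larger than $W^+_p\cap\theta^{-1}(0)$, since boundary points can be pushed by $\T$ into the central fiber. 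Under (2), consider the weighted-homogeneous map $\theta|_{W^+_p}:T^+_p\to\Base$, which is $\T$-equivariant with positive weights. The condition $\theta^{-1}(0)\cap W^+_p=\{p\}$ means the pulled-back coordinates generate an ideal with radical equal to the maximal ideal at the origin. By the graded Nakayama lemma, $\mathcal{O}(W^+_p)$ is then a finite module over $\mathcal{O}(\Base)$, so $\theta|_{W^+_p}$ is finite, and in particular proper. A proper map into the separated scheme $\Base$ has closed image of the source inside the target: $W^+_p\to\Hitch$ becomes proper, because $W^+_p\to\Hitch\times_\Base\Base$ factors through the proper map to $\Base$ and $\Hitch\to\Base$ is separated. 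So $W^+_p$ is closed in $\Hitch$. The main obstacle is stating this finiteness-implies-closedness argument cleanly; graded Nakayama is the key tool.
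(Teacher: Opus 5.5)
Your proof is correct. For two of the three implications it is the paper's argument. In $(1)\Rightarrow(3)$ you use ``proper and affine, hence finite, then miracle flatness''; your quasi-finiteness detour via upper semicontinuity can simply be replaced by the fact that a proper morphism between affine varieties is finite. In $(3)\Rightarrow(2)$ you use ``coherent pushforward, hence finite, so the zero fibre is a finite $\T$-stable set, hence $\{p\}$''.

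Where you genuinely differ is $(2)\Rightarrow(1)$. The paper gets $(1)\Leftrightarrow(2)$ from the Bia\l{}ynicki-Birula picture: $W^+_p$ is closed iff no $\T$-curve starts at $p$ iff $W^+_p\cap\Core=\{p\}$, and the first equivalence rests on the adherence argument of \cref{lm:adherence-spreads}. You instead argue algebraically:
\begin{enumerate}
\item Condition (2) says the pulled-back base coordinates cut out the origin set-theoretically in the positively graded ring $\C[T^+_p]$.
\item Graded Nakayama then makes $\theta|_{W^+_p}$ finite.
\item By the cancellation property of proper morphisms ($X\to Z$ proper and $Y\to Z$ separated imply any $Z$-morphism $X\to Y$ is proper), the inclusion $W^+_p\to\Hitch$ is proper, so $W^+_p$ is closed.
\end{enumerate}
This route is self-contained, does not use \cref{lm:adherence-spreads}, and shows directly that all three conditions are equivalent to finiteness of $\theta|_{W^+_p}$. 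The paper's route instead yields the $\T$-curve characterization, which it reuses later: in \cref{cor:v-stable}, and in identifying very stable isolated fixed points with those maximal in the flow order. The opening sentences of your $(2)\Rightarrow(1)$ about degenerating boundary points into the central fibre are neither proved nor needed; the Nakayama argument stands on its own.

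There is one slip, in your side remark on $F\notin\Fix^\WG$: the inequality is reversed. Since $\dim W^+_p+\dim W^-_F=2n$ and $W^-_F$ is isotropic but not Lagrangian, you get $\dim W^+_p>n$, not $<n$. For example, the bottom point of a Painlev\'e surface has two positive weights while $n=1$. So the ``torsion pushforward'' reasoning does not apply as written. The conclusion survives anyway: under (1) you have already shown $\theta|_{W^+_p}$ is finite, which forces $\dim W^+_p\le n$ and hence $F\in\Fix^\WG$. This is exactly the second claim of \cref{cor:v-stable}.
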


\begin{proof}
    We have
    \[
        W^+_p = \overline{W^+_p} \quad\Leftrightarrow\quad \text{no $\T$-curves starting at $p$} \quad\Leftrightarrow\quad W^+_p\cap \Core = \{p\},
    \]
    which proves the equivalence of the first two conditions.
    If $p$ is very stable, then $W^+_p\to \Base$ is a proper map between affine varieties, hence finite.
    By miracle flatness, this implies that $\theta_*\mathcal{O}_{W^+_p}$ is locally free.
    Conversely, let us assume (3). Since $W^+_p\to \Base$ is a map of affine varieties, it has to be finite.
    Hence, its fiber over $0$ is finite, and so does not contain a non-constant $\T$-orbit.
    This shows that $p$ is very stable.
\end{proof}
\begin{rmk}
    By Quillen--Suslin theorem, $\theta_*\mathcal{O}_{W^+_p}$ is locally free if and only if it is free.
    We will not use this result.
\end{rmk}
It follows from \cref{prop:v-stable-lf-push} that the set of very stable points is Zariski open in $\Hitch^{\T}$.
We will therefore abuse the notation and say $F\in\Fix$ is \textit{very stable} if it contains a very stable point, and \textit{wobbly} otherwise; note that a very stable fixed component might contain wobbly points.
Furthermore, the condition (2) from \cref{prop:v-stable-lf-push} is preserved under taking étale covers, so that we can check whether a point is very stable étale-locally.

\begin{cor}\label{cor:v-stable}
    Any fixed component which is maximal in flow order is very stable.
    Any very stable component belongs to $\Fix^\WG$.
    When $\Fbot\in \Fix^\WG$, it is very stable.
\end{cor}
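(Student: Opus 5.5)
We prove the three claims in turn; each uses \cref{prop:v-stable-lf-push}, which says that $p$ is very stable if and only if $W^+_p\cap\Core=\{p\}$, equivalently if and only if no $\T$-curve $f$ has $f(0)=p$.

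\emph{First claim.} Let $F$ be maximal in the flow order. Suppose some $p\in F$ were wobbly. Then $W^+_p$ is not closed, so by the proof of \cref{prop:v-stable-lf-push} there is a $\T$-curve $f$ with $f(0)=p\in F$. Its other endpoint $f(\infty)$ is a fixed point lying in some component $F'$. By \cref{prop:flow-order-descr} this gives $F\preceq F'$. We must also have $F'\neq F$. Otherwise $f$ would join two points of the same fixed component, and the $\T$-orbit of a point $q\in W^+_p\setminus\{p\}$ would flow down to $F$ and up to $F$. A tidal function $w_L$ as in \cref{ex:MO-tidal} rules this out: the degree of $L$ on the curve is $w_L(F)-w_L(F')>0$, which is impossible if $F=F'$. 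So $F\prec F'$, contradicting maximality. Hence every point of $F$ is very stable, and in particular $F$ is very stable.

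\emph{Second claim.} Let $p\in F$ be very stable. Then $W^+_p$ is closed in $\Hitch$ and $W^+_p\cap\Core=\{p\}$. The restriction $\theta|_{W^+_p}\colon W^+_p\to\Base$ is a proper map of affine varieties, hence finite. Therefore $\dim W^+_p\leq\dim\Base=n$. By \cref{prop:flows-Lagr}, $W^+_p$ is coisotropic, so $\dim W^+_p\geq n$. Hence $\dim W^+_p=n$ and $W^+_p$ is Lagrangian. The last clause of \cref{prop:flows-Lagr} then gives $F\in\Fix^\WG$. (Alternatively, one can check directly that if $T^+_p$ had a weight below $k$, then $\dim T^+_p>n$ by the weight pairing argument in that proof.)

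\emph{Third claim.} Since $\Fbot$ is the least element for $\preceq$, we have $W^\llcurly_\Fbot=\Fbot$. Take $p\in\Fbot$ and suppose there were a $\T$-curve $f$ with $f(0)=p$ and $f(\infty)\in F'$. Then $f(\mathbb{P}^1)\subset\Core$, since it is the closure of a $\T$-orbit with a limit as $t\to\infty$. Its open part lies in $W^-_{F'}\cap W^+_p$. So we only need to rule out the existence of any such curve.

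The plan here is a dimension argument. Because $\Fbot\in\Fix^\WG$, we have $\dim\Fbot=\dim W^-_\Fbot=n$, so $T_p\Fbot$ is Lagrangian. Hence $\dim T^+_p\oplus T^-_p=n$ and $T^-_p$ is dual to part of $T^+_p$. Moreover, $W^+_\Fbot$ is the open cell of dimension $2n$, so $\dim W^+_p=n$ and $T^-_p=0$. Thus $\Fbot$ is a Lagrangian fixed component and $W^+_\Fbot\to\Fbot$ has $n$-dimensional fibres. Now $W^+_p\cap\Core$ is a closed conic subset of $W^+_p\cong T^+_p$. If it were positive-dimensional, it would contain $\T$-orbits flowing to other components $F'$, and then $\Core_{F'}$ would meet $\Fbot$. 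I would rule this out as follows. $\Fbot$ is an irreducible component of $\Core$ of dimension $n$, and $\Core$ is equidimensional by \cref{prop:irr-equal-pi0}. The set $W^+_\Fbot\cap\Core$ is then a closed subset containing $\Fbot$ as a component, and it should equal $\Fbot$: the restriction of $\theta$ to $W^+_\Fbot\cong$ a vector bundle over $\Fbot$ is fibrewise a $\T$-equivariant map $T^+_p\to\Base$ between spaces of the same dimension. We need this map to be finite, i.e.\ to have zero fibre equal to $\{0\}$.

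This last step is the main obstacle. I would try to establish finiteness from the flatness of $\theta$ (\cref{cor:flatness}) together with the fact that $\Fbot\subset\Core$ has codimension $0$ inside $\Core$ near $p$. If $W^+_p\cap\Core$ contained a curve through $p$, then near $p$ the core would contain points of $W^+_\Fbot\setminus\Fbot$. Those points lie in components $\Core_{F'}$ passing through $p$, so $F'\preceq\Fbot$, contradicting the minimality of $\Fbot$ unless $F'=\Fbot$. But $\Core_\Fbot=\Fbot$ is $\T$-fixed, which rules out $F'=\Fbot$ as well.
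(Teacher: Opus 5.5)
Your first two claims are fine. In the first, your ampleness argument excluding a $\T$-curve from $F$ back to $F$ fills in a detail the paper leaves implicit; the paper simply cites \cref{prop:v-stable-lf-push}(2). In the second, bounding $\dim W^+_p\leq n$ via finiteness of $\theta|_{W^+_p}$ is equivalent to the paper's semicontinuity argument.

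The third claim has a genuine gap. You set out to show that \emph{no} point of $\Fbot$ is the source of a $\T$-curve, i.e.\ that $W^+_{\Fbot}\cap\Core=\Fbot$. This is false in general. In $S_{\mathbb{Z}/2}$ the bottom component is the central $\mathbb{P}^1$, and the four points where the outer $\mathbb{P}^1$'s attach are sources of $\T$-curves, hence wobbly. Likewise $I_c^{(n)}\in\Fbot\subset\Hilb^n(T^*E)$ is wobbly for $n\geq 2$, since there is a $\T$-curve from it to $I_c^{(1^n)}$. Recall that a component counts as very stable as soon as it contains \emph{one} very stable point. Moreover, the step meant to produce the contradiction runs the order backwards. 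If $\Core_{F'}=\overline{W^-_{F'}}$ passes through $p\in\Fbot$, the definition gives $\Fbot\preceq^- F'$, hence $\Fbot\preceq F'$. That is exactly what minimality of $\Fbot$ predicts, so nothing is contradicted.

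The missing idea is to aim only for a \emph{general} point of $\Fbot$. Since $W^-_{\Fbot}=\Fbot$ and $\Fbot\in\Fix^{\WG}$, the smooth connected variety $\Fbot=\Core_{\Fbot}$ is an $n$-dimensional irreducible component of $\Core$. Every other component $\Core_F$ is also $n$-dimensional, so $Z\coloneqq\bigcup_{F\neq\Fbot}(\Core_F\cap\Fbot)$ is a proper closed subset of $\Fbot$. Suppose $p\in\Fbot\setminus Z$ were the source of a $\T$-curve $f$. Then $f(1)$ is a non-fixed point of $W^-_{F'}$, where $F'$ is the component containing $f(\infty)$. The irreducible closed set $\overline{W^-_{F'}}$ contains $p$ and lies in some component $\Core_{F''}$. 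Here $F''\neq\Fbot$, because $\Fbot$ is pointwise fixed while $f(1)$ is not. Hence $p\in\Core_{F''}\cap\Fbot\subset Z$, a contradiction, so $p$ is very stable by \cref{prop:v-stable-lf-push}. This is the paper's argument.
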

\begin{proof}
    The first claim follows immediately from \cref{prop:v-stable-lf-push}(2).
    For the second claim, let $p\in F$ be very stable.
    As the dimension of $\theta^{-1}(a)\cap W^+_p$ is upper semi-continuous in $a\in \Base$, we have $\dim(W^+_p\cap \Core)\geq \dim(W^+_p)-n$.
    In particular, \cref{prop:flows-Lagr} implies that unless $F\in \Fix^\WG$, the intersection $W^+_p\cap \Core$ cannot be zero-dimensional.
    Finally, for the last claim, note that the only weights of $\Fbot$ are 
    ($\omega$-dual) $0$ and $k$, thus $\dim \Fbot = n$. 
    Since $\dim \Core_F =\dim \Fbot=n$ for any $F\in \Fix^\WG\setminus \Fbot$,  intersection $\Core\cap \Fbot$ has positive codimension in $\Fbot$.
    This implies that a general point $p\in \Fbot$ is very stable.
\end{proof}

Given a fixed point $p\in F$, $F \in \Fix$, denote
\begin{equation}\label{eq:HH-eq-mult}
    \mu_F(t) \coloneqq\frac{\chi_t(\Sym T^+_p)}{\chi_t(\Sym \Base)}\in \mathbb{Z}(t).
\end{equation}
By \cref{prop:v-stable-lf-push} and equivariant integration, if $p$ is very stable, $\theta_*\mathcal{O}_{W^+_p}$ is a (locally) free sheaf of $\T$-graded rank $\mu_F(t)$; in particular, $\gamma_p=\mu_F(1)$.

\begin{prop}\label{prop:HH-mult}
    For any $F\in \Fix^\WG$ and $p\in F$, we have $\gamma_p \leq \mu_F(1)$.
    Moreover, the equality holds if and only if $p$ is very stable.
\end{prop}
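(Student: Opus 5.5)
The plan is to compare the finite generic fibre count $\gamma_p$ with the graded rank of the pushforward of $\mathcal{O}$ of a suitable finite model of the upward flow. Set $V\coloneqq W^+_p\simeq T^+_p$, an affine $\T$-space with positive weights. Write $R\coloneqq \Gamma(V,\mathcal{O}_V)=\Sym (T^+_p)^\vee$, graded so that $\chi_t(R)=\chi_t(\Sym T^+_p)$ (with the sign convention of \eqref{eq:HH-eq-mult}). The map $\theta|_V$ gives a graded ring homomorphism $\theta^*: A\coloneqq \C[\Base]\to R$. We may assume $\theta|_V$ is dominant. By \cref{lem:gen-finite} and the weight gap condition ($\dim V=n=\dim\Base$), it is then generically finite, and the generic transversal count of points equals the generic degree of $V\to\Base$ restricted to the part of $V$ lying over $\Base^\circ$.

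\textbf{Step 1: the algebraic inequality.} Consider the ideal $\mathfrak{m}_0R$ generated by the images of the coordinate functions of $\Base$. Since all gradings are positive, graded Nakayama applies. If $R/\mathfrak{m}_0R$ is finite-dimensional, equivalently if $V\cap\Core=V\cap\theta^{-1}(0)$ is the single point $p$, then the $n$ homogeneous generators form a regular sequence in the polynomial ring $R$. Hence $R$ is a free graded $A$-module with Hilbert series $\chi_t(R)/\chi_t(\Sym\Base)=\mu_F(t)$, and its rank equals $\mu_F(1)=\deg(V\to\Base)$. This is the very stable case of \cref{prop:v-stable-lf-push}. In that case $V\to\Base$ is finite, so no points escape to infinity, and generically $\gamma_p=\deg=\mu_F(1)$.

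\textbf{Step 2: the wobbly case.} If $p$ is wobbly, then $V\cap\Core$ is a positive-dimensional cone (it contains a $\T$-orbit). I would then pass to a graded Noether normalization. Choose the image of $A$ and note that $R$ is not finite over $A$. Let $\overline V\subset\Hitch$ be the closure. It is proper over $\Base$ because $\theta$ is proper, and its generic fibre cardinality $\deg(\overline V\to\Base)$ equals $\mu_F(1)$. One sees this by computing $\deg$ via the $\T$-equivariant pushforward of $\mathcal{O}_{\overline V}$: flatness over an open set together with equivariant localization to the origin of $\Base$ gives the Hilbert series ratio, which specializes to $\mu_F(1)$ at $t=1$. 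Alternatively, compactify $V$ weightedly and count intersection with a generic fibre via the multiplicity $e(\mathfrak{m}_0,R)=\prod(\text{weights of }T^+_p)/\prod e_i=\mu_F(1)$ (Bézout for weighted homogeneous systems). The generic fibre of $\overline V$ splits into points of $V$, which number $\gamma_p$, and points of $\overline V\setminus V$. So $\gamma_p\le\mu_F(1)$. Equality fails precisely when the boundary $\overline V\setminus V$ dominates $\Base$, or when the weighted Bézout count has solutions at infinity.

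\textbf{Main obstacle.} The hard step is showing that in the wobbly case the count is \emph{strictly} smaller, i.e.\ that the positive-dimensional component of $V\cap\theta^{-1}(0)$ forces a genuine loss. In weighted-Bézout language, the homogeneous system $\theta^*x_1=\dots=\theta^*x_n=0$ on $V$ has a positive-dimensional solution set at the origin cone. That set corresponds to solutions on the hyperplane at infinity of the weighted projective compactification $\mathbb{P}(T^+_p\oplus\C)$. By the excess intersection / conservation of number principle, these solutions absorb a positive part of the total $\mu_F(1)$. I would make this rigorous by degenerating a generic fibre $\theta^{-1}(ta)$ as $t\to 0$: the $\T$-action identifies $V\cap\theta^{-1}(ta)$ with $V\cap\theta^{-1}(a)$ rescaled. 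Some of the $\mu_F(1)$ intersection points of the compactified system must limit onto the infinite part, since the finite part degenerates to a positive-dimensional cone that is connected to infinity. Hence they are not in $V$, giving $\gamma_p<\mu_F(1)$.
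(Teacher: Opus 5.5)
Your Step 1 (the very stable case) is correct and matches the paper: a homogeneous regular sequence makes $R$ free over $A$ of graded rank $\mu_F(t)$, so $\gamma_p=\mu_F(1)$.

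Step 2, however, rests on a false claim: that $\deg(\overline V\to\Base)=\mu_F(1)$ for the closure $\overline V$ of $V=W^+_p$ in $\Hitch$. The set $V$ is open in $\overline V$ and $\dim V=n$, so the boundary $\overline V\setminus V$ has dimension $<n$ and can never dominate $\Base$. Hence the generic fibre of $\overline V\to\Base$ lies entirely in $V$, and $\deg(\overline V\to\Base)=\gamma_p$ always. Your claim would therefore force $\gamma_p=\mu_F(1)$ for every $p$, which is exactly what the proposition denies for wobbly $p$.

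A concrete counterexample is an intermediate fixed point of $S_{\Z/3}$, with weights $-1|2$ and local equation $xy^2$. There $\mu_F(1)=3/2$ is not even an integer, while $\overline V$ is a $\T$-curve inside the core and $\gamma_p=0$. The justification breaks because in the wobbly case $R=\C[T^+_p]$ is not finite over $A=\C[\Base]$. So $\chi_t(R)/\chi_t(\Sym\Base)$ at $t=1$ is not the rank of any module. Moreover, localizing $\theta_*\mathcal{O}_{\overline V}$ picks up every fixed point of $\overline V$, not only $p$.

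Your weighted-Bézout alternative has two further problems. The ratio is inverted: it should be $\mu_F(1)=\prod e_i/\prod a_i$. And $e(\mathfrak m_0,R)$ is undefined when $\mathfrak m_0R$ is not primary to the irrelevant ideal, which happens precisely in the wobbly case.

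The strictness, which you rightly call the main obstacle, remains heuristic. The $\T$-degeneration you describe sends the solutions of $\theta|_V=ta$ to $p$ as $t\to0$, not to infinity, so it shows nothing escaping. An excess component at infinity also needs a positivity input, not just ``conservation of number''.

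Your compactification idea can be made rigorous as follows. Pull back along $u\mapsto(u_1^{a_1},\dots,u_n^{a_n})$, so that the equations $\theta^*x_i(u^{a})-c_iz^{e_i}$ become ordinary homogeneous polynomials of degree $e_i$ on $\mathbb{P}^n$. For generic $c$, the affine solutions are $\gamma_p\prod a_i$ isolated points. When $p$ is wobbly, any nonzero point of $V\cap\theta^{-1}(0)$ produces a nonempty component at $z=0$. Fulton's refined Bézout theorem then gives $\gamma_p\prod a_i<\prod e_i$, which yields both the inequality and its strictness.

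The paper stays purely algebraic instead. It identifies $\gamma_p$ with the generic rank of $R$ over $A$. Then $\mu_F(1)-\gamma_p$ is the normalized leading term at $t=1$ of the Hilbert series of the $A$-torsion in $R/A$, which is nonnegative. In the wobbly case the paper exhibits enough torsion — monomials times powers of a function nonconstant along a $\T$-orbit in $V\cap\Core$ — to force a pole of order $n$.
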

\begin{proof}
    Thanks to \BB decomposition, we can see the restriction of $\theta$ to $W^+_p$as a map $T^+_p \simeq W^+_p \to \Base$.
    Denote the equivariant weights of $T^+_p$ by $a_1,\ldots,a_n$.
    On the rings of functions, this map is given by a homogeneous map
    \[
        \mathbb{C}[\Base] = \mathbb{C}[x_1,\ldots,x_n]\xrightarrow{\theta^*} \mathbb{C}[y_1,\ldots,y_n] = \mathbb{C}[T^+_p],
    \]
    where $\deg x_i = e_i$, $\deg y_i = a_i$ (recall weights on $\Base$ are denoted by $e_i$).
    By definition, we have
    \[
        \mu_F(1) = \left.\frac{(1-t^{e_1})\ldots(1-t^{e_n})}{(1-t^{a_1})\ldots(1-t^{a_n})}\right|_{t=1} = \frac{\prod_i e_i}{\prod_i a_i}>0.
    \]
    If $\theta^*$ is not injective, then $p$ is wobbly by \cref{prop:v-stable-lf-push}, and $\theta|_{T^+_p}$ is not dominant, so that $\gamma_p=0$.
    We therefore assume from now on that $\theta^*$ is injective.

    Consider $\mathbb{C}[T^+_p]$ as a $\mathbb{C}[\Base]$-module, and let $M = \mathbb{C}[T^+_p]/\mathbb{C}[\Base]$, and $N$ the torsion submodule of $M$.
    By \cref{lem:gen-finite}, $\mathbb{C}[T^+_p]$ is generically finitely generated and locally free, and so $M'=M/N$ is finitely generated torsion free.
    We can find a (locally) free graded $\mathbb{C}[\Base]$-module $M''\supset M'$ of the same rank.
    Both $M''/M'$ and $N$ are supported non-generically on $\Base$; moreover, since $M''/M'$ is manifestly finitely generated, its Hilbert series $\chi_t(M''/M')$ have a pole of order strictly smaller than $n$ at $t=1$.
    As a consequence,
    \begin{align*}
        \gamma_p & = 1+\rk_{\mathbb{C}[\Base]} M''
        = 1+\left.\frac{\chi_t(M'')}{\chi_t(\Base)}\right|_{t=1}
        = 1+\left.\frac{\chi_t(M'') - \chi_t(M''/M')}{\chi_t(\Base)}\right|_{t=1}\\
        & = 1+\left.\frac{\chi_t(M) - \chi_t(N)}{\chi_t(\Base)}\right|_{t=1}
        \leq 1+\left.\frac{\chi_t(M)}{\chi_t(\Base)}\right|_{t=1}
        = \left.\frac{\chi_t(\mathbb{C}[T^+_p])}{\chi_t(\Base)}\right|_{t=1}
        = \mu_F(1).
    \end{align*}
    This proves $\gamma_p\leq \mu_F(1)$.
    It remains to show that the inequality is strict for wobbly $p$.
    In this case, $T^+_p$ contains a one-dimensional $\T$-orbit.
    In terms of rings of functions, it means that there exists a homogeneous $f\in \mathbb{C}[T^+_p]$ such that the induced map of vector spaces $\mathbb{C}[f] \to M$ is injective.
    Let us denote $d = \deg(f)$.
    There exists $1\leq j\leq n$ such that the set $\{y_1,\ldots,y_{j-1},f,y_{j+1},\ldots,y_n\}$ is algebraically independent in $\mathbb{C}[T^+_p]$; for notational convenience, let us assume $j=n$.
    Substituting $f$ with its positive power, we can assume that $\theta^*(x_i)\not\in (f^2)$ for all $i$.
    Nevertheless, for transcendence degree reasons $\mathbb{C}[\Base]\cap (f)\neq \{0\}$.
    This implies that for any monomial $q= q(y_1,\ldots,y_{n-1})$ of $y$-degree $k$, the element $qf^{k+1}\in M$ is torsion, which allows us to estimate $\chi_t(N)$.
    Let us write $\prod_{i=1}^{n-1} (1-t^{a_i})^{-1} = \sum_{i} b_i t^i$; then
    \begin{align*}
        \chi_t(N) \geq \sum_{k\geq 0} t^{d{k+1}}\sum_{i=0}^k b_it^i = \sum_{i\geq 0} \frac{b_it^{(i+1)d+i}}{1-t^d} = \frac{t^d}{1-t^d} \prod_{i=1}^{n-1}(1-t^{a_i})^{-1}.
    \end{align*}
    In particular, $\chi_t(N)$ has a pole of order $n$ at $t=1$.
    Therefore, $\chi_t(N)/\chi_t(\Base)|_{t=1}$ is strictly positive, and so the inequality above is strict.
\end{proof}

\section{Equivariant multiplicities}\label{sec:mult}
In this section, we give a geometric interpretation of equivariant multiplicities from~\cite{hausel2022very}, collect some of their structural properties, and state several conjectures.

\subsection{$K$-theoretic prerequisites}
Given a $\T$-scheme $X$, we denote the Grothendieck group of $\T$-equivariant coherent sheaves on $X$ with $\mathbb{Q}$-coefficients by $K^\T(X)$\footnote{This group is usually denoted by $G_0(X)_\mathbb{Q}$, but our abuse of notation is standard in the geometric representation theory community.}.
By pullback along the projection to a point, this group is always a $K^\T(\mathrm{pt})$-module. 
We use a slightly unusual convention that $K^\T(\mathrm{pt}) = \mathbb{Q}[t^{\pm 1}]$, where $t$ is the class of weight $-1$ character of $\T$.
The $\T$-scheme $X$ is \textit{(equivariantly) formal} if $K^\T(X)\simeq K(X)\otimes K^\T(\mathrm{pt}) = K(X)[t^{\pm 1}]$; for example, $X$ is always formal if the $\T$-action is trivial, and the total space of a $\T$-equivariant vector bundle over a formal space is formal.
When $X$ is smooth, every coherent sheaf on $X$ admits a finite length resolution by vector bundles.
Assigning to each vector bundle its rank, we get a $\mathbb{Q}$-linear map $K(X)\to \mathbb{Q}$ in non-equivariant $K$-theory.
In particular, when $X$ is smooth and formal we obtain a $\mathbb{Q}[t^{\pm 1}]$-linear map
\[
    \rk: K^\T(X) \simeq K(X)[t^{\pm 1}] \to \mathbb{Q}[t^{\pm 1}].
\]

\begin{lm}\label{lem:rank-restr}
    Let $X$ be a smooth scheme with a trivial $\T$-action, $\pi:E\to X$ a $\T$-equivariant vector bundle, and $s:X\to E$ the zero section.
    For any $M \in K^\T(E)$, we have $\rk M = \rk s^*(M)$.
    Moreover, if $V$ is a $\T$-equivariant vector bundle on $E$, we have $\rk[V] = \chi_t(V_{p})$ for any $p\in X$.
\end{lm}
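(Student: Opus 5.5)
The plan is to use that $E$ is equivariantly homotopic to its zero section and that pullback along $s$ commutes with taking ranks. Since $X$ is smooth with trivial $\T$-action, $K^\T(X) \simeq K(X)[t^{\pm 1}]$. By the remark preceding the lemma, $E$ is formal, and the relevant identification is $K^\T(E)\simeq K^\T(X)$ via $\pi^*$.

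\emph{Step 1: $\pi^*$ is an isomorphism with inverse $s^*$.} The map $\pi^*: K^\T(X)\to K^\T(E)$ is an isomorphism by equivariant Thom isomorphism/homotopy invariance of $G$-theory for vector bundles. Since $s^*\pi^* = \mathrm{id}$ (as $\pi\circ s = \mathrm{id}_X$), its inverse is $s^*$. Here $s^*$ is the derived pullback, which is well defined on $K$-theory because $E$ is smooth, so sheaves have finite locally free resolutions.

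\emph{Step 2: compatibility with ranks.} Resolve $M$ equivariantly by vector bundles $V_\bullet$ on $E$; such resolutions exist on smooth quasi-projective $\T$-schemes by Thomason's resolution property. Then $s^*M = \sum(-1)^i [s^*V_i]$. Under the identification $K^\T(E)\simeq K(E)[t^{\pm1}]$, the rank of an equivariant vector bundle is its $\T$-graded rank. A $\T$-equivariant vector bundle $V$ on $E$ has locally constant graded fiber character along the connected components of $E$, since the weight decomposition varies continuously and discrete data are locally constant. Every point $y\in E$ lies in the same connected component as $s(\pi(y))$, namely the fiber through the origin. Hence $\rk[V]=\rk[s^*V]$ for each bundle, and by additivity $\rk M = \rk s^*M$.

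\emph{Step 3: the second claim.} For $V$ a vector bundle on $E$ and $p\in X$, apply Step 2 to get $\rk[V] = \rk[s^*V]$. On $X$ the torus acts trivially, so $s^*V$ splits as a direct sum of weight subbundles $\bigoplus_i (s^*V)_i\otimes \C_i$. Its rank in $K(X)[t^{\pm1}]$ is therefore $\sum_i \rk (s^*V)_i\, t^{\pm i}$, which is exactly $\chi_t(V_{s(p)})$ by definition of $\chi_t$, with sign conventions matched to $t$ being the weight $-1$ character. We identify $p$ with $s(p)$.

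The main subtlety is making precise what "$\rk$" means on $K^\T(E)$, i.e. that the formal identification $K^\T(E)\simeq K(E)[t^{\pm1}]$ is the one coming from $\pi^*$ together with weight splitting on $X$. Once $\rk$ is defined this way, the first claim is essentially tautological. The real content is the local constancy of the equivariant fiber character, which is what identifies the result with $\chi_t$ at an arbitrary point.
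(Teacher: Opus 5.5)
Your overall strategy is the paper's: the Thom isomorphism $\pi^*:K^\T(X)\simto K^\T(E)$ with inverse $s^*$. Your closing remark, that once $\rk$ on $K^\T(E)$ is read through this identification the first claim is tautological, is essentially the paper's proof. The paper notes that $K^\T(E)$ is spanned by $\pi^*$ of non-equivariant bundles on $X$ twisted by characters, and that pullback preserves rank. Your Step 3 is also fine; it is the paper's ``analogous argument'' for the second claim.

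\textbf{Problem with Step 2.} The justification you give there does not work.
\begin{enumerate}
\item A $\T$-equivariant bundle $V$ on $E$ has a well-defined graded fiber character only at $\T$-fixed points. At a point $y\in E$ off $E^\T$, the stabilizer is finite and $V_y$ is not a $\T$-module. This covers every point off the zero section when $E$ has no weight-zero summand, which is exactly the case $E=W^-_F$ where the lemma is later applied. So ``local constancy of the graded fiber character along connected components of $E$'' has no meaning there.
\item The preceding assertion, that under $K^\T(E)\simeq K(E)[t^{\pm 1}]$ the rank of an equivariant bundle is its graded rank, is essentially the second claim of the lemma. Using it to prove the first claim, and then deriving the second claim from the first in Step 3, is circular.
\end{enumerate}

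\textbf{How to fix it.} Drop the fiberwise reasoning; the equivariant resolution via Thomason is also unnecessary. Write $M=\pi^*N$ with $N=s^*M=\sum_k t^k N_k$ and $N_k\in K(X)$. Then
\[
\rk M=\sum_k t^k \rk(\pi^*N_k)=\sum_k t^k\rk N_k=\rk s^*M,
\]
using only that flat pullback of a locally free resolution preserves ranks. This is precisely the paper's argument.

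As in the paper, connectedness of $X$ is tacitly needed for $\rk$ to be a single Laurent polynomial, and hence for ``$\rk[V]=\chi_t(V_p)$ for any $p\in X$''.
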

\begin{proof}
    By Thom isomorphism, $K^\T(E)$ is spanned over $\mathbb{Q}[t^{\pm 1}]$ by classes $\pi^*(V)$, where $V$ is a (non-equivariant) vector bundle over $X$.
    We have 
    \[
        \rk s^*\pi^*(V) = \rk V = \rk s^*(V),
    \]
    and so the first claim follows by linearity of rank.
    For the second claim, note that $\chi_t(V_p) = [i^* V]$, where $i:\{p\}\hookrightarrow E$ is the inclusion, and use an analogous argument.
\end{proof}

For any closed embedding of schemes $Y\subset X$, denote by $C_{Y/X}$ the normal cone.
We have the deformation to normal cone $D_{Y/X}$, which is a flat family over $\mathbb{C}$ interpolating between $X$ and $C_{Y/X}$:
\[
    \begin{tikzcd}[row sep=small, column sep=small]
        C_{Y/X}\ar[r,hook]\ar[d] & D_{Y/X}\ar[d] & X\times \mathbb{C}^*\ar[d]\ar[l,hook']\\
        0\ar[r] & \mathbb{C} & \mathbb{C}^*\ar[l]
    \end{tikzcd}
\]
Let $\mathcal{I}\subset \mathcal{O}_X$ be the defining sheaf of ideals of $Y$.
Given a coherent sheaf $\mathcal{F}$ on $X$, its pullback to $X\times \mathbb{C}^*$ can be extended to a sheaf $\widetilde{\mathcal{F}}$ on $D_{Y/X}$, flat over $\mathbb{C}$, such that its fiber at $0$ is
\[
    \sp_Y(\mathcal{F}) \coloneqq \bigoplus_{k\geq 0} \mathcal{I}^k\mathcal{F}/\mathcal{I}^{k+1}\mathcal{F}.
\]
One can check that the assignment $[\mathcal{F}]\mapsto[\sp_Y(\mathcal{F})]$ defines a $\mathbb{Q}[t^{\pm 1}]$-linear map $\sp_Y:K^\T(X)\to K^\T(C_{Y/X})$, see e.g.~\cite[§5.3]{CGi97}.

Finally, let us recall some facts about equivariant localization.
For any $\T$-variety $X$, we denote $K^\T(X)_\loc \coloneqq K^\T(X)\otimes_{\mathbb{Q}[t^{\pm 1}]} \mathbb{Q}(t)$.
Pushforward along the inclusion $i_X: X^\T \hookrightarrow X$ induces the map 
\[
    (i_X)_*^\loc : K^\T(X^\T)_\loc \to K^\T(X)_\loc.
\]
The localization theorem states that this map is an isomorphism; the proof is essentially the same as for Chow groups~\cite[§2.3, Cor.~2]{brion1997equivariant}, see e.g.~\cite[Prop.~A.13]{minets2020cohomological} for details.

Consider a closed embedding of $\T$-varieties $f:Y \hookrightarrow X$, such that $Y^\T = X^\T$.
We have the following commutative square:
\[
    \begin{tikzcd}[row sep=small, column sep=small]
        K^\T(Y)\ar[r,"f_*"]\ar[d,"l_Y"] & K^\T(X)\ar[d,"l_X"]\\
        K^\T(Y)_\loc\ar[r,"f_*^\loc"] & K^\T(X)_\loc
    \end{tikzcd}
\]
By the localization theorem, the bottom map is an isomorphism, so that we can define 
\[
    \loc_Y \coloneqq (f_*^\loc)^{-1}\circ l_X: K^\T(X)\to K^\T(Y)_\loc.
\]
If $K^\T(Y)$ is $\mathbb{Q}[t^{\pm 1}]$-torsion free (for instance, when $Y$ is formal), the map $l_Y$ is injective.
Then $f_*$ is also injective, and so $\loc_Y$ can be thought of as a left inverse of $f_*$.
For instance, when both $X$ and $Y$ are smooth, \cite[Prop.~5.4.10]{CGi97} implies that
\begin{equation}\label{eq:loc-smooth}
    \loc_Y(-) = \frac{f^*(-)}{[\wedge^\bullet (N_Y X)^\vee]}
\end{equation}
where $[\wedge^\bullet E] = \bigoplus_{i=0}^{\rk E} [\wedge^i E]$ for a vector bundle $E$.

\subsection{Equivariant multiplicity of coherent sheaves}\label{subs:em-cohsh}
Let $Y\subset X$ be a locally closed embedding of $\T$-schemes, such that for any $y\in Y\cap X^\T$ the normal cone at $y$ has no directions preserved by $\T$.
Then $(C_{Y/X})^\T \subset Y$.
For example, this condition is satisfied when $X$ is smooth, and the intersection $X^\T\cap Y$ is a collection of connected components of $X^\T$.

\begin{de}\label{def:em-class}
    Keep the notations above.
    For $\mathcal{E}\in \Coh X$, we define the equivariant multiplicity class of $\mathcal{E}$ 
    along $Y$ by
    \[ 
        \mu(Y;\mathcal{E}) \coloneqq \loc_Y\circ \sp_Y([\mathcal{E}]) \in K^\T(Y)_\loc.
    \]
\end{de}

This definition is closely related to the notion of equivariant multiplicity of Brion and Rossmann.
Namely, let us replace $K^\T(X)$ by the equivariant Chow group $A_*^\T(X)$, and set $Y = \{x\}$ to be a fixed point of $X$ with non-zero tangent weights.
Even though it is stated in slightly different language, one can check that our definition then specializes to equivariant multiplicities in \cite[§4.2]{brion1997equivariant}.

For our purposes, it is important to study the generic behaviour of multiplicity.
Let $\mathcal{E}\in \Coh X$, and consider the maximal sheaf $\mathcal{T}_Y\subset\sp_Y(\mathcal{E})$ such that $\Supp(\mathcal{T}_Y) \cap Y \neq Y$.
Since the supports of all subsheaves of $\sp_Y(\mathcal{E})$ are conical, we can alternatively define $\mathcal{T}_Y$ as the subsheaf of $\mathcal{O}_Y$-torsion.
We denote $\sp_Y(\mathcal{E})_{\mathrm{tf}}\coloneqq \sp_Y(\mathcal{E})/\mathcal{T}_Y$.

\begin{de}\label{def:gen-em-class}
    We define the generic equivariant multiplicity class of $\mathcal{E}$ 
    along $Y$ by
    \[ 
        m(Y;\mathcal{E}) \coloneqq \loc_Y([\sp_Y(\mathcal{E})_{\mathrm{tf}}]) \in K^\T(Y)_\loc.
    \]
\end{de}

Since both $\loc_Y$ and $\sp_Y$ descend to $K$-theory, the assignment $[\mathcal{E}]\mapsto [\mu(Y;\mathcal{E})]$ defines a map of $\mathbb{Q}[t^{\pm 1}]$-modules $K^\T(X)\to K^\T(Y)_\loc$.
For instance, for any finite-dimensional $\T$-vector space $V$ we have $\mu(Y;V\otimes\mathcal{E}) = \chi_t(V)\mu(Y;\mathcal{E})$.
On the other hand, the operation $(-)_{\mathrm{tf}}$ is not preserved under taking extensions; therefore $m(Y;\mathcal{E})$ depends on $\mathcal{E}$ itself, and not its class in $K$-theory.

It is elementary to check that both $m$ and $\mu$ are compatible with direct products:
\begin{lm}\label{lem:shv-eq-mult-product}
    Let $Y_i\subset X_i$, $\mathcal{E}_i\in \Coh X_i$ be as in \cref{def:em-class} for $i=1,2$.
    Then the following equalities hold in $K^\T(Y_1\times Y_2)_\loc$:
    \[
        \mu(Y_1\times Y_2;\mathcal{E}_1\boxtimes \mathcal{E}_2) = \mu(Y_1;\mathcal{E}_1)\boxtimes \mu(Y_2;\mathcal{E}_2), \quad m(Y_1\times Y_2;\mathcal{E}_1\boxtimes \mathcal{E}_2) = m(Y_1;\mathcal{E}_1)\boxtimes m(Y_2;\mathcal{E}_2).
    \]
\end{lm}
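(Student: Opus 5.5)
The plan is to reduce both equalities to the compatibility of each of the three ingredients (deformation to the normal cone, the torsion-free quotient, and localization) with external products, and then compose.

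\emph{Specialization.} Let $\mathcal{I}_i\subset\mathcal{O}_{X_i}$ be the ideal of $Y_i$. The ideal of $Y_1\times Y_2$ is $\mathcal{I}=\mathcal{I}_1\boxtimes\mathcal{O}+\mathcal{O}\boxtimes\mathcal{I}_2$, so $\mathcal{I}^k=\sum_{a+b=k}\mathcal{I}_1^a\boxtimes\mathcal{I}_2^b$. The normal cone satisfies $C_{Y_1\times Y_2/X_1\times X_2}=C_{Y_1/X_1}\times C_{Y_2/X_2}$, since the associated graded of a tensor product of filtered algebras over $\C$ is the tensor product of the associated graded algebras. The same argument, applied to $\mathcal{F}_1\boxtimes\mathcal{F}_2$ with the filtrations $\mathcal{I}_i^a\mathcal{F}_i$, gives an isomorphism
\[
\sp_{Y_1\times Y_2}(\mathcal{E}_1\boxtimes\mathcal{E}_2)\simeq\sp_{Y_1}(\mathcal{E}_1)\boxtimes\sp_{Y_2}(\mathcal{E}_2).
\]
To prove it, work locally on affines, where it becomes a statement about filtered modules over a field. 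Choose bases of $M_i$ compatible with the filtrations (splittings over $\C$); then the product filtration $\sum_{a+b=k}F^aM_1\otimes F^bM_2$ has associated graded $\bigoplus_{a+b=k}\gr^a\otimes\gr^b$. The isomorphism is $\T$-equivariant.

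\emph{Localization.} The inclusion $f=f_1\times f_2$ satisfies $f_*(\alpha_1\boxtimes\alpha_2)=f_{1*}\alpha_1\boxtimes f_{2*}\alpha_2$. The map $l_X$ is also multiplicative on external products after tensoring over $\mathbb{Q}[t^{\pm1}]$. Hence $\loc_{Y_1\times Y_2}(\beta_1\boxtimes\beta_2)=\loc_{Y_1}\beta_1\boxtimes\loc_{Y_2}\beta_2$, because $(f^{\loc}_*)^{-1}$ of a product is the product of the inverses. This uses $(C_1\times C_2)^\T=C_1^\T\times C_2^\T\subset Y_1\times Y_2$ for the diagonal action, which holds. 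The formula for $\mu$ follows by composing the two steps.

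\emph{Torsion-free part (the main obstacle).} For $m$ it remains to show
\[
(\mathcal{S}_1\boxtimes\mathcal{S}_2)_{\mathrm{tf}}=(\mathcal{S}_1)_{\mathrm{tf}}\boxtimes(\mathcal{S}_2)_{\mathrm{tf}},\qquad \mathcal{S}_i=\sp_{Y_i}(\mathcal{E}_i).
\]
Since $\boxtimes$ over $\C$ is exact, the exact sequences $0\to\mathcal{T}_i\to\mathcal{S}_i\to\mathcal{S}_{i,\mathrm{tf}}\to0$ show that the kernel of $\mathcal{S}_1\boxtimes\mathcal{S}_2\to\mathcal{S}_{1,\mathrm{tf}}\boxtimes\mathcal{S}_{2,\mathrm{tf}}$ is $\mathcal{T}_1\boxtimes\mathcal{S}_2+\mathcal{S}_1\boxtimes\mathcal{T}_2$. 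Its support is contained in $(\Supp\mathcal{T}_1\times C_2)\cup(C_1\times\Supp\mathcal{T}_2)$, which meets $Y_1\times Y_2$ in a proper subset on each component, so this kernel is torsion.

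Conversely, we must show that $\mathcal{S}_{1,\mathrm{tf}}\boxtimes\mathcal{S}_{2,\mathrm{tf}}$ has no subsheaf whose support meets $Y_1\times Y_2$ properly. We use the characterization that $\mathcal{T}_Y$ is the $\mathcal{O}_Y$-torsion. A product of two modules that are torsion-free over $\mathcal{O}_{Y_1}$ and $\mathcal{O}_{Y_2}$ respectively is torsion-free over $\mathcal{O}_{Y_1}\otimes\mathcal{O}_{Y_2}$ when the $Y_i$ are integral (or at least reduced). To see this, embed each factor into its generic fibre (localization at the generic points of $Y_i$); the product of the localizations is a module over $K(Y_1)\otimes K(Y_2)$, which is a domain since $\C$ is algebraically closed. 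For non-reduced or reducible $Y_i$, I would argue componentwise on associated points, using that $\mathrm{Ass}(M_1\otimes_\C M_2)$ consists of the components of products of associated points. This associated-points argument is where I expect to have to be most careful.

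Applying $\loc$ multiplicatively then gives the formula for $m$.
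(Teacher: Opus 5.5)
Your proposal is correct and carries out the verification that the paper only asserts as ``elementary to check''. You show that specialization, the torsion-free quotient and localization are each compatible with $\boxtimes$ over $\C$, and your generic-fibre argument for the torsion-free step is complete for integral $Y_i$. That is the only case used later, namely $Y=W^-_F$ in the product formula for core multiplicities, so your more tentative associated-points remark for reducible or non-reduced $Y_i$ is not needed.
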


Note that $\mu(Y;\mathcal{E})$ significantly depends on $Y$, and not just on $Y\cap X^\T$.
\begin{ex}\label{ex:C2-mult}
    Let $X = \mathbb{C}_{a_1} \oplus \mathbb{C}_{a_2}$ with $a_1,a_2\neq 0$, so that $X^\T = \{0\}$.
    Denote by $x_i$ the coordinate on $\mathbb{C}_{a_i}$, and take $\mathcal{E} = \mathcal{O}_{X}/(x_1^{b_1}x_2^{b_2})$.
    Note that by definition $[\mathcal{E}] = 1-t^{d}$ in $K^\T(X) \simeq \Q[t^{\pm 1}]$, where $d = a_1b_1+a_2b_2$.
    For $Y\in\{X, \mathbb{C}_{a_1}, \mathbb{C}_{a_2}, 0\}$, the specialization map sends $\mathcal{E}$ to $\mathcal{E}$ itself.
    Using formula~\eqref{eq:loc-smooth} and our weight conventions, we can easily see that 
    \[
        \mu(X;\mathcal{E}) = 1-t^{d},\quad \mu(\mathbb{C}_{a_1};\mathcal{E}) = \frac{1-t^{d}}{1-t^{a_2}},\quad \mu(\mathbb{C}_{a_2};\mathcal{E}) = \frac{1-t^{d}}{1-t^{a_1}},\quad \mu(\{0\};\mathcal{E}) = \frac{1-t^{d}}{(1-t^{a_1})(1-t^{a_2})}.
    \]
    For generic equivariant multiplicities, note that $\mathcal{T}_X = \mathcal{E}$ and $\mathcal{T}_{\{0\}} = \{0\}$, so that $m(X;\mathcal{E}) = 0$, $m(\{0\};\mathcal{E}) = \mu(\{0\};\mathcal{E})$.
    Furthermore, the quotient $\mathcal{E}_\mathrm{tf}$ is equal to $\mathcal{O}_{X}/(x_2^{b_2})$ for $\mathbb{C}_{a_1}$ and $\mathcal{O}_{X}/(x_1^{b_1})$ for $\mathbb{C}_{a_2}$, so that 
    \begin{equation}\label{eq:2d-m}
        m(\mathbb{C}_{a_1};\mathcal{E}) = \frac{1-t^{a_2b_2}}{1-t^{a_2}} = [b_2]_{t^{a_2}},\qquad m(\mathbb{C}_{a_2};\mathcal{E}) = \frac{1-t^{a_1b_1}}{1-t^{a_1}} = [b_1]_{t^{a_1}}.
    \end{equation}
\end{ex}

\subsection{Equivariant multiplicity of core Lagrangians}
Let us specialize the general discussion above to our situation of interest.
Namely, let $\theta: \Hitch\to \Base$ be a smooth pre-integrable system.
Every \BB stratum of the core is a vector bundle over a $\T$-invariant variety, and thus formal.
In particular, for each $F\in \Fix$ the rank map $\rk:K^\T(W^-_F)\to\mathbb{Q}[t^{\pm 1}]$ is well defined. 

\begin{de}\label{em-components}
    Set $X = \Hitch$. Given $F\in \Fix$, we also set $Y = W^-_F$. Define
    \[
        m_F(t) \coloneqq \rk m(W^-_F;\mathcal{O}_\Core),\qquad \mu_F(t) \coloneqq \rk \mu(W^-_F;\mathcal{O}_\Core).
    \]
    We say that $m_F(t)$ is the \textit{equivariant multiplicity} of $\Core_F$ in $\Core$, and $\mu_F(t)$ is the \textit{virtual equivariant multiplicity} of $\Core_F$ in $\Core$.
\end{de}

The change in terminology compared to \cref{subs:em-cohsh} can be motivated in several ways.
First, as we work with $W^-_F$ instead of $\Core_F$, both notions are in a sense generic.
Second, $m_F(t)$ is related to the classical notion of multiplicity of an irreducible component, see \cref{prop:non-eq-mult}.
Finally, note that virtual equivariant multiplicity is compatible with the function in \cref{subs:v-stable}, which was called virtual multiplicity in~\cite{hausel2022very}:

\begin{prop}\label{prop:sing-is-mu}
    Virtual equivariant multiplicity in the sense of \cref{em-components} is given by~\eqref{eq:HH-eq-mult}. 
\end{prop}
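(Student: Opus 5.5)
We have to show that $\rk \mu(W^-_F;\mathcal{O}_\Core) = \chi_t(\Sym T^+_p)/\chi_t(\Sym \Base)$ for $p\in F$. The plan is to avoid computing $\sp_Y$ and $\loc_Y$ separately. Instead we use that $\mu$ only depends on the class $[\mathcal{O}_\Core]\in K^\T(\Hitch)$, and then push everything into the smooth situation of formula~\eqref{eq:loc-smooth}.

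\textbf{Step 1: a Koszul resolution of $\mathcal{O}_\Core$.} By \cref{cor:flatness}, $\theta$ is flat, so $\mathcal{O}_\Core = \theta^*\mathcal{O}_0$ is resolved by the pullback of the Koszul complex of $0\in\Base$. This gives
\[
    [\mathcal{O}_\Core] = [\wedge^\bullet \Base^\vee]\cdot[\mathcal{O}_\Hitch] \quad\text{in } K^\T(\Hitch),
\]
where $\Base^\vee$ is a trivial bundle with $\T$-weights dual to $e_i$ and signs are included in $\wedge^\bullet$. With our convention on $t$, the scalar $[\wedge^\bullet\Base^\vee]$ should equal $\prod_i(1-t^{e_i}) = \chi_t(\Sym\Base)^{-1}$; the sign of the weights must be checked against \cref{ex:C2-mult}. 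Since $\mu(Y;-)$ is $\mathbb{Q}[t^{\pm1}]$-linear, $\mu(W^-_F;\mathcal{O}_\Core) = \chi_t(\Sym \Base)^{-1}\mu(W^-_F;\mathcal{O}_\Hitch)$.

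\textbf{Step 2: compute $\mu(W^-_F;\mathcal{O}_\Hitch)$.} Here $Y=W^-_F$ is a smooth locally closed subvariety of the smooth $\Hitch$. To make $\sp_Y$ and $\loc_Y$ meaningful, we replace $\Hitch$ by a $\T$-stable open neighbourhood $U$ in which $W^-_F$ is closed, for instance the complement of the closed set $W^\llcurly_F\setminus W^-_F$.

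For a smooth closed pair, $\sp_Y(\mathcal{O}_U) = \mathcal{O}_{N}$, where $N = N_{Y/U}$ is the normal bundle. The inclusion $Y\hookrightarrow N$ is the zero section of a smooth pair, so~\eqref{eq:loc-smooth} gives
\[
    \loc_Y[\mathcal{O}_N] = \frac{[\mathcal{O}_Y]}{[\wedge^\bullet N^\vee]} = [\Sym N^\vee]\quad\text{(formally)}.
\]
Along $F$, the bundle $N$ restricts to $T^+$, because $T_p\Hitch = T^+_p\oplus T_pF\oplus T^-_p$ and $T_pW^-_F = T_pF\oplus T^-_p$.

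\textbf{Step 3: take ranks.} Apply \cref{lem:rank-restr}, with $W^-_F$ a $\T$-equivariant vector bundle over $F$, to evaluate $\rk$ at a point $p\in F$. The rank of $[\wedge^\bullet N^\vee]$ is $\chi_t(\wedge^\bullet (T^+_p)^\vee)=\prod_i (1-t^{a_i})$, so its inverse has rank $\chi_t(\Sym T^+_p)$. Combined with Step 1, this gives $\mu_F(t)$ as in~\eqref{eq:HH-eq-mult}.

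\textbf{Main obstacle.} The delicate point is rank multiplicativity after localization. $\rk$ is defined on $K^\T(W^-_F)$, not on its localization, so it must be extended $\mathbb{Q}(t)$-linearly. We then need $\rk(1/[\wedge^\bullet N^\vee]) = 1/\rk[\wedge^\bullet N^\vee]$, which holds because rank is a ring map $K(Y)\to\mathbb{Q}$ for smooth $Y$. We also need it to be legitimate to invert $[\wedge^\bullet N^\vee]$, which is fine once the fiber weights are nonzero. Finally, the weight/dual sign conventions (character of weight $-1$ being $t$) must be checked so that both the Koszul factor and the normal factor produce $1-t^{a}$ with positive exponents. \cref{ex:C2-mult} can be used as a sanity check for these conventions.
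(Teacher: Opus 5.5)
Your proposal is correct and follows the paper's own proof essentially step by step: flatness of $\theta$ gives the Koszul identity $[\mathcal{O}_\Core] = \chi_t(\Sym\Base)^{-1}[\mathcal{O}_\Hitch]$, linearity of $\mu$ reduces to $\mathcal{O}_\Hitch$, specialization of the structure sheaf is $\mathcal{O}$ of the normal bundle, \eqref{eq:loc-smooth} gives the inverse of $[\wedge^\bullet N^\vee]$, and \cref{lem:rank-restr} evaluates the rank at $p\in F$. The additional points you raise — shrinking to an open set in which $W^-_F$ is closed, and the rank map extending to a ring homomorphism on the localization — are left implicit in the paper, and you handle them correctly.
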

\begin{proof}
    As $\theta$ is flat, we have an equality $[\mathcal{O}_\Core] = [\wedge^\bullet \Base] = \chi_t(\Sym\Base)^{-1}$ in $K^\T(\Hitch)$.
    Since this is a rational function, we can use the linearity of $\mu$:
    \[
        \mu(W^-_F;\mathcal{O}_\Core) = \frac{\mu(W^-_F;\mathcal{O}_\Hitch)}{\chi_t(\Sym\Base)}.
    \]
    Specialization preserves the structure sheaf, and localization is given by the formula~\eqref{eq:loc-smooth}.
    Fix an arbitrary point $p\in F$.
    For any vector bundle $V$ on $W^-_F$ we have $\rk V = \chi_t(V_p)$ by \cref{lem:rank-restr}, therefore
    \[
        \mu_F(t) = \frac{\rk[\wedge^\bullet (N_{W^-_F} \Hitch)^\vee]^{-1}}{\chi_t(\Base)}
        = \frac{[\wedge^\bullet (T^+_p)^\vee]^{-1}}{\chi_t(\Base)}
        = \frac{\chi_t(\Sym T^+_p)}{\chi_t(\Base)},
    \]
    and so we are done.
\end{proof}

Recall that classically, the \textit{multiplicity} $m_F$ of component $\Core_F$ in $\Core$ is defined as the length of the local ring $\mathcal{O}_{\Core,\eta_F}$, where $\eta_F$ is the generic point of $\Core_F$.
Because of its "generic" definition, equivariant multiplicity recovers the usual one.
\begin{prop}\label{prop:non-eq-mult}
    Fix $F\in \Fix^\WG$.
    We have $m_F(t)\in \mathbb{Z}[t^{\pm 1}]$, and $m_F(1) = m_F$.
\end{prop}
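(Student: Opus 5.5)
We unwind \cref{em-components} with $X=\Hitch$, $Y=W^-_F$, $\mathcal{E}=\mathcal{O}_\Core$. The normal cone $C_{W^-_F/\Hitch}$ is a cone over $W^-_F$ whose fixed locus lies in $W^-_F$. The sheaf $\sp_Y(\mathcal{O}_\Core)_{\mathrm{tf}}$ is a $\T$-equivariant coherent sheaf on the normal cone $C' \coloneqq C_{W^-_F/\Core}$, which is a closed conical subscheme of $C_{W^-_F/\Hitch}$. Its defining feature is that it has no sections supported over proper closed subsets of $W^-_F$. Since $F\in\Fix^\WG$, $\dim W^-_F=n=\dim\Core$, so $W^-_F$ is open in the component $\Core_F$.

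\textbf{Step 1: locally $\Core$ is a thickening of $W^-_F$.} Let $U\subset\Hitch$ be an open set meeting $\Core$ only inside $\Core_F\setminus\bigcup_{F'\neq F}\Core_{F'}$ (intersected with $W^-_F$). Over $U\cap W^-_F$, the reduced core equals $W^-_F$, so $\mathcal{I}$ is nilpotent in $\mathcal{O}_{\Core}|_U$. Hence the normal cone there is $\operatorname{Spec}\bigoplus_k\mathcal{I}^k/\mathcal{I}^{k+1}$, a nilpotent thickening of $W^-_F$ supported set-theoretically on $Y$. It follows that $C'$ is supported on $W^-_F$ together with pieces lying over the closed subset $Z=W^-_F\cap\bigcup_{F'\neq F}\Core_{F'}$. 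The torsion subsheaf $\mathcal{T}_Y$ kills precisely the sections living over $Z$ in the cone directions. So $\sp_Y(\mathcal{O}_\Core)_{\mathrm{tf}}$ is set-theoretically supported on $W^-_F$, i.e.\ it is the pushforward along the zero section $s$ of a coherent sheaf $\mathcal{G}$ on $W^-_F$ (with an infinite filtration by powers of $\mathcal{I}$, but finite length generically).

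\textbf{Step 2: rationality.} Because the class is pushed forward from $W^-_F$ itself, $\loc_Y$ simply returns $[\mathcal{G}]\in K^\T(W^-_F)$, with no denominators. Consequently $m_F(t)=\rk[\mathcal{G}]\in\mathbb{Z}[t^{\pm1}]$. Rationality of coefficients is no issue, since ranks of honest coherent sheaves are integers, and $\mathcal{G}$ has a finite graded filtration by the $\mathcal{I}^k/\mathcal{I}^{k+1}$, which vanish for $k\gg0$ on a dense open set. The care needed here is that $\mathcal{T}_Y$ removal must leave something finite. I would argue this via coherence: $\mathcal{G}$ is coherent on $C'$ and supported on the zero section, hence some power of the ideal of the zero section kills it, so only finitely many graded pieces occur.

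\textbf{Step 3: evaluate at $t=1$.} Setting $t=1$ forgets the equivariance, and $\rk$ becomes generic rank on $W^-_F$. The generic rank of $\mathcal{G}$ is computed at the generic point $\eta_F$, where torsion removal does nothing. There it equals $\sum_k \operatorname{length}(\mathcal{I}^k\mathcal{O}_{\Core,\eta_F}/\mathcal{I}^{k+1}\mathcal{O}_{\Core,\eta_F})$, which is the length of the Artinian local ring $\mathcal{O}_{\Core,\eta_F}$ by additivity of length along the $\mathcal{I}$-adic filtration. This is $m_F$.

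\textbf{Main obstacle.} The delicate point is Step 1/2: showing rigorously that after removing $\mathcal{O}_Y$-torsion, the sheaf $\sp_Y(\mathcal{O}_\Core)$ is supported on the zero section globally over $W^-_F$, rather than only over the open complement of $Z$. I would try to prove that any section of $\sp_Y(\mathcal{O}_\Core)$ not annihilated by a power of the zero-section ideal generates a subsheaf whose support has positive-dimensional fibers over $W^-_F$. By conicality, such a subsheaf must then lie over $Z$, because over $W^-_F\setminus Z$ the cone is nilpotent. Hence it is torsion. If $\mathcal{G}$ fails to be globally a finite thickening, one still has $\rk$ determined on a dense open subset via \cref{lem:rank-restr}, which suffices for both claims.
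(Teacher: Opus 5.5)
Your overall strategy is the paper's. You show that the torsion-free quotient $\mathcal{E}=\sp_{W^-_F}(\mathcal{O}_\Core)_{\mathrm{tf}}$ is set-theoretically supported on the zero section, so that $[\mathcal{E}]=i_*M$ with $M\in K^\T(W^-_F)$ and $\loc$ returns $M$ with no denominators. Then, at $t=1$, you identify the rank with the generic rank, i.e. the length of $\mathcal{O}_{\Core,\eta_F}$. Your $\mathcal{I}$-adic computation at $\eta_F$ is a more explicit version of the paper's ``free on an open $U$'' step.

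Where you differ is in how you get the support claim. The paper uses a dimension count: every irreducible component of $\Supp\,\mathcal{E}$ is conical, dominates $W^-_F$ by torsion-freeness, and has dimension at most $\dim\Core=n=\dim W^-_F$ (this is where \eqref{eq:weight-gap} enters). Hence each component is the zero section. You instead use that away from $Z=W^-_F\cap\bigcup_{F'\neq F}\Core_{F'}$ the core is a nilpotent thickening of $W^-_F$. That is a legitimate alternative and even gives an explicit annihilating power. However, the step where you carry it out is misformulated.

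The claim in your last paragraph, that a section not killed by any power of the zero-section ideal $\mathcal{J}$ is torsion, is false. The section $1$ is never torsion, since its support contains the whole zero section. Yet $\mathcal{J}^N\cdot 1=\bigoplus_{k\geq N}\mathcal{I}^k\mathcal{O}_\Core/\mathcal{I}^{k+1}\mathcal{O}_\Core\neq 0$ for every $N$ as soon as some $\Core_{F'}$ meets $W^-_F$ (by Nakayama at a point of $Z$). That is exactly the wobbly situation you care about. Only the part of the support lying off the zero section sits over $Z$, not the whole subsheaf.

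The correct statement is about $\mathcal{J}^N$ applied to the sheaf. Since $W^-_F$ is open in $\Core_F$ (by local closedness of Bia\l{}ynicki-Birula cells, not by the dimension count you cite), there is an open $V'\subset\Hitch$ with $(\Core\cap V')_{\mathrm{red}}=W^-_F\setminus Z$. By Noetherianity, $\mathcal{I}^N\mathcal{O}_\Core$ vanishes on $V'$ for some $N$. Hence the subsheaf $\mathcal{J}^N\sp_{W^-_F}(\mathcal{O}_\Core)=\bigoplus_{k\geq N}\mathcal{I}^k\mathcal{O}_\Core/\mathcal{I}^{k+1}\mathcal{O}_\Core$ vanishes over $W^-_F\setminus Z$. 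Its support therefore meets the zero section inside $Z\neq W^-_F$, so it lies in the torsion subsheaf by maximality, and $\mathcal{J}^N\mathcal{E}=0$. That is all your Step 2 needs.

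You should also drop your fallback sentence. If $\mathcal{E}$ were not supported on the zero section, $\loc$ would genuinely divide by $[\wedge^\bullet N^\vee]$. Knowing the rank on a dense open subset would then not produce a Laurent polynomial, so the support statement cannot be avoided for the first claim.
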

\begin{proof}
    By definition, the sheaf $\mathcal{E}\coloneqq \sp_{W^-_F}(\mathcal{O}_\Core)_{\mathrm{tf}}$ is both torsion-free and cyclic as an $\mathcal{O}_{N_{W^-_F} \Hitch}$-module.
    As $F$ satisfies condition~\eqref{eq:weight-gap}, and the support of $\mathcal{O}_\Core$ is $n$-dimensional, we deduce that $\mathcal{E}$ has pure support in $W^-_F$, and so it is coherent as a $\mathcal{O}_{W^-_F}$-module.
    Denoting by $i:W^-_F\to N_{W^-_F} \Hitch$ the zero section, the sheaf $\mathcal{E}$ can therefore be obtained as a successive extension of sheaves of the form $i_*\mathcal{F}_j$, $\mathcal{F}_j\in \Coh(N_{W^-_F} \Hitch)$, so that $[\mathcal{E}] = i_*M$, $M\in K^\T(W^-_F)$.
    By formula~\eqref{eq:loc-smooth}, we have $\loc_{N_{W^-_F}}([\mathcal{E}]) = M\in K^\T(W^-_F)$.
    Therefore $m_F(t) = \rk M$ belongs to $K^\T(\mathrm{pt}) = \mathbb{Z}[t^{\pm 1}]$ as claimed.

    Let us compute the value $m_F(1)$.
    As shown above, we can replace localization in the definition of $m_F(t)$ by restriction to $W^-_F$.
    In particular, this allows us to set $t=1$ at the very beginning, and work in non-equivariant $K$-theory.
    Since $\mathcal{E}$ is torsion-free, we can pick a (non-equivariant) open $U\subset W^-_F$ such that $\mathcal{E}|_U$ is free.
    Then on one hand $m_F(1) = \rk \mathcal{E}|_U$, as rank can be computed locally, and on the other hand the length $m_F$ of $\mathcal{O}_{\Core,\eta_F}$ coincides with the rank of $\mathcal{E}|_U$.
\end{proof}

We expect that the form of $m_F(t)$ can be further constrained.
The following conjecture holds in all the situations where we were able to compute equivariant multiplicity, see \cref{ssec:2d-mult-ex,sec:ex-Hilb2}.
Note that equivariant multiplicities neither have positive coefficients nor are palindromic in general, see~\eqref{eq:eq-mult-Hilb2-sep} and~\eqref{eq:eq-mult-Hilb2-punct-y3}.
\begin{con}\label{conj:eq-mult-positivity}
    For any $F\in \Fix^\WG$, we have $m_F(t)\in \mathbb{Z}[t]$, and $m_F(0)=1$.
    Furthermore, $m_F(t)$ is palindromic if and only if $\sp_{W^-_F}(\mathcal{O}_\Core)_{\mathrm{tf}}$ is locally free.
\end{con}

Equivariant multiplicities can be computed étale-locally.
Namely, pick a $\T$-equivariant étale neighbourhood $j:U\to \Hitch$ such that $F|_U\coloneqq U\times_{\Hitch} F$ is non-empty.
Then $j^*$ intertwines $\loc_{W^-_F}$ with $\loc_{W^-_U}$ (by functoriality of pullback), commutes with rank map (since it is local in $F$), with $\sp_{W^-_F}$ (as deformation to normal cone construction is étale local), and with the procedure $(-)_\mathrm{tf}$ (because the support condition is étale local).
In particular, replacing $\Core$ with $\Core|_U = U\times_{\Hitch} \Core$, $F$ with $F|_U$ and $W^-_F$ with $W^-_{F|_U}$, we obtain that $m_F(t) = m_U(t)$.
The same also holds for $\mu$, because by \cref{prop:sing-is-mu} it is computed from tangent datum.

\begin{prop}\label{prop:product-mult}
    Let $\theta:\Hitch_1\times \Hitch_2\to \Base_1\oplus \Base_2$ be a product of pre-integrable systems. Then for any $F_1\times F_2\in\Fix(\theta)$, we have $\mu_{F_1\times F_2}(t) = \mu_{F_1}(t) \mu_{F_2}(t)$ and $m_{F_1\times F_2}(t) = m_{F_1}(t) m_{F_2}(t)$.
\end{prop}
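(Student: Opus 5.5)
The plan is to reduce everything to \cref{lem:shv-eq-mult-product}, together with the multiplicativity of the rank map. By \cref{subs:products-and-covers}, the core of the product system is $\Core = \theta^{-1}(0) = \Core_1\times\Core_2$ scheme-theoretically, since $\theta^{-1}(0,0) = \theta_1^{-1}(0)\times\theta_2^{-1}(0)$. Hence $\mathcal{O}_\Core = \mathcal{O}_{\Core_1}\boxtimes\mathcal{O}_{\Core_2}$. The fixed locus of the product is $\Hitch_1^\T\times\Hitch_2^\T$, and a point of it flows to $F_1\times F_2$ as $t\to\infty$ exactly when each coordinate flows to the corresponding $F_i$. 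So the repelling cell splits as
\[
W^-_{F_1\times F_2} = W^-_{F_1}\times W^-_{F_2}.
\]
This exhibits the pair $(Y,X) = (W^-_{F_1\times F_2},\Hitch_1\times\Hitch_2)$ as the product of the pairs $(W^-_{F_i},\Hitch_i)$. Each of these satisfies the hypotheses of \cref{def:em-class}, because $X$ is smooth and $Y\cap X^\T$ is a fixed component.

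Applying \cref{lem:shv-eq-mult-product} then gives
\[
\mu(W^-_{F_1\times F_2};\mathcal{O}_\Core) = \mu(W^-_{F_1};\mathcal{O}_{\Core_1})\boxtimes\mu(W^-_{F_2};\mathcal{O}_{\Core_2}),
\]
and the same identity for $m$. It remains to check that $\rk(M_1\boxtimes M_2) = \rk M_1\cdot\rk M_2$ for classes $M_i\in K^\T(W^-_{F_i})_\loc$, where $\rk$ has been extended $\mathbb{Q}(t)$-linearly. By \cref{lem:rank-restr}, the rank on each $W^-_{F_i}$, a vector bundle over $F_i$ with trivial $\T$-action on $F_i$, is computed by restricting to a point $p_i\in F_i$. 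On the product, rank is computed by restricting to $(p_1,p_2)$, and restriction to a point commutes with external tensor product: $(M_1\boxtimes M_2)|_{(p_1,p_2)} = M_1|_{p_1}\otimes M_2|_{p_2}$. More concretely, one can reduce to $M_i = [V_i]$ for vector bundles, where the rank of $V_1\boxtimes V_2$ at a point is visibly the product of ranks. Extending by linearity, and by $\mathbb{Q}(t)$-linearity after localization, gives multiplicativity.

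For $\mu$ there is also an independent check via \cref{prop:sing-is-mu}. We have $T^+_{(p_1,p_2)} = T^+_{p_1}\oplus T^+_{p_2}$ and $\Base = \Base_1\oplus\Base_2$, and $\chi_t(\Sym(-))$ is multiplicative on direct sums, which gives $\mu_{F_1\times F_2} = \mu_{F_1}\mu_{F_2}$ immediately.

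The only step needing real care is the $m$ part of \cref{lem:shv-eq-mult-product}, which the paper calls elementary. One needs two facts. First, specialization commutes with external products: $\sp_{Y_1\times Y_2}(\mathcal{E}_1\boxtimes\mathcal{E}_2) = \sp_{Y_1}\mathcal{E}_1\boxtimes\sp_{Y_2}\mathcal{E}_2$ on $C_{Y_1}\times C_{Y_2}$, which holds because the ideal-adic filtration of the product is the tensor-product filtration. Second, the torsion-free quotient satisfies $(\mathcal{A}\boxtimes\mathcal{B})_{\mathrm{tf}} = \mathcal{A}_{\mathrm{tf}}\boxtimes\mathcal{B}_{\mathrm{tf}}$. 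This follows since $\mathcal{A}_{\mathrm{tf}}\boxtimes\mathcal{B}_{\mathrm{tf}}$ is $\mathcal{O}_{Y_1\times Y_2}$-torsion free, as an external product of torsion-free sheaves over a field, and the kernel $\mathcal{T}_{Y_1}\boxtimes\mathcal{B}+\mathcal{A}\boxtimes\mathcal{T}_{Y_2}$ is supported away from the generic point of $Y_1\times Y_2$. Since this is assumed as \cref{lem:shv-eq-mult-product}, the proof amounts to the identifications above plus multiplicativity of rank.
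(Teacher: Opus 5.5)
Your proposal is correct and is essentially the paper's argument: the paper's proof is just \cref{lem:shv-eq-mult-product} combined with the fact that the rank map commutes with the K\"unneth (external product) map, which is what you verify via \cref{lem:rank-restr} by restricting to a point $(p_1,p_2)$. Your extra details (the splitting $W^-_{F_1\times F_2}=W^-_{F_1}\times W^-_{F_2}$, the tangent-weight check for $\mu$ via \cref{prop:sing-is-mu}, and why specialization and $(-)_{\mathrm{tf}}$ commute with $\boxtimes$) are sound and fill in what the paper leaves implicit.
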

\begin{proof}
    Follows immediately from \cref{lem:shv-eq-mult-product} and the fact that taking ranks commutes with Künneth map.
\end{proof}

Before stating the next property, we need some notations.
Let $\theta:\Hitch\to \Base$ be an pre-integrable system, $\Base'$ another $n$-dimensional $\T$-vector spaces with positive weights, and 
$$\pi:\Base\to \Base'$$ 
a finite $\T$-equivariant map.
Denote $\delta_\pi(t): = \chi_t(\Sym \Base)/\chi_t(\Sym \Base')$, and observe that $\delta_\pi(1) = \deg(\pi)$.
Let $\theta'\coloneqq\pi\circ \theta$, and denote the core of $\theta'$ by $\Core'$. 
We have $\Fix(\theta) = \Fix(\theta')$, therefore for any $F\in \Fix(\theta)$ we denote the corresponding element of $\Fix(\theta')$ by $F'$.

\begin{prop}\label{prop:cover-mult}
    In the notations above, we have $\mu_{F'}(t) = \delta_\pi(t) \mu_F(t)$, and $m_{F'} = \deg(\pi) m_F$.
\end{prop}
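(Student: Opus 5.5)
The virtual statement follows from \cref{prop:sing-is-mu}; the genuine one from comparing the two cores generically along $W^-_F$. Note first that $\theta$ and $\theta'$ have the same fixed locus and the same attracting and repelling cells, since these depend only on the $\T$-action on $\Hitch$. Thus $T^+_p$ and $W^-_F$ are unchanged. By \cref{prop:sing-is-mu},
\[
\mu_{F'}(t) = \frac{\chi_t(\Sym T^+_p)}{\chi_t(\Sym \Base')} = \frac{\chi_t(\Sym\Base)}{\chi_t(\Sym\Base')}\cdot\frac{\chi_t(\Sym T^+_p)}{\chi_t(\Sym \Base)} = \delta_\pi(t)\mu_F(t).
\]
So the first claim needs nothing beyond rewriting.

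For the non-equivariant claim, I would use \cref{prop:non-eq-mult}: $m_F$ is the length of $\mathcal{O}_{\Core,\eta_F}$. Similarly, $m_{F'}$ is the length of $\mathcal{O}_{\Core',\eta_F}$, where $\Core' = \theta^{-1}(\pi^{-1}(0))$ scheme-theoretically. Let $Z\coloneqq\pi^{-1}(0)\subset\Base$ be the scheme-theoretic fiber. It is a finite scheme supported at $0$, since $\pi$ is finite and equivariant with positive weights. Its length is $\deg\pi$: $\pi$ is a finite surjective map between smooth varieties of the same dimension, hence flat by miracle flatness, so the fiber length is constant and equals the generic degree. Then $\mathcal{O}_{\Core'} = \mathcal{O}_\Hitch\otimes_{\mathcal{O}_\Base}\mathcal{O}_Z$.

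Since $\mathcal{O}_Z$ is Artinian local, it has a composition series with all subquotients isomorphic to $\mathcal{O}_0 = \mathcal{O}_\Base/\mathfrak{m}_0$, of length $\deg\pi$. By flatness of $\theta$ (\cref{cor:flatness}), tensoring with $\mathcal{O}_\Hitch$ preserves exactness. Hence $\mathcal{O}_{\Core'}$ has a filtration by $\mathcal{O}_\Hitch$-modules with $\deg\pi$ subquotients, each isomorphic to $\mathcal{O}_\Core$. Localizing at $\eta_F$ is exact, and length is additive, so
\[
\ell(\mathcal{O}_{\Core',\eta_F}) = \deg(\pi)\,\ell(\mathcal{O}_{\Core,\eta_F}),
\]
i.e.\ $m_{F'} = \deg(\pi)\, m_F$.

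I expect the main subtlety to be the flatness of $\pi$ and the identification of the length of $Z$ with $\deg\pi$. This is routine, but it is exactly where $\delta_\pi(1)=\deg\pi$ enters. As a consistency check, $\chi_t(\mathcal{O}_Z) = \chi_t(\Sym\Base)/\chi_t(\Sym\Base')=\delta_\pi(t)$, since $\Sym\Base$ is free over $\Sym\Base'$. Evaluating at $t=1$ gives the length. One could hope to upgrade the result to $m_{F'}(t)=\delta_\pi(t)m_F(t)$, but the operation $(-)_{\mathrm{tf}}$ is not additive along filtrations. For that reason I would state only the non-equivariant equality, as in the proposition.
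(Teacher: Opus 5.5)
Your proof is correct and follows the paper's route. The virtual statement is the same rewriting via \cref{prop:sing-is-mu}, and the non-equivariant one compares local rings at $\eta_F$ through the Artinian fiber $\pi^{-1}(0)$ of length $\deg(\pi)$. Your filtration argument (a composition series of $\mathcal{O}_Z$ pulled back along the flat map $\theta$) is the precise form of the paper's identification $\mathcal{O}_{\Core',\eta_{F'}}\simeq V_\pi\otimes\mathcal{O}_{\Core,\eta_F}$; that identification holds only up to filtration, not as a literal isomorphism, but this is all the length count requires.
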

\begin{proof}
    The first claim follows immediately from \cref{prop:sing-is-mu}:
    \[
        \delta_{\pi}(t) = \frac{\mu_{F'}(t)}{\mu_{F}(t)} = \frac{\chi_t(\Sym T^+_p)}{\chi_t(\Sym \Base')} \frac{\chi_t(\Sym \Base)}{\chi_t(\Sym T^+_p)} = \frac{\chi_t(\Sym \Base)}{\chi_t(\Sym \Base')}.
    \]
    For the second claim, consider the following diagram with Cartesian squares:
    \[
    \begin{tikzcd}[row sep=small, column sep=small]
        \Core\ar[r,hook]\ar[d] & \Core'\ar[r,hook]\ar[d] & \Hitch\ar[d,"\theta"]\\
        0\ar[r,hook] & \pi^{-1}(0)\ar[r,hook]\ar[d] & \Base\ar[d,"\pi"]\\
         & 0\ar[r,hook] & \Base'
    \end{tikzcd}
    \]
    By our assumptions, $V_\pi\coloneqq \C[\pi^{-1}(0)]$ is an Artinian algebra of dimension $\deg(\pi)$.
    Passing to the local rings, we see that at the generic point $\eta_F$ we have $\mathcal{O}_{\Core',\eta_{F'}} \simeq V_\pi\otimes \mathcal{O}_{\Core,\eta_F}$, and so $m_{F'} = \deg(\pi) m_F$.
\end{proof}

\begin{con}\label{conj:finite-cover}
    For any $F\in \Fix^\WG(\theta)$ and $\pi$ as above, there exists a polynomial $d_{F,\pi}(t)\in \Z[t]$ such that $m_{F'}(t) = d_{F,\pi}(t) m_F(t)$.
\end{con}
Note that $d_{F,\pi}(t)$ depends non-trivially on $F$.
In general $d_{F,\pi}(t)\neq\delta_\pi(t)$, see \cref{ex:degree-correction}.

\subsection{Characterization of very stable components}
For a very stable $p\in F$, Hausel--Hitchin formally defined the equivariant multiplicity of $\Core_F$ by the equation~\eqref{eq:HH-eq-mult}, and showed that it specializes to $m_F$ at $t=1$~\cite[Th.~5.2]{hausel2022very}.
The following is a refinement of this result.

\begin{thm}\label{prop:v-stable-HHvsMZ}
    Let $F\in \Fix^\WG$.
    We have $\mu_F(t) = m_F(t)$ if and only if $F$ is very stable.
    In particular, for any very stable $F$ we have $\mu_F(1) = m_F$.
\end{thm}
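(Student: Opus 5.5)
The plan is to compare $\mu_F(t)$ and $m_F(t)$ through the torsion subsheaf $\mathcal{T}$ in the defining short exact sequence
\[
    0\to \mathcal{T}\to \sp_{W^-_F}(\mathcal{O}_\Core)\to \sp_{W^-_F}(\mathcal{O}_\Core)_{\mathrm{tf}}\to 0
\]
on the normal bundle $N \coloneqq N_{W^-_F}\Hitch$. Since $\loc$, $\sp$ and $\rk$ are additive, this gives
\[
    \mu_F(t)-m_F(t) = \rk \loc_{W^-_F}[\mathcal{T}].
\]
So the statement reduces to showing that this quantity vanishes if and only if $F$ is very stable. Everything can be computed étale-locally, and by \cref{lem:rank-restr} the rank can be evaluated at a single general point $p\in F$. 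Concretely, restrict to the fiber $N_p = T^+_p$ over $p$ (or to a transversal slice through $p$). There $\sp(\mathcal{O}_\Core)$ becomes a graded $\C[T^+_p]$-module whose Hilbert series, divided by $\chi_t(\Sym T^+_p)$ and multiplied back, recovers $\mu_F(t)$.

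For the direction ``very stable $\Rightarrow$ equality'', take $p\in F$ very stable. By \cref{prop:v-stable-lf-push}(2), $W^+_p\cap\Core=\{p\}$, and $W^+_p$ is a transversal slice to $W^-_F$ at $p$. I would show that near $p$ the scheme $\Core$ meets this slice in the finite scheme $\theta|_{W^+_p}^{-1}(0)$, which is a complete intersection by flatness. Hence the normal cone of $\Core$ along $W^-_F$ is supported near $p$ on the zero section. This gives $\mathcal{T}=0$ generically along $W^-_F$, and therefore $\loc[\mathcal{T}]$ has rank $0$. Since $\mathcal{T}$ is by definition supported off a dense part of $W^-_F$, and the rank can be computed at a general point, we get $m_F=\mu_F$. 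Evaluating at $t=1$ and using \cref{prop:non-eq-mult} then yields $\mu_F(1)=m_F$. This recovers Hausel--Hitchin's result.

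For the converse, suppose $F$ is wobbly, so every $p\in F$ is wobbly. Then $W^+_p\cap\Core$ contains a non-trivial $\T$-orbit, which is a conical curve in $T^+_p$. I would argue that $\mathcal{T}$ is then non-zero, with support containing the cone over these orbits. To see this, note that the fiber of $\sp(\mathcal{O}_\Core)$ over $p$ restricts to $\C[T^+_p]/(\theta^*\C[\Base])$ up to specialization. The torsion part is exactly the module $N$ analysed in the proof of \cref{prop:HH-mult}, which was shown to have a pole of order $n$. Comparing with that proof gives $\mu_F(1)-m_F(1)=\mu_F(1)-\gamma_p>0$, using that $m_F$ equals the generic rank $\gamma_p$ at a general point. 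In particular $\mu_F\neq m_F$.

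The main obstacle is the identification $m_F = \gamma_p$ (equivalently, $\rk$ of the torsion-free part equals the generic degree of $W^+_p\to\Base$) for general $p$. The plan is to use flatness of $\theta$ (\cref{cor:flatness}) to degenerate the generic fiber count. Specifically, $\gamma_p$ counts points of a generic fiber inside a slice, while $m_F$ is the length at the generic point of $\Core_F$. By conservation of number for the flat finite part over $\Base^\circ$, together with a local transversal-slice argument, these two quantities should agree. The extra torsion contribution then matches the strict inequality proven in \cref{prop:HH-mult}.
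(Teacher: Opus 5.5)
\paragraph{The very stable direction.} This part is essentially the paper's argument. On the open set of very stable points of $F$, no other component of $\Core$ meets $W^-_F$, so the torsion $\mathcal{T}$ vanishes there and $m_F(t)=\mu_F(t)$. Your closing justification, however, is the wrong reason. You say that $\mathcal{T}$ is supported off a dense part of $W^-_F$ and that the rank can be computed at a general point. But $\mathcal{T}$ is supported off a dense part of $W^-_F$ \emph{by definition}, for every $F$, so this argument would prove $m_F=\mu_F$ for wobbly $F$ too. By \cref{lem:rank-restr} and localization, the rank is read off at points of the fixed locus $F$, not at general points of $W^-_F$. What matters is that $\mathcal{T}$ vanishes near an open subset of $F$.

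\paragraph{The gap: the wobbly direction.} Your argument rests on $m_F=\gamma_p$ for general $p\in F$, and this is false for wobbly $F$. Conservation of number along the slice $W^+_p$ requires $W^+_p\cap\Core$ to be zero-dimensional, which by \cref{prop:v-stable-lf-push} happens exactly when $p$ is very stable. For wobbly $p$ the intersection is excess, and $\gamma_p$ has no relation to $m_F$.

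A concrete counterexample is the intermediate fixed point $p$ of $S_{\mathbb{Z}/3}$, with local equation $xy^2$ and weights $-1|2$ (see \cref{ssec:2d-mult-ex}).
\begin{itemize}
\item The upward flow of $p$ lies in the $\mathbb{P}^1$ through the next fixed point, hence inside $\Core$. So $\theta|_{W^+_p}\equiv 0$ and $\gamma_p=0$.
\item On the other hand $m_p=2$ and $\mu_p(1)=3/2$.
\end{itemize}
Your chain $\mu_F(1)-m_F=\mu_F(1)-\gamma_p>0$ would give $m_F<\mu_F(1)$, which has the wrong sign. Here $m_p>\mu_p(1)$. The paper proves $\mu_F(1)<m_F$ for all wobbly components of $\Hilb^n S_{\mathbb{Z}/e}$, and only conjectures it in general (\cref{conj:mult-inequality}). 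So a non-equivariant separation $\mu_F(1)\neq m_F$ is itself an open problem, and an argument at $t=1$ should not be expected to work.

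There is a second mismatch. The module $N$ in the proof of \cref{prop:HH-mult} is the $\mathbb{C}[\Base]$-torsion of $\mathbb{C}[T^+_p]/\mathbb{C}[\Base]$ on the upward flow. Your $\mathcal{T}$ is the $\mathcal{O}_{W^-_F}$-torsion of the normal cone of $\Core$ along $W^-_F$. These live on different spaces and are not the same object. Moreover, restricting $\sp_{W^-_F}(\mathcal{O}_\Core)$ to the fibre over $p$ does not produce $\mathbb{C}[T^+_p]/(\theta^*\mathbb{C}[\Base])$, and it does not commute with passing to the torsion-free quotient.

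\paragraph{What the paper does instead.} The missing step is to show directly that $\rk\loc_{W^-_F}[\mathcal{T}]\neq 0$.
\begin{itemize}
\item Every point $p\in F$ is wobbly, so its upward flow contains a $\T$-curve in $\Core$ leading to some $F'\succ F$. Hence $p$ lies on a core component other than $\Core_F$.
\item Therefore the support of $\mathcal{T}$ contains all of $F$.
\item Choose an open $U\subset F$ over which $\mathcal{T}$ is locally free. The rank $\rk i^*[\mathcal{T}]$, with $i\colon F\to N_{W^-_F}$, can then be computed from the restriction of $\mathcal{T}$ to points of $U$. That restriction is a nonzero module, which the paper asserts is of positive rank, so the rank is nonzero.
\end{itemize}
No comparison with $\gamma_p$ is involved.
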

\begin{proof}
    Let $F$ be very stable.
    Since the definition of $m_F(t)$ is local in $F$, we can restrict to the open subset $U\subset F$ of very stable points.
    By definition, the intersection of $W^-_U$ with all other core components is empty; therefore, $\sp_{W^-_F}(\mathcal{O}_\Core)$ is torsion-free on the nose, and so $m_F(t) = \mu_F(t)$.

    Now assume that $F$ is wobbly.
    Writing $\sp_{W^-_F}(\mathcal{O}_\Core)_{\mathrm{tf}}\coloneqq \sp_{W^-_F}(\mathcal{O}_\Core)/\mathcal{T}$, it follows from definitions that $\mu_F(t)-m_F(t) = \rk \loc_{W^-_F}([\mathcal{T}])$.
    We need to prove that this quantity is non-zero.
    By \cref{lem:rank-restr} and~\eqref{eq:loc-smooth}, it suffices to show that $\rk i^*[\mathcal{T}]\neq 0$, where $i:F\to N_{W^-_F}$ is the inclusion.
    Since all points of $F$ are wobbly, the support of $\mathcal{T}$ contains the entirety of $F$.
    In particular, we can pick an open $U\subset F$ such that $\mathcal{T}$ is locally free over $\mathcal{O}_U$. 
    Restricting to this open, we have $\rk i^*[\mathcal{T}] = \rk [i^* \mathcal{T}]\neq 0$, since $i^* \mathcal{T}$ is manifestly of positive rank.
\end{proof}
Recall that we denote the $\T$-weights on Hitchin base $\Base$ by $0<e_1\leq\ldots\leq e_n$.
\begin{cor}
    \cref{conj:eq-mult-positivity} holds for all very stable $F\in\Fix^\WG$.
    Moreover, in this case $m_F(t)$ is palindromic.
\end{cor}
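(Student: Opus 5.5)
For very stable $F$, \cref{prop:v-stable-HHvsMZ} gives $m_F(t)=\mu_F(t)$, and \cref{prop:sing-is-mu} gives $\mu_F(t)=\chi_t(\Sym T^+_p)/\chi_t(\Sym\Base)$ for any $p\in F$. So everything reduces to understanding this ratio. Pick a very stable point $p\in F$. By \cref{prop:v-stable-lf-push}, $\theta_*\mathcal{O}_{W^+_p}$ is a locally free (equivalently, by graded Nakayama, free) graded $\C[\Base]$-module of rank $\gamma_p$. Since $W^+_p\simeq T^+_p$ is an affine space, $\C[T^+_p]$ is a polynomial ring $\C[y_1,\dots,y_n]$ with $\deg y_i=a_i>0$. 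It is finite and free over $\C[\Base]=\C[x_1,\dots,x_n]$ with $\deg x_i=e_i$. Choosing a homogeneous basis of degrees $d_1,\dots,d_{\gamma_p}\geq 0$, we get
\[
    \mu_F(t)=\frac{\chi_t(\C[T^+_p])}{\chi_t(\C[\Base])}=\sum_j t^{d_j}
\]
(up to the sign convention on $t$, which only replaces $t$ by $t^{-1}$ consistently). Hence $m_F(t)\in\Z_{\geq0}[t]$. The degree-zero part of $\C[T^+_p]$ is $\C$, and it is a free summand generated by $1$, so exactly one $d_j$ equals $0$ and $m_F(0)=1$. Locally freeness of $\sp_{W^-_F}(\mathcal{O}_\Core)_{\mathrm{tf}}=\sp_{W^-_F}(\mathcal{O}_\Core)$ over the very stable locus is the freeness of $\mathcal{O}_\Core$ transversally to $W^-_F$, matching the "if" direction of the palindromicity clause of \cref{conj:eq-mult-positivity}. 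So it remains to prove palindromicity.

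For palindromicity, I would use that $\C[T^+_p]\otimes_{\C[\Base]}\C=\C[y]/(\theta^*x_1,\dots,\theta^*x_n)$. Finiteness of $\C[y]$ over $\C[x]$ means the $\theta^*x_i$ form a homogeneous system of parameters in the polynomial ring, hence a regular sequence. The quotient is therefore a graded complete intersection, and in particular a Gorenstein Artinian algebra. Its Hilbert series is exactly
\[
    \prod_i\frac{1-t^{e_i}}{1-t^{a_i}}=\mu_F(t).
\]
Gorenstein Artinian graded algebras have symmetric Hilbert functions, because the socle is one-dimensional and multiplication gives a perfect pairing between degrees $k$ and $s-k$. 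Alternatively, one can check directly that $\mu_F(t^{-1})=t^{-s}\mu_F(t)$ with $s=\sum_i(e_i-a_i)$, which is immediate from the product formula once we know $\mu_F$ is a polynomial.

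The main obstacle, which is mild, is making sure the identifications are legitimate. First, $\chi_t$ of the graded free module really equals $\sum_j t^{d_j}\chi_t(\Sym\Base)$; this requires graded freeness, not just local freeness. I would get it from graded Nakayama, since $\theta_*\mathcal{O}_{W^+_p}$ is a finitely generated, positively graded, projective module. Second, the weight conventions (positive weights on $T^+_p$ and $t$ being weight $-1$) must agree, so that the result lies in $\Z[t]$ and not in $\Z[t^{-1}]$. I would fix this by following the sign conventions already used in the proof of \cref{prop:HH-mult}.
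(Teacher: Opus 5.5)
Your argument is correct. Like the paper, you reduce to $m_F(t)=\mu_F(t)=\prod_i[e_i]_t/\prod_i[a_i]_t$ via \cref{prop:v-stable-HHvsMZ}, but you justify polynomiality differently.

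The paper gets palindromicity because a Laurent monomial in quantum numbers is palindromic. It then concludes polynomiality from $m_F(0)=1$. That step tacitly combines the Laurent-polynomiality $m_F(t)\in\mathbb{Z}[t^{\pm1}]$ of \cref{prop:non-eq-mult} with the fact that the quotient of quantum numbers is a power series with constant term $1$.

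You instead use that $\C[T^+_p]$ is a graded free $\C[\Base]$-module, by very stability plus graded Nakayama. This gives $m_F(t)=\sum_j t^{d_j}$ directly. You then identify $m_F(t)$ with the Hilbert series of the graded complete intersection $\C[T^+_p]/(\theta^*x_1,\ldots,\theta^*x_n)$, whose Gorenstein symmetry gives palindromicity. Your route is self-contained and proves more: non-negativity of the coefficients, and a structural reason for the symmetry. The paper's route is a two-line computation that leans on the earlier proposition.

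One sentence of yours should be either proved or dropped. The ``iff'' clause of \cref{conj:eq-mult-positivity} requires, once palindromicity is known, that $\sp_{W^-_F}(\mathcal{O}_\Core)_{\mathrm{tf}}$ be locally free. This does not follow from $\mathcal{O}_\Core$ being free over $W^-_F$ near a very stable point, because the associated graded of a module free over the base can acquire torsion. For example, $A=\C[u,y_1,y_2]/(y_1^2,\,y_2^2-uy_1)$ is free over $\C[u]$ on $1,y_1,y_2,y_1y_2$. Yet in $(y)A/(y)^2A$ the class of $y_1$ is nonzero and killed by $u$. Moreover, torsion-freeness alone does not give local freeness once $\dim W^-_F\geq 2$. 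The paper's proof is also silent on this point, so it is a shared omission rather than a divergence, but your one-line justification does not close it.
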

\begin{proof}
    Let $p\in F$ be very stable, and use formula~\eqref{eq:HH-eq-mult}.
    Denoting the equivariant weights of $T^+_p$ by $a_1,\ldots, a_n$, we have 
    \begin{equation}\label{multiplicity_fla_very_stable}
        m_F(t) = \mu_F(t) = \frac{\prod_i [e_i]_t}{\prod_i [a_i]_t}.
    \end{equation}
    Since quantum numbers are palindromic, a Laurent monomial in them is palindromic as well.
    Furthermore $m_F(0)=1$, so that $m_F(t)$ is a polynomial.
\end{proof}

\begin{cor}\label{cor:correction-v-stable}
    Let $F\in\Fix^\WG$ be very stable, and take $\pi$ as in \cref{prop:cover-mult}.
    Then \cref{conj:finite-cover} holds for $d_{F,\pi}(t) = \delta_{\pi}(t)$.
\end{cor}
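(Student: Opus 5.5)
The plan is to reduce everything to the very stable identity $m_F(t)=\mu_F(t)$ of \cref{prop:v-stable-HHvsMZ}, applied to both $\theta$ and $\theta'=\pi\circ\theta$, and then compare the two virtual multiplicities via \cref{prop:cover-mult}.

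First I check that $F$ is very stable for $\theta$ if and only if the corresponding $F'$ is very stable for $\theta'$. Very stability of a point $p\in\Hitch^\T$ means that $W^+_p$ is closed in $\Hitch$ (definition in \cref{subs:v-stable}). This condition refers only to $\Hitch$, its $\T$-action and $p$, not to the base. Since $\Fix(\theta)=\Fix(\theta')$ with the same underlying fixed loci (\cref{subs:products-and-covers}), a very stable point $p\in F$ is also very stable as a point of $F'$.

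Alternatively, via \cref{prop:v-stable-lf-push}(2): the core of $\theta'$ is $\Core'=\theta^{-1}(\pi^{-1}(0))$, which set-theoretically equals $\Core$ because $\pi^{-1}(0)=\{0\}$ set-theoretically. Hence $W^+_p\cap\Core'=W^+_p\cap\Core=\{p\}$.

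Next I check that $F'\in\Fix^\WG(\theta')$. This condition also concerns only the tangent weights at $p$ and the weight $k$ of $\omega$, so it is identical for $\theta$ and $\theta'$; alternatively it follows from \cref{cor:v-stable}.

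With these two facts, I apply \cref{prop:v-stable-HHvsMZ} to both systems, which gives $m_F(t)=\mu_F(t)$ and $m_{F'}(t)=\mu_{F'}(t)$. By the first claim of \cref{prop:cover-mult}, $\mu_{F'}(t)=\delta_\pi(t)\mu_F(t)$. Hence
\[
m_{F'}(t)=\delta_\pi(t)\,m_F(t),
\]
as required.

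It remains to confirm that $\delta_\pi(t)\in\Z[t]$, as \cref{conj:finite-cover} requires. This is the only mildly nontrivial step. Let $e'_1,\ldots,e'_n$ be the weights of $\Base'$, so that
\[
\delta_\pi(t)=\frac{\prod_i(1-t^{e'_i})}{\prod_i(1-t^{e_i})}.
\]
Since $\pi$ is finite and equivariant, $\C[\Base]$ is a finitely generated graded module over the polynomial ring $\C[\Base']$. Because $\C[\Base]$ is Cohen--Macaulay, it is free over $\C[\Base']$ by miracle flatness. Its Hilbert series therefore equals $\chi_t(\Sym\Base')$ times a polynomial with nonnegative coefficients, namely the Hilbert series of $V_\pi=\C[\pi^{-1}(0)]$. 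This shows $\delta_\pi(t)=\chi_t(V_\pi)\in\Z_{\geq0}[t]$, with $\delta_\pi(1)=\deg\pi$, consistent with \cref{prop:cover-mult}.
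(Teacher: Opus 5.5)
Your proposal is correct and follows the paper's route. The paper's one-line proof is exactly the combination of $m_F(t)=\mu_F(t)$ for very stable components (\cref{prop:v-stable-HHvsMZ}, applied to both $\theta$ and $\theta'=\pi\circ\theta$) with the identity $\mu_{F'}(t)=\delta_\pi(t)\mu_F(t)$ from \cref{prop:cover-mult}.

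Your extra checks are sound and are details the paper leaves implicit: very stability and \eqref{eq:weight-gap} do not change when $\theta$ is replaced by $\pi\circ\theta$, and $\delta_\pi(t)=\chi_t(V_\pi)\in\Z_{\geq 0}[t]$ because $\C[\Base]$ is free over $\C[\Base']$.
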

\begin{proof}
    Follows from \cref{prop:cover-mult}.
\end{proof}

We expect that \cref{prop:v-stable-HHvsMZ} can be strengthened to an inequality.
Observe that for any $F\in \Fix^\WG$ both numerator and denominator of $\mu_F(t)$ have a zero of degree $n$ at $t=1$; in particular, the specialization $\mu_F(1)$ is well defined.
\begin{con}\label{conj:mult-inequality}
    Assume that $\theta:\Hitch \to\Base$ is integrable.
    For any wobbly $F\in \Fix^\WG$, we have $\mu_F(1)<m_F$, and more generally $\mu_F(t)< m_F(t)$ for all $t>1$.
\end{con}

Assume that $\Hitch$ is of weight $k=1$.
Then the bottom component $\Fbot$ is in $\Fix^{\WG}$ is a very stable irreducible component of $\Core$ by \cref{cor:v-stable}.
Let us denote its equivariant multiplicity by $\mathbf{m}(t)$.
All normal weights of $\Fbot$ are equal to $1$, therefore by \cref{prop:v-stable-HHvsMZ}:
\begin{equation}\label{eq:mult-bottom}
    \mathbf{m}(t) = \mu_\Fbot(t) = \frac{\prod_{i=1}^n (1-t^{e_i})}{(1-t)^n} = \prod_{i=1}^{n} [e_i]_t.
\end{equation}

\begin{cor}\label{prop:divis}
    For each very stable $F\in \Fix^\WG$, the equivariant multiplicity $m_F(t)$ divides $\prod_{i} [e_i]_t.$
    In particular, $m_F(t) \mid \mathbf{m}(t)$ when $k=1$.
\end{cor}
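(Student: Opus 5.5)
The plan is to use the explicit formula \eqref{multiplicity_fla_very_stable}: for very stable $p\in F$ with tangent weights $a_1,\ldots,a_n$ on $T^+_p$,
\[
    m_F(t)\prod_i [a_i]_t = \prod_i [e_i]_t .
\]
We already know from the preceding corollary that $m_F(t)\in\mathbb{Z}[t]$. The claim is that the cofactor $\prod_i [a_i]_t$ is also an honest polynomial with integer coefficients. This holds because every $a_i$ is a positive integer, since $T^+_p$ is by definition the positive-weight part. So each $[a_i]_t=1+t+\dots+t^{a_i-1}\in\mathbb{Z}[t]$.

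This factorization already exhibits $m_F(t)$ as a divisor of $\prod_i[e_i]_t$ in $\mathbb{Z}[t]$. Both $m_F(t)$ and $\prod_i[a_i]_t$ are integer polynomials whose product is $\prod_i[e_i]_t$.

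If one prefers divisibility over $\mathbb{Q}[t]$ and then a Gauss lemma argument, the same conclusion follows. The quantum numbers are monic up to sign and have constant term $1$, hence are primitive. Their product is primitive, so the cofactor being in $\mathbb{Z}[t]$ is automatic. I expect no real obstacle here; the only point requiring care is to confirm that the weights $a_i$ are genuinely positive integers. That is immediate from the decomposition $T_p\Hitch=T_p^+\oplus T_pF\oplus T_p^-$.

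For the second statement, assume $k=1$. By \eqref{eq:mult-bottom} we have $\mathbf{m}(t)=\prod_i[e_i]_t$, since $\Fbot\in\Fix^\WG$ is very stable by \cref{cor:v-stable} and its normal weights are all $1$. Substituting this into the first statement gives $m_F(t)\mid\mathbf{m}(t)$.
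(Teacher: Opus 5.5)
Your proof is correct and follows the paper's own argument: the first claim is read off from \eqref{multiplicity_fla_very_stable}, where the cofactor $\prod_i [a_i]_t$ lies in $\mathbb{Z}[t]$ because all weights of $T^+_p$ are positive, and the second claim follows from \eqref{eq:mult-bottom}. The Gauss lemma detour is unnecessary, since the cofactor is given explicitly as an integer polynomial.
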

\begin{proof}
    The first part follows \eqref{multiplicity_fla_very_stable}, and the second from \eqref{eq:mult-bottom}.
\end{proof}

\subsection{Two-dimensional systems}\label{ssec:2d-mult-ex}
Let us compute equivariant multiplicities for the integrable systems of \cref{ex:2d-int}.
For surfaces $S^{\mathrm{I}},S^{\mathrm{II}},S^{\mathrm{IV}}$ of Painlevé family, the fixed points are isolated.
The corresponding cores can be depicted as follows (see e.g.~\cite{SZZ}):
\[  
    \begingroup
    \arraycolsep=25pt
    \begin{array}[c]{ccc}
    \tikz[thick,xscale=.45,yscale=.45,font=\scriptsize,baseline=(current  bounding  box.center)]{
       \filldraw [black] (0,0) circle (3pt);
       \filldraw [black] (2,0) circle (3pt);
       \node at (0,-0.6) {$|23$};
       \node at (1.75,-0.6) {$-1|6$};
       \draw[->,thin] (0.2,0) -- (1.8,0);
		} &
    \tikz[thick,xscale=.45,yscale=.45,font=\scriptsize,baseline=(current  bounding  box.center)]{
       \filldraw [black] (0,0) circle (3pt);
       \filldraw [black] (2,0) circle (3pt);
       \filldraw [black] (-2,0) circle (3pt);
       \node at (0,-0.6) {$|12$};
       \node at (1.75,-0.6) {$-1|4$};
       \node at (-2.25,-0.6) {$-1|4$};
       \draw[->,thin] (0.2,0) -- (1.8,0);
       \draw[->,thin] (-0.2,0) -- (-1.8,0);
		} &
    \tikz[thick,xscale=.45,yscale=.45,font=\scriptsize,baseline=(current  bounding  box.center)]{
       \filldraw [black] (0,0) circle (3pt);
       \filldraw [black] (2,0) circle (3pt);
       \filldraw [black] (-2,0) circle (3pt);
       \filldraw [black] (0,2) circle (3pt);
       \node at (0,-0.6) {$|11$};
       \node at (1.75,-0.6) {$-1|3$};
       \node at (-2.25,-0.6) {$-1|3$};
       \node at (0.9,2) {$-1|3$};
       \draw[->,thin] (0.2,0) -- (1.8,0);
       \draw[->,thin] (-0.2,0) -- (-1.8,0);
       \draw[->,thin] (0,0.2) -- (0,1.8);
		} \\
    S^{\mathrm{I}} & S^{\mathrm{II}} & S^{\mathrm{IV}}
    \end{array}
    \endgroup
\]
Here, the nodes are connected components of $\Fix$, the arrows are the $\T$-curves between the elements of $\Fix$, and the numbers denote tangent weights; we separate positive from non-positive weights with a vertical line for readability.
The bottom component $\Fbot$ does not satisfy the weight gap condition, 
and the other components are maximal for the flow order, thus are very stable by \cref{cor:v-stable}.
As mentioned in \cref{ex:2d-int}, the $\T$-weight of the base is equal to $\e=6, 4, 3$ for $S^{\mathrm{I}},S^{\mathrm{II}},S^{\mathrm{IV}}$ respectively.
Thus by \cref{prop:v-stable-HHvsMZ}, we have $m_F(t)=\mu_F(t)=1$ for all $F\in \Fix \setminus \mathcal{N}$.
On the other hand, let us denote the tangent weights of $\Fbot$ by $a_1, a_2$.
Since $\Core_\Fbot=\Fbot$ is a single point, we have
\[
    m_\Fbot(t) = \mu_\Fbot(t) = \frac{1-t^\e}{(1-t^{a_1})(1-t^{a_2})} = 
    \begin{cases}
        \frac{1-t+t^2}{1-t},& \text{for }S^{\mathrm{I}}\\
        \frac{1+t^2}{1-t},& \text{for }S^{\mathrm{II}}\\
        \frac{1+t+t^2}{1-t},& \text{for }S^{\mathrm{IV}}
    \end{cases}
\]
This illustrates the importance of condition~\eqref{eq:weight-gap} in \cref{prop:non-eq-mult}.

For the surfaces of the parabolic family, the picture becomes richer.
All fixed components satisfy the weight gap condition, so each of them corresponds to an irreducible component of the core.
The equivariant multiplicities are given by the polynomials in blue in the core diagrams below:
\begin{align*}
    \begingroup
    \arraycolsep=20pt
    \begin{array}[c]{ccc}
    \tikz[thick,xscale=.42,yscale=.42,font=\footnotesize,baseline=(current  bounding  box.center)]{
       \filldraw [color=black,fill=gray!35] (0,0) ellipse (2 and 1);
       \filldraw[color=gray!35,fill=white] (-0.7,-0.07) .. controls (0,0.17) .. (0.7,-0.07) .. controls (0,-0.2) ..  (-0.7,-0.07);
       \draw (0.8,0.1) arc (-10:-170:0.8 and 0.3);
       \draw (0.6,-0.07) arc (15:165:0.6 and 0.2);
       \node at (1.8,1.1) {$0|1$};
       \node[color=blue] at (-1.2,0) {$1$};
	} &
	\tikz[thick,xscale=.42,yscale=.42,font=\footnotesize,baseline=(current  bounding  box.center)]{
       \filldraw [color=black,fill=gray!35] (0,0) circle (1);
       \draw[dash pattern=on 1.5pt off 1.5pt,very thin,color=black!70] (0,0) ellipse (1 and 0.15);
       \filldraw [black] (0,3) circle (3pt);
       \filldraw [black] (0,-3) circle (3pt);
       \filldraw [black] (3,0) circle (3pt);
       \filldraw [black] (-3,0) circle (3pt);
       \node at (1.1,1.1) {$0|1$};
       \node[color=blue] at (0.1,-0.45) {$[2]_t$};
       \node at (2.75,-0.6) {$-1|2$};
       \node at (-3.25,-0.6) {$-1|2$};
       \node at (0.9,3) {$-1|2$};
       \node at (0.9,-3) {$-1|2$};
       \node[color=blue] at (3,0.7) {$1$};
       \node[color=blue] at (-3,0.7) {$1$};
       \node[color=blue] at (-0.5,3) {$1$};
       \node[color=blue] at (-0.5,-3) {$1$};
       \draw[->,thin] (0,1.2) -- (0,2.8);
       \draw[->,thin] (0,-1.2) -- (0,-2.8);
       \draw[->,thin] (1.2,0) -- (2.8,0);
       \draw[->,thin] (-1.2,0) -- (-2.8,0);
	} &
    \tikz[thick,xscale=.42,yscale=.42,font=\footnotesize,baseline=(current  bounding  box.center)]{
       \filldraw [color=black,fill=gray!35] (0,0) circle (1);
       \draw[dash pattern=on 1.5pt off 1.5pt,very thin,color=black!70] (0,0) ellipse (1 and 0.15);
       \filldraw [black] (0,3) circle (3pt);
       \filldraw [black] (3,0) circle (3pt);
       \filldraw [black] (-3,0) circle (3pt);
       \filldraw [black] (0,5) circle (3pt);
       \filldraw [black] (5,0) circle (3pt);
       \filldraw [black] (-5,0) circle (3pt);
       \node at (1.1,1.1) {$0|1$};
       \node[color=blue] at (0.1,-0.45) {$[3]_t$};
       \node at (2.75,-0.6) {$-1|2$};
       \node at (-3.25,-0.6) {$-1|2$};
       \node at (0.9,3) {$-1|2$};
       \node at (4.75,-0.6) {$-2|3$};
       \node at (-5.25,-0.6) {$-2|3$};
       \node at (0.9,5) {$-2|3$};
       \node[color=blue] at (3.2,0.7) {$[2]_{t^2}$};
       \node[color=blue] at (-2.8,0.7) {$[2]_{t^2}$};
       \node[color=blue] at (-0.9,3) {$[2]_{t^2}$};
       \node[color=blue] at (5,0.7) {$1$};
       \node[color=blue] at (-5,0.7) {$1$};
       \node[color=blue] at (-0.5,5) {$1$};
       \node[color=green!80!black] at (3.2,1.7) {$xy^2$};
       \draw[->,thin] (0,1.2) -- (0,2.8);
       \draw[->,thin] (1.2,0) -- (2.8,0);
       \draw[->,thin] (-1.2,0) -- (-2.8,0);
       \draw[->,thin] (0,3.2) -- (0,4.8);
       \draw[->,thin] (3.2,0) -- (4.8,0);
       \draw[->,thin] (-3.2,0) -- (-4.8,0);
	} \\
    T^*E & S_{\mathbb{Z}/2} & S_{\mathbb{Z}/3}
    \end{array}\endgroup\\[1em]
    \begin{array}[c]{cc}
    \tikz[thick,xscale=.42,yscale=.42,font=\footnotesize,baseline=(current  bounding  box.center)]{
       \filldraw [color=black,fill=gray!35] (0,0) circle (1);
       \draw[dash pattern=on 1.5pt off 1.5pt,very thin,color=black!70] (0,0) ellipse (1 and 0.15);
       \filldraw [black] (0,3) circle (3pt);
       \filldraw [black] (3,0) circle (3pt);
       \filldraw [black] (-3,0) circle (3pt);
       \filldraw [black] (5,0) circle (3pt);
       \filldraw [black] (-5,0) circle (3pt);
       \filldraw [black] (7,0) circle (3pt);
       \filldraw [black] (-7,0) circle (3pt);
       \node at (1.1,1.1) {$0|1$};
       \node[color=blue] at (0.1,-0.45) {$[4]_t$};
       \node at (2.75,-0.6) {$-1|2$};
       \node at (-3.25,-0.6) {$-1|2$};
       \node at (0.9,3) {$-1|2$};
       \node at (4.75,-0.6) {$-2|3$};
       \node at (-5.25,-0.6) {$-2|3$};
       \node at (6.75,-0.6) {$-3|4$};
       \node at (-7.25,-0.6) {$-3|4$};
       \node[color=blue] at (3.2,0.7) {$[3]_{t^2}$};
       \node[color=blue] at (-2.8,0.7) {$[3]_{t^2}$};
       \node[color=blue] at (-0.9,3) {$[2]_{t^2}$};
       \node[color=blue] at (5.2,0.7) {$[2]_{t^3}$};
       \node[color=blue] at (-4.8,0.7) {$[2]_{t^3}$};
       \node[color=blue] at (7,0.7) {$1$};
       \node[color=blue] at (-7,0.7) {$1$};
       \node[color=green!80!black] at (3.2,1.7) {$x^2y^3$};
       \node[color=green!80!black] at (5.2,1.7) {$xy^2$};
       \draw[->,thin] (0,1.2) -- (0,2.8);
       \draw[->,thin] (1.2,0) -- (2.8,0);
       \draw[->,thin] (-1.2,0) -- (-2.8,0);
       \draw[->,thin] (3.2,0) -- (4.8,0);
       \draw[->,thin] (-3.2,0) -- (-4.8,0);
       \draw[->,thin] (5.2,0) -- (6.8,0);
       \draw[->,thin] (-5.2,0) -- (-6.8,0);
	} & 
    \tikz[thick,xscale=.42,yscale=.42,font=\footnotesize,baseline=(current  bounding  box.center)]{
       \filldraw [color=black,fill=gray!35] (0,0) circle (1);
       \draw[dash pattern=on 1.5pt off 1.5pt,very thin,color=black!70] (0,0) ellipse (1 and 0.15);
       \filldraw [black] (0,3) circle (3pt);
       \filldraw [black] (3,0) circle (3pt);
       \filldraw [black] (-3,0) circle (3pt);
       \filldraw [black] (5,0) circle (3pt);
       \filldraw [black] (-5,0) circle (3pt);
       \filldraw [black] (7,0) circle (3pt);
       \filldraw [black] (9,0) circle (3pt);
       \filldraw [black] (11,0) circle (3pt);
       \node at (1.1,1.1) {$0|1$};
       \node[color=blue] at (0.1,-0.45) {$[6]_t$};
       \node at (2.75,-0.6) {$-1|2$};
       \node at (-3.25,-0.6) {$-1|2$};
       \node at (0.9,3) {$-1|2$};
       \node at (4.75,-0.6) {$-2|3$};
       \node at (-5.25,-0.6) {$-2|3$};
       \node at (6.75,-0.6) {$-3|4$};
       \node at (8.75,-0.6) {$-4|5$};
       \node at (10.75,-0.6) {$-5|6$};
       \node[color=blue] at (3.2,0.7) {$[5]_{t^2}$};
       \node[color=blue] at (-2.7,0.7) {$[4]_{t^2}$};
       \node[color=blue] at (-0.9,3) {$[3]_{t^2}$};
       \node[color=blue] at (5.2,0.7) {$[4]_{t^3}$};
       \node[color=blue] at (-4.8,0.7) {$[2]_{t^3}$};
       \node[color=blue] at (7.2,0.7) {$[3]_{t^4}$};
       \node[color=blue] at (9.2,0.7) {$[2]_{t^5}$};
       \node[color=blue] at (11,0.7) {$1$};
       \node[color=green!80!black] at (3.2,1.7) {$x^4y^5$};
       \node[color=green!80!black] at (5.2,1.7) {$x^3y^4$};
       \node[color=green!80!black] at (7.2,1.7) {$x^2y^3$};
       \node[color=green!80!black] at (9.2,1.7) {$xy^2$};
       \node[color=green!80!black] at (-2.7,1.7) {$x^2y^4$};
       \draw[->,thin] (0,1.2) -- (0,2.8);
       \draw[->,thin] (1.2,0) -- (2.8,0);
       \draw[->,thin] (-1.2,0) -- (-2.8,0);
       \draw[->,thin] (3.2,0) -- (4.8,0);
       \draw[->,thin] (-3.2,0) -- (-4.8,0);
       \draw[->,thin] (5.2,0) -- (6.8,0);
       \draw[->,thin] (7.2,0) -- (8.8,0);
       \draw[->,thin] (9.2,0) -- (10.8,0);
	} \\
    S_{\mathbb{Z}/4} & S_{\mathbb{Z}/6}
    \end{array}
\end{align*}
Observe that this confirms \cref{conj:eq-mult-positivity} for $2$-dimensional integrable systems.
Note also that the divisibility of \cref{prop:divis} does not hold for wobbly components. For instance, for the intermediate component in $S_{\mathbb{Z}/3}$ we have $[2]_{t^2} = 1+t^2 \nmid 1+t+t^2 = [3]_t$.

Let us explain how these multiplicities are obtained.
For bottom components, they are computed from formula~\eqref{eq:mult-bottom}; recall from \cref{ex:2d-int} that in $S_{\Z/e}$ the base has weight $\e$.
Top components in flow order are very stable by \cref{cor:v-stable}, so their multiplicities follow from \cref{prop:v-stable-HHvsMZ}.
For the intermediate components, passing to an étale neighborhood we find ourselves in the situation of \cref{ex:C2-mult}.
Let us compute the local equations of $\Core$.
At each isolated fixed point $p$, denote the local coordinate of negative, resp. positive, weight direction by $x$, resp. $y$.
When $p$ is very stable, the local equation of $\Core$ at $p$ is $y^{m_p}$.
When it is not, we instead get some monomial $x^{b_1}y^{b_2}$ of $\T$-weight $\e$.
However, for any $\T$-curve from 
$p$ to $q$ the power of $x$ at $p$ must be the same as the power of $y$ at $q$.
This uniquely determines the local equations, which are given by the polynomials in green in the diagrams above.
We conclude by appealing to formulas~\eqref{eq:2d-m}.

\begin{prop}\label{prop:2d-fin-cover}
    \cref{conj:finite-cover} holds for $2$-dimensional integrable systems.
\end{prop}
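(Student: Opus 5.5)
The plan is to reduce to the explicit local computation of \cref{ssec:2d-mult-ex}. Here $\Base=\C_\e$ and $\Base'=\C_{\e'}$ are one-dimensional, and $\pi:\Base\to\Base'$ is a finite $\T$-equivariant map. Such a map is given by a homogeneous polynomial $z\mapsto cz^d$ with $c\neq 0$. Hence $\e'=d\e$, $\deg\pi=d$, and $\delta_\pi(t)=[d]_{t^\e}$. In particular, the defining ideal of $\Core'$ is generated by $\theta^*(z)^d$, i.e. by the $d$-th power of the local equation of $\Core$.

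First I dispose of the very stable components, which by \cref{cor:correction-v-stable} satisfy the conjecture with $d_{F,\pi}=\delta_\pi$. This covers every $F\in\Fix^\WG$ of the Painlevé surfaces $S^{\mathrm{I}},S^{\mathrm{II}},S^{\mathrm{IV}}$, since their bottom components are not in $\Fix^\WG$. It also covers $T^*E$, and the top and bottom components of $S_{\Z/\e}$ (by \cref{cor:v-stable}).

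It remains to treat the intermediate (wobbly) components of $S_{\Z/\e}$, where I use that equivariant multiplicities are computed étale-locally. Near such an isolated fixed point $p$ with weights $-a_1\,|\,a_2$, the core is cut out by $x^{b_1}y^{b_2}$ as listed in green in \cref{ssec:2d-mult-ex}. Here $x$ has weight $-a_1$, $y$ has weight $a_2$, and $W^-_p$ is the $x$-axis. Then $\Core'$ is cut out by $x^{db_1}y^{db_2}$. By \cref{ex:C2-mult} and formula~\eqref{eq:2d-m}, applied with the axis $W^-_p$ playing the role of $\C_{a_1}$ (so the relevant factor is the $y$-exponent), we get
\[
    m_{F}(t)=[b_2]_{t^{a_2}},\qquad m_{F'}(t)=[db_2]_{t^{a_2}}=[b_2]_{t^{a_2}}\,[d]_{t^{a_2b_2}}.
\]
So we may take $d_{F,\pi}(t)=[d]_{t^{a_2b_2}}\in\Z[t]$.

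The main point to check is the sign and weight bookkeeping in applying \cref{ex:C2-mult}. That example is stated for nonzero weights with $Y$ a coordinate axis. One must verify that the torsion-free quotient of $\sp_{W^-_p}$ kills exactly the $x^{db_1}$ factor, leaving $\mathcal{O}/(y^{db_2})$. This holds because $x$ is a unit generically along $W^-_p$.

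Finally, I note that $a_2b_2$ need not equal $\e$, since $x^{b_1}y^{b_2}$ has weight $a_2b_2-a_1b_1=\e$. For instance, for the component of $S_{\Z/3}$ with local equation $xy^2$ we get $a_2b_2=4\neq 3$. Thus $d_{F,\pi}$ genuinely depends on $F$ and differs from $\delta_\pi=[d]_{t^{\e}}$, consistent with the remark after \cref{conj:finite-cover}.
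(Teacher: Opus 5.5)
Your proposal is correct and follows the paper's proof essentially verbatim. You reduce to the wobbly components via \cref{cor:correction-v-stable}, observe that $\pi$ is $z\mapsto cz^d$ so the local equation $x^{b_1}y^{b_2}$ of the core becomes its $d$-th power, and read off $d_{F,\pi}(t)=[d]_{t^{a_2b_2}}$ from~\eqref{eq:2d-m}. Your additional bookkeeping is accurate and consistent with \cref{ex:degree-correction}: which components are very stable, and why $d_{F,\pi}$ differs from $\delta_\pi=[d]_{t^{\e}}$.
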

\begin{proof}
    We only need to treat wobbly components $p\in\Fix$ by \cref{cor:correction-v-stable}.
    Note that the only finite $\T$-equivariant map $\pi:\C_e\to \C_{de}$ of degree $d$ up to a scalar is $z\mapsto z^d$.
    As explained above, the local equation of $\Core$ at $p$ is of the form $x^{b_1}y^{b_2}$.
    We deduce from the formula~\eqref{eq:2d-m} that if $m_p(t) = [b_2]_{t^a}$, then $m_{p'}(t) = [db_2]_{t^a}$, and so $d_{p,\pi}(t) = [d]_{t^{ab_2}}$. 
\end{proof}

\begin{ex}\label{ex:degree-correction}
    Let us have a closer look at the component $\Core_p$ of multiplicity $[2]_{t^2}$ in $S_{\mathbb{Z}/3}$.
    As explained above, the local equation of $\Core$ at $p$ is $xy^2$.
    Let $\pi: \mathbb{C}_3 \to \mathbb{C}_6$, $\pi(z) = z^2$, and consider the composition $\theta'$ of integrable system $\theta: S_{\mathbb{Z}/3}\to \C_3$ with $\pi$.
    By the proof of \cref{prop:2d-fin-cover}, we have $d_{p,\pi}(t) = [2]_{t^4}$; in fact, locally at $p$ the system $\theta'$ is isomorphic to the neighborhood of the fixed point of multiplicity $[4]_{t^2}$ in $S_{\mathbb{Z}/6}$.
    On the other hand $\delta_\pi(t) = \frac{1-t^6}{1-t^3} = [2]_{t^3}$, and so $\delta_\pi(t)\neq d_{p,\pi}(t)$.
\end{ex}

\section{Multiplicities for Hilbert schemes}\label{sec:mult-for-Hilb}
In this section, we compute multiplicities for integrable systems of \cref{ex:Hilb}, i.e., Hilbert schemes of points on 2-dimensional integrable systems.
\subsection{\'Etale neighbourhoods}
Our arguments will proceed by reducing all computations to the case of Hilbert schemes of points on $\mathbb{C}^2$ using appropriate étale neighborhoods.

Let $S$ be a smooth quasi-projective surface, $p_1,\ldots,p_r\in S$ closed points, and fix an étale neighbourhood $\jmath_i:U_i\to S$, $U_i\ni p_i$ for each point.
Further, let $\underline{p} = \sum_{i=1}^{r} k_ip_i\in \Sym^n S$, $k_i\geq 0$, and $I\subset \mathcal{O}_S$ an ideal with support $\underline{p}$.
Note that we can write $I = I_1\cap\ldots \cap I_r$, where each $I_i$ is supported at $p_i$ and has colength $k_i$.

\begin{prop}\label{prop:Hilb-etale}
    Denote $\Sym^{\underline{k}} S \coloneqq \prod_i \Sym^{k_i} S$, and $\Delta\subset \Sym^{\underline{k}} S$ the closed subvariety of non-dijoint tuples. 
    Let $\widetilde{\Hilb}^{k_i} U_i\subset \Hilb^{k_i} U_i$ be the open subset of all ideals $J$ such that for any $p\in S$, at most one point of $\jmath_i^{-1}(p)$ lies in the set-theoretic support of $J$.
    Define
    \[
        U_I \coloneqq \left(\prod_i \widetilde{\Hilb}^{k_i} U_i\right) \times_{\Sym^{\underline{k}} S} \Sym^{\underline{k}} S\setminus \Delta.
    \]
    Then the map $\jmath_I:U_I\to \Hilb^n S$, $(J_1,\ldots,J_r)\mapsto J_1\cap\ldots \cap J_r$ is étale, and the following diagram commutes:
    \[
    \begin{tikzcd}[row sep=small, column sep=small]
        U_I\ar[r]\ar[d] & \Hilb^n S\ar[d] \\
        \prod_i \Sym^{k_i} U_i\ar[r] & \Sym^n S 
    \end{tikzcd}
    \]
\end{prop}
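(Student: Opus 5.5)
The plan is to reduce étaleness to two standard facts: the Hilbert–Chow morphism commutes with étale base change in a way that is Cartesian over the locus where the étale map is injective on supports, and the Hilbert scheme of a disjoint union of supports is a product. First, commutativity of the square is formal. The map $\jmath_I$ sends $(J_1,\ldots,J_r)$ to $\bigcap_i \jmath_{i*}$-images, that is, to the ideal of the subscheme $\bigsqcup_i \jmath_i(Z(J_i))$. Its support cycle is the image under $\prod_i \Sym^{k_i}\jmath_i$ followed by the addition map $\prod_i \Sym^{k_i}S\to \Sym^n S$. Well-definedness needs two facts. Restricted to subschemes $Z_i\subset U_i$ on which $\jmath_i$ is injective on points, the map $Z_i\to S$ is a closed immersion onto its image, since $\jmath_i$ is étale and unramified and a finite, radicial, unramified map is a closed immersion. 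The images are pairwise disjoint because we removed $\Delta$, so the union is a length-$n$ subscheme. Both facts hold in families over any base $T$, so the construction defines a morphism of schemes.

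Second, I will prove étaleness by factoring $\jmath_I$ through two maps:
\[
U_I \to \Bigl(\prod_i \Hilb^{k_i} S\Bigr)\times_{\Sym^{\underline{k}} S}(\Sym^{\underline{k}} S\setminus\Delta) \to \Hilb^n S.
\]
The second map, $(Z_i)\mapsto \bigsqcup Z_i$, is the standard open immersion onto its image after a finite étale quotient. Precisely, the map from the disjoint locus of the product of Hilbert schemes to $\Hilb^n S$ is étale. I will check this via the tangent-space or deformation comparison $\mathrm{Hom}(I_Z,\mathcal{O}_Z)=\bigoplus_i \mathrm{Hom}(I_{Z_i},\mathcal{O}_{Z_i})$ for disjoint $Z_i$. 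Both sides are smooth of dimension $2n$ since $S$ is a surface, so an isomorphism on tangent spaces gives étaleness. The same tangent-space argument works directly for the first map $\widetilde{\Hilb}^{k}U\to \Hilb^{k}S$, $Z\mapsto \jmath(Z)$. Because $\jmath$ is étale and $Z\to\jmath(Z)$ is an isomorphism, we get $\mathcal{O}_{\jmath(Z)}$ as $\jmath_*\mathcal{O}_Z$. Also $N_{Z/U}\simeq N_{\jmath(Z)/S}$, since $\jmath$ is a local isomorphism of completed local rings at each point of $Z$. So deformations agree, and the map is étale because it is a morphism of smooth varieties inducing isomorphisms on tangent spaces.

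The main obstacle is to make the first step rigorous in families, namely that $\jmath$ restricted to a $T$-flat family $\mathcal{Z}\subset U\times T$, with fibers injective on points, maps isomorphically onto a closed $T$-flat subscheme of $S\times T$. I would handle this by noting that $\mathcal{Z}\to S\times T$ is finite (being proper and quasi-finite) and unramified. On geometric fibers it is injective with trivial residue field extensions, hence a monomorphism, and a proper monomorphism is a closed immersion. If this becomes delicate, an alternative is to avoid functor-of-points arguments and argue only on closed points and tangent spaces. Since every variety involved is smooth, a morphism that induces isomorphisms on all tangent spaces is étale, and this suffices for the statement.
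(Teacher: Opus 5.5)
Your proposal is correct and is essentially the paper's proof: the same factorization of $\jmath_I$ through $\bigl(\prod_i \Hilb^{k_i}S\bigr)\times_{\Sym^{\underline{k}}S}(\Sym^{\underline{k}}S\setminus\Delta)$, with $Z\mapsto \jmath_i(Z)$ well defined because $\jmath_i$ is injective on the support (the content of the paper's adjunction-surjectivity remark), the only difference being that where the paper cites Bertin's Prop.~3.3 for étaleness of both factors, you verify it directly by comparing the tangent spaces $\Hom(I_Z,\mathcal{O}_Z)$, which suffices since all spaces involved are smooth of the same dimension. One aside is inaccurate but never used: the second map is not ``an open immersion after a finite étale quotient'' (for $\underline{k}=(1,2)$ its fibre over a subscheme supported at $a+2b$ is one point, while over one supported at $a+b+c$ it is three points); it is merely étale.
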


\begin{rmk}
    An analytically-minded reader can replace each étale neighbourhood $U_i$ with a small ball $\mathbb{B}$, and $\widetilde{\Hilb}^{k} U_i$ with the Douady space of $k$ points on $\mathbb{B}$.
\end{rmk}

\begin{proof}
    The commutativity of the diagram is obvious.
    We can factorize the map $\jmath_I$ as follows:
    \[
        U_I \to \left(\prod_i \Hilb^{k_i} S\right) \times_{\Sym^{\underline{k}} S} (\Sym^{\underline{k}} S\setminus \Delta) \to \Hilb^n S.
    \]
    It suffices to show that both these maps are étale.
    For the second one, see~\cite[Prop~3.3]{bertin}.
    For the first one, we consider the map $\phi:\widetilde{\Hilb}^k U_i \to \Hilb^k S$.
    Observe that it is well defined.
    Indeed, given $J\subset \mathcal{O}_{U_i}$, by adjunction we have $\mathcal{O}_S\to (\jmath_i)_*(\mathcal{O}_S/J)$, which is surjective by our assertion on the support on $J$.
    Thus $\phi(J) = \ker(\mathcal{O}_S\to (\jmath_i)_*(\mathcal{O}_S/J))$, and the fact that this map is étale follows from the same exact argument as in~\cite[Prop.~3.3]{bertin}.
\end{proof}

\begin{rmk}\label{rmk:tg-weights}
    Assume that $\T$ acts on $S$, all $p_i$'s are fixed by $\T$, and the étale neighbourhoods $U_i$ are equivariant.
    Then the neighbourhood $\jmath_I:U_I\to \Hilb^n S$ is also $\T$-equivariant.
    In particular, by Luna's slice theorem we can assume that each $U_i$ admits a $\T$-equivariant étale map $U_i\to T_{p_i} S$, and so computing tangent weights in $\Hilb^n S$ reduces to the case $S = \mathbb{C}^2$.
\end{rmk}

\subsection{Partitions}
As is customary with Hilbert schemes of points, we need some combinatorics of partitions.
We denote the set of partitions of $n$ by $\Part(n)$, and write $\Part = \bigsqcup_{n\geq 1} \Part(n)$.
An element of $\Part(n)$ is $\lambda = (\lambda_1\geq \lambda_2\geq\ldots)$ with $|\lambda|\coloneqq \sum_i \lambda_i = n$.
We draw $\lambda\in\Part$ as a Young diagram (in French notation), and denote by $\lambda'$ the transposed partition.
In particular, we write $\ell(\lambda) \coloneqq \lambda'_1$ for the length of $\lambda$.
For each box $s=(i,j)\in\lambda$ we define its leg, arm, hook lengths:
\[
\ytableausetup{centertableaux,boxsize=1.25em}
\begin{ytableau}
\none &  & a \\
\none[\vdots] &  & a & \\
\none[{\lambda_2}] &  & s & l & l & l\\
\none[{\lambda_1}] &  & & & & \\
\none & \none[{\lambda'_1}] & \none[{\lambda'_2}] & \none[\dots]
\end{ytableau}\qquad\qquad
\begin{aligned}
    &l(s) = \lambda_j-i,\quad a(s) = \lambda'_i-j,\\
    &h(s) = l(s)+a(s)+1.
\end{aligned}
\]
Given $\lambda\in\Part(n)$, we define its ($t$-)multinomial coefficient by
\[
    \binom{n}{\lambda} \coloneqq \frac{n!}{\prod_i \lambda_i!},\qquad \qnom{n}{\lambda}_t \coloneqq \frac{[n]!_t}{\prod_i [\lambda_i]!_t}, \quad [n]!_t = [n]_t[n-1]_t\dots [1]_t.
\]
For a finite set $X$, we denote the set of $X$-multipartitions by 
$\Part^X \coloneqq \bigsqcup_n \Part^X(n),$ 
where an element of $\Part^X(n)$ is an $X$-tuple of partitions $\underline{\lambda}=(\lambda(x))_{x\in X}$, satisfying $\sum_{x\in X} |\lambda(x)| = n$.
We can alternatively treat partitions as unordered multisets of positive numbers.
In these terms, we have a natural map $\cup: \Part^X(n)\to \Part(n)$, which sends $\underline{\lambda}$ to $\cup_x \lambda(x)$.

\subsection{Fixed loci}
Let $S$ be a two-dimensional smooth integrable system from \cref{ex:2d-int}. Recall that we denote the $\T$-weight of its symplectic form by $k$.
By \BB decomposition of $S$, at most one connected component of $S^{\T}$ is one-dimensional.
Let us denote this curve by $C$ (by abuse of notation, $C = \varnothing$ for Painlevé surfaces), and write 
\begin{equation}\label{eq:fix-S}
    S^{\T} = C \sqcup \{ p_1,\ldots,p_r\}.
\end{equation}
Given $p\in S^\T$ satisfying~\eqref{eq:weight-gap}, we have the homogeneous coordinates $(x,y)$ on $T_p S$, where $x$, resp. $y$ is the direction with non-negative, resp. positive weight.
For each $\lambda\in\Part(n)$, we then define an ideal 
\begin{equation}\label{eq:fixed-ideals}
    I^{\lambda}_p\coloneqq(x^{\lambda_1}, x^{\lambda_2}y,\ldots, x^{\lambda_q}y^{q-1},y^q).
\end{equation}
The following descriptions of ${\T}$-fixed loci of Hilbert schemes of points are well known.
\begin{prop}\label{prop:Hilb-fix-C2}
    Let $S=\mathbb{C}^2$, $0\in \C^2$ the origin, and ${\T}$ acts by $t(x,y) = (t^{-l}x, t^{k+l}y)$ for $k>0$, $l\geq 0$.
    There is a bijection $\Part(n) \to \Fix$, $\lambda\mapsto F_\lambda$.
    More precisely, for $\lambda = (\lambda_1,\lambda_2,\ldots)\in\Part(n)$:
    \begin{itemize}
        \item If $l\neq 0$, $F_\lambda\in \Fix$ is the ideal $I^{\lambda}_0$;
        \item If $l=0$, $F_\lambda\in \Fix$ is the attracting locus of $I^{\lambda}$ under the torus action $t(x,y) = (tx, y)$. It consists of ideals of the form $I_{x_1}^{\mu(1)}\cap I_{x_2}^{\mu(2)}\cap\ldots$, where $\underline{\mu}\in \cup^{-1}(\lambda)\subset \Part^s$, and $x_i\in \mathbb{C}$.
    \end{itemize}
\end{prop}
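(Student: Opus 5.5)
The plan is to use the standard description of $\T$-fixed ideals in $\mathbb{C}[x,y]$ through monomial ideals, and to treat the two cases by which coordinate carries weight zero.

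\emph{Case $l\neq 0$.} Here $\T$ acts on the coordinate functions with weights of opposite signs ($x$ with $-l<0$, $y$ with $k+l>0$). Since $\theta$ is not relevant here, we use properness of the Hilbert–Chow map. First, a $\T$-fixed ideal $I$ has $\T$-invariant support in $\Sym^n\mathbb{C}^2$. The $\T$-fixed locus of $\mathbb{C}^2$ is only the origin, but an invariant $0$-dimensional support must consist of fixed points, since finite orbits of a connected group are points. Hence $\Supp I=\{0\}$. Next, $I$ is spanned by weight vectors. The weight of $x^ay^b$ is $-la+(k+l)b$, and this map $(a,b)\mapsto$ weight is not injective when $l\neq0$. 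So a direct monomiality argument does not work immediately.

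To get around this, we note that $I$ has colength $n$ and is supported at $0$, so $I\supset \mathfrak m^n$. We work in the finite-dimensional space $\mathbb{C}[x,y]/\mathfrak m^n$. There the relevant exponents satisfy $a,b<n$. We replace the weight by a generic perturbation: the fixed locus of the one-parameter torus $\T$ contains the fixed locus of the full torus $(\mathbb{C}^*)^2$. Whether it equals it is exactly the question. Rather than argue through weights, we use smoothness and dimension. $\Hilb^n(\mathbb{C}^2)^{\T}$ is smooth and projective, because it lies in the punctual Hilbert scheme, which is proper. Its tangent space at a monomial ideal $I_\lambda$ has weights given by the standard arm–leg formula: pairs $t^{\,l(s)+1}$-type characters, specialized to $(t^{-l},t^{k+l})$, namely weights $-l(a(s)+1)+(k+l)l(s)$ and $l\,a(s)-(k+l)(l(s)+1)$. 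A zero weight would require $(k+l)l(s)=l(a(s)+1)$ or $l\,a(s)=(k+l)(l(s)+1)$.

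This is the main obstacle: for special ratios $l/k$ such coincidences can occur. I would try to show the second equation is impossible, and that the first forces a contradiction with the symplectic pairing. Failing that, I would restrict the claim, as in the paper's intended setting, to showing that the fixed locus is $0$-dimensional at each $I_\lambda$. Every fixed component is proper and therefore contains a $(\mathbb{C}^*)^2$-fixed point, that is, some $I_\lambda$. If its tangent space has no weight-zero part, the component is the single point $I^\lambda_0$. The bijection with $\Part(n)$ then follows.

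\emph{Case $l=0$.} Now $x$ has weight $0$ and $y$ has weight $k>0$, so $\T$ only scales $y$. The fixed ideals are those homogeneous in $y$: $I=\bigoplus_j y^jJ_j$ with ideals $J_0\subset J_1\subset\dots$ of $\mathbb{C}[x]$. Each $J_j=(f_j)$ is principal, and the chain condition reads $f_{j+1}\mid f_j$. Factoring the $f_j$ over the roots $x_i$ splits $I$ as an intersection of fixed ideals supported at the points $(x_i,0)$. Each factor, after translation, is $I^{\mu(i)}$, where $\mu(i)_j$ records the multiplicity of $x_i$ in $f_{j-1}$; the condition $f_{j+1}\mid f_j$ makes $\mu(i)$ a partition. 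The column heights $\lambda_j=\deg f_{j-1}$ give $\cup\underline\mu=\lambda$, and $\lambda$ is constant on connected families, because the degrees of the $f_j$ are discrete invariants. Conversely, the locus with fixed $\lambda$ is the image of a product of configuration spaces of the roots, so it is connected. Finally, the $x$-translation action $t(x,y)=(tx,y)$ contracts every such ideal to $I^\lambda$, which identifies $F_\lambda$ with the attracting locus of $I^\lambda$, as stated.
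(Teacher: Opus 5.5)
Your $l=0$ case is the standard argument; the paper itself just cites Nakajima, \S7, for it, and Evain for $l\neq 0$. The case $l\neq 0$, however, is left unproved. After setting monomiality aside, you call the possible vanishing of tangent weights the ``main obstacle'' and propose to ``restrict the claim'' if it cannot be removed.

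That obstacle comes from a sign error. The tangent characters at a monomial ideal are $T_1^{l(s)+1}T_2^{-a(s)}$ and $T_1^{-l(s)}T_2^{a(s)+1}$; in each, the two exponents have opposite signs. Your specialization effectively uses exponents of the same sign: for $n=1$ it gives weights $-l$ and $-(k+l)$, whereas $T_0\mathbb{C}^2$ has weights $-l$ and $k+l$. Substituting $T_1=t^{-l}$, $T_2=t^{k+l}$ correctly yields $-l\,h(s)-k\,a(s)$ and $l\,h(s)+k(a(s)+1)$, exactly as in Proposition~\ref{prop:fixed-tg-weights}. The first vanishes only when $l=0$ and $a(s)=0$.

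So for $l>0$ no zero weights occur, and your outline closes. Each component of $(\Hilb^n\mathbb{C}^2)^{\T}$ is projective and preserved by the commuting torus $(\mathbb{C}^*)^2$. By Borel's fixed point theorem it therefore contains a monomial ideal, and its tangent space there is zero.

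The monomiality argument you abandoned also goes through, and more directly. Set $g=\gcd(l,k+l)$. Two monomials have the same weight iff they differ by a power of $u=x^{(k+l)/g}y^{l/g}\in\mathfrak{m}$. Hence every weight vector in $I$ is a monomial times $p(u)$ with $p(0)\neq 0$, i.e.\ a monomial times a unit of $\mathcal{O}_{\mathbb{C}^2,0}$. Since $I$ is $\mathfrak{m}$-primary, that monomial lies in $I$, so $I$ is monomial.

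For $l=0$, your computation gives $\lambda_j=\deg f_{j-1}=\sum_i\mu(i)_j$. So $\lambda$ is the row-wise sum of the $\mu(i)$, equivalently $\lambda'=\bigcup_i\mu(i)'$, not $\cup\underline{\mu}=\lambda$. For instance, $I_{x_1}\cap I_{x_2}=((x-x_1)(x-x_2),y)$ has $\deg f_0=2$ and degenerates to $I^{(2)}$, not $I^{(1,1)}$. This transposition slip is already in the statement. The rest of the paper uses the row-wise version: a general point of $F_\lambda$ is $I^{(1^{\lambda'_1})}_{x_1}\cap\dots\cap I^{(1^{\lambda'_m})}_{x_m}$ with $m=\lambda_1$. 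Record what you actually proved rather than matching the statement.

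Two smaller points. First, $t(x,y)=(tx,y)$ is a scaling, not a translation. Second, the attracting locus must be taken inside $(\Hilb^n\mathbb{C}^2)^{\T}$: in all of $\Hilb^n\mathbb{C}^2$, the non-fixed ideal $(x-cy,y^2)$ with $c\neq 0$ also flows to $(x,y^2)=I^{(1,1)}$.
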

\begin{proof}
    See \cite[Cor.~7]{evain2004irreducible} for $l\neq 0$, and \cite[Sec.~7]{Nak99} for $l=0$.
\end{proof}

For $\lambda \in \Part$, we write $\Sym^\lambda C \coloneqq \prod_{i\geq 1} \Sym^{\lambda_i-\lambda_{i+1}} C$.\footnote{Compared to~\cite{Nak99}, we swap $\lambda$ with $\lambda'$ in our notations.}
\begin{cor}\label{cor:Hilb-fixed-loci}
    Let $S$ be as in \cref{ex:2d-int}.
    We have $\Fix(\Hilb^n S) = \Part^{\Fix(S)}(n)$, moreover
    \[
        (\Hilb^n S)^{\T} = \bigsqcup_{\underline{\lambda}\in\Part^{\Fix(S)}(n)} \Sym^{\lambda(C)} C \times \prod_{i=1}^r I_{p_i}^{\lambda(p_i)}
    \]
\end{cor}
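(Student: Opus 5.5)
The plan is to reduce to the local statement of \cref{prop:Hilb-fix-C2} by decomposing a $\T$-fixed subscheme according to its support.

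Let $I\in(\Hilb^n S)^\T$. Its support $\underline{p}\in\Sym^n S$ is $\T$-fixed. Since $\T$ is connected, it cannot permute the finitely many support points nontrivially, so each support point lies in $S^\T = C\sqcup\{p_1,\ldots,p_r\}$. Write $I = \bigcap_q I_q$ with $I_q$ supported at $q$. Each $I_q$ is again $\T$-fixed, by uniqueness of the primary decomposition for zero-dimensional ideals.

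Next, choose $\T$-equivariant étale neighbourhoods $U_q\to T_qS$ as in \cref{rmk:tg-weights}. By \cref{prop:Hilb-etale}, the fixed locus of $\Hilb^n S$ near $I$ is identified with the product of fixed loci of the punctual pieces in $\Hilb^{k_q}(T_qS)$, restricted to the disjointness locus. We then apply \cref{prop:Hilb-fix-C2} to each piece.
\begin{itemize}
\item At an isolated point $p_i$, the tangent weights are $(-l,k+l)$ with $l>0$. (The weights are nonzero, and by \eqref{eq:weight-gap}/\BB they have the form $-l,k+l$.) So $I_{p_i}$ must equal $I^{\lambda(p_i)}_{p_i}$ for a unique partition $\lambda(p_i)$.
\item Along $C$, the weights are $(0,k)$, so $l=0$. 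Here the fixed ideals supported near a point of $C$ are intersections $\bigcap_j I^{\mu(j)}_{x_j}$ with $x_j\in C$.
\end{itemize}

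Globally, the fixed ideals supported on $C$ are therefore finite intersections $\bigcap_j I^{\mu(j)}_{c_j}$ with distinct $c_j\in C$. Record $\lambda(C)=\cup_j\mu(j)$, i.e.\ the multiset union. To see the component structure we regroup by columns. Each $I^{\mu}_{c}$ in the coordinates $(x,y)$, where $x$ is a local coordinate on $C$, is determined by the vertical strip data. Equivalently, $\bigcap_j I^{\mu(j)}_{c_j}$ is determined by the divisors $D_i=\sum_j m_i(\mu(j))\,c_j$, where $m_i$ counts the parts of length $\geq$ appropriately many rows. Concretely, the ideal is $\sum_q (y^q)\cdot\mathcal{I}_{D_{q}}$ with $D_1\geq D_2\geq\cdots$ effective divisors on $C$. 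The successive differences $D_i-D_{i+1}$ are arbitrary effective divisors of degree $\lambda_i-\lambda_{i+1}$, with $\lambda=\lambda(C)$ after the transposition convention of the footnote. This gives an isomorphism of the corresponding locus with $\Sym^{\lambda(C)}C=\prod_i\Sym^{\lambda_i-\lambda_{i+1}}C$, following \cite[Sec.~7]{Nak99}. Since this product is connected and distinct $\underline{\lambda}$ give disjoint loci (the multipartition is a locally constant invariant, read off from colengths at each fixed point and the shape), the connected components are exactly the products $\Sym^{\lambda(C)}C\times\prod_i I^{\lambda(p_i)}_{p_i}$. This yields $\Fix(\Hilb^n S)=\Part^{\Fix(S)}(n)$.

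The main obstacle is the identification of the $C$-part with $\Sym^{\lambda(C)}C$ scheme-theoretically rather than just on points. I would handle it by noting that the fixed locus is smooth: it is a fixed locus of a smooth variety. It therefore suffices to give a bijective morphism from $\Sym^{\lambda(C)}C$ that is an isomorphism on tangent spaces. The tangent check is local and reduces via \cref{prop:Hilb-etale} to the case $\C^2$ with weights $(0,k)$, where it is Nakajima's computation.
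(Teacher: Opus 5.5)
Your route is the paper's: split a fixed ideal by its support, treat the isolated points étale-locally via \cref{prop:Hilb-fix-C2}, and treat the part supported on $C$ via Nakajima's description of the fixed locus of $\Hilb^n T^*C$. The paper restricts to the open $T^*C\subset S$ and cites \cite[Prop.~7.5]{Nak99}; your flag of divisors $D_1\geq D_2\geq\cdots$ plus the smoothness/bijectivity check reproves that result.

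\textbf{The failing step.} The gap is the claim that every isolated fixed point has weights $(-l,k+l)$ with $l>0$. The bottom point $b$ of $S^{\mathrm{I}},S^{\mathrm{II}},S^{\mathrm{IV}}$ has tangent weights $(2,3)$, $(1,2)$, $(1,1)$, which are both positive. This point violates \eqref{eq:weight-gap}, so your appeal to that condition does not apply. Consequently \cref{prop:Hilb-fix-C2} is unavailable there, and $I^{\lambda}_b$ is not even defined.

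\textbf{The conclusion is actually false at $b$.}
\begin{itemize}
\item For $S^{\mathrm{IV}}$ the torus acts on $T_bS$ by scalars. So the whole punctual Hilbert scheme $\Hilb^2_b S\cong\mathbb{P}^1$, consisting of the ideals $(\ell,\mathfrak{m}_b^2)$, is pointwise fixed. Hence $\Hilb^2 S^{\mathrm{IV}}$ has $13$ fixed components instead of $|\Part^{\Fix(S)}(2)|=14$.
\item Similarly, the quasi-homogeneous families $(y-cx^2,x^3)$ in $\Hilb^3 S^{\mathrm{II}}$ and $(y^2+cx^3,xy)$ in $\Hilb^5 S^{\mathrm{I}}$ each join two monomial ideals into one $\mathbb{P}^1$ of fixed points.
\end{itemize}

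The paper's proof has exactly the same blind spot, since it too applies \cref{prop:Hilb-fix-C2} at every isolated point. What both arguments actually establish is:
\begin{itemize}
\item the statement for the parabolic family;
\item for Painlevé surfaces, the description of the fixed components with $\lambda(b)=\varnothing$.
\end{itemize}
This is what the later sections need. The Briançon argument showing that components with a part supported at $b$ are not in $\Fix^{\WG}$ does not use isolatedness.

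\textbf{A minor bookkeeping point.} If $\lambda(C)_i=\deg D_i$, then $\lambda(C)$ is the row-wise sum of the $\mu(j)$, not their multiset union. For example, $((x-c_1)(x-c_2),y)$ lies in the two-dimensional component $\Sym^{(2)}C=\Sym^2 C$, while $(1)\cup(1)=(1,1)$. This slip comes from the formulation of the $l=0$ case of \cref{prop:Hilb-fix-C2} and does not affect the structure of your argument.
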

\begin{proof}
    Restricting to the open $T^*C\subset S$, resp. passing to an étale cover as in \cref{prop:Hilb-etale}, it suffices to prove the claim for $S=T^*C$, resp. $\mathbb{C}^2$. 
    The latter is treated in \cref{prop:Hilb-fix-C2}, while for the former we have
    \[
        (\Hilb^n T^*C)^\T = \bigsqcup_{\lambda\in\Part(n)} \Sym^{\lambda} C
    \]
    by~\cite[Prop.~7.5]{Nak99}.
\end{proof}

Denote by $F_{\underline{\lambda}}$ the $\T$-fixed component in $\Hitch\coloneqq\Hilb^n S$ which corresponds to the multipartition $\underline{\lambda}\in\Part^{\Fix(S)}(n)$.
Let $l_{p_i} \coloneqq -\wt(T_{p_i}^-)$, $1\leq i\leq r$, and $l_C\coloneqq 0$.
Note that $\wt(T_{p_i}^+) = k+l_{p_i}$, and $\wt(T_{c}^+) = k$ for $c\in C$.
\begin{prop}\label{prop:fixed-tg-weights}
    Let $F_{\underline{\lambda}}\in \Fix$, and $x\in F_{\underline{\lambda}}$.
    We have
    \begin{equation*}
        \chi_t(T_x \Hitch) = \sum_{p\in\Fix(S)} \sum_{s\in\lambda(p)} t^{-l_ph(s)-ka(s)} + t^{l_ph(s)+k(a(s)+1)}.
    \end{equation*}
    In particular, separating positive and negative weights,
    \begin{equation}\label{eq:Tplus-tangent}
        \chi_t(T^+_x) = \sum_{p\in\Fix(S)} \sum_{s\in\lambda(p)} t^{l_ph(s)+k(a(s)+1)},\quad
        \chi_t(T^-_x) = - \lambda(C)_1 + \sum_{p\in\Fix(S)} \sum_{s\in\lambda(p)} t^{-l_ph(s)-ka(s)}.
    \end{equation}
\end{prop}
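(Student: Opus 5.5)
Since a point $x\in F_{\underline{\lambda}}$ is a subscheme whose support decomposes according to the fixed components of $S$, the plan is to reduce the computation to local models and then apply the classical tangent-space formula for $\Hilb^n\mathbb{C}^2$.

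\emph{Step 1: factorization.} Write $x=\bigcap_j I_j$ with the $I_j$ supported at distinct points $q_j$. Take $\T$-equivariant étale neighbourhoods $U_j\ni q_j$; by \cref{prop:Hilb-etale} and \cref{rmk:tg-weights} we get an equivariant étale map $U_x\to\Hilb^n S$ hitting $x$. Hence $T_x\Hitch\simeq\bigoplus_j T_{I_j}\Hilb^{k_j}(T_{q_j}S)$ as $\T$-representations. For isolated fixed points $q_j=p$, $I_j$ is the monomial ideal $I^{\lambda(p)}_p$ in coordinates of weights $(-l_p,\,k+l_p)$. For points on $C$, the ideal $I_j$ is of the form $I^{\mu}_c$ in coordinates of weights $(0,k)$, where the $\mu$'s over the points $c\in C$ in the support satisfy $\cup\mu=\lambda(C)$ by \cref{cor:Hilb-fixed-loci}. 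The characters $\chi_t$ are additive over $j$.

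\emph{Step 2: local formula.} For the torus $\T^2$ acting on $\mathbb{C}^2$ with characters $t_1,t_2$ on the coordinate functions, the classical Ellingsrud--Strømme / Nakajima formula gives
\[
\chi(T_{I^\mu}\Hilb\mathbb{C}^2)=\sum_{s\in\mu} t_1^{-l(s)}t_2^{a(s)+1}+t_1^{l(s)+1}t_2^{-a(s)},
\]
with conventions matched to the paper's legs and arms (this matching is where care is needed: leg/arm must be aligned with $x$/$y$ in \eqref{eq:fixed-ideals}). Specialize along $\T\to\T^2$ so that the $x$-direction has weight $-l$ and the $y$-direction has weight $k+l$. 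The first term then has weight $l\,l(s)+(k+l)(a(s)+1)=l\,h(s)+k(a(s)+1)$, and the second has weight $-l(l(s)+1)-(k+l)a(s)=-l\,h(s)-k\,a(s)$. This gives the summand in the proposition.

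\emph{Step 3: curve part and the sign split.} For $c\in C$ we have $l_C=0$, so we get weights $k(a(s)+1)>0$ and $-k\,a(s)\le0$. Boxes with $a(s)=0$, i.e.\ the top box of each column, contribute weight $0$. Summing over all points of $C$ gives one zero weight per column of $\lambda(C)$, so $\lambda(C)_1$ zero weights in total. These match $\dim\Sym^{\lambda(C)}C=\lambda(C)_1$, i.e.\ the tangent directions of $F$.

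For isolated points, $l_p>0$ (or $a(s)>0$ kicks in), so the second-type weights are strictly negative. The first-type weights are always positive since $k>0$. Separating signs then gives \eqref{eq:Tplus-tangent}, with the $-\lambda(C)_1$ term accounting for removing the weight-zero part.

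\emph{Main obstacle.} The hardest part is bookkeeping: fixing the convention (French diagrams, $l(s)=\lambda_j-i$, and $K^\T(\mathrm{pt})$ with $t$ of weight $-1$ versus characters) so that the local formula produces exactly $t^{l_ph(s)+k(a(s)+1)}$ rather than its transpose or inverse. I would verify this on $n=1$, where the formula must recover $T_pS$ with weights $-l_p$ and $k+l_p$, and on $n=2$ with $\lambda=(2)$ and $\lambda=(1,1)$.
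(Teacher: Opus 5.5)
Your proposal is correct and is essentially the paper's argument: reduce via \cref{prop:Hilb-etale} and \cref{rmk:tg-weights} to $\T$-fixed monomial ideals in $T_qS\simeq\C^2$, then specialize Nakajima's torus-character formula along $t_1\mapsto t^{-l_p}$, $t_2\mapsto t^{k+l_p}$. Your conventions check out: at $(x^2,y)$ you get $t_1^2+t_1+t_2+t_1^{-1}t_2$, and you can make the summation over $s\in\lambda(C)$ rigorous either by noting that for $l_C=0$ the weights depend only on $a(s)$, which is unchanged when $\lambda(C)$ splits into columns at a general point, or by evaluating at the point where the $C$-part is concentrated at a single $c$, since the character is constant along $F_{\underline{\lambda}}$.
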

\begin{proof}
    The local case $S = \mathbb{C}^2$ is easily deduced from~\cite[Prop.~5.8]{Nak99}.
    Our statement then immediately follows from \cref{rmk:tg-weights}. 
\end{proof}

\subsection{Flow order}
The flow order on Hilbert schemes can be completely described for surfaces $S$ of the parabolic family.
For any $\underline{\lambda}\in \Part^{\Fix(S)}(n)$ and $F\in \Fix(S)$, we write $\lambda(F) = (\lambda(F)_1\geq \ldots \geq\lambda(F)_n)$, where $\lambda(F)_i=0$ for $i>\lambda(F)'_1$.
Given $\underline{\lambda},\underline{\mu}\in \Part^{\Fix(S)}(n)$, we say that $\underline{\lambda}$ \textit{dominates} $\underline{\mu}$ if for any $F\in \Fix(S)$ and $1\leq i\leq n$, we have
\begin{equation}\label{eq:multipart-dom}
    \sum_{F'\prec F}|\lambda(F')| + \sum_{j\leq i}\lambda(F)_j \geq \sum_{F'\prec F}|\mu(F')| + \sum_{j\leq i}\mu(F)_j.
\end{equation}
This clearly defines a partial order on $\Fix = \Fix(\Hilb^n S)$.
The following lemma gives us a convenient combinatorial preorder, whose transitive closure is the dominance order above.

\begin{lm}\label{lm:dom-skelethon}
    Suppose that $\underline{\lambda}$ dominates $\underline{\mu}$.
    There exists a sequence $\underline{\lambda} = \underline{\lambda}^0$, $\underline{\lambda}^1,\ldots, \underline{\lambda}^N = \underline{\mu}$ of elements of $\Part^{\Fix(S)}(n)$, such that for every $1\leq i \leq N$ we have:
    \begin{enumerate}[label=(\roman*)]
        \item either there exists $F\in \Fix(S)$ such that $\lambda^{i-1}(F') = \lambda^{i}(F')$ for all $F'\neq F$, and $\lambda^{i-1}(F)$ dominates $\lambda^{i}(F)$ in the usual sense in $\Part$;
        \item or there exist $F,G\in \Fix(S)$ together with a $\T$-curve from $F$ to $G$, such that $\lambda^{i-1}(F') = \lambda^{i}(F')$ for all $F'\neq F,G$, $(\lambda^{i-1}(F))'_j = (\lambda^{i}(F))'_j$ and $\lambda^{i-1}(G)_j = \lambda^{i}(G)_j$ for $j>1$, and $(\lambda^{i}(F))'_1 = (\lambda^{i-1}(F))'_1 - 1$, $\lambda^{i}(G)_1 = \lambda^{i-1}(G)_1 + 1$.
    \end{enumerate}
\end{lm}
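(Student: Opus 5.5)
Here $\prec$ on $\Fix(S)$ denotes the flow order on the surface (for the parabolic family it is a tree rooted at the central curve $C$), and \eqref{eq:multipart-dom} is dominance of the ``concatenated'' sequence of parts read along the order of $\Fix(S)$. The plan is induction on a nonnegative integer measuring how far $\underline{\lambda}$ is from $\underline{\mu}$, for instance
\[
D(\underline{\lambda},\underline{\mu}) = \sum_{F\in\Fix(S)}\sum_{i=1}^n \Big(\sum_{F'\prec F}|\lambda(F')|-|\mu(F')| + \sum_{j\le i}\lambda(F)_j-\mu(F)_j\Big),
\]
which is a sum of nonnegative terms by \eqref{eq:multipart-dom}. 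It vanishes exactly when $\underline{\lambda}=\underline{\mu}$, since all partial sums are equal. It therefore suffices to produce, for $\underline{\lambda}\neq\underline{\mu}$, a single move of type (i) or (ii) to some $\underline{\lambda}^1$ that still dominates $\underline{\mu}$ and satisfies $D(\underline{\lambda}^1,\underline{\mu})<D(\underline{\lambda},\underline{\mu})$.

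\emph{Case A: $|\lambda(F)|=|\mu(F)|$ for all $F$.} Then the first sums in \eqref{eq:multipart-dom} agree, so $\lambda(F)$ dominates $\mu(F)$ for every $F$. Choose any $F$ with $\lambda(F)\neq\mu(F)$. By the classical fact that dominance on $\Part(m)$ is generated by elementary moves (move one box from row $a$ to a lower row $b$), take one such move from $\lambda(F)$ that stays above $\mu(F)$. This is a type (i) step, and it strictly decreases the partial sums at $F$.

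\emph{Case B: some $|\lambda(F)|\neq|\mu(F)|$.} Pick $G$ minimal in $\prec$ with $\sum_{F'\preceq G}|\lambda(F')|>\sum_{F'\preceq G}|\mu(F')|$; such a $G$ exists since domination forces these cumulative sums to be $\geq$. Because $\mu$ must catch up, there is a successor $H$ of $G$ (joined by a $\T$-curve from $G$ to $H$) where $\lambda$ has a deficit. The natural move is type (ii) with source $G$ and target $H$: delete one box from the first column of $\lambda(G)$, i.e.\ remove its last row (shrinking $\lambda(G)'_1$ by one), and add that box to the first row of $\lambda(H)$. This requires the removed row to have length $1$, which holds exactly when $\lambda(G)$ has a part equal to $1$. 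If it does not, first apply type (i) moves inside $\lambda(G)$, pushing boxes down until a part $1$ appears; these moves must preserve domination over $\mu$.

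Two checks remain, and I expect them to be the main obstacle: that these preliminary type (i) moves keep domination over $\mu$, and that the transfer $G\to H$ does too. The transfer decreases the cumulative totals at $G$ by one and increases the first row at $H$, so inequalities at $H$ and at its successors are unaffected or improved. At $G$ the strict surplus chosen above guarantees we stay $\geq$. The delicate point is the intermediate indices $i$: moving the last box to a singleton row only changes the partial sums with $i$ beyond the length of $\lambda(G)$, where the surplus lives.

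I would try to choose $G$ as the \emph{last} component in a linear extension of $\prec$ at which a surplus occurs, so that $\mu(G)$ has strictly fewer boxes and tail slack is guaranteed. Since the parabolic flow order on $\Fix(S)$ is a chain of $\T$-curves emanating from $C$, the successor $H$ is unique, which makes the bookkeeping one-dimensional.
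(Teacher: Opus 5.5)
Your monovariant $D$ is a legitimate replacement for the paper's component-by-component sweep. Any nontrivial move of type (i) or (ii) weakly lowers every partial sum and strictly lowers one, so it suffices to find moves that preserve domination over $\underline{\mu}$. Case A is fine; you may even jump from $\lambda(F)$ to $\mu(F)$ in one move (i).

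In Case B with $G\neq C$, the two checks you leave open are closed by the device the paper uses. All ancestors of $G$ have equal cumulative sums, so \eqref{eq:multipart-dom} at $G$ says that the partial sums of $\lambda(G)$ dominate those of $\mu(G)$. Hence $\lambda(G)$ dominates $\nu=\mu(G)\cup(1^d)$, where $d=|\lambda(G)|-|\mu(G)|>0$. Now make one move (i) to $\nu$, then (ii)-moves to the successor $H$. Domination is preserved because a (ii)-move from $G$ to $H$ leaves every partial sum at $H$ and beyond unchanged, and $\mu(G)\cup(1^{d-1})$ still dominates $\mu(G)$. Moreover $H$ exists and is unique: if $C$ has no surplus, the inequalities at the leaves force all leg totals of $\underline{\lambda}$ and $\underline{\mu}$ to agree.

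The genuine gap is $G=C$, i.e.\ $|\lambda(C)|>|\mu(C)|$. Your claim that the successor is unique is false there: $C$ is joined by $\T$-curves to the first point of each of the three or four legs. A (ii)-move from $C$ into one leg lowers by one the prefix $\sum_{F'\prec F}|\lambda(F')|$ at every point of every \emph{other} leg, so ``unaffected or improved'' fails.

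This cannot be patched, because the statement is false for \eqref{eq:multipart-dom} as written. In $S_{\mathbb{Z}/2}$ with $n=2$, take $\lambda(C)=\lambda(p_1)=(1)$ and $\mu(p_2)=\mu(p_3)=(1)$. Every inequality holds, with equality $1\geq 1$ at $p_2$ and $p_3$. Yet moves (i) preserve sizes and moves (ii) only push boxes out of $C$ or outward along a leg, so $|\lambda(p_1)|$ can never return to $0$. Geometrically, the multiplicity at the very stable point $p_1$ cannot drop along a $\T$-curve, so this pair also violates the ``if'' part of \cref{prop:Hilb-flow-is-dom}.

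The paper's own proof has the same hole. At $F=C$ it sends $\sum_{F\in B}(|\mu(F)|-|\lambda(F)|)$ ones into each leg $B$. This tacitly assumes these numbers are non-negative, and that the result still dominates $\underline{\mu}$ at the first point of each leg. The latter can fail even when the leg totals are fine: take $\lambda(C)=(1,1)$, $\lambda(b_2)=(1)$ and $\mu(b_1)=(2)$, $\mu(a_1)=(1)$ in $S_{\mathbb{Z}/3}$, where $b_1\prec b_2$ and $a_1$ are points on two legs.

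Both arguments work under the stronger hypothesis measured from the top:
$\sum_{F'\succ F}|\lambda(F')|+\sum_{j>i}\lambda(F)_j\leq\sum_{F'\succ F}|\mu(F')|+\sum_{j>i}\mu(F)_j$ for all $F$ and all $i\geq 0$.
\begin{itemize}
\item It is necessary, since both moves can only increase the left-hand side.
\item It implies \eqref{eq:multipart-dom}.
\item Under it, the sweep from $C$ outward through $\nu$ goes through, with each leg $B$ receiving exactly $|\mu(B)|-|\lambda(B)|\geq 0$ boxes.
\end{itemize}
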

\begin{proof}
    It is evident that both moves $(i)$ and $(ii)$ of \cref{lm:dom-skelethon} are compatible with the dominance order.
    Let $A\subset \Fix(S)$ be a subset such that $F'\in A$ for all $F'\preceq F$, $F\in A$.
    We proceed by induction on $A$.
    Namely, we fix a collection of such subsets $A_0=\Fix(S)\supset A_1\supset\ldots\supset A^N=\varnothing$ with $|A_i\setminus A_{i+1}|=1$, and construct the multipartitions $\underline{\lambda}^i$, such that $\lambda^i(F) = \lambda(F)$ for all $F\not\in A_i$, and each $\underline{\lambda}^{i+1}$ is obtained from $\underline{\lambda}^i$ by a sequence of moves $(i)$, $(ii)$.

    Let $F$ be the unique element in $A_i\setminus A_{i+1}$.
    By dominance assumption, $\lambda^i(F)$ dominates $\mu(F)$ as a partition.
    If $|\lambda^i(F)| = |\mu(F)|$, we can use the move $(i)$ to get from $\lambda^i(F)$ to $\mu(F)$, and set $\underline{\lambda}^{i+1}$ to be equal to $\underline{\lambda}^{i}$ on $\Fix(S)\setminus \{F\}$, and $\lambda^{i+1}(F) = \mu(F)$.
    Otherwise, let $\nu$ be the partition obtained from $\mu(F)$ by appending several $1$'s, in such a way that $|\nu|=\lambda^i(F)$.
    By our assumptions $\lambda^i(F)$ dominates $\nu$, so we can pass from the former to the latter with move $(i)$.
    Further, we can use the move $(ii)$ to pass from $\nu$ to $\mu(F)$ for the price of augmenting the partitions at components in $A_{i+1}$ adjacent to $F$.
    If $F\neq C$, there is only one such component (see figures in \cref{ssec:2d-mult-ex}), so we set $\underline{\lambda}^{i+1}$ to be the resulting element.
    If $F=C$, we are necessarily at the base $i=0$ of our induction.
    In this case, there is a unique choice of where to send all the $1$'s; it is determined by the conditions $\sum_{F\in B}|\mu(F)| = \sum_{F\in B}|\lambda^1(F)|$, where $B$ is the set of all components lying on one given leg of the core diagram. 
    Since at each step we only modified partitions in $A_i$, the sequence $\underline{\lambda}^{i}$ terminates in $\underline{\mu}$.
\end{proof}

It is a standard fact that we can decompose a move $(i)$ into a sequence of moves of the following type: 

\begin{enumerate}[label=\textit{(\roman*')}]
    \item There exists $F\in \Fix(S)$ and $j_1<j_2$ positive, such that $\lambda^{i-1}(F') = \lambda^{i}(F')$ for all $F'\neq F$, $\lambda^{i-1}(F)_j = \lambda^{i}(F)_j$ for all $j\neq j_1,j_2$, $\lambda^{i}(F)_{j_1} = \lambda^{i-1}(F)_{j_1}-1$, $\lambda^{i}(F)_{j_2} = \lambda^{i-1}(F)_{j_2}+1$, and $\lambda^{i}(F)_{j_1}=\lambda^{i}(F)_{j}\geq\lambda^{i}(F)_{j_2}$ for all $j_1< j<j_2$.
\end{enumerate}

\begin{ex}
    Let us illustrate the moves described above for $\Hilb^2 S_{\mathbb{Z}/3}$.
    In this case, there is no difference between moves $(i)$ and $(i')$.
    We concentrate on the fixed points supported on one of the legs of the core of $\Hilb^2 S_{\mathbb{Z}/3}$, which we denote by $p_0\in C$, $p_1$, $p_2$.
    We draw the tangent direction of $C$ as a thick line, $\T$-curves as normal lines, and the upward flow escaping from the top point as a dotted line.
    The moves are then depicted as follows:

    \[
    \tikz[xscale=.53,yscale=.53,font=\scriptsize]{
      \draw [dashed] (-0.25,1.25) -- (1.25,-0.25);
      \draw [very thin,->] (0.6,0.1) -- (0.1,0.6);  
      \draw (0.75,-0.25) -- (2.25,1.25);
      \draw [very thin,->] (1.6,0.9) -- (1.1,0.4); 
      \draw (1.75,1.25) -- (3.25,-0.25);
      \draw [very thin,->] (2.6,0.1) -- (2.1,0.6);  
      \draw [very thick] (2.75,-0.25) -- (4.25,1.25);
      \filldraw[draw=black,fill=white,rotate around={45:(3,0)}] (2.8,-0.2) rectangle (3.2,0.2);
      \filldraw[draw=black,fill=white,rotate around={45:(3,0)}] (3.2,-0.2) rectangle (3.6,0.2);

      \draw [->] (4.2,0.5) -- (5.8,0.5) node[midway,above] {$(i)$};

      \begin{scope}[shift={(6,0)}]
      \draw [dashed] (-0.25,1.25) -- (1.25,-0.25); 
      \draw (0.75,-0.25) -- (2.25,1.25); 
      \draw (1.75,1.25) -- (3.25,-0.25);
      \draw [very thick] (2.75,-0.25) -- (4.25,1.25);
      \filldraw[draw=black,fill=white,rotate around={45:(3,0)}] (2.8,-0.2) rectangle (3.2,0.2);
      \filldraw[draw=black,fill=white,rotate around={45:(3,0)}] (2.8,0.2) rectangle (3.2,0.6);
      \end{scope}

      \draw [->] (10.2,0.5) -- (11.8,0.5) node[midway,above] {$(ii)$};

      \begin{scope}[shift={(12,0)}]
      \draw [dashed] (-0.25,1.25) -- (1.25,-0.25); 
      \draw (0.75,-0.25) -- (2.25,1.25); 
      \draw (1.75,1.25) -- (3.25,-0.25);
      \draw [very thick] (2.75,-0.25) -- (4.25,1.25);
      \filldraw[draw=black,fill=white,rotate around={45:(3,0)}] (2.8,-0.2) rectangle (3.2,0.2);
      \filldraw[draw=black,fill=white,rotate around={45:(2,1)}] (1.8,0.8) rectangle (2.2,1.2);
      \end{scope}

      \draw [->] (16.3,0.7) -- (21.7,0.7) node[midway,above] {$(ii)$};
      \draw [->] (15.8,0.3) -- (18.2,-0.7) node[midway,above] {$(ii)$};
      \draw [->] (17.2,-2.1) -- (14.8,-2.8) node[midway,above] {$(ii)$};

      \begin{scope}[shift={(22,0)}]
      \draw [dashed] (-0.25,1.25) -- (1.25,-0.25); 
      \draw (0.75,-0.25) -- (2.25,1.25); 
      \draw (1.75,1.25) -- (3.25,-0.25);
      \draw [very thick] (2.75,-0.25) -- (4.25,1.25);
      \filldraw[draw=black,fill=white,rotate around={45:(2,1)}] (1.8,0.8) rectangle (2.2,1.2);
      \filldraw[draw=black,fill=white,rotate around={45:(2,1)}] (1.8,0.4) rectangle (2.2,0.8);
      \end{scope}

      \draw [->] (24,-0.5) -- (24,-2.3) node[midway,left] {$(i)$};
      \draw [->] (21.7,-3.7) -- (16.3,-3.7) node[midway,below] {$(ii)$};

      \begin{scope}[shift={(17,-2)}]
      \draw [dashed] (-0.25,1.25) -- (1.25,-0.25); 
      \draw (0.75,-0.25) -- (2.25,1.25); 
      \draw (1.75,1.25) -- (3.25,-0.25);
      \draw [very thick] (2.75,-0.25) -- (4.25,1.25);
      \filldraw[draw=black,fill=white,rotate around={45:(3,0)}] (2.8,-0.2) rectangle (3.2,0.2);
      \filldraw[draw=black,fill=white,rotate around={45:(1,0)}] (0.8,-0.2) rectangle (1.2,0.2);
      \end{scope}

      \begin{scope}[shift={(22,-4)}]
      \draw [dashed] (-0.25,1.25) -- (1.25,-0.25); 
      \draw (0.75,-0.25) -- (2.25,1.25); 
      \draw (1.75,1.25) -- (3.25,-0.25);
      \draw [very thick] (2.75,-0.25) -- (4.25,1.25);
      \filldraw[draw=black,fill=white,rotate around={45:(2,1)}] (1.8,0.8) rectangle (2.2,1.2);
      \filldraw[draw=black,fill=white,rotate around={45:(2,1)}] (1.4,0.8) rectangle (1.8,1.2);
      \end{scope}

      \begin{scope}[shift={(12,-4)}]
      \draw [dashed] (-0.25,1.25) -- (1.25,-0.25); 
      \draw (0.75,-0.25) -- (2.25,1.25); 
      \draw (1.75,1.25) -- (3.25,-0.25);
      \draw [very thick] (2.75,-0.25) -- (4.25,1.25);
      \filldraw[draw=black,fill=white,rotate around={45:(2,1)}] (1.8,0.8) rectangle (2.2,1.2);
      \filldraw[draw=black,fill=white,rotate around={45:(1,0)}] (0.8,-0.2) rectangle (1.2,0.2);
      \end{scope}
      
      \draw [->] (11.8,-3.5) -- (10.2,-3.5) node[midway,above] {$(ii)$};

      \begin{scope}[shift={(6,-4)}]
      \draw [dashed] (-0.25,1.25) -- (1.25,-0.25); 
      \draw (0.75,-0.25) -- (2.25,1.25); 
      \draw (1.75,1.25) -- (3.25,-0.25);
      \draw [very thick] (2.75,-0.25) -- (4.25,1.25);
      \filldraw[draw=black,fill=white,rotate around={45:(1,0)}] (0.8,-0.2) rectangle (1.2,0.2);
      \filldraw[draw=black,fill=white,rotate around={45:(1,0)}] (1.2,-0.2) rectangle (1.6,0.2);
      \end{scope}

      \draw [->] (5.8,-3.5) -- (4.2,-3.5) node[midway,above] {$(i)$};

      \begin{scope}[shift={(0,-4)}]
      \draw [dashed] (-0.25,1.25) -- (1.25,-0.25); 
      \draw (0.75,-0.25) -- (2.25,1.25); 
      \draw (1.75,1.25) -- (3.25,-0.25);
      \draw [very thick] (2.75,-0.25) -- (4.25,1.25);
      \filldraw[draw=black,fill=white,rotate around={45:(1,0)}] (0.8,-0.2) rectangle (1.2,0.2);
      \filldraw[draw=black,fill=white,rotate around={45:(1,0)}] (0.8,0.2) rectangle (1.2,0.6);
      \end{scope}

      \node at (2,-1) {$I^{(2)}_{p_0}$};
      \node at (8,-1) {$I^{(1^2)}_{p_0}$};
      \node at (14,-1) {$I_{p_0}\cap I_{p_1}$};
      \node at (25.5,-1) {$I^{(2)}_{p_1}$};

      \node at (19,-2.8) {$I_{p_0}\cap I_{p_2}$};

      \node at (2,-5) {$I^{(1^2)}_{p_2}$};
      \node at (8,-5) {$I^{(2)}_{p_2}$};
      \node at (14,-5) {$I_{p_1}\cap I_{p_2}$};
      \node at (24,-5) {$I^{(1^2)}_{p_1}$};
	}
    \]
\end{ex}

\begin{thm}\label{prop:Hilb-flow-is-dom}
    The flow order on $\Fix$ coincides with the antidominance order.
    In other words, we have $F_{\underline{\lambda}}\preceq F_{\underline{\mu}}$ if and only if $\underline{\lambda}$ dominates $\underline{\mu}$.
\end{thm}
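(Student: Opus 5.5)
We prove the two inclusions separately, using the chain description of the flow order from \cref{prop:flow-order-descr} together with the combinatorial skeleton of \cref{lm:dom-skelethon}.

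\emph{Dominance implies flow order.} By \cref{lm:dom-skelethon} and the decomposition of move $(i)$ into moves $(i')$, it suffices to produce, for each elementary move $(i')$ or $(ii)$ taking $\underline{\lambda}^{i-1}$ to $\underline{\lambda}^i$, a $\T$-curve in $\Hilb^n S$ from $F_{\underline{\lambda}^{i-1}}$ to $F_{\underline{\lambda}^i}$. These curves are then concatenated as in \cref{prop:flow-order-descr}.

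The constructions are local. Using the étale neighbourhoods of \cref{prop:Hilb-etale} and \cref{rmk:tg-weights}, all points not involved in the move are frozen, and the problem reduces to $\Hilb^m\C^2$, or to $\Hilb^m$ of a neighbourhood of a $\T$-curve in $S$.

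For a move $(i')$ at an isolated point $p$ with weights $(-l,k+l)$, we take the monomial ideal $I^{\lambda^{i-1}}_p$. Moving one box from row $j_1$ to row $j_2$ corresponds to a one-parameter family
\[
(\ldots, x^{\lambda_{j_1}} + s\, x^{\lambda_{j_1}-1}y^{j_2-j_1}, \ldots),
\]
which is $\T$-equivariant for a suitable weight on $s$. We then check, using the arm/leg combinatorics, that one limit recovers $I^{\lambda^{i-1}}$ and the other recovers $I^{\lambda^i}$. For $p\in C$ (weight $l=0$), we can instead use the attracting-locus description of \cref{prop:Hilb-fix-C2}.

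For a move $(ii)$, take a $\T$-curve $\ell\cong\mathbb{P}^1$ from $F$ to $G$ in $S$. We move one point along $\ell$: consider $I^{\nu}_{F}\cap I_{q}$ with $q\in\ell$, where $\nu$ is $\lambda^{i-1}(F)$ with the first column shortened. As $q\to G$ this should collide with $I^{\lambda^{i-1}(G)}_G$ to produce the partition with first row lengthened. As $q\to F$ it produces the partition with first column lengthened, since the point $q$ approaches $F$ along the negative (i.e.\ $x$) direction at $F$ and the positive ($y$) direction at $G$. The content of this step is checking which monomial ideal the limit of $I\cap I_q$ is; this is a standard colength computation in local coordinates.

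\emph{Flow order implies dominance.} We show that for every $\T$-curve from $F_{\underline{\lambda}}$ to $F_{\underline{\mu}}$, $\underline{\lambda}$ dominates $\underline{\mu}$; transitivity then concludes by \cref{prop:flow-order-descr}. A $\T$-curve in $\Hilb^n S$ lies over a $\T$-curve in $\Sym^n S$, which is a tuple of points moving along $\T$-curves of $S$ (or fixed). Hence the support multiplicities $\sum_{F'\preceq F}|\lambda(F')|$ can only move in the direction prescribed by \eqref{eq:multipart-dom}: points flow from $F$ to $F'\succ F$ as $t\to\infty$.

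For the within-component inequalities on partial sums $\sum_{j\le i}\lambda(F)_j$, we use semicontinuity. Consider the functions
\[
I\mapsto \dim\bigl(\mathcal{O}/(I+(y^i))\bigr)_{\text{near }F},
\]
localized at $F$, or equivalently the ranks of multiplication by $y$ on $\mathcal{O}/I$. These are upper semicontinuous in flat families, and on $I^\lambda_p$ they equal $\sum_{j\le i}\lambda_j$ (or the transposed quantity, depending on orientation). Semicontinuity applied along the closure of a $\T$-orbit, where the special fibre is the $t\to\infty$ limit, gives the inequality. Combined with the support bookkeeping, this yields \eqref{eq:multipart-dom}.

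\emph{Main obstacle.} I expect the main difficulty to be in this second direction: setting up the semicontinuous invariant correctly when points coming from lower components merge into $F$, so that the two summands of \eqref{eq:multipart-dom} interact. I would first try using the total colength of $I+(y^i)$ restricted to the preimage of a neighbourhood of $\bigcup_{F'\preceq F}F'$. This is upper semicontinuous as well, and it directly produces the combined sum in \eqref{eq:multipart-dom}.
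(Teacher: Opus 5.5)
\textbf{The direction ``flow order $\Rightarrow$ dominance'' has a genuine gap.} (Your other direction follows the paper's route: reduce via \cref{lm:dom-skelethon} to moves $(i')$ and $(ii)$, then write explicit $\T$-curves.)

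Your invariant $I\mapsto\dim\mathcal{O}/(I+(y^i))$, globalized over the lower components, is essentially the paper's $a_{p,i}$, which is built from the divisors $D_{p,i}$. So the bookkeeping you flag as the main obstacle is the easy part.

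The real problem is that upper semicontinuity cannot compare the two ends of a $\T$-curve $f$.
\begin{itemize}
\item The invariant is $\T$-invariant, so it is constant, say equal to $v$, on the open orbit $f(\C^*)$.
\item Both $f(0)$ and $f(\infty)$ are special points of the orbit closure, so semicontinuity only gives $a(f(0))\geq v$ and $a(f(\infty))\geq v$. Nothing relates the two endpoints.
\item What you need is that the invariant does not jump at one of the ends. The paper supplies this as \cref{lem:dom-loc-cst}: $a_{p,i}$ is constant on every downward flow $W^-_F$.
\item Since $f(1)\in W^-_{f(\infty)}$, this gives $v=a(F_{\underline{\mu}})$. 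Semicontinuity at $f(0)$ then gives $a(F_{\underline{\lambda}})\geq a(F_{\underline{\mu}})$. Note that the special fibre must be the $t\to 0$ end, not the $t\to\infty$ end you named.
\end{itemize}

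This constancy is not formal, and it depends on the choice you leave open (``or the transposed quantity''). For example, $\dim\mathcal{O}/(I+(x))$ equals $k$ at $I=(x,y^k)$. But it equals $1$ at the points $(x+b_1y+\dots+b_{k-1}y^{k-1},y^k)$ with $b_1\neq 0$, which lie on the downward flow of $(x,y^k)$.

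The paper proves the lemma in two steps. First, $W^-_{F_\lambda}\subset\Hilb(\C^2)$ depends only on how the tangent space splits into positive and non-positive weights, so it does not depend on $(k,l)$. Second, for $k=1$, $l=0$ a general fixed point is an intersection of column ideals $I^{(1^m)}_x$. Their downward flows are $\{(x+b_1y+\dots+b_{m-1}y^{m-1},y^m)\}$, on which $a_i=\min(i,m)$ is visibly constant.

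\textbf{Two smaller errors in the other direction.}
\begin{itemize}
\item Your family for move $(i')$ is constant. The perturbing monomial $x^{\lambda_{j_1}-1}y^{j_2-1}$ already lies in $I^{\lambda^{i-1}}$: it is divisible by the generator of row $j_1+1$, or of row $j_2$ when $j_2=j_1+1$. So no box moves. The deformation must instead couple the removed box $x^{\lambda_{j_1}-1}y^{j_1-1}$ with the added box $x^{\lambda_{j_2}}y^{j_2-1}$. This is what the paper does when it reduces to a curve from $(x^{a+1},x^ay,y^b)$ to $(x^a,xy^b,y^{b+1})$.
\item In move $(ii)$ you have the directions swapped. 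A $\T$-curve from $F$ to $G$ leaves $F$ along its positive ($y$) direction and enters $G$ along its negative ($x$) direction. That is exactly why the first column at $F$ shrinks and the first row at $G$ grows.
\end{itemize}
You also do not need to check the orientation of these curves by hand. Once the first implication is proved, a $\T$-curve joining two distinct components can only run in the direction compatible with dominance.
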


The proof of this theorem is inspired by~\cite[§4]{nakajima1996jack}, and requires some preparation. 
Recall that the core of $S$ is the union of $C$ with projective lines $\overline{W^-_{p_i}}$, $1\leq i\leq r$.
Similarly, $\Core\subset\Hilb^n S$ consists of ideals $I$ with $\mathcal{O}_S/I$ supported on the core of $S$.
Slightly abusing notation, let us denote $\mathbb{P}_C \coloneqq C$, and $\mathbb{P}_p \coloneqq \overline{W^-_{p}}$.

For any $p\in \Fix(S)$, $0\leq i\leq n-1$, and $I\in \Core$, let us define
\[
    D_{p,i} \coloneqq i\mathbb{P}_{p} + \sum_{p'\preceq p} n\mathbb{P}_{p'},\qquad 
    a_{p,i}(I) \coloneqq \mathrm{length}(\mathcal{O}_S / (\mathcal{O}_S(-D_{p,i})+ I)).
\]
It is clear from the definition that $a_{p,i}$ is $\T$-invariant and upper semicontinuous.
In particular, we have closed subvarieties $\{I:a_{p,i}(I)\geq a\}$, $a\in \mathbb{Z}_{\geq 0}\subset \Core$ for any $p,i$.
Given $\underline{\lambda}\in\Part^{\Fix(S)}(n)$, we consider the subvariety $X_{\underline{\lambda}}\subset \Core$, which is an intersection of closed subvarieties above:
\[
    X_{\underline{\lambda}}\coloneqq \left\{I:a_{p,i}(I)\geq \sum_{p'\prec p}|\lambda(F')| + \sum_{j\leq i}\lambda(p)_j \text{ for all }p\in\Fix(S), 0\leq i\leq n-1\right\}.
\]

\begin{lm}\label{lem:geometric-dom}
    Let $\underline{\lambda},\underline{\mu} \in\Part^{\Fix(S)}(n)$. 
    The multipartition $\underline{\lambda}$ is dominated by $\underline{\mu}$ if and only if $F_{\underline{\lambda}}$ is contained in $X_{\underline{\mu}}$.
\end{lm}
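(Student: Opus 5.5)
The plan is to compute the functions $a_{p,i}$ directly on the fixed component $F_{\underline{\lambda}}$, and to show that they are constant there with the value
\[
    a_{p,i}(I) = \sum_{p'\prec p}|\lambda(p')| + \sum_{j\leq i}\lambda(p)_j .
\]
Granting this, the lemma is a tautology. $F_{\underline{\lambda}}\subset X_{\underline{\mu}}$ holds if and only if, for all $p$ and $i$, this value is at least the corresponding expression for $\underline{\mu}$. That is exactly the dominance inequality~\eqref{eq:multipart-dom}, with the roles of $\underline{\lambda}$ and $\underline{\mu}$ as in the statement. The range $0\leq i\leq n-1$ is harmless: for $i\geq n$ both sides equal the total size, because partitions of size at most $n$ have at most $n$ parts.

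To compute $a_{p,i}(I)$ for $I\in F_{\underline{\lambda}}$, I would use the decomposition from \cref{cor:Hilb-fixed-loci}, $I=\bigcap_{q} I_q$, with $I_q$ supported on $\mathbb{P}_q\cap S^\T$. Since the length of $\mathcal{O}_S/(\mathcal{O}_S(-D)+I)$ is additive over the disjoint support points, we get
\[
    a_{p,i}(I)=\sum_q \mathrm{length}\bigl(\mathcal{O}_{S}/(\mathcal{O}_S(-D_{p,i})+I_q)\bigr),
\]
and each summand can be computed in an étale neighbourhood as in \cref{prop:Hilb-etale}, with local coordinates $(x,y)$ at $q$.

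The summands fall into three cases, and the first thing to pin down is which curves pass through $q$. Near the fixed point $q$, the curve $\mathbb{P}_q=\overline{W^-_q}$ is the negative-weight direction; its local equation should be $y=0$, using the convention of~\eqref{eq:fixed-ideals} that $x$ has non-positive weight. If there is a $\T$-curve from $q$ to some $q''$, it is the upward direction, with equation $x=0$, and it coincides with $\mathbb{P}_{q''}$. For $q\in C$, the curve $\mathbb{P}_C=C$ is given by $y=0$.

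The cases are then as follows.

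\emph{Case $q\prec p$.} Here $D_{p,i}$ contains $n\mathbb{P}_q$, and also contains $n\mathbb{P}_{q''}$ whenever $q''\preceq p$. Since $(y^n)\subset I_q$, the summand is the full colength $|\lambda(q)|$.

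\emph{Case $q=p$.} Locally $D_{p,i}$ is $i\mathbb{P}_p$, possibly plus $n\mathbb{P}_{q''}$ along $x=0$ if $q''\prec p$; but along each leg $q''$ lies above $p$, so that term does not occur. The summand is therefore $\dim \mathcal{O}/(y^i, I^{\lambda}_p)$. For the monomial ideal~\eqref{eq:fixed-ideals} this counts the boxes in the first $i$ rows, namely $\sum_{j\leq i}\lambda(p)_j$. When $p\in C$ and $I$ is a product of $I^{\mu(k)}_{x_k}$ as in \cref{prop:Hilb-fix-C2}, I add over the $x_k$ and use $\cup\underline{\mu}=\lambda(C)$ to get the same answer.

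\emph{Case $q\not\preceq p$.} The only components of $D_{p,i}$ that can pass through $q$ are $\mathbb{P}_{q'}$ with $q'\preceq p$. The only such curve through $q$ is $\mathbb{P}_{q''}$, where $q''$ is the $\T$-neighbour above $q$, and $q''\succ q$ is not $\preceq p$. So the local contribution is zero.

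Summing the three cases gives the claimed formula.

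The main obstacle is the local bookkeeping: correctly identifying which $\mathbb{P}_{q'}$ pass through each fixed point and with which local equation, and checking that the row convention (rows $x^{\lambda_j}y^{j-1}$) matches the sums $\sum_{j\leq i}\lambda_j$. I would verify this against the $\Hilb^2 S_{\mathbb{Z}/3}$ picture. I also need to check that $\prec$ along a leg agrees with the orientation of the core diagrams in \cref{ssec:2d-mult-ex}, with $C$ minimal.
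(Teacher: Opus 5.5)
Your case analysis of $a_{p,i}$ on $F_{\underline{\lambda}}$ is what the paper's one-line proof (``straightforward to check'') has in mind. It is correct, provided $D_{p,i}$ is read as $i\mathbb{P}_p+\sum_{p'\prec p}n\mathbb{P}_{p'}$; you do this implicitly, and the proof of \cref{lem:dom-loc-cst} needs the same reading.

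The gap is in the step you call a tautology. You say the range $0\leq i\leq n-1$ is harmless because ``for $i\geq n$ both sides equal the total size''. This is false. At $i=n$ the two sides of \eqref{eq:multipart-dom} at $F=p$ are $\sum_{p'\preceq p}|\lambda(p')|$ and $\sum_{p'\preceq p}|\mu(p')|$, the masses on the down-set of $p$, and these differ in general. Sometimes the $i=n$ condition is still recovered from $X_{\underline{\mu}}$. For non-maximal $p$ it is exactly the $i=0$ condition at an immediate successor of $p$. For $\mu(p)\neq(1^n)$ it is implied by the $i=n-1$ condition. But for a maximal $p$ with $\mu(p)=(1^n)$ it is lost, and the lemma as written then fails.

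Here is a counterexample. Take $S=S_{\mathbb{Z}/2}$, $n=2$, two outer fixed points $p_1,p_2$, and set $\mu(p_1)=(1,1)$, $\lambda(C)=(1)$, $\lambda(p_2)=(1)$.
\begin{itemize}
\item Your formula gives $a_{p_1,0}=a_{p_1,1}=1$ on $F_{\underline{\lambda}}$, which meets the targets $0$ and $1$.
\item All other targets of $X_{\underline{\mu}}$ are $0$, so $F_{\underline{\lambda}}\subset X_{\underline{\mu}}$.
\item Yet neither multipartition dominates the other: compare $(F,i)=(p_1,2)$ and $(C,1)$.
\end{itemize}
The fix is to let $i$ run over $0\leq i\leq n$ in the definitions of $a_{p,i}$ and $X_{\underline{\mu}}$. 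The $i=0$ conditions are then redundant, since they are the $i=n$ conditions at predecessors (the legs are chains attached to $C$). With this range your computation does reduce the lemma to \eqref{eq:multipart-dom}. \cref{lem:dom-loc-cst} and the proof of \cref{prop:Hilb-flow-is-dom} go through unchanged.

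There are two smaller problems.
\begin{itemize}
\item \textbf{Direction.} Your computation shows $F_{\underline{\lambda}}\subset X_{\underline{\mu}}$ exactly when the $\underline{\lambda}$-side of \eqref{eq:multipart-dom} is at least the $\underline{\mu}$-side, i.e.\ when $\underline{\lambda}$ \emph{dominates} $\underline{\mu}$. That is the opposite of the statement's wording, not ``the roles as in the statement''. For example, on $T^*E$ one has $F_{(n)}\subset X_{(1^n)}$ but $F_{(1^n)}\not\subset X_{(n)}$ for $n\geq 2$. The ``dominates'' version is the one \cref{prop:Hilb-flow-is-dom} actually uses.
\item \textbf{The case $p=C$.} Additivity over the points $x_k$ does not follow from $\cup\underline{\mu}=\lambda(C)$ if $\cup$ is multiset union, as the paper literally defines it. What holds is that rows add: $\lambda(C)_j=\sum_k\mu(k)_j$. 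For instance, $I^{(1)}_{c_1}\cap I^{(1)}_{c_2}=((x-c_1)(x-c_2),y)$ lies in $F_{(2)}$, not $F_{(1,1)}$. With this relation your sum does give $\sum_{j\leq i}\lambda(C)_j$.
\end{itemize}
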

\begin{proof}
    Using the explicit description of $\T$-fixed ideals in \cref{prop:Hilb-fix-C2}, it is straightforward to check that the inequalities in the definition of $X_{\underline{\mu}}$ specialize to~\eqref{eq:multipart-dom} for any ideal in $F_{\underline{\lambda}}$.
\end{proof}

\begin{lm}\label{lem:dom-loc-cst}
    Let $p\in \Fix(S)$, $0\leq i\leq n-1$.
    The function $a_{p,i}: \Core\to \Z_{\geq 0}$ is constant on each downward flow $W^-_F$, $F\in \Fix$. 
\end{lm}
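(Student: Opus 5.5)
We plan to show that for $I\in W^-_F$, with $I_0\coloneqq\lim_{t\to\infty} t\cdot I\in F$, we have $a_{p,i}(I)=a_{p,i}(I_0)$, and that $a_{p,i}$ is constant on $F$ itself. Since $a_{p,i}$ is $\T$-invariant (the divisor $D_{p,i}$ is $\T$-stable), it is constant along the orbit $\T\cdot I$. By upper semicontinuity, passing to the limit gives $a_{p,i}(I_0)\geq a_{p,i}(I)$. The content of the lemma is therefore the reverse inequality, which is a statement about the whole downward flow rather than about a single orbit.

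For the reverse inequality we will use the local structure of $W^-_F$. By \cref{prop:Hilb-etale} and \cref{rmk:tg-weights}, any $I\in W^-_F$ decomposes as $I=\bigcap_q I_q$ over the points $q$ of its support, and each $I_q$ lies on the core of $S$. The length $a_{p,i}$ is additive over support points, so it suffices to understand one local factor. Since $\mathcal{O}_S(-D_{p,i})$ is a sum of multiples of the $\mathbb{P}_{p'}$, each local contribution only depends on the component $\mathbb{P}_{p'}$ containing $q$, together with, at a node, the local equations $x^a y^b$ of the curve $D_{p,i}$.

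We then distinguish two kinds of support point, using the description of the core of $S$ as a chain of curves $\mathbb{P}_{p'}$ with ordering $\preceq$.

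\textbf{Points in the open part of a line.} If $q$ lies in the open part $\mathbb{P}_{p'}\setminus\Fix(S)$, then $D_{p,i}$ near $q$ is $c\,\mathbb{P}_{p'}$ with $c\in\{0,i,n\}$. The local contribution is $\mathrm{length}\,\mathcal{O}/(y^c+I_q)$, where $y$ is a local equation of $\mathbb{P}_{p'}$. Flowing downward moves $q$ along $\mathbb{P}_{p'}$ towards the repelling end, and by translation-invariance of the situation along the curve this contribution is preserved. It equals the analogous quantity at the limit fixed point, where the curve through the limit is still only $\mathbb{P}_{p'}$ (weighted by the same $c$) near the relevant attracting direction.

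\textbf{Fixed points.} If $q$ is a fixed point, then $I_q$ lies in the downward flow inside $\Hilb(\C^2)$ with coordinates $x,y$. The relevant quotient is $\mathcal{O}/(x^a y^b + I_q)$, with $a,b$ given by the multiplicities of the two lines through $q$ in $D_{p,i}$.

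The main obstacle is the fixed-point case: showing that $\mathrm{length}\,\mathcal{O}/(x^ay^b+J)$ is constant for $J$ in the downward flow of a monomial ideal $I^\lambda$ in $\Hilb^m\C^2$, where $x$ has negative weight. Our first attempt will be to show that $J\cap(x^ay^b)$ degenerates flatly. The downward flow consists of ideals whose initial ideal with respect to the weight order favouring $x$ is $I^\lambda$, i.e.\ a Gröbner cell. In that setting $x^ay^b\cdot\mathcal{O}+J$ has initial ideal $(x^ay^b)+I^\lambda$, so the length is preserved. This uses that $(x^ay^b)$ is monomial, so initial ideals behave well under adding a monomial ideal. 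We will verify this carefully, possibly by reducing to the cases $a=0$ or $b=0$, which is where the divisor structure $i\mathbb{P}_p+n\sum\mathbb{P}_{p'}$ is used. Combining the local constancies then gives constancy of $a_{p,i}$ on all of $W^-_F$.
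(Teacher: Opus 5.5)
You correctly isolate the content of the lemma as the reverse inequality $a_{p,i}(I)\geq a_{p,i}(I_0)$, but the proposal never proves it. The Gr\"obner step rests on the principle that initial ideals commute with adding a monomial ideal, and that principle is false; in general only $\mathrm{in}(J+M)\supseteq\mathrm{in}(J)+M$ holds. For instance, $J=(x-y,y^2)$ lies in $W^-_{I^{(1,1)}}$ and has $\mathrm{in}(J)=(x,y^2)$. Yet with $M=(x)$ one gets $J+M=(x,y)$. Taking initial ideals preserves colength, so the valid inclusion only gives $a(J)\leq a(I^\lambda)$. That is the semicontinuity inequality you already had. For $J$ in the downward flow, the equality $\mathrm{in}(J+(y^i))=I^\lambda+(y^i)$ is equivalent to the lemma itself, so invoking it is circular.

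Your per-point bookkeeping has a second defect. As $t\to\infty$, support points in the open part of $\mathbb{P}_p$ collide with those at $p$, and the limit of $\bigcap_q I_q$ is not the intersection of the limits. So preserving a single point's contribution along its orbit says nothing about $a_{p,i}(I_0)$. Group the support instead by the downward flows $W^-_q$ of $S$, and let $J'$ be the factor of $I$ supported on $W^-_q$.

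\begin{itemize}
\item If $q\npreceq p$, the contribution of $J'$ is $0$, because both lines through points of $W^-_q$ have coefficient $0$ in $D_{p,i}$.
\item If $q\prec p$, the contribution is the full colength. The local equation of $D_{p,i}$ along $W^-_q$ is a multiple of $y^n$, where $y$ cuts out $\mathbb{P}_q$. Moreover $y^n\in J'$, since $y$ is nilpotent on $\mathcal{O}_S/J'$, which has dimension at most $n$.
\item Both statements hold before and after the limit alike. For $q=p$ the local equation is just $y^i$.
\end{itemize}

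Everything therefore reduces to constancy of $J\mapsto\mathrm{length}\,\mathcal{O}/(J+(y^i))$ on the whole of $W^-_{F_{\lambda(p)}}$, non-punctual $J$ included.

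The missing idea is a sandwich. $W^-_F$ is irreducible, so by upper semicontinuity the value at a general point is the minimum. By $\T$-invariance and semicontinuity, every value is at most the value at its limit, which is $\sum_{j\leq i}\lambda(p)_j$ throughout $F$. Hence it suffices to compute at a general point. The subvariety $W^-_{F_{\lambda(p)}}$ does not change when $(k,l)$ varies with $k>0$, $l\geq 0$ (compare \eqref{eq:Tplus-tangent}), so you may take $l=0$. A general point is then a union, over distinct points of $\{y=0\}$, of curvilinear ideals $(x+b_1y+\dots+b_{m-1}y^{m-1},y^m)$, with $m$ running over the parts of $\lambda(p)'$. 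Each contributes $\min(i,m)$, for a total of $\sum_{j\leq i}\lambda(p)_j$, which closes the sandwich.
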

\begin{proof}
    Let us decompose the core of $S$ into the disjoint union of downward flows $\bigsqcup_q W^-_q$.
    All the ideals we consider are supported on the core of $S$, and length of a sheaf is additive under decomposition of the base space.
    Let $I_{\overline{\lambda}} = \prod_p I_q^{\lambda(q)}$ be a $\T$-fixed ideal.
    Any ideal in $W^-_{I_{\overline{\lambda}}}$ can be written as $\prod_q I'_q$, where $\lim\limits_{t\to\infty} I'_q = I_q^{\lambda(q)}$, and each $I'_q$ is supported on $W^-_q$.
    In particular, we have 
    \[
        a_{p,i}\left(\prod_q I'_q\right) = \sum_q \mathrm{length}(\mathcal{O}_S / (\mathcal{O}_S(-b_q\mathbb{P}_q)+ I'_q)),
    \]
    where $b_q$ is the coefficient of $\mathbb{P}_q$ in $D_{p,i}$.
    This length is clearly $0$ for $q\npreceq p$, and equals to the length of $\mathcal{O}_S/I'_q$ for $q\prec p$.
    Therefore, it suffices to show that $a_{p,i}$ is constant on $W^-_{F_{\lambda(p)}}$.

    Let us suppress $p$ from notations for simplicity.
    It is easy to see from e.g. \cref{prop:Hilb-fix-C2} that $a_i(I) = \sum_{j=1}^i \lambda_i$ for any $I\in F_{\lambda(p)}$.
    As $a_{i}$ is $\T$-invariant and upper semicontinuous, it suffices to show that it attains the same value at a general point of $W^-_{F_{\lambda(p)}}$.
    For this computation, we can assume that $S = \mathbb{C}^2$.
    Observe that the subvariety $W^-_{F_{\lambda(p)}}\subset \Hilb(\mathbb{C}^2)$ is independent of the weights $-l, k+l$ of the $\T$-action, as long as $k>0$, $l\geq 0$.
    Indeed, consider the formula for tangent weights in \cref{prop:fixed-tg-weights}.
    It describes the decomposition of $T_{F_{\lambda(p)}}$ into weight spaces.
    As we change the parameters $k>0$, $l\geq 0$, the direct sum of positive and non-positive weight spaces does not change, as evidenced by~\eqref{eq:Tplus-tangent}.
    Hence $W^-_{F_{\lambda(p)}}$ is independent of $k$, $l$ around $F_{\lambda(p)}$, and so everywhere by $\T$-equivariance.

    We can therefore assume $k=1$, $l=0$.
    In this case, a general fixed point is of the form $I = I^{(1^{\lambda'_1})}_{p_1}\cap \ldots \cap I^{(1^{\lambda'_q})}_{p_q}$, $p_j\in \{y=0\}$.
    By additivity of length, it is enough to consider points $I^{(1^{k})}_p$.
    The downward flow of such a point consists of the ideals
    \[
        \{ I_{\underline{b}}\coloneqq(x + b_1y + \ldots + b_{k-1}y^{k-1},y^k) : b_i\in \mathbb{C} \},
    \]
    see e.g. the proof of~\cite[Prop.~4.14]{nakajima1996jack}. 
    For these ideals, we have $a_i(I_{\underline{b}}) = i$ for $0\leq i\leq k$, and $a_i(I_{\underline{b}}) = k$ for $i>k$.
    Summing up over all points $p_j$, we get precisely $a_i(I) = \sum_{j=1}^i \lambda_i$. 
    This coincides with the value of $a_i$ on the fixed points, so we may conclude.
\end{proof}

\begin{proof}[Proof of \cref{prop:Hilb-flow-is-dom}]
    We first prove the ``only if'' direction.
    By \cref{prop:flow-order-descr}, $F_{\underline{\lambda}}\preceq F_{\underline{\mu}}$ if and only if there is a chain of $\T$-curves from $F_{\underline{\lambda}}$ to $F_{\underline{\mu}}$.
    We have $F_{\underline{\mu}}\subset X_{\underline{\mu}}$ by \cref{lem:geometric-dom}.
    Successive applications of \cref{lem:dom-loc-cst} and the fact that $X_{\underline{\mu}}$ is closed imply that $F_{\underline{\lambda}}\subset X_{\underline{\mu}}$.
    Thus $\underline{\lambda}$ is dominated by $\underline{\mu}$ by \cref{lem:geometric-dom}.

    For the converse implication, applying \cref{lm:dom-skelethon} it suffices to construct an explicit $\T$-curve for each pair of multipartitions as in moves $(i')$ and $(ii)$.
    For the move $(i')$, restricting to an étale neighborhood of a point in $F$, we reduce to the case of $S = \mathbb{C}^2$ and two monomial ideals
    \begin{align*}
        I &= (x^{\lambda_1}, \ldots, x^{\lambda_{i-1}}y^{i-2},x^{\lambda_{i}}y^{i-1},x^{\lambda_{i}-1}y^{i},\ldots,x^{\lambda_{i}-1}y^{j-2},x^{\lambda_{j}}y^{j-1}, \ldots,y^q),\\
        J &= (x^{\lambda_1}, \ldots, x^{\lambda_{i-1}}y^{i-2},x^{\lambda_{i}-1}y^{i-1},x^{\lambda_{i}-1}y^{i},\ldots,x^{\lambda_{i}-1}y^{j-2},x^{\lambda_{j}+1}y^{j-1}, \ldots,y^q),
    \end{align*}
    where $\lambda_{i}-1\geq \lambda_{j}$.
    Note that $I/I\cap (x^{\lambda_j + 1}y^{i-1}) = J/J\cap (x^{\lambda_j + 1}y^{i-1})$.
    Replacing $\mathbb{C}[x,y]$ with $x^{\lambda_j + 1}y^{i-1}$, it suffices to find a $\T$-curve from $I' = (x^{a+1},x^a y, y^b)$ to $J' = (x^a, xy^b, y^{b+1})$ for arbitrary $a,b>0$.
    It is given by the family of ideals, with $t\in \mathbb{P}^1$, 
    \[
        (x^{a+1}, y^b+t x^a, y^{b+1}) = (x^{a+1}, t^{-1}y^b+ x^a, y^{b+1}).
    \]   
    The torus $\T$ acts on this family with a positive weight $(k+l)b+la$, therefore the $\T$-curve goes in the correct direction. 
    
    For the move $(ii)$, let $p,q\in S^\T$ be two points connected by a $\T$-curve.
    Restricting to an étale neighborhood of said curve, we can assume that we work in $\Hilb^n(T^*\mathbb{P}^1)$, and $p = 0$, $q = \infty$.
    Denote the local coordinates at $0$ by $(x,y)$, and at $\infty$ by $(x',y')$; on $T^*\mathbb{C}^*$, they are related by equations $x' = y^{-1}$, $y' = xy^2$.
    Let $I_\lambda = (x^{\lambda_1},\ldots,x^{\lambda_{q}}y^{q-1},y^q)$ be a monomial ideal at $0$, and $J_\mu = ((x')^{\mu_1},\ldots,(x')^{\mu_{q'}}(y')^{q'-1},(y')^{q'})$ a monomial ideal at $\infty$.
    Write $\lambda^+ = (\lambda_1,\ldots,\lambda_q,1)$, and $\mu = (\mu_1+1,\mu_2,\ldots,\mu_{q'})$.
    We want to construct a $\T$-curve from $I_{\lambda^+}\cap J_\mu$ to $I_\lambda\cap J_{\mu^+}$.
    Consider the following two families of ideals, with $t\in \mathbb{C}$:
    \begin{align*}
        X_t & = (x^{\lambda_1},\ldots,x^{\lambda_{q}}y^{q-1},(y-t)y^{q},y^{q+1})\cap J_\mu,\\
        X'_t & = I_\lambda \cap ((x')^{\mu_1}(x'-t),\ldots,(x')^{\mu_{q'}}(y')^{q'-1},(y')^{q'})
    \end{align*}
    It is clear that $X_0 = I_{\lambda^+}\cap J_\mu$ and $X'_0=I_\lambda\cap J_{\mu^+}$.
    Moreover, denoting by $I_t$ the colength $1$ ideal supported at $[t:1]\subset \mathbb{P}^1$, we see that $X_t = I_{\lambda}\cap I_t\cap J_\mu = X'_{t^{-1}}$ for $t\in \mathbb{C}^*$.
    Therefore these two families glue into a single $\T$-curve.
    The action of $\T$ on this family is of positive weight $k+l_p$, hence the $\T$-curve goes in the correct direction.
\end{proof}

\begin{rmk}
    We stress that one can, and does, have $\T$-curves other than the ones in the proof of \cref{prop:Hilb-flow-is-dom}.
    For instance, for any $c\in C$ and $n>0$ we have a $\T$-curve from $I_c^{(n)}$ to $I_c^{(1^n)}$ in $\Hilb^n(S)$, given by the family of ideals $\{x^n,y-\lambda x\}$, $\lambda \in \mathbb{P}^1$.
\end{rmk}

\subsection{Cotangent case}
Let $S = T^*E\simeq E\times \mathbb{C}$, $\Hitch = \Hilb^n S$, and $\Base = \Sym^n\C$.
In this case, $S^\T = E$, $\Fix = \Part(n)$, the component $F_\lambda$ of the fixed locus is isomorphic to $\Sym^\lambda E$, and the dominance order on Fix is the usual dominance order on partitions.

\begin{prop}\label{prop:ell-very-stable}
    For $S = T^*E$, every component $F_\lambda\in \Fix$ has a very stable point.
\end{prop}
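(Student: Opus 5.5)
The plan is to verify criterion (2) of \cref{prop:v-stable-lf-push}, namely $W^+_I\cap\Core=\{I\}$, for a well-chosen point $I$ of each $F_\lambda$. I would take $I$ to be a \emph{general} point of $F_\lambda\simeq\Sym^\lambda E$. By \cref{prop:Hilb-fix-C2} (case $l=0$) and the proof of \cref{lem:dom-loc-cst}, such a point has the form
\[
I=\bigcap_{j} I^{(1^{k_j})}_{c_j},\qquad I^{(1^{k})}_{c}=(x-c,\,y^{k}),
\]
with pairwise distinct $c_j\in E$, where $(k_j)$ is the column multiset determined by $\lambda$. Here $x$ is the coordinate along $E$ and $y$ the fiber coordinate of weight $1$.

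\textbf{Step 1: localize.} Let $J\in W^+_I\cap\Core$. Since $\T$ only scales the $\C$-factor of $E\times\C$, the map $\Hilb^n S\to\Sym^n S\to\Sym^n E$ is $\T$-invariant. Therefore the support of $J$, projected to $E$, equals that of $I$, namely $\sum_j k_j c_j$. Since $J\in\Core$, it is supported on $E\times\{0\}$. Hence $J=\bigcap_j J_j$ with $J_j$ supported at the single point $(c_j,0)$ and of length $k_j$. Using the étale neighbourhood of \cref{prop:Hilb-etale}, which is $\T$-equivariant by \cref{rmk:tg-weights}, upward flows split as products. This reduces the claim to a single point: in $\Hilb^k(\C^2)$ with $\T$ acting by weights $(0,1)$, any $J$ punctual at the origin with $\lim_{t\to0}J=(x,y^k)$ must equal $(x,y^k)$.

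\textbf{Step 2: the local claim.} Suppose $J\neq(x,y^k)$. Its $\T$-orbit lies in the punctual Hilbert scheme, which is proper. The orbit closure is therefore a $\T$-curve from $(x,y^k)$ to a fixed point $I'=\lim_{t\to\infty}J$, which is again punctual at the origin. By \cref{prop:Hilb-fix-C2} and \cref{lm:adherence-spreads}, $I'$ would then lie in a component strictly above $F_{(1^k)}$ in the flow order, i.e.\ (by \cref{prop:Hilb-flow-is-dom}) with strictly dominated label. To rule this out, I would evaluate the upper semicontinuous $\T$-invariant functions $a_{p,i}$ of \cref{lem:dom-loc-cst}; for $E\times\C$ these are $a_i(J)=\mathrm{length}\,\mathcal{O}/(J+(y^i))$. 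The aim is to show that along the curve they cannot change in the way required for a strict move. Alternatively and more directly, I would compute $T^+_{(x,y^k)}$ via \cref{prop:fixed-tg-weights}: its weights are $1,2,\dots,k$. Then $W^+=\{(x-f(y),\,g(y))\}$ with $g$ monic of degree $k$ in $y$ and $f$ of degree $<k$ with weight-appropriate coefficients, perhaps after a coordinate change. In such a parametrization, punctuality at the origin forces $g=y^k$, which should kill all positive-weight coordinates and leave $J=(x,y^k)$.

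The main obstacle is Step 2: pinning down $W^+_{(x,y^k)}$ explicitly, or equivalently showing that no $\T$-curve inside the punctual Hilbert scheme leaves $(x,y^k)$ upward. I would first try the explicit parametrization by weights $1,\dots,k$, matching the dimension count $\dim T^+=k$. Once this is done, the very stable locus is open and nonempty in $F_\lambda$, which proves the proposition.
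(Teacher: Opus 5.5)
Your proposal is correct and is essentially the paper's proof. The paper also takes the general point $\bigcap_i I^{(1^{\lambda'_i})}_{x_i}$ with distinct $x_i$ and reduces étale-locally via \cref{prop:Hilb-etale} to a single column $I^{(1^k)}_p$. There it identifies $W^+_{I^{(1^k)}_p}$, by equivariance and dimension count, with the image of the closed section $\Hilb^k(T^*_pE)\to\Hilb^k S$. That image is exactly your family $\{(x,g(y))\}$: all $f$-coordinates have weight $\leq 0$, so $f\equiv 0$ on $W^+$, and closedness gives very stability at once.

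Your flow-order alternative is also shorter than you expect. Since $(1^k)$ is the minimum of the dominance order, $F_{(1^k)}$ is maximal for $\preceq$ by \cref{prop:Hilb-flow-is-dom}, hence very stable by \cref{cor:v-stable}, with no need for the functions $a_{p,i}$.
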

\begin{proof}
    First, let $\lambda = (1^n)$, and fix a point $p\in E$.
    The closed embedding $i_p:\C \simeq T^*_p \subset T^*E$ induces the following equivariant section of $\theta$ by adjunction:
    \[
        i_*:\Base\simeq \Hilb^n \C \to \Hilb^n S, \quad I\mapsto \ker (\mathcal{O}_S \to (i_p)_* (\mathcal{O}_\C/I)) 
    \]
    The image of the origin $0\in\Base$ under $i_*$ is precisely $I_p^{(1^n)}$.
    By equivariance and dimension count we have $i_*(\Base) = W^+_{I_p^{(1^n)}}$, which proves that $I_p^{(1^n)}$ is very stable.
    Thus, all points of $F_{(1^n)}$ are very stable.

    Let now $\lambda\in\Part(n)$ be an arbitrary partition, and pick $m=\lambda_1$ distinct points $x_1,\ldots,x_m$ in $E$.
    Consider the ideal
    \[
        I\coloneqq I_{x_1}^{(1^{\lambda'_1})}\cap\ldots\cap I_{x_m}^{(1^{\lambda'_m})} \in F_\lambda.
    \]
    We claim that this is a very stable point.
    Indeed, $I\in \Hilb^n S$ has an étale neighborhood isomorphic to an étale neighborhood of $(I_{x_1}^{(1^{\lambda'_1})},\ldots, I_{x_m}^{(1^{\lambda'_m})})$ in $\prod_i \Hilb^{\lambda'_i} S$ by \cref{prop:Hilb-etale}. 
    The latter point is very stable by the computation above. 
\end{proof}

Thanks to \cref{prop:HH-mult}, we immediately obtain the equivariant multiplicities.
\begin{cor}\label{cor:HilbTE-mult}
    For any $\lambda\in\Fix$, we have $m_\lambda(t) = \qnom{n}{\lambda'}_t$, in particular $m_\lambda=\binom{n}{\lambda'}$.
\end{cor}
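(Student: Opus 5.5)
Since every component $F_\lambda$ contains a very stable point by \cref{prop:ell-very-stable}, \cref{prop:v-stable-HHvsMZ} gives $m_\lambda(t)=\mu_{F_\lambda}(t)$. By \cref{prop:sing-is-mu} this equals $\chi_t(\Sym T^+_x)/\chi_t(\Sym \Base)$ for any $x\in F_\lambda$, so the whole task is to evaluate this ratio explicitly. Specializing at $t=1$ then gives $m_\lambda=\binom{n}{\lambda'}$ via \cref{prop:non-eq-mult}; equivalently, it is $\gamma_x$ by \cref{prop:HH-mult}.

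The denominator is immediate. For $S=T^*E$ we have $\e=1$, so $\Base=\Sym^n\C$ has weights $1,2,\ldots,n$, and $\chi_t(\Sym\Base)^{-1}=\prod_{i=1}^n(1-t^i)$. For the numerator, \eqref{eq:Tplus-tangent} applies with $\Fix(S)=\{C\}$, $C=E$, $l_C=0$ and $k=1$. It gives $\chi_t(T^+_x)=\sum_{s\in\lambda}t^{a(s)+1}$, so
\[
    m_\lambda(t)=\frac{\prod_{i=1}^n(1-t^i)}{\prod_{s\in\lambda}(1-t^{a(s)+1})}.
\]

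The remaining step is combinatorial. Group the boxes of $\lambda$ by the row/column in which the arm $a(s)$ is measured, i.e.\ by the index $i$ with $a(s)=\lambda'_i-j$. For fixed $i$, as $j$ runs from $1$ to $\lambda'_i$, the values $a(s)+1$ run over $1,\ldots,\lambda'_i$. Hence the denominator is $\prod_i\prod_{r=1}^{\lambda'_i}(1-t^r)$. Dividing numerator and denominator by $(1-t)^n$ converts everything into quantum factorials, giving $[n]!_t/\prod_i[\lambda'_i]!_t=\qnom{n}{\lambda'}_t$.

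The only genuine obstacle is bookkeeping of conventions: whether the arm in the paper's French-notation convention runs along $\lambda'$ or $\lambda$. This determines whether the answer involves $\lambda'$ or $\lambda$. I would check it against a small example, e.g.\ $\lambda=(1^n)$. There $F_{(1^n)}$ contains the very stable point whose upward flow is a section of $\theta$ (proof of \cref{prop:ell-very-stable}), so the multiplicity must be $1=\qnom{n}{(n)}_t$, consistent with $\lambda'=(n)$. This fixes the orientation.
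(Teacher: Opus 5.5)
Your proposal is correct and follows essentially the same route as the paper: every $F_\lambda$ is very stable (\cref{prop:ell-very-stable}), so $m_\lambda(t)=\mu_{F_\lambda}(t)$; this is then evaluated via \eqref{eq:Tplus-tangent} with $l_C=0$ and $k=\e=1$, regrouped column by column into $[n]!_t/\prod_i[\lambda'_i]!_t=\qnom{n}{\lambda'}_t$, and specialized at $t=1$. The orientation issue you flag is settled directly by the paper's definition $a(s)=\lambda'_i-j$, and your $\lambda=(1^n)$ check is consistent with it.
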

\begin{proof}
    Applying the formula~\eqref{eq:Tplus-tangent}, we have 
    \begin{align*}
        m_\lambda(t) &= \frac{\chi_t(\Sym T^+_p)}{\chi_t(\Sym \Base)}
        = \frac{\prod_{i=1}^n (1-t^i)}{\prod_{s\in \lambda}(1-t^{a(s)+1})}
        = \frac{\prod_{i=1}^n [i]_t}{\prod_{s\in \lambda}[a(s)+1]_t}\\
        &= \frac{[n]!_t}{\prod_{i}[\lambda'_i]!_t}
        = \qnom{n}{\lambda'}_t.
    \end{align*}
    The claim for $m_\lambda$ follows by specializing to $t=1$.
\end{proof}

In this case, we can prove a bit more.
Recall~\cite[Def.~3.7]{hausel2022enhanced} that for a very stable point $I\in \Hitch$, its \textit{equivariant multiplicity algebra} $Q_I$ is defined as the ring of functions on the scheme-theoretic preimage of $0$ under $\theta|_{W^+_I}:W^+_I\to \Base$.
This algebra is naturally graded by the action of $\T$; for convenience, we rescale it by doubling the degree of each element.
\begin{cor}\label{cor:mult-alg-ell}
    Let $\lambda$ be a partition of $n$, $x_1,\ldots,x_m$ distinct points in $E$, and $I = I_{x_1}^{(1^{\lambda_1})}\cap\ldots\cap I_{x_m}^{(1^{\lambda_m})}$.
    Then $Q_I\simeq H^*(\mathscr{F}_\lambda)$ as graded algebras, where $\mathscr{F}_\lambda\coloneqq GL_n/P_\lambda$ is the partial flag variety.
\end{cor}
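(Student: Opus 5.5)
The plan is to describe the upward flow $W^+_I$ globally as a product of symmetric powers of $\C$. Once that is done, the restriction $\theta|_{W^+_I}$ becomes an explicit map of affine spaces, and the fiber over $0$ can be matched with the Borel presentation of $H^*(\mathscr{F}_\lambda)$.

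\emph{Step 1: support of points of $W^+_I$.} The $\T$-action on $S=E\times\C$ only scales the $\C$-coordinate and fixes the $E$-coordinate. Let $J\in W^+_I$, so that $\lim_{t\to 0} t\cdot J = I$. The support of $t\cdot J$ is obtained from that of $J$ by fiberwise scaling. For its limit to be the support $\{x_1,\ldots,x_m\}$ of $I$, the support of $J$ must lie in $\bigsqcup_i T^*_{x_i}E$, with multiplicity $\lambda_i$ over $x_i$. Since the fibers $T^*_{x_i}E$ are disjoint, we can write $J=J_1\cap\ldots\cap J_m$ with $J_i$ supported on $T^*_{x_i}E$ and of colength $\lambda_i$. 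Taking limits componentwise gives $J_i\in W^+_{I^{(1^{\lambda_i})}_{x_i}}\subset \Hilb^{\lambda_i}S$.

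\emph{Step 2: the flow is a product.} By the proof of \cref{prop:ell-very-stable}, $W^+_{I^{(1^{k})}_{x}}$ is the image of the section $i_*:\Hilb^k\C\to\Hilb^k S$. The map $(J_i)_i\mapsto\bigcap_i J_i$ is therefore a $\T$-equivariant isomorphism
\[
    \prod_{i=1}^m \Hilb^{\lambda_i}\C \simto W^+_I.
\]
It is an isomorphism onto its image by \cref{prop:Hilb-etale}, since the supports are disjoint; both sides are affine spaces of dimension $n$, and it is bijective by Step 1. Under this identification, $\theta|_{W^+_I}$ becomes the map
\[
    \prod_i \Sym^{\lambda_i}\C\to\Sym^n\C,
\]
which takes the union of multisets. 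On rings of functions, write $\C[\Sym^{\lambda_i}\C]=\C[z_{i,1},\ldots,z_{i,\lambda_i}]^{S_{\lambda_i}}$. Then the map is the inclusion
\[
    \C[z_1,\ldots,z_n]^{S_n}\hookrightarrow \C[z_1,\ldots,z_n]^{S_{\lambda_1}\times\ldots\times S_{\lambda_m}},
\]
since the characteristic polynomial of a union of multisets is the product of the characteristic polynomials.

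\emph{Step 3: the fiber over $0$.} The scheme-theoretic fiber over $0$ has coordinate ring
\[
    Q_I=\C[z]^{S_\lambda}/\bigl(\C[z]^{S_n}_+\bigr).
\]
Each $z_j$ has $\T$-weight $1$, the weight of the fiber coordinate of $T^*E$, so after doubling the grading each $z_j$ sits in degree $2$. By Borel's presentation, $H^*(GL_n/P_\lambda)\simeq H^*_{GL_n}(\mathrm{pt})$-coinvariants of $H^*(BL_\lambda)$, which is exactly $\C[z]^{S_\lambda}/(\C[z]^{S_n}_+)$ with $\deg z_j=2$. This gives the claimed isomorphism of graded algebras.

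\emph{Main obstacle.} The step I expect to need the most care is Step 2: the identification of $W^+_I$ with the product must be scheme-theoretic, not merely set-theoretic, so that the scheme-theoretic fiber computed in Step 3 is really $Q_I$. Both sides are smooth affine spaces (\BB), and the map is a bijective morphism that is étale by \cref{prop:Hilb-etale}. Such a morphism is an isomorphism, which settles this point.
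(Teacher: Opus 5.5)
Your proposal is correct and follows essentially the same route as the paper. Both arguments use \cref{prop:Hilb-etale} and the section $i_*$ from the proof of \cref{prop:ell-very-stable} to identify $\theta|_{W^+_I}$ with the symmetrization map $\prod_i\Sym^{\lambda_i}\C\to\Sym^n\C$, i.e.\ the inclusion $\C[z]^{\mathfrak{S}_n}\hookrightarrow\C[z]^{\mathfrak{S}_\lambda}$ with $\deg z_j=2$, and then read off the fiber over $0$. The differences are cosmetic: you make the identification $W^+_I\simeq\prod_i\Hilb^{\lambda_i}\C$ global rather than étale-local, and you match $\C[z]^{\mathfrak{S}_\lambda}/(\C[z]^{\mathfrak{S}_n}_+)$ with $H^*(\mathscr{F}_\lambda)$ directly via Borel's presentation, whereas the paper writes it as $R_n^{\mathfrak{S}_\lambda}$ and uses $H^*(\mathscr{F}_\lambda)\simeq H^*(\mathscr{F}_{(1^n)})^{\mathfrak{S}_\lambda}$; these agree by averaging over $\mathfrak{S}_\lambda$ in characteristic $0$.
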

\begin{proof}
    Using \cref{prop:Hilb-etale} as in the proof of \cref{prop:ell-very-stable}, we see that the map $\theta|_{W^+_I}$ is étale-locally isomorphic to the symmetrization map $\prod_{i} \Sym^{\lambda_i} \C \to \Sym^n \C$.
    On the rings of functions, this map is given by the inclusion
    \[
        \C[x_1,\ldots,x_n]^{\mathfrak{S}_n} \hookrightarrow \C[x_1,\ldots,x_n]^{\mathfrak{S}_\lambda},
    \]
    where each $x_i$ has degree $2$.
    Let $S^+\subset \C[x_1,\ldots,x_n]^{\mathfrak{S}_n}$ be the augmentation ideal.
    Denoting the coinvariant ring $\C[x_1,\ldots,x_n]/(S^+)$ by $R_n$, we obtain $Q_I \simeq R_n^{\mathfrak{S}_\lambda}$.
    On the other hand, it is well known that $H^*(\mathscr{F}_\lambda) \simeq H^*(\mathscr{F}_{(1^n)})^{\mathfrak{S}_\lambda}\simeq R_n^{\mathfrak{S}_\lambda}$.
    We may conclude.
\end{proof}

\subsection{Parabolic case}\label{subs:Hilbert of parabolic S}
Let us now consider the surfaces $S_{\mathbb{Z}/e}$ of the parabolic family.
In this case, the analogue of \cref{prop:ell-very-stable} fails to hold already for $n=2$; indeed, if $p\in \Fix(S)$ is a point with $m_p(t)=1$, the isolated fixed point $I_p^{(2)}$ admits a $\T$-curve to $I_p^{(1^2)}$, and hence is wobbly.
The following proposition can be interpreted as saying that, in the terminology of~\cite{hausel2022very}, being very stable in this case is the same as being generically of type $(1,1,\ldots,1)$.

\begin{prop}\label{prop:par-stable-descr}
    Let $\underline{\lambda}\in \Part^{\Fix(S)}(n)$.
    The fixed component $F_\ulambda \in \Fix$ is very stable 
    if and only if $\lambda(p_i)=\varnothing$ for every wobbly $p_i\in S^\T$, and $\lambda(p_i)=(1^{k_i})$ for every very stable $p_i\in S^\T$ and some $k_i\geq 0$.
\end{prop}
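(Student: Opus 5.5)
Both directions can be reduced to single points of $S$ by the étale-local description of \cref{prop:Hilb-etale}, using the criterion of \cref{prop:v-stable-lf-push}(2): $I$ is very stable iff $W^+_I\cap\Core=\{I\}$, equivalently iff there is no $\T$-curve starting at $I$, a condition that can be checked étale-locally. A general point of $F_{\underline\lambda}$ has the form
\[
I=\bigcap_{c} I_{c}^{\mu(c)}\cap\bigcap_i I_{p_i}^{\lambda(p_i)},
\]
with the $c$'s distinct points of $C$. By \cref{prop:Hilb-etale}, a neighbourhood of $I$ is étale-equivalent to the product of neighbourhoods of the factors. Upward flows and cores are compatible with products (\cref{subs:products-and-covers}), so $I$ is very stable iff every factor is very stable in the corresponding $\Hilb^{k}$ of a neighbourhood of the respective point.

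\emph{Points of $C$.} Near a general point of $C$ the surface $S$ looks étale-locally like $T^*E$. By the proof of \cref{prop:ell-very-stable}, a general point of $F_{\lambda(C)}$ is a union of ideals $I_{c}^{(1^{k})}$ at distinct points of $C$, and these are very stable. So the $C$-part never obstructs very stability, consistent with the statement imposing no condition on $\lambda(C)$.

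\emph{Isolated points, sufficiency.} Suppose $p_i$ is very stable in $S$. We show $I_{p_i}^{(1^k)}$ is very stable. The upward flow $W^+_{p_i}$ is closed and is a $\T$-equivariant section-like curve $\C\cong W^+_{p_i}\to\C$; by the computations of \cref{ssec:2d-mult-ex} it has degree $m_{p_i}$ onto the base. Push forward $\Hilb^k(W^+_{p_i})\to\Hilb^k S$ as in the proof of \cref{prop:ell-very-stable}. Its image is closed (by properness of $\Hilb^k$ of a closed curve over $\Sym^k\C$) and $\T$-stable, it contains $I_{p_i}^{(1^k)}$ (the ideal $(x,y^k)$ in local coordinates with $W^+=\{x=0\}$), and it has dimension $k=\dim T^+$. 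By equivariance it therefore equals the closure of $W^+_{I_{p_i}^{(1^k)}}$. Its intersection with the core consists of subschemes of $W^+_{p_i}$ supported on $W^+_{p_i}\cap\Core_S=\{p_i\}$, i.e. only $I_{p_i}^{(1^k)}$. Hence this point is very stable.

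\emph{Isolated points, necessity.} Two cases.
\begin{itemize}
\item If $p_i$ is wobbly and $\lambda(p_i)\neq\varnothing$, there is a $\T$-curve in $S$ leaving $p_i$ upwards to a point $q$ (the green local equation $x^{b_1}y^{b_2}$ with $b_1>0$). Move (ii) in the proof of \cref{prop:Hilb-flow-is-dom} then gives a $\T$-curve from $I_{p_i}^{\lambda}\cap(\cdots)$ to an ideal with one box moved to $q$; take its direction reversed appropriately. Concretely, deforming the $y$-outermost box along the upward curve gives a non-constant $\T$-orbit in $W^+_I\cap\Core$.
\item If $p_i$ is very stable but $\lambda=\lambda(p_i)\neq(1^k)$, then $\lambda$ has a row of length at least $2$. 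The move (i') curve, for instance $(x^{a+1},\,y^b+tx^a,\,y^{b+1})$, runs from $I_{p_i}^{\lambda}$ upward to a partition of larger first column, with positive weight $(k+l)b+la$. This curve lies in $\Core$, so $I$ is wobbly.
\end{itemize}
Since $F_{\underline\lambda}$ is a single point times $\Sym$ of $C$, and the wobbly factor persists for every point of the component, the whole component is wobbly.

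The main obstacle is the necessity direction for wobbly $p_i$ and the correct orientation of the $\T$-curves. We must exhibit a curve with limit at $t\to0$ equal to $I$ (an upward curve), not one ending at $I$. I would handle this by checking tangent weights: a point is very stable iff no positive-weight direction of $T^+_I$ integrates into $\Core$. For the weight computation I would use \eqref{eq:Tplus-tangent} together with the explicit local equations of $\Core_S$ from \cref{ssec:2d-mult-ex}.
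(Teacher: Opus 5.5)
Your argument is correct in substance, but it handles the isolated factors differently from the paper. Both proofs use the same étale reduction and treat the part supported on $C$ the same way.

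For the isolated factors, the paper notes that an isolated fixed point is very stable iff it is maximal in the flow order. It then reads off the maximal elements from \cref{prop:Hilb-flow-is-dom}.

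You instead prove sufficiency directly. For $I=I^{(1^k)}_{p_i}$ you show $W^+_I=\Hilb^k(W^+_{p_i})$: the image lies in $W^+_I$ because $\Hilb^k(W^+_{p_i})\cong\Sym^k\C$ contracts to $I$, and equality follows from closedness, irreducibility and $\dim W^+_I=k$. This set is closed and meets $\Core$ only at $I$. You prove necessity with the explicit $\T$-curves of moves $(i')$ and $(ii)$.

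Your route avoids the harder half of \cref{prop:Hilb-flow-is-dom}, namely the semicontinuity argument of \cref{lem:geometric-dom,lem:dom-loc-cst}. The paper's sufficiency implicitly needs that half to rule out $\T$-curves leaving $I^{(1^k)}_{p_i}$. Your route also describes the upward flow explicitly, as the paper itself does later in \cref{ssec:dual-checks}. The paper's route is shorter once the flow order is known.

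One step needs repair. Move $(ii)$ transports a removable box in the first column along $\overline{W^+_{p_i}}$, so it exists only when $\lambda(p_i)$ has a part equal to $1$. For example, at a wobbly $p_i$ with $\lambda(p_i)=(2)$, i.e.\ $I=(x^2,y)$, no box can be moved along $\{x=0\}$. Your case (b) does not cover this as stated, because it assumes $p_i$ is very stable.

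The fix is immediate. Split instead into two cases:
\begin{enumerate}
\item $\lambda(p_i)\neq(1^k)$. Your move $(i')$ curve works at any isolated $p_i$: it lies in the punctual Hilbert scheme at $p_i$, has positive weight $(k+l)b+la$, and never uses very stability of $p_i$.
\item $\lambda(p_i)=(1^k)$ with $k\geq 1$ and $p_i$ wobbly. Here move $(ii)$ applies.
\end{enumerate}

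No reversal of orientation is needed. If $\lambda(p_i)$ is obtained from $\nu$ by appending a part $1$, the ideals $I^{\nu}_{p_i}\cap\mathfrak{m}_s$ with $s\in W^+_{p_i}$ tend to $I^{\lambda(p_i)}_{p_i}$ as $s\to p_i$, i.e.\ as $t\to 0$. So this orbit already lies in $W^+_I\cap\Core$, since $\overline{W^+_{p_i}}\subset\Core_S$ for wobbly $p_i$. Drop the tangent-weight fallback: a weight computation cannot decide whether a direction integrates into $\Core$.
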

\begin{proof}
    Let $I\in F_{\ulambda} = I'\cap I_{p_1}^{\lambda(p_1)}\cap\ldots\cap I_{p_k}^{\lambda(p_k)}$, where $I'$ is an ideal supported on $C$.
    Recall that we can check whether a point is very stable étale-locally, using the equivalent definition (2) of \cref{prop:v-stable-lf-push}.
    \cref{prop:Hilb-etale} then implies that $I$ is very stable if and only if each of the ideals $I'$, $I_{p_j}^{\lambda(p_j)}$ is very stable in the corresponding smaller Hilbert scheme of points.
    Since any two smooth curves are étale-locally isomorphic, $I'$ can be chosen to be very stable by \cref{prop:ell-very-stable}.
    Each of the ideals $I_{p_i}^{\lambda(p_i)}$ is isolated, and by definition an isolated fixed point is very stable if and only if it is maximal in the flow order.
    Thanks to \cref{prop:Hilb-flow-is-dom}, we know that $I_{p_i}^{\lambda(p_i)}$ is maximal if and only if $p_i$ is maximal (hence very stable) in $S$, and $\lambda(p_i) = (1^{|\lambda(p_i)|})$.
    This concludes the proof.
\end{proof}

Let us compute the equivariant multiplicities of very stable points.
Fix a very stable multipartition $\underline{\lambda}$; by the previous proposition we have $\lambda(p) = \varnothing$ for wobbly $p\in \Fix(S)$, and $\lambda(p) = (1^{\lambda(p)'})$ for very stable $p\in \Fix(S)\setminus \{C\}$.
We denote $w_p \coloneqq k+l_{p}$,
By \cref{prop:v-stable-HHvsMZ} we have
\begin{align*}
    m_{\underline{\lambda}}(t) 
    & = \mu_{\underline{\lambda}}(t)
    = \frac{(1-t^e)\ldots (1-t^{ne})}{\chi_t(\Sym T^+_{I})^{-1}} 
    = \frac{\prod_{i=1}^{n}[ie]_t}{\left(\prod_{i=1}^{\lambda(C)_1} [\lambda(C)'_i]!_t\right)\left( \prod_{i=1}^r \prod_{j=1}^{\lambda(p_i)'}[jw_{p_i}]_t\right)} \\
    & = \frac{[n]!_t}{\left(\prod_{i=1}^{\lambda(C)_1} [\lambda(C)'_i]!_t\right)\left(\prod_{i=1}^r[\lambda(p_i)']!_t\right)} \frac{\prod_{i=1}^{n}[e]_{t^i}}{\prod_{i=1}^r \prod_{j=1}^{\lambda(p_i)'}[w_{p_i}]_{t^j}},
\end{align*}
where in the first line $I\in F_{\underline{\lambda}}$ very stable.
For any polynomial $p(t)$, let us write 
\begin{equation}\label{eq:t-powers}
    p(t)^{(b)} \coloneqq \prod_{i=1}^{b}p(t^i),   
\end{equation}
and treat $\underline{\lambda}'$ as an unordered tuple of numbers, obtained by concatenating all $\lambda(p)'$'s.
We obtain
\begin{prop}\label{prop:vst-eq-mult-parab}
Equivariant multiplicity of a very stable component $F_\ulambda \in \Fix(\Hilb^n(S_{\Z/e}))$ is
\begin{equation}\label{eq:vst-eq-mult-parab}
    \pushQED{\qed} 
    m_{\underline{\lambda}}(t) = \qnom{n}{\underline{\lambda}'}_t\cdot\frac{[e]_t^{(n)}}{\prod_{p}[w_p]_t^{(\lambda(p)')}}.\qedhere
    \popQED
\end{equation}
\end{prop}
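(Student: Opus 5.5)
The formula follows from \cref{prop:v-stable-HHvsMZ}, which gives $m_{\underline{\lambda}}(t)=\mu_{\underline{\lambda}}(t)=\chi_t(\Sym T^+_I)/\chi_t(\Sym\Base)$ at a very stable point $I\in F_{\underline{\lambda}}$. Such a point exists by \cref{prop:par-stable-descr}, so the task reduces to evaluating both Hilbert series and rearranging the result.

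\textbf{Numerator (base contribution).} The base is $\Base=\Hilb^n\C\simeq\C^n$ with weights $e,2e,\ldots,ne$ (\cref{ex:Hilb}). Hence
\[
\chi_t(\Sym\Base)^{-1}=\prod_{i=1}^n(1-t^{ie}).
\]

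\textbf{Denominator (tangent contribution).} For the tangent side, apply \eqref{eq:Tplus-tangent} with the shape restrictions of \cref{prop:par-stable-descr}:
\begin{itemize}
\item on $C$ we have $l_C=0$ and $k=1$, so the positive weights are $a(s)+1$ for $s\in\lambda(C)$;
\item at a very stable $p$ with $\lambda(p)=(1^{m})$, where $m=\lambda(p)'$ is a single number, the boxes $s_j$ have $a(s_j)=j-1$ and $h(s_j)=j$, so the weights are $l_pj+kj=jw_p$ for $1\le j\le m$.
\end{itemize}
Converting each factor with $1-t^{a}=(1-t)[a]_t$, the $n$ factors of $(1-t)$ cancel between numerator and denominator, since both have exactly $n$ factors. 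This gives the displayed expression
\[
\frac{\prod_{i=1}^n[ie]_t}{\prod_{s\in\lambda(C)}[a(s)+1]_t\cdot\prod_{p}\prod_{j=1}^{\lambda(p)'}[jw_p]_t}.
\]
The column factors combine as $\prod_{s\in\lambda(C)}[a(s)+1]_t=\prod_i[\lambda(C)'_i]!_t$, exactly as in \cref{cor:HilbTE-mult}.

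\textbf{Rearrangement.} The key identity is $[ab]_t=[a]_t[b]_{t^a}$, used in the symmetric form $[ie]_t=[i]_t\,[e]_{t^i}$ and $[jw]_t=[j]_t\,[w]_{t^j}$. This splits the numerator into $[n]!_t\cdot[e]_t^{(n)}$. It splits each very stable point's factor into $[\lambda(p)']!_t\cdot[w_p]_t^{(\lambda(p)')}$, in the notation \eqref{eq:t-powers}.

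The remaining factorials $[n]!_t\big/\bigl(\prod_i[\lambda(C)'_i]!_t\prod_p[\lambda(p)']!_t\bigr)$ form the $t$-multinomial $\qnom{n}{\underline{\lambda}'}_t$. Here $\underline{\lambda}'$ is the concatenated tuple; its parts sum to $n$ because wobbly points carry empty partitions. This yields \eqref{eq:vst-eq-mult-parab}.

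The only real care is bookkeeping: confirming that $w_p=k+l_p$ is the weight at each very stable isolated point, and that no wobbly or $C$-factor is misassigned. I expect no genuine obstacle, since this is a direct computation.
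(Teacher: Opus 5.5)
Your proposal is correct and follows essentially the same route as the paper: you use \cref{prop:v-stable-HHvsMZ} to replace $m_{\underline{\lambda}}(t)$ by $\mu_{\underline{\lambda}}(t)$, read off the positive tangent weights from \eqref{eq:Tplus-tangent} under the shape constraints of \cref{prop:par-stable-descr}, and split $[ie]_t=[i]_t[e]_{t^i}$ and $[jw_p]_t=[j]_t[w_p]_{t^j}$ to separate out the $t$-multinomial. Your intermediate expression, with column factorials for $\lambda(C)$ and weights $jw_p$ for $1\le j\le \lambda(p)'$ at very stable isolated points, is exactly the one the paper displays before stating the proposition.
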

\begin{rmk}
    One can rewrite formula~\eqref{eq:vst-eq-mult-parab} to make its polynomiality apparent:
    \begin{equation}\label{eq:vst-eq-mult-parab-poly}
        m_{\underline{\lambda}}(t) = \qnom{n}{|\lambda(C)|,|\lambda(p_1)|,\ldots, |\lambda(p_r)|}_{t^e} \prod_{p\in \Fix(S)} \qnom{|\lambda(p)|}{\lambda(p)'}_{t^{w_p}} m_p(t)^{(|\lambda(p)|)}.
    \end{equation}
    Note that with our choice of $\underline{\lambda}$, we have $\qnom{|\lambda(p)|}{\lambda(p)'}_{t^{w_p}} = 1$ for all $p\neq C$.
\end{rmk}

For wobbly components, the non-equivariant multiplicities can also be computed.
\begin{thm}\label{prop:mult-parab}
    For any $F_\ulambda\in \Fix$, we have
    \begin{equation}\label{eq:neq-mult-parab}
        m_{\underline{\lambda}} = \binom{n}{\underline{\lambda}'}\prod_{p\in\Fix(S)} m_p^{|\lambda(p)|}.
    \end{equation}
\end{thm}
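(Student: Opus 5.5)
Since the multiplicity $m_{\ulambda}$ is the length of $\mathcal{O}_{\Core,\eta}$ at the generic point $\eta$ of $\Core_{F_\ulambda}$, it is local, and I will compute it étale-locally at a generic point of $\Core_{F_\ulambda}$ rather than at the fixed point itself. A general point of $W^-_{F_\ulambda}$ is an ideal $I=\bigcap_j I_j$ with pairwise distinct supports. Using the description of downward flows in the proof of \cref{lem:dom-loc-cst}, each $I_j$ is a curvilinear ideal $(x+b_1y+\dots+b_{k-1}y^{k-1},y^k)$ of length $k=\lambda(p)'_i$, where $p$ is the relevant core component of $S$. Moreover its support is a general point $q_j$ of the core component $\mathbb{P}_p$ of $S$, so it lies off the finitely many intersection points with other components. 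By \cref{prop:Hilb-etale}, $\Hilb^n S\to\Sym^n\C$ near $I$ is étale-locally the product $\prod_j \Hilb^{k_j}U_j\to\prod_j \Sym^{k_j}U_j\to \Sym^n\C$, where the last map sends the tuple of supports to the image under $\theta$. This composite factors through $\prod_j\Sym^{k_j}\C\to\Sym^n\C$. Near the point $(0,\dots,0)$ this map is finite, and by the analysis of \cref{prop:cover-mult} it contributes the Artinian factor of length equal to its degree. That degree is $\binom{n}{k_1,k_2,\ldots}=\binom{n}{\ulambda'}$, the number of ways to distribute $n$ unordered points into blocks of sizes $k_j$, computed as the length of the fibre over $0$ of the symmetrization map.

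Concretely, I would proceed as in \cref{prop:cover-mult}. Write $\theta$ near $I$ as $\pi\circ\prod_j\theta_j$, with $\theta_j:\Hilb^{k_j}U_j\to\Sym^{k_j}\C$ and $\pi:\prod_j\Sym^{k_j}\C\to\Sym^n\C$. Then $\Core$ locally equals $(\prod\theta_j)^{-1}(\pi^{-1}(0))$. Because $\prod_j\theta_j$ is flat (\cref{cor:flatness}), the length at the generic point is $\deg\pi$ times the length of the local ring of $\prod_j\theta_j^{-1}(0)$ at $\eta$. By \cref{prop:product-mult} (at $t=1$) the latter splits as a product of the multiplicities of the components through $I_j$ in $\Hilb^{k_j}U_j$.

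This reduces everything to a single curvilinear ideal of length $k$ supported at a general point $q\in\mathbb{P}_p$. Near $q$ the map $\theta:S\to\C$ looks like $(x,y)\mapsto y^{m_p}$, with $\Core_S=\{y^{m_p}=0\}$ (the local model in \cref{ssec:2d-mult-ex}, valid away from the nodes). Hence $\Hilb^k U\to\Sym^k\C$ factors as $\Hilb^k(\C^2)\to\Sym^k\C_y\to\Sym^k\C$, where the second map is induced by $y\mapsto y^{m_p}$. This second map is finite of degree $m_p^k$. The first map is the Hilbert scheme of the product system $T^*\C$-type with $\theta=y$; at the curvilinear component it is the very stable component $(1^k)$ of $\Hilb^k(\C\times\C)$, which has multiplicity $\binom{k}{(k)}=1$ by \cref{cor:HilbTE-mult} and its étale-local analogue. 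Hence, by \cref{prop:cover-mult}, the multiplicity of the local component is $m_p^k$. Multiplying over the blocks gives $\prod_p m_p^{|\lambda(p)|}$, and together with the factor $\binom{n}{\ulambda'}$ this yields \eqref{eq:neq-mult-parab}.

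The main obstacle is justifying the first reduction. One has to check that the general point of $\Core_{F_\ulambda}$ really has this form: distinct supports lying in the smooth loci of the $\mathbb{P}_p$, with curvilinear local ideals. One must also check that the étale chart of \cref{prop:Hilb-etale}, composed with $\theta$, matches $\pi\circ\prod\theta_j$ scheme-theoretically at the core, and that the length of $\mathcal{O}_{\pi^{-1}(0)}$ at the origin is exactly the multinomial coefficient. For the last point I would use that $\C[\Sym^n]\subset\bigotimes_j\C[\Sym^{k_j}]$ is free of rank $\binom{n}{\ulambda'}$, being the invariants of a Young subgroup inside $\C[x_1,\dots,x_n]$.
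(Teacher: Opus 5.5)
Your argument is correct, but it localizes at a different point than the paper does.

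\textbf{The paper's route.} The paper works at a fixed ideal, in three steps:
\begin{enumerate}
\item It splits only according to the fixed components of $S$, which gives the factor $\binom{n}{\{|\lambda(p)|\}_p}$ via \cref{prop:Hilb-etale,prop:cover-mult}.
\item It treats the bottom component $C$ separately, by specializing \eqref{eq:vst-eq-mult-parab} at $t=1$.
\item For an isolated $p$, it replaces the $\T$-action by $(x,ty)$, which does not change the downward flow. It then slides the support of the fixed ideal to $(c,0)$, where $\theta\sim y^{m_p}$. Finally it imports $m_\lambda=\binom{n}{\lambda'}$ from \cref{cor:HilbTE-mult}, together with $\deg \Sym^n(z\mapsto z^{m_p})=m_p^n$.
\end{enumerate}

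\textbf{Your route.} You go instead to a general point of $W^-_{F_\ulambda}$. There the ideal already splits into one transverse curvilinear block per column of each $\lambda(p)$, with distinct supports lying off the nodes. The whole multinomial $\binom{n}{\ulambda'}$ then appears at once as the degree of the union map $\prod_j\Sym^{k_j}\C\to\Sym^n\C$. Each block contributes exactly $m_p^{k}$, because $\Hilb^k\C^2\to\Sym^k\C_y$ is smooth near any $J$ with $\mathcal{O}/J\simeq\C[y]/(y^k)$: locally $J=(x-g(y),h(y))$ and the map is $(g,h)\mapsto h$.

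\textbf{What your approach buys.} It treats all components uniformly, with no separate case for $C$. It is purely non-equivariant and does not depend on very stability or \cref{prop:v-stable-HHvsMZ}. In fact it reproves $m_\lambda=\binom{n}{\lambda'}$ for $T^*E$ along the way. So your citation of \cref{cor:HilbTE-mult} for the block multiplicity $1$ can simply be replaced by the smoothness observation above.

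\textbf{What it costs.} You need the explicit shape of a general point of $W^-_{F_\ulambda}$. That rests on the weight-independence of downward flows from the proof of \cref{lem:dom-loc-cst}, which is the same change-of-action input the paper uses.

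\textbf{Two small points to state explicitly.}
\begin{itemize}
\item Making $\theta=y^{m_p}$ exactly requires extracting an $m_p$-th root of a unit, which is an \'etale step.
\item The splitting of the generic length over the blocks is the elementary product formula for lengths at generic points. It is not literally \cref{prop:product-mult}, whose factors are required to be pre-integrable systems rather than \'etale charts.
\end{itemize}
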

\begin{proof}
    Using étale neighbourhoods of \cref{prop:Hilb-etale} together with \cref{prop:cover-mult}, we see that
    \[
        m_{\underline{\lambda}} = \binom{n}{\{ |\lambda(p)| \}_{p\in\Fix(S)}}\prod_{p\in\Fix(S)} m_{\lambda(p)},
    \]
    where each $m_{\lambda(p)}$ is computed in a smaller Hilbert scheme.
    Thus, it suffices to prove the formula for an ideal $I$ completely supported on one component of $\Fix(S)$.
    For the bottom component $C\in \Fix(S)$, the result follows by specializing equation~\eqref{eq:vst-eq-mult-parab} to $t=1$.
    Therefore from now on, we can assume that we are in the local situation of \cref{prop:Hilb-fix-C2}, suppress $p$ from notations, and write $\lambda \in \Part(n)$.

    Observe that the multiplicities $m_\lambda$ are independent of the choice of $\T$-action, as long as the downward flow does not change.
    Hence, we can replace the action $t(x,y) = (t^{-l}x, t^{k+l}y)$ with $t(x,y) = (x, ty)$; in particular, our fixed component ceases to be punctual.
    The integrable system structure is induced by a map $\pi:\C^2\to \C$, $(x,y)\mapsto (x^ay^m)$, where $a < m = m_p$.
    As $m_\lambda$ does not depend on a choice of a point in the fixed component, we can move the support of $I$ from $(0,0)$ to $(c,0)$, $c\neq 0$.
    Passing to an étale neighbourhood of this point, the map $\pi$ becomes simply $(x,y)\mapsto (y^m)$.
    Applying \cref{prop:Hilb-etale} again, we are reduced to computing the multiplicity of the core component $\Core_{F_\lambda}$ of $\Hilb^n T^*E$, where the map $T^*E\to \C$ is the composition of the natural projection with $f:z\mapsto z^m$.
    Using \cref{cor:HilbTE-mult,prop:cover-mult}, we obtain
    \[
        m_\lambda = m_{\Core_{F_\lambda}}(1)\deg(\Sym^n f) = \binom{n}{\lambda'} m^n,
    \]
    which is exactly what remained to be proved.
\end{proof}

\begin{cor}
    The first statement of \cref{conj:mult-inequality} holds for $\Hilb^n(S_{\mathbb{Z}/e})$.
\end{cor}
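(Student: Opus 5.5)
The plan is to compare the closed formulas on both sides box by box. For $m_{\underline{\lambda}}$ I will use \cref{prop:mult-parab}. For $\mu_{\underline{\lambda}}(1)$ I will use \cref{prop:sing-is-mu} together with the tangent weights \eqref{eq:Tplus-tangent} of \cref{prop:fixed-tg-weights}. Here $k=1$, the base weights are $e,2e,\ldots,ne$, and the positive weight at a box $s\in\lambda(p)$ is $l_ph(s)+a(s)+1$, with $l_C=0$. This gives
\[
    \mu_{\underline{\lambda}}(1)=\frac{n!\,e^n}{\prod_{p\in\Fix(S)}\prod_{s\in\lambda(p)}\bigl(l_ph(s)+a(s)+1\bigr)}.
\]
On the other side, $\prod_{s\in\lambda(p)}(a(s)+1)=\prod_i[\lambda(p)'_i]!_1$, so \eqref{eq:neq-mult-parab} can be rewritten as
\[
    m_{\underline{\lambda}}=\frac{n!\prod_p m_p^{|\lambda(p)|}}{\prod_p\prod_{s\in\lambda(p)}(a(s)+1)}.
\]
The claim $\mu_{\underline{\lambda}}(1)<m_{\underline{\lambda}}$ therefore reduces to a product over boxes of the local inequalities
\[
    e\,(a(s)+1)\le m_p\bigl(l_ph(s)+a(s)+1\bigr),
\]
with strict inequality for at least one box.

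To prove the local inequality I first need the two-dimensional fact $e\le m_pw_p=m_p(1+l_p)$ for every $p\in\Fix(S)$, with equality exactly when $p$ is very stable or $p=C$. This is the case $n=1$ of the statement: $\mu_p(1)=e/w_p$ and $m_p$ is the multiplicity of the core component. I will check it directly from the list in \cref{ssec:2d-mult-ex}:
\begin{itemize}
    \item for $C$, $w=1$ and $m_C=e$;
    \item for top (very stable) points, $m_p=1$ and $w_p=e$;
    \item for intermediate points, the multiplicities $[b_2]_{t^a}$ come from local equations $x^{b_1}y^{b_2}$ of weight $e$, so $e=b_1l_p+b_2w_p<b_2w_p$, because $b_1,l_p\ge1$.
\end{itemize}
Given this fact, I bound $e(a+1)\le m_p(1+l_p)(a+1)\le m_p(l_ph+a+1)$. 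The second step is equivalent to $l_p(a+1)\le l_ph$, which holds since $h=l+a+1$. It is strict exactly when $l_p>0$ and the leg $l(s)>0$.

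It remains to find a box where the inequality is strict whenever $F_{\underline{\lambda}}$ is wobbly. By \cref{prop:par-stable-descr}, wobbliness means one of two things:
\begin{itemize}
    \item some wobbly $p$ has $\lambda(p)\neq\varnothing$, and then any box of $\lambda(p)$ is strict by the first step;
    \item some very stable $p\neq C$ has $\lambda(p)$ not of the form $(1^{k})$, so $\lambda(p)_1\ge 2$. Then the box $(1,1)$ has leg $\lambda(p)_1-1>0$, and $l_p=w_p-1=e-1>0$. Here $e\ge2$, since for $e=1$ there are no isolated fixed points.
\end{itemize}
In either case the product of local inequalities is strict, which gives $\mu_{\underline{\lambda}}(1)<m_{\underline{\lambda}}$.

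The main care point is the two-dimensional inequality $e\le m_pw_p$, and its strictness, at the intermediate wobbly points. This is where the local equations $x^{b_1}y^{b_2}$ must be used honestly rather than just the displayed values. Everything else is bookkeeping of arm, leg and hook lengths, and it helps that $\prod_{s}(a(s)+1)$ matches the multinomial in \cref{prop:mult-parab} exactly.
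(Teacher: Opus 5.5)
Your argument is correct, but it is organised differently from the paper's proof.

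\textbf{The paper's route.} The paper re-runs the proof of \cref{prop:mult-parab} with $\mu$ in place of $m$. Étale neighbourhoods together with \cref{prop:product-mult,prop:cover-mult} (using $\delta_\pi(1)=\deg\pi$) factor both sides over the support points. The claim is thereby reduced to $n=1$, which is read off from \cref{ssec:2d-mult-ex}.

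\textbf{Your route.} You compare two closed formulas directly. The first is $\mu_{\underline{\lambda}}(1)=n!\,e^n/\prod_{p}\prod_{s\in\lambda(p)}(l_ph(s)+a(s)+1)$, from \cref{prop:sing-is-mu} and \eqref{eq:Tplus-tangent}; the second is $m_{\underline{\lambda}}$ from \eqref{eq:neq-mult-parab}. You split their ratio into box-wise factors, each controlled by the two-dimensional inequality $e\le m_pw_p$ and the bound $h(s)\ge a(s)+1$.

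\textbf{Comparison.} Both routes rest on the same $n=1$ check, but yours has two advantages.
\begin{enumerate}
\item It sidesteps the question of what the change-of-torus-action step in the proof of \cref{prop:mult-parab} becomes for $\mu$. That step alters tangent weights, so it does not preserve $\mu$.
\item It shows explicitly where strictness comes from. This matters when all support points are very stable but some $\lambda(p)$ is not a single column, e.g.\ $I_q^{(2)}$ at a very stable $q$. There the $n=1$ comparison at $q$ is an equality, and the strict inequality comes only from a box with positive leg, where $l_ph(s)>l_p(a(s)+1)$.
\end{enumerate}
In exchange, the paper's route is shorter and stays within the structural properties of multiplicities established earlier.

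\textbf{Slips to fix.} None of these affects validity.
\begin{enumerate}
\item The weight of the local equation $x^{b_1}y^{b_2}$ is $b_2w_p-b_1l_p=e$. As you wrote it, $b_1l_p+b_2w_p$ would give the opposite inequality.
\item Very stable isolated points need not have $m_p=1$ and $w_p=e$. For example, the point with equation $y^2$ and weights $-1|2$ in $S_{\mathbb{Z}/4}$ has $m_p=w_p=2$. Still, $m_pw_p=e$ holds for all of them, because $m_p=\mu_p(1)=e/w_p$ by \cref{prop:v-stable-HHvsMZ}.
\item In your last bullet you only need $l_p=w_p-1\ge 1$, which holds because $p$ is an isolated fixed point. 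The claim $l_p=e-1$ is not needed and not true in general.
\end{enumerate}
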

\begin{proof}
    Using \cref{prop:product-mult,prop:cover-mult} for $\mu(t)$ instead of $m$ and repeating the proof above, we reduce the claim to the case $n=1$, which can be easily checked using the explicit formulas in \cref{ssec:2d-mult-ex}.
\end{proof}

\begin{rmk}
    It is very tempting to guess that the equivariant multiplicities of wobbly components are also given by~\eqref{eq:vst-eq-mult-parab-poly}.
    Our computations in \cref{sec:ex-Hilb2} show that this is not true, see \cref{table:eqmultH2}.
    We wonder whether the ``naive'' polynomials in~\eqref{eq:vst-eq-mult-parab-poly} can be imbued with some other geometric meaning.
    This appears to be the case for more sophisticated polynomials; see \cref{ssec:main-conj}.

    The proof of \cref{prop:mult-parab} relies on two non-equivariant tricks: applying \cref{prop:cover-mult} and changing $\T$-action.
    If the latter was replaced by some other argument, and \cref{conj:finite-cover} held true, then the same proof would at least verify \cref{conj:eq-mult-positivity} for all Hilbert schemes.
\end{rmk}

\subsection{Painlevé case}\label{subs:Hilbert of Painleve}
Let $S=S^\mathrm{X}$, $\mathrm{X}\in \{\mathrm{I},\mathrm{II},\mathrm{IV}\}$ be a surface in the Painlevé family, and denote the bottom fixed component of $S$ by $b$. 
Recall that $\Hilb^n S$ has isolated fixed points, and pick a $\T$-fixed ideal $I$ completely supported at $b$.
Since $b$ has strictly positive tangent weights, the closure $\overline{W^-_I}$ is contained in the punctual Hilbert scheme at $b$.
By Briançon's theorem~\cite[Cor.~V.3.3]{brianccon1977description}, the latter is irreducible of dimension $n-1$; in particular, none of the isotropic subvarieties $W^-_I\subset \Hilb^n S$ can be Lagrangian.
Passing to an étale neighbourhood as in \cref{prop:Hilb-etale}, we conclude that a point $F_{\underline{\lambda}}\in \Fix$ satisfies~\eqref{eq:weight-gap} if and only if $\lambda(b)=\varnothing$.
For such points, the proof of \cref{prop:mult-parab} applies.
Moreover, since all core components in $S^\mathrm{X}$ have multiplicity $1$, we get $m_{\underline{\lambda}} = \binom{n}{\underline{\lambda}'}$.

Similarly, by the argument in \cref{prop:par-stable-descr} the only very stable points in $\Hilb^n S$ are the ideals $I_{\underline{\lambda}}\coloneqq\bigcap_{p\in \Fix(S)\setminus \{b\}} I_p^{1^{\ell(\lambda(p))}}$, where $\sum_p \ell(\lambda(p)) = \sum_p \lambda(p)'_1 = n$.
Note that for each point $p\in\Fix(S)$, $p\neq b$ we have $w_p=e$. 
Analogously to~\eqref{eq:vst-eq-mult-parab}, we obtain
\begin{align*}
    m_{\underline{\lambda}}(t) 
    = \mu_{\underline{\lambda}}(t)
    = \frac{\prod_{i=1}^n(1-t^{ie})}{\prod_p \prod_{i=1}^{\ell(\lambda(p))}(1-t^{ie})}
    = \frac{\prod_{i=1}^n[i]_{t^e}}{\prod_p \prod_{i=1}^{\ell(\lambda(p))}[i]_{t^e}} 
    = \qnom{n}{\underline{\lambda}'}_{t^e}.
\end{align*}

Summarizing the discussion above, we obtain
\begin{prop}\label{prop:mult-pain}
    $F_\ulambda\in \Fix$ satisfies~\eqref{eq:weight-gap} if and only if $\lambda(b)=\varnothing$.
    $F_\ulambda\in \Fix^{\WG}$ is very stable if and only if $\lambda(p_i)=(1^{k_i})$ for every $p_i\in S^\T\setminus\{b\}$ and some $k_i\geq 0$. 
    Furthermore, we have
    \begin{equation}
         m_{\underline{\lambda}} = \binom{n}{\underline{\lambda}'},\quad F_\ulambda\in \Fix^{\WG};\qquad 
         m_{\underline{\lambda}}(t)=\qnom{n}{\underline{\lambda}'}_{t^e},\quad \textrm{$F_\ulambda$ very stable}.
    \end{equation}
\end{prop}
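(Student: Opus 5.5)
The plan is to argue in three steps, each mirroring the parabolic case, using étale localization via \cref{prop:Hilb-etale} throughout. Étale localization preserves all three properties in question: the weight gap condition (tangent weights are étale-local by \cref{rmk:tg-weights}), very stability (criterion (2) of \cref{prop:v-stable-lf-push}), and multiplicities (the discussion preceding \cref{prop:product-mult}).

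\textbf{Step 1: the weight gap condition.} Write a fixed ideal $I\in F_{\underline{\lambda}}$ as $I_b\cap\bigcap_{p\neq b} I_p^{\lambda(p)}$, with $I_b$ supported at $b$. By \cref{prop:Hilb-etale} and \cref{subs:products-and-covers}, the downward cell $W^-_I$ is étale-locally a product of the downward cells of the factors. Each factor at $p\neq b$ has a half-dimensional downward cell. This follows from \eqref{eq:Tplus-tangent}: at $p\neq b$ the number of positive weights equals $|\lambda(p)|$. For the factor at $b$, both tangent weights of $S$ are positive, so $W^-_{I_b}$ lies in the punctual Hilbert scheme at $b$. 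Briançon's theorem says this punctual Hilbert scheme has dimension $|\lambda(b)|-1<|\lambda(b)|$ whenever $\lambda(b)\neq\varnothing$. By \cref{prop:flows-Lagr}, \eqref{eq:weight-gap} therefore holds exactly when $\lambda(b)=\varnothing$.

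\textbf{Step 2: very stability.} By criterion (2) of \cref{prop:v-stable-lf-push} and \cref{prop:Hilb-etale}, $I$ is very stable if and only if each factor $I_p^{\lambda(p)}$ is very stable in $\Hilb^{|\lambda(p)|}S$. These factors are isolated fixed points. An isolated fixed point is very stable if and only if it is maximal in the flow order: no $\T$-curve leaves it, and \cref{lm:adherence-spreads} provides one otherwise.

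I would then check maximality locally at $p\neq b$ using \cref{prop:Hilb-fix-C2} and the moves in the proof of \cref{prop:Hilb-flow-is-dom}.
\begin{itemize}
\item \emph{Move $(i')$.} The $\T$-curve $(x^{a+1},y^b+tx^a,y^{b+1})$ from $(x^{a+1},x^ay,y^b)$ to $(x^a,xy^b,y^{b+1})$ is purely local and works for any weights $-l,k+l$. It shows that any $\lambda(p)$ other than a single column $(1^{k})$ admits an outgoing curve.
\item \emph{Move $(ii)$.} In the Painlevé case every $p\neq b$ is maximal in $S$, and its $\mathbb{P}_p$ flows only downward into $b$. So no move of type $(ii)$ starts at such a $p$.
\end{itemize}
This is exactly where the argument diverges from \cref{prop:par-stable-descr}. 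Conversely, for $\lambda(p)=(1^k)$ I would argue directly that the ideal is maximal. Its positive weights $e,2e,\dots,ke$ are those of the upward flow of $\Hilb^k$ of the fiber line $T^+_p$. Hence $W^+=\Hilb^k(\overline{W^+_p})$, which is closed, mirroring \cref{prop:ell-very-stable}.

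\textbf{Step 3: multiplicities.} For the equivariant formula I apply \cref{prop:v-stable-HHvsMZ} together with \eqref{eq:HH-eq-mult}. The base weights are $e,2e,\dots,ne$. Since $w_p=k+l_p=e$ for $p\neq b$, formula \eqref{eq:Tplus-tangent} gives positive weights $e,2e,\dots,\ell(\lambda(p))e$ at each $p$. This yields the $t^e$-multinomial $\qnom{n}{\underline{\lambda}'}_{t^e}$, as computed just above the statement.

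For the non-equivariant statement on $\Fix^{\WG}$, I would repeat the proof of \cref{prop:mult-parab} verbatim. First factorize via \cref{prop:Hilb-etale} and \cref{prop:cover-mult}. Next change the $\T$-action to $(x,y)\mapsto(x,ty)$, which preserves the downward cell. Then move the support off the $\T$-curve intersection and reduce to $\Hilb^n T^*E$ composed with $z\mapsto z^{m_p}$. Finally apply \cref{cor:HilbTE-mult}, using $m_p=1$ for all core components of $S^{\mathrm{X}}$.

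\textbf{Main obstacle.} I expect the delicate step to be the very-stability classification in Step 2. One must verify that the local move $(i')$ really produces $\T$-curves in the Painlevé weights. One must also show that single columns have closed upward flows even though $b$ fails \eqref{eq:weight-gap}. Concretely, this means checking that the upward flow of $I_p^{(1^k)}$ never meets the punctual part at $b$, which follows from $p$ being maximal in $S$.
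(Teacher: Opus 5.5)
Your proposal is correct and follows essentially the same route as the paper: Brian\c{c}on's theorem plus \'etale localization for the weight gap condition, the maximality argument of \cref{prop:par-stable-descr} for very stability, formula \eqref{eq:HH-eq-mult} with $w_p=e$ for the equivariant multiplicities, and the proof of \cref{prop:mult-parab} with $m_p=1$ for the non-equivariant ones. The only difference is in Step 2, where you spell out what the paper leaves implicit. The paper defers to \cref{prop:Hilb-flow-is-dom}, which is stated only for the parabolic family; you instead check directly that the local move $(i')$ still gives outgoing $\T$-curves for the Painlev\'e weights, and that $W^+_{I_p^{(1^k)}}=\Hilb^k(W^+_p)$ is closed.
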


The proofs of \cref{prop:ell-very-stable,prop:par-stable-descr,prop:mult-pain} also imply the following.
\begin{cor}\label{cor:v-st-points}
    A $\T$-fixed ideal $I_{x_1}^{\lambda_1}\cap \ldots \cap I_{x_s}^{\lambda_s}$ is very stable if and only if $\lambda_i = (1^{|\lambda_i|})$ and $x_i$ is very stable in $S$ for all $i$. \qed
\end{cor}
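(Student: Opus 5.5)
The statement is a pointwise version of \cref{prop:par-stable-descr,prop:mult-pain}, so I will reuse their argument for an individual ideal rather than a component. The tool is the étale-local characterization of very stability: by \cref{prop:v-stable-lf-push}(2), $I$ is very stable iff $W^+_I\cap\Core=\{I\}$, and this condition is étale-local. Let $I=I_{x_1}^{\lambda_1}\cap\ldots\cap I_{x_s}^{\lambda_s}$ with distinct $x_i\in S^\T$. First I apply \cref{prop:Hilb-etale} with equivariant étale neighbourhoods $U_i\ni x_i$ (equivariant by \cref{rmk:tg-weights}). This identifies a neighbourhood of $I$ with a neighbourhood of $(I_{x_1}^{\lambda_1},\ldots,I_{x_s}^{\lambda_s})$ in $\prod_i\Hilb^{|\lambda_i|}U_i$. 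It is compatible with the maps to $\prod\Sym^{|\lambda_i|}\C\to\Sym^n\C$, and the second map is finite with $\pi^{-1}(0)=0$ set-theoretically, as in \cref{subs:products-and-covers}. Upward flows and cores are compatible with products, so $I$ is very stable iff each $I_{x_i}^{\lambda_i}$ is very stable in $\Hilb^{|\lambda_i|}S$. This reduces the problem to a single support point $x\in S^\T$ and a partition $\lambda$.

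\emph{Case $x\in C$.} Here $S$ is étale-locally $T^*E$ near $x$, since any two smooth curves are étale-locally isomorphic. If $\lambda=(1^k)$, the section argument in the proof of \cref{prop:ell-very-stable} shows that $I_x^{(1^k)}$ is very stable. If $\lambda\neq(1^k)$, I will exhibit a non-constant $\T$-curve from $I_x^\lambda$ inside $W^+_{I_x^\lambda}\cap\Core$. For example, the family $(x^{a+1},y^b+tx^a,y^{b+1})$ from the proof of \cref{prop:Hilb-flow-is-dom}, placed at a corner of $\lambda$ where a move $(i')$ is possible, lies over $0\in\Base$ and leaves $I_x^\lambda$ in the positive direction. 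Equivalently, $I_x^\lambda$ is not maximal among points of its own fixed component for chains of $\T$-curves. Hence $I_x^\lambda$ is wobbly.

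\emph{Case $x=p$ an isolated fixed point.} Then $I_p^\lambda$ is an isolated fixed point of $\Hilb^k S$. It is very stable iff no $\T$-curve starts at it, i.e.\ iff $F_\lambda$ is maximal in flow order (using \cref{prop:v-stable-lf-push}). By \cref{prop:Hilb-flow-is-dom} (parabolic case), or the analogous move analysis (Painlevé case, where the bottom $b$ fails \eqref{eq:weight-gap} and hence is wobbly by \cref{cor:v-stable}), this holds iff $\lambda=(1^k)$ and $p$ is maximal in $\Fix(S)$, i.e.\ $p$ is very stable in $S$. Moves $(i')$ and $(ii)$ provide the required outgoing $\T$-curves otherwise.

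\textbf{Main obstacle.} The delicate step is the non-isolated case $x\in C$ with $\lambda\neq(1^k)$. There, component-level statements such as \cref{prop:par-stable-descr} only say the \emph{component} is very stable, and a special point like $I_x^{(2)}$ must still be shown to be wobbly. I would first try the explicit curve $\{x^n,y-\mu x\}$ from the remark after \cref{prop:Hilb-flow-is-dom}, generalized to a local move at a corner of $\lambda$, and check its $\T$-weight is positive.
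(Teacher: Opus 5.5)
There is a genuine gap in the case $x\in C$, $\lambda=(1^k)$; the rest of your skeleton is the argument the paper has in mind. That skeleton is: étale-local reduction to one support point via \cref{prop:Hilb-etale}, outgoing $(i')$-curves of weight $(k+l)b+la>0$ to rule out non-column partitions, and flow-order maximality for isolated points.

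\textbf{The gap.} You conclude that $I_x^{(1^k)}$ is very stable for \emph{every} $x\in C$, because $S$ is étale-locally $T^*E$ near $x$. Very stability is a property of the core (condition (2) of \cref{prop:v-stable-lf-push}), not of the $\T$-surface alone. An étale-local identification with $T^*E$ matches the cores only when $x$ is very stable in $S$. For $S_{\Z/e}$ with $e>1$, the curve $C$ contains wobbly points, namely the points where the non-central lines $\overline{W^-_p}$ are attached. At such a point, $W^+_x=\overline{W^-_p}\setminus\{p\}\subset\Core(S)$. Every length-$k$ subscheme of the $\T$-line $W^+_x$ flows as $t\to 0$ to its unique fixed length-$k$ subscheme at $x$, which is $I_x^{(1^k)}$. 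Hence $\Hilb^k(W^+_x)$ is a $k$-dimensional subset of $W^+_{I_x^{(1^k)}}\cap\Core$, and $I_x^{(1^k)}$ is wobbly. As written, your argument establishes a false criterion. It never proves that a support point $x_i\in C$ must be very stable in $S$, which is part of the statement.

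\textbf{The repair.} Split the case $x\in C$ according to whether $x$ is very stable in $S$.
\begin{itemize}
\item If $x$ is wobbly, use the argument just given.
\item If $x$ is very stable, then $W^+_x$ is closed in $S$. Comparing the positive weights in \eqref{eq:Tplus-tangent} with those of $\Sym^k W^+_x$ gives $W^+_{I_x^{(1^k)}}=\Hilb^k(W^+_x)$, which is closed in $\Hilb^k S$.
\end{itemize}
The same identification handles very stable isolated points $p$. This matters in the Painlevé case, where you appeal to ``analogous move analysis'': exhibiting moves only produces outgoing curves and cannot prove that $I_p^{(1^k)}$ is maximal.

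\textbf{A smaller point.} For the bottom point $b$ of a Painlevé surface, before invoking \cref{cor:v-stable} you need that every $F_{\underline\lambda}$ with $\lambda(b)\neq\varnothing$ fails \eqref{eq:weight-gap} (\cref{prop:mult-pain}). It is not enough that $b$ itself fails it.
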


\begin{rmk}
    A similar analysis can be performed for the higher rank version of Hilbert schemes, namely moduli of torsion-free sheaves on $S$ equipped with a trivialization at infinity.
    We will treat it in a subsequent work.
\end{rmk}

\section{Cohomological filtrations}\label{sec:coh-filtr}
This section collects some observations about the multiplicities of core components of Hilbert schemes.
\subsection{Multiplicity filtration}
Let $\theta: \Hitch\to \Base$ be a pre-integrable system. 
A natural question to ask is whether multiplicity of core components defines a tidal filtration on $H^*(\Hitch)$ (see \cref{def:tidal-filt}), provided that the condition~\eqref{eq:weight-gap} holds.

\begin{prop}\label{prop:tidal-ell}
    Multiplicity is tidal on $\Hilb^n(T^*E)$.
\end{prop}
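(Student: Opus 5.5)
We must show that $w(F_\lambda)\coloneqq m_\lambda$ is tidal, i.e. $m_\lambda\geq m_\mu$ whenever $F_\lambda\preceq F_\mu$. For $S=T^*E$ every component satisfies~\eqref{eq:weight-gap} since $k=1$, so every $\lambda\in\Part(n)$ indexes a core component. By \cref{cor:HilbTE-mult}, $m_\lambda=\binom{n}{\lambda'}$. By \cref{prop:Hilb-flow-is-dom}, specialized to the cotangent case, $F_\lambda\preceq F_\mu$ holds if and only if $\lambda$ dominates $\mu$ in the usual dominance order on $\Part(n)$. The statement therefore reduces to a purely combinatorial claim: if $\lambda\trianglerighteq\mu$, then $\binom{n}{\lambda'}\geq\binom{n}{\mu'}$.

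Transposition reverses dominance, so $\lambda\trianglerighteq\mu$ is equivalent to $\mu'\trianglerighteq\lambda'$. Writing $\alpha=\mu'$ and $\beta=\lambda'$, the claim becomes: if $\alpha\trianglerighteq\beta$, then $\binom{n}{\alpha}\leq\binom{n}{\beta}$. In other words, the multinomial coefficient is antitone with respect to dominance.

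By the standard fact recalled after \cref{lm:dom-skelethon}, any dominance relation $\alpha\trianglerighteq\beta$ is a chain of elementary moves, each moving a single box from a row $\alpha_{j_1}$ to a lower row $\alpha_{j_2}$ with $\alpha_{j_1}-1\geq\alpha_{j_2}+1$. It suffices to check one such move. Under this move the multinomial coefficient changes by the factor
\[
    \frac{\alpha_{j_1}!\,\alpha_{j_2}!}{(\alpha_{j_1}-1)!\,(\alpha_{j_2}+1)!}=\frac{\alpha_{j_1}}{\alpha_{j_2}+1}\geq 1,
\]
where the inequality uses $\alpha_{j_1}>\alpha_{j_2}+1$. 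Hence the multinomial coefficient weakly increases along each elementary move, and therefore along the whole chain from $\alpha$ to $\beta$.

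The only point requiring care is the direction conventions. First, \cref{prop:Hilb-flow-is-dom} states that the flow order is the antidominance order. Second, the definition of tidal asks $w(F)\geq w(F')$ when $F\preceq F'$. Third, there is the transpose in $\binom{n}{\lambda'}$. The argument above composes these three as: $\lambda$ dominates $\mu$, hence $\lambda'$ is dominated by $\mu'$, hence $\binom{n}{\lambda'}\geq\binom{n}{\mu'}$, which is exactly the tidal inequality $m_\lambda\geq m_\mu$. As a sanity check, the bottom component has the largest multiplicity: for $\lambda=(n)$ we get $\lambda'=(1^n)$ and $m_{(n)}=n!$, matching $\mathbf{m}(1)=n!$ from~\eqref{eq:mult-bottom}.
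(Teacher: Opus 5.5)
Your proof is correct, and its skeleton matches the paper's. Both use \cref{prop:Hilb-flow-is-dom} and \cref{cor:HilbTE-mult} to reduce tidality, after transposing, to the statement that the multinomial coefficient $\binom{n}{\alpha}$ is antitone with respect to dominance.

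The difference lies in how that combinatorial inequality is proved. The paper applies Karamata's majorization inequality to the strictly convex function $\phi(x)=\log\Gamma(x+1)$, which interpolates $\log x!$. This gives $\sum_i\phi(\mu'_i)\geq\sum_i\phi(\lambda'_i)$ in one step, with strictness for $\lambda\neq\mu$ coming from strict convexity.

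You instead decompose the dominance relation into the elementary box moves. These are the moves (i') that the paper itself recalls after \cref{lm:dom-skelethon}. You then compute the exact ratio $\alpha_{j_1}/(\alpha_{j_2}+1)$ of multinomial coefficients across a single move. Your direction bookkeeping through the antidominance convention, the tidal inequality and the transpose is right.

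Your route is essentially an unpacked proof of the special case of Karamata needed here. It is fully elementary and avoids choosing a convex interpolation of the factorial. Your hypothesis $\alpha_{j_1}\geq\alpha_{j_2}+2$ makes each ratio strictly larger than $1$, not merely $\geq 1$ as you wrote, so it also recovers the paper's strict inequality for $\lambda\neq\mu$. The paper's route is shorter, at the cost of invoking an external classical inequality.
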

\begin{proof}
Let $\lambda,\mu\in \Part(n)$.
By \cref{prop:Hilb-flow-is-dom}, we have $F_{{\lambda}}\preceq F_{{\mu}}$ if and only if $\lambda$ dominates $\mu$, or equivalently $\mu'$ dominates $\lambda'$, which means that $\sum_{i=1}^j \mu'_i\geq\sum_{i=1}^j \lambda'_i$ for all $j$.

Consider a strictly convex function $\phi: [0,\infty)\to \mathbb{R}$, $\phi(x)=\log \Gamma(x+1)$.
By Karamata's inequality,
\[
\sum_i \phi(\mu'_i) \geq \sum_i \phi(\lambda'_i),
\]
and the inequality is strict for $\lambda \neq \mu$.
Exponentiating both sides of the inequality, we obtain
\[
m_\mu = \binom{n}{\mu'} =\frac{n!}{\prod_i \mu'_i!}\leq
\frac{n!}{\prod_i \lambda'_i!}=\binom{n}{\lambda'}=m_\lambda,
\]
where we used \cref{cor:HilbTE-mult}, and the fact that $\exp(\phi(n)) = n!$ for $n\in \mathbb{Z}_{\geq 0}$.
\end{proof}

\begin{rmk}
    Let $\lambda = (5,1,1,1)$ and $\mu = (4,4)$.
    Then $m_\lambda = 336 > 70 = m_\mu$, while for the Atiyah--Bott tidal function of \cref{ex:MO-tidal} we have $w(\lambda) = 10 < 12 = w(\mu)$.
    In particular, there can be no $\T$-curves between $F_{\lambda}$ and $F_{\mu}$.
\end{rmk}

Unfortunately, \cref{prop:tidal-ell} does not generalize much beyond $S = T^*E$.

\begin{prop}\label{prop:list-tidal}
    Let $S$ be a surface of parabolic family.
    Multiplicity is tidal for $\Hilb^n S$ exactly in the following list of cases:
    \begin{itemize}
        \item $n=1$ or $S = T^*E$;
        \item $S = S_{\Z / 2}$, and $n=2$.
    \end{itemize}
\end{prop}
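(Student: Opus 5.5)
The proof splits into the positive cases, checked directly, and the negative cases, refuted by one explicit pair of components in each remaining situation. The tools are the non-equivariant formula \eqref{eq:neq-mult-parab}, namely $m_{\underline{\lambda}} = \binom{n}{\underline{\lambda}'}\prod_p m_p^{|\lambda(p)|}$, and \cref{prop:Hilb-flow-is-dom}, which identifies the flow order with antidominance. By \cref{lm:dom-skelethon} and its refinement into moves $(i')$, the flow order is the transitive closure of moves $(i')$ and $(ii)$. Hence multiplicity is tidal if and only if it weakly decreases along every such move.

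\emph{Positive cases.} The case $S=T^*E$ is \cref{prop:tidal-ell}. For $n=1$ we only need to read off the core diagrams in \cref{ssec:2d-mult-ex} at $t=1$. Along every leg the multiplicities decrease: $e$ at $C$, and then $e-1, e-2,\ldots$ or $4,2$ or $3$, and so on. So multiplicity is tidal on $S$.

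For $S_{\Z/2}$ and $n=2$ the list of fixed components is finite, and \eqref{eq:neq-mult-parab} gives the following values:
\begin{itemize}
  \item $(2)$ at $C$: $8$;
  \item $(1^2)$ at $C$: $4$;
  \item $C$ together with $p_i$: $4$;
  \item $p_i$ together with $p_j$: $2$;
  \item $(2)$ at $p_i$: $2$;
  \item $(1^2)$ at $p_i$: $1$.
\end{itemize}
I would then list the moves $(i')$ and $(ii)$ between these components, as in the $\Hilb^2 S_{\Z/3}$ example picture, and check that each move is weakly decreasing. For instance, $8\to 4$ for $(2)_C\to(1^2)_C$, and $4\to 4$ for $(1^2)_C\to C+p_i$.

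\emph{Negative cases.} Let $S=S_{\Z/e}$ with $e\ge 2$, and let $p$ be the fixed point adjacent to $C$ on a leg where $m_p=e-1$. Such a leg exists in every diagram of \cref{ssec:2d-mult-ex}. Consider the move $(ii)$ from $\underline{\lambda}$ with $\lambda(C)=(1^n)$ to $\underline{\mu}$ with $\mu(C)=(1^{n-1})$ and $\mu(p)=(1)$. This is the move $I^{(1^2)}_{p_0}\to I_{p_0}\cap I_{p_1}$ in the example, extended by extra boxes in the first column. It gives $F_{\underline{\lambda}}\preceq F_{\underline{\mu}}$. Here $\lambda'=(n)$ and $\underline{\mu}'=(n-1,1)$, so \eqref{eq:neq-mult-parab} gives
\[
m_{\underline{\lambda}} = e^n, \qquad m_{\underline{\mu}} = n\,e^{n-1}(e-1).
\]
Then $m_{\underline{\mu}}>m_{\underline{\lambda}}$ if and only if $n(e-1)>e$. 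This inequality holds for all $n\ge 3$ and $e\ge 2$, and for $n=2$ and $e\ge 3$. These are exactly the cases outside the list, since $e=1$ is $T^*E$. So multiplicity fails to be tidal in all remaining cases.

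The main obstacle is to justify carefully that the move in the negative cases really lies in the flow order. This comes from the explicit $\T$-curve constructed for move $(ii)$ in the proof of \cref{prop:Hilb-flow-is-dom}, together with the direction convention. A second, bookkeeping-heavy point is checking that the $S_{\Z/2}$, $n=2$ list of moves is exhaustive.
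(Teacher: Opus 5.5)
Your proposal is correct and follows the paper's proof essentially verbatim. The positive cases come from the $n=1$ core diagrams, \cref{prop:tidal-ell}, and a direct check on $\Hilb^2 S_{\Z/2}$: your values $8,4,4,2,2,1$ are right, and the inequality along $(1^2)_C\to C+p_i$ is the weak one $4\geq 4$. The negative cases use exactly the paper's pair $((1^n),\varnothing)$, $((1^{n-1}),(1))$ with $m_p=e-1$, giving $e^n<ne^{n-1}(e-1)$.

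The step you flag as the main obstacle needs no curve construction. Since $\underline{\lambda}$ visibly dominates $\underline{\mu}$ in the sense of \eqref{eq:multipart-dom}, \cref{prop:Hilb-flow-is-dom} gives $F_{\underline{\lambda}}\preceq F_{\underline{\mu}}$ at once.
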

\begin{proof}
    In the first case, multiplicity is tidal by \cref{ssec:2d-mult-ex,prop:tidal-ell}.
    In the second case, this can be checked by a direct computation, using \cref{prop:Hilb-flow-is-dom} for a description of flow order and \cref{prop:mult-parab} for multiplicities.

    For $\Hilb^n(S_{\Z / e})$, $e>2$, $n\geq 2$, consider the multipartitions $\underline{\lambda}=((1^n),\varnothing)$ and $\underline{\mu}=((1^{n-1}),(1))$, where the first partition lies on the bottom core component $C\subset S$, and the second is supported on $p\subset S^\T$ with $m_p = e-1$.
    Clearly $\underline{\lambda}$ dominates $\underline{\mu}$; at the same time 
    \[
        m_{\underline{\lambda}} = e^n < n e^{n-1}(e-1) = m_{\underline{\mu}},
    \]
    which contradicts tidality.
    The same pair also rules out $\Hilb^n(S_{\Z/2})$, $n\geq 3$.
\end{proof}

\begin{rmk}
    Curiously, this classification superficially resembles Tirelli's classification of moduli of Higgs bundles of degree $0$ admitting a symplectic resolution~\cite{tirelli2019symplectic}.
\end{rmk}

One might ask whether the failure of tidality only occurs at wobbly components. This is not the case either.
Let us call a function $w: \Fix\to \mathbb{R}$ \textit{stably tidal} if for any very stable $F,F'\in \Fix$ with $F\preceq F'$ we have $w(F)\geq w(F')$.

\begin{prop}
    Let $S$ be a surface of parabolic family.
    Multiplicity is stably tidal for $\Hilb^n S$ exactly in the following list of cases:
    \begin{itemize}
        \item $n\leq 2$ or $S = T^*E$;
        \item $S = S_{\Z / 3}$, and $n=3$.
    \end{itemize}
\end{prop}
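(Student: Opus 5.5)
The plan is to make everything explicit on the set of very stable components, and then reduce tidality to a finite list of elementary comparisons. By \cref{prop:par-stable-descr}, a very stable $\underline{\lambda}$ has an arbitrary partition $\lambda(C)$ on the bottom curve. Otherwise it has only single columns $(1^{k_p})$ at the very stable (top-of-leg) points $p$, and is empty at the intermediate points of each leg. By \cref{prop:mult-parab}, using $m_C=e$ and $m_p=1$ for top points, we get
\[
    m_{\underline{\lambda}} = \frac{n!}{\prod_i \lambda(C)'_i!\,\prod_p k_p!}\; e^{|\lambda(C)|}.
\]
By \cref{prop:Hilb-flow-is-dom}, the flow order is the (anti)dominance order \eqref{eq:multipart-dom}. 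For very stable multipartitions, mass can only move from $C$ towards the tops of legs, and within $C$ by dominance of partitions; distinct top points are incomparable.

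The first key step is a version of \cref{lm:dom-skelethon} that stays inside the very stable locus. I would show that if $\underline{\lambda}\succeq\underline{\mu}$ in dominance and both are very stable, one can pass from $\underline{\lambda}$ to $\underline{\mu}$ through very stable multipartitions using two kinds of moves. Move (a) replaces $\lambda(C)$ by a partition it dominates, at fixed size. Move (b) removes one box from a column of height $h$ of $\lambda(C)$, chosen so that the result is still dominated, and adds it to the column at a top point $p$ already carrying $k$ boxes. The proof would follow that of \cref{lm:dom-skelethon}: first use move (i) at $C$ to reach $\mu(C)$ with extra $1$'s appended, then push those boxes up the legs. Since the intermediate points are empty at both ends, the leg-by-leg bookkeeping via the sets $B$ in that proof can be done one box at a time, jumping directly to the top. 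With this in hand, stable tidality is equivalent to $m$ weakly decreasing along each such move.

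For move (a) this holds by the Karamata argument of \cref{prop:tidal-ell}, since $e^{|\lambda(C)|}$ does not change. For move (b) the multiplicity changes by the factor $h/((k+1)e)$. So stable tidality fails exactly when some admissible move (b) has $h>(k+1)e$. The worst case is $k=0$ and $h$ maximal, realized by the pair $\underline{\lambda}=((1^n),\varnothing)$ and $\underline{\mu}=((1^{n-1}),(1)_p)$ already used in \cref{prop:list-tidal}, where $m_{\underline{\lambda}}=e^n$ and $m_{\underline{\mu}}=ne^{n-1}$. This gives the counterexamples for all $n>e$, which already rules out $\Hilb^n S_{\Z/2}$ for $n\ge 3$ and $\Hilb^n S_{\Z/3}$ for $n\geq 4$. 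The positive direction for $n\le 2$, for $T^*E$ (where very stable equals all components, so \cref{prop:tidal-ell} applies), and for $(S_{\Z/3},n=3)$ then reduces to checking $h\le (k+1)e$ for all admissible moves, which is immediate since $h\le n$.

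The main obstacle I expect is the remaining cases $e\in\{4,6\}$ with $3\le n\le e$, where the single-move criterion above predicts stable tidality, whereas the statement excludes them. So either the move-generation step is wrong, meaning comparable very stable pairs exist that are not linked through very stable intermediates, or I have misread which comparisons are forced. I would first test this by brute force for $\Hilb^3 S_{\Z/4}$ and $\Hilb^3 S_{\Z/6}$. This means listing all very stable multipartitions, all dominance-comparable pairs (including pairs whose only chain passes through wobbly components, now using the leg structure of $S_{\Z/4}$ and $S_{\Z/6}$ with its intermediate points), and comparing multiplicities via \eqref{eq:neq-mult-parab}. The goal is to extract an explicit violating pair and then propagate it to all larger $n$ by adding boxes at $C$.
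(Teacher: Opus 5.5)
There is a genuine gap, and it is the source of the ``main obstacle'' you describe. Your formula $m_{\underline{\lambda}} = \frac{n!}{\prod_i \lambda(C)'_i!\,\prod_p k_p!}\,e^{|\lambda(C)|}$ assumes $m_p=1$ at every very stable isolated point. That is true in $S_{\Z/2}$ and $S_{\Z/3}$, but false in $S_{\Z/4}$ and $S_{\Z/6}$. For a very stable isolated $p$ one has $m_p=\mu_p(1)=e/w_p$:

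\begin{itemize}
\item the top of the short leg of $S_{\Z/4}$ (equation $y^2$, weights $-1|2$, multiplicity $[2]_{t^2}$) has $m_p=2$;
\item the tops of the two short legs of $S_{\Z/6}$ (equations $y^3$ and $y^2$, multiplicities $[3]_{t^2}$ and $[2]_{t^3}$) have $m_p=3$ and $m_p=2$.
\end{itemize}

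So \eqref{eq:neq-mult-parab} contributes an extra factor $\prod_p m_p^{k_p}$. Consequently your move (b) rescales the multiplicity by $\frac{h\,m_p}{(k+1)e}=\frac{h}{(k+1)w_p}$, not by $\frac{h}{(k+1)e}$.

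For $e\in\{2,4,6\}$ there is a very stable $p$ with $w_p=2$, i.e.\ $m_p=e/2$. Your own pair $\underline{\lambda}=((1^n),\varnothing)$, $\underline{\mu}=((1^{n-1}),(1))$, with the box placed at this $p$, gives $m_{\underline{\lambda}}=e^n<\tfrac{n}{2}e^n=m_{\underline{\mu}}$ for every $n\geq 3$. For $e=3$ every very stable isolated point has $w_p=3$, giving $3^n<n3^{n-1}$ exactly when $n\geq 4$. This is precisely the paper's proof.

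As written, your proposal therefore fails to exclude $e\in\{4,6\}$ with $3\leq n\leq e$; it even predicts stable tidality there. The remedies you suggest --- doubting the move lemma, or brute-forcing chains through wobbly components --- are misdiagnoses. A single comparable pair of very stable components suffices, and you already had it.

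Once the multiplicities are corrected, your framework does more than the paper, which only says the positive cases are checked by explicit computation. The single-move criterion becomes $h\leq (k+1)w_p$. Since $h\leq n$ and $w_p=1+l_p\geq 2$ for every isolated point (with $w_p=3$ throughout $S_{\Z/3}$), this plus the Karamata argument for move (a) settles $n\leq 2$ and $(S_{\Z/3},n=3)$ uniformly.

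When you write up the move lemma, justify geometrically that $k_p$ never decreases at a top point $p$; do not derive it from \eqref{eq:multipart-dom}. Those inequalities, tested at each single $F$, do not force it: as soon as there are three legs, one box at $C$ plus one at $p$ satisfies all of them against one box at each of two other tops $q,r$. The geometric argument works because, along a $\T$-curve, the only point of the core of $S$ flowing to $p$ as $t\to 0$ is $p$ itself. Hence the multiplicity of the support at $p$ can only grow along the flow order.
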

\begin{proof}
    The cases $n=1$ or $S = T^*E$ follow from \cref{prop:list-tidal}.
    In the rest of the cases, stable-tidality can be checked via an explicit computation.

    For $\Hilb^n(S_{\Z / e})$, $e\neq 3$, $n\geq 3$, consider the multipartitions $\underline{\lambda}=((1^n),\varnothing)$ and $\underline{\mu}=((1^{n-1}),(1))$, where the first partition lies on the bottom core component $C\subset S$, and the second is supported on a very stable point $p\subset S^\T$ with $m_p = e/2$.
    Clearly $\underline{\lambda}$ dominates $\underline{\mu}$; at the same time
    \[
        m_{\underline{\lambda}} = e^n < n e^{n-1}\cdot e/2 = m_{\underline{\mu}},
    \]
    which contradicts stable tidality.

    A similar pair also rules out $\Hilb^n(S_{\Z / 3})$ for $n\geq 4$.
    We do not change the multipartitions, and take $p$ with $m_p = 1$.
    Then $m_{\underline{\lambda}} = 3^n < n\cdot 3^{n-1} = m_{\underline{\mu}}$, hence a contradiction.
\end{proof}

\begin{rmk}
    Let $\Hitch$ be the moduli space of stable Higgs bundles of rank $2$.
    In this case, all core components are very stable, and their multiplicities are explicit powers of $2$ by~\cite{hausel2022very}. 
    Moreover, all these powers of $2$ are distinct, and the flow order is linear; from this it is easy to conclude that multiplicity is tidal.
    S. Raskin has informed us that he can prove tidality for the moduli of stable Higgs bundles of arbitrary rank.
    It would be very interesting to understand what accounts for such divergence of behaviour between the unramified and tamely ramified settings.
\end{rmk}

Let us briefly comment on Hilbert schemes on Painlevé surfaces.
While multiplicity is only defined on a subset $\Fix^\WG\subset \Fix$, we can still ask whether it is monotonous with respect to the restriction of flow order to $\Fix^\WG$.

\begin{prop}
    Multiplicity is tidal on $\Fix^\WG(\Hilb^n(S^{\mathrm{X}}))$, $\mathrm{X}\in \{\mathrm{I},\mathrm{II},\mathrm{IV}\}$.
\end{prop}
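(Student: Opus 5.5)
We reduce the claim to the statement that the multinomial coefficient $\binom{n}{\underline{\lambda}'}$ is monotone along the flow order on $\Fix^\WG$. By \cref{prop:mult-pain}, every $F_\ulambda\in\Fix^\WG$ has $\lambda(b)=\varnothing$ and $m_\ulambda = \binom{n}{\underline{\lambda}'}$, where $\underline{\lambda}'$ is the unordered tuple obtained by concatenating the conjugates $\lambda(p)'$ for $p\neq b$. So we must show: if $F_\ulambda\preceq F_{\underline{\mu}}$ with both in $\Fix^\WG$, then $\binom{n}{\underline{\lambda}'}\geq \binom{n}{\underline{\mu}'}$.

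The first step is to describe the flow order on $\Fix(\Hilb^n S^{\mathrm{X}})$. We expect the analogue of \cref{prop:Hilb-flow-is-dom} to hold: $F_\ulambda\preceq F_{\underline{\mu}}$ if and only if $\ulambda$ dominates $\underline{\mu}$ in the sense of~\eqref{eq:multipart-dom}, with $b$ as the unique minimal element of $\Fix(S)$ and the other points $p\neq b$ incomparable. The ``only if'' direction goes through verbatim: the functions $a_{p,i}$ and varieties $X_\ulambda$ make sense, \cref{lem:dom-loc-cst} is local and reduces to $\C^2$, and \cref{prop:flow-order-descr} applies. We only need this direction. Indeed, if $\ulambda$ dominates $\underline{\mu}$ and both have empty $b$-part, then for each $p\neq b$ the sum $\sum_{F'\prec p}|\lambda(F')|$ vanishes. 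Applying~\eqref{eq:multipart-dom} with $i=n$ gives $|\lambda(p)|\geq|\mu(p)|$ for every $p$. Since $\sum_p|\lambda(p)|=\sum_p|\mu(p)|=n$, all these are equalities, and then~\eqref{eq:multipart-dom} says that $\lambda(p)$ dominates $\mu(p)$ as partitions for each $p\neq b$.

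Since $|\lambda(p)|=|\mu(p)|$, we can factor
\[
\binom{n}{\underline{\lambda}'} = \binom{n}{\{|\lambda(p)|\}_p}\prod_{p\neq b}\binom{|\lambda(p)|}{\lambda(p)'},
\]
and similarly for $\underline{\mu}$, with the same first factor. It therefore suffices to show $\binom{|\lambda(p)|}{\lambda(p)'}\geq\binom{|\mu(p)|}{\mu(p)'}$ whenever $\lambda(p)$ dominates $\mu(p)$. This is exactly the Karamata argument of \cref{prop:tidal-ell}: domination $\lambda\trianglerighteq\mu$ is equivalent to $\mu'\trianglerighteq\lambda'$, and convexity of $\log\Gamma(x+1)$ gives $\prod\mu'_i!\geq\prod\lambda'_i!$.

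The main obstacle is justifying the ``only if'' direction of the flow-order description in the Painlevé setting. The isolated bottom point $b$ has no $\T$-fixed curve $C$, and the core of $S$ has a different shape (for example, a cusp for $S^{\mathrm{I}}$). I would first check that the divisors $D_{p,i}$ can be defined using the $\T$-curves $\overline{W^-_p}$ and the point $b$, and that the length-additivity argument in \cref{lem:dom-loc-cst} survives. If this proves awkward, a fallback is to use \cref{prop:flow-order-descr} directly. Every $\T$-curve preserves the total length of the part of the support lying on $\overline{W^-_p}\setminus\{b\}$ for each $p$, since points can only move into $b$. Thus along a chain ending in $\Fix^\WG$ no mass can leave $b$, so $|\lambda(p)|=|\mu(p)|$ for all $p$. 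The remaining dominance comparison of $\lambda(p)$ and $\mu(p)$ is then local at $p$ and handled by the $\C^2$ case via \cref{lem:dom-loc-cst}.
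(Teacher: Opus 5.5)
Your proposal is correct and follows essentially the paper's argument: show that comparable components of $\Fix^\WG$ satisfy $|\lambda(p)|=|\mu(p)|$ at every $p\neq b$, factor $\binom{n}{\underline{\lambda}'}$ accordingly, and apply the Karamata argument of \cref{prop:tidal-ell} at each point separately. The paper justifies the size equality only by remarking that the very stable points of $S$ have no $\T$-curves between them. Your fallback is a more careful version of that step: mass at $b$ can only decrease when moving up a $\T$-curve, so starting from $\lambda(b)=\varnothing$ the support cycle never changes along the chain. It also makes the detour through a Painlevé analogue of \cref{prop:Hilb-flow-is-dom} unnecessary.
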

\begin{proof}
    Recall from \cref{prop:mult-pain} that a multipartition belongs to $\Fix^\WG$ if and only if it is supported on very stable points of $S = S^{\mathrm{X}}$.
    Since these points in $S$ do not have $\T$-curves between them, two such multipartitions are comparable only if at each point they restrict to a partition of the same number.
    Since
    \[
        m_{\underline{\lambda}} = \binom{n}{\underline{\lambda}'} = \binom{n}{\{|\lambda(p)|\}_p}\prod_p \binom{|\lambda(p)|}{\lambda(p)'},
    \]
    we are reduced to checking tidality only for the multipartitions supported at a single point.
    This is achieved by the same argument as in \cref{prop:tidal-ell}.
\end{proof}

\subsection{Dimension filtration}
From now on and until the end of this section, we assume that $k=1$, so that $\Fix = \Fix^\WG$.
The discussion above shows that multiplicity of fixed components is not well suited for defining cohomological filtration: it is almost never tidal, and even when it is, the possible values of multinomial coefficients are too sparse.
Let us consider a coarser filtration.

\begin{de}
    Define the dimension function $d: \Fix \to \Z$ by $d(F) = \dim F$.
\end{de}

The dimension function can be recovered from virtual multiplicities $\mu(t)$; we expect that a similar statement also holds for $m(t)$.
\begin{lm}
    Let $1\leq A\leq n$ be the maximal index with $e_i = 1$. Then $\mu_F(t) = 1 + (d(F) - A)t + O(t^2)$ for any $F\in\Fix$.
\end{lm}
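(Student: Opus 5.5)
We work directly with the formula~\eqref{eq:HH-eq-mult} for $\mu_F(t)$, which by \cref{prop:sing-is-mu} holds for every $F\in\Fix$. We expand numerator and denominator to first order in $t$; all weights involved are positive, so both are power series in $t$ with constant term $1$.

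For the numerator, the weight-$1$ part of $T^+_p$ controls the $t$-coefficient of $\chi_t(\Sym T^+_p)=\prod_{a}(1-t^{a})^{-1}$, the product running over the weights $a$ of $T^+_p$ with multiplicity. Writing $n_1$ for the number of weights equal to $1$, we get
\[
\chi_t(\Sym T^+_p) = 1 + n_1 t + O(t^2).
\]
For the denominator, the weights of $\Base$ are $0<e_1\le\dots\le e_n$, and exactly $A$ of them equal $1$, so
\[
\chi_t(\Sym\Base)^{-1} = \prod_i (1-t^{e_i}) = 1 - A t + O(t^2).
\]
Multiplying the two expansions gives $\mu_F(t) = 1 + (n_1 - A)t + O(t^2)$.

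It remains to show $n_1 = \dim F = d(F)$, using $k=1$. Since $\omega$ has weight $k=1$, the proof of \cref{prop:flows-Lagr} shows that $\omega$ pairs the weight-$i$ part of $T_p\Hitch$ nondegenerately with the weight-$(1-i)$ part. Hence the weight-$1$ subspace of $T_p\Hitch$ is dual to the weight-$0$ subspace $T_pF$, and so has dimension $\dim F$. The weight-$1$ subspace lies in $T^+_p$, so $n_1=\dim F$, which gives the statement.

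The one point needing care is that $F$ may be disconnected or non-equidimensional. This is harmless: $F$ is a connected component of a smooth fixed locus, so it is smooth and connected, hence equidimensional, and $\dim T_pF=\dim F$ for every $p\in F$. Nothing else is a real obstacle; the lemma is a direct expansion once the symplectic duality between weights $0$ and $1$ is invoked.
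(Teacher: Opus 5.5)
Your proposal is correct and follows essentially the same route as the paper: use the weight-$1$ symplectic form to identify the number of weight-$1$ directions in $T^+_p$ with $d(F)=\dim F$, then expand $\mu_F(t)$ from \cref{prop:sing-is-mu} to first order in $t$. The only difference is cosmetic. The paper factors out $(1-t)^{A-d(F)}$ explicitly, whereas you multiply the two first-order power-series expansions.
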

\begin{proof}
    Let $0\leq a_1 \leq \ldots a_n$ be the positive tangent weights of $F$.
    Since the symplectic form $w$ has weight $1$, it pairs weight-$0$ directions with weight-$1$ directions.
    In particular, $a_i = 1$ if and only if $i\leq d(F)$.
    By \cref{prop:sing-is-mu}, we have
    \[
        \mu_F(t) = \frac{(1-t^{e_1})\ldots(1-t^{e_n})}{(1-t^{a_1})\ldots(1-t^{a_n})} = (1-t)^{A-d(F)}\frac{(1-t^{e_{A+1}})\ldots(1-t^{e_n})}{(1-t^{a_{d(F)+1}})\ldots(1-t^{a_n})} = 1 + (d(F) - A)t + O(t^2),
    \]
    and so we may conclude.
\end{proof}

\begin{prop}
    Let $\Hitch = \Hilb^n S$, where $S$ is a surface of parabolic family.
    Then the dimension function $d$ is tidal.
\end{prop}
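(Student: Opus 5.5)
The plan is to compute $d$ explicitly via \cref{cor:Hilb-fixed-loci} and then read the required inequality off \cref{prop:Hilb-flow-is-dom}.

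\emph{Step 1: compute $d$.} By \cref{cor:Hilb-fixed-loci}, the component $F_{\underline{\lambda}}$ is isomorphic to
\[
    \Sym^{\lambda(C)} C\times \prod_{i=1}^r I_{p_i}^{\lambda(p_i)} .
\]
The second factor is a point, since the isolated fixed points $p_i$ contribute only the isolated monomial ideals. By definition, $\Sym^{\lambda(C)} C=\prod_{i\geq 1}\Sym^{\lambda(C)_i-\lambda(C)_{i+1}}C$. Each factor $\Sym^m C$ of a curve has dimension $m$, so the sum telescopes and gives
\[
    d(F_{\underline{\lambda}})=\sum_{i\geq1}\bigl(\lambda(C)_i-\lambda(C)_{i+1}\bigr)=\lambda(C)_1 .
\]
For $S=T^*E$ we have $C=E$ and there are no isolated points, so the formula still holds.

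\emph{Step 2: monotonicity.} Tidality means that $F_{\underline{\lambda}}\preceq F_{\underline{\mu}}$ implies $d(F_{\underline{\lambda}})\geq d(F_{\underline{\mu}})$. By \cref{prop:Hilb-flow-is-dom}, $F_{\underline{\lambda}}\preceq F_{\underline{\mu}}$ holds exactly when $\underline{\lambda}$ dominates $\underline{\mu}$ in the sense of \eqref{eq:multipart-dom}. Take $F=C$ and $i=1$ in \eqref{eq:multipart-dom}. Since $C$ is the bottom component of $S$ (all $\T$-curves in the core diagrams of \cref{ssec:2d-mult-ex} start at $C$), no $F'\prec C$ exists, so the sums over $F'\prec C$ are empty. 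The inequality therefore reduces to $\lambda(C)_1\geq\mu(C)_1$, which is $d(F_{\underline{\lambda}})\geq d(F_{\underline{\mu}})$ by Step 1.

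\emph{Expected obstacle.} The only delicate point is bookkeeping of conventions. One must check that the index in $\Sym^{\lambda}C$ (where rows and columns are swapped compared to~\cite{Nak99}) matches the $\lambda(F)_j$ used in \eqref{eq:multipart-dom}, so that the telescoped dimension is really the first term $\lambda(C)_1$ of the dominance sums.

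If the conventions turned out to be mismatched, I would instead run a move-by-move check using \cref{lm:dom-skelethon}:
\begin{itemize}
    \item a move $(i)$ at $F\neq C$ leaves $d$ unchanged, and a move $(i)$ at $C$ weakly decreases the largest part;
    \item a move $(ii)$ never has target $G=C$, since no $\T$-curve ends at $C$;
    \item a move $(ii)$ with source $C$ removes a part of size one from $\lambda(C)$, which can only weakly decrease $\lambda(C)_1$.
\end{itemize}
Hence $d$ weakly decreases along every chain of moves, which again gives tidality.
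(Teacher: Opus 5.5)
Your proof is correct and follows the paper's argument. Like the paper, you read off $d(F_{\underline{\lambda}})=\lambda(C)_1$ from \cref{cor:Hilb-fixed-loci}, then combine \cref{prop:Hilb-flow-is-dom} with the dominance inequality \eqref{eq:multipart-dom} at $F=C$, $i=1$, where the sums over $F'\prec C$ are empty because $C$ is the bottom component; the convention worry and the move-by-move fallback are unnecessary, since the paper indexes $\Sym^{\lambda}C$ and \eqref{eq:multipart-dom} by the same parts $\lambda(C)_j$.
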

\begin{proof}
    Recall the description of fixed loci in \cref{cor:Hilb-fixed-loci}.
    It immediately follows that $d(F)_{\underline{\lambda}} = \lambda(C)_1$.
    As the flow order on $\Fix$ coincides with the antidominance order by \cref{prop:Hilb-flow-is-dom}, we have $\lambda(C)_1 \geq \nu(C)_1$ for any $\lambda\preceq\nu$.
\end{proof}

Recall that the ranks of the \textit{perverse} filtration $P_\bullet$ on $H^*(\Hilb^n S)$ were computed in~\cite{shen-zhang}.
Let us denote $P_n(q) \coloneqq \sum_{k=0}^{\infty} q^{k} \dim \gr^P_k H^{\mathrm{top}}(\Hilb^n S)$.
As a consequence of~\cite[Th.~3.6]{shen-zhang}, we have 
\begin{equation}\label{eq:gen-fun-P}
    \sum_{n=0}^{\infty} s^n P_n(q) = \prod_{m=1}^{\infty} \frac{1}{(1-s^mq^m)^r(1-s^mq^{m+1})},
\end{equation}
where $r$ is defined as in~\eqref{eq:fix-S}.
Denoting the tidal filtration associated to $d$ by $D_\bullet$, let us similarly write $D_n(q) \coloneqq \sum_k q^k \dim \gr^D_k H^{\mathrm{top}}(\Hilb^n S)$.

\begin{cor}
    We have $q^n D_n(q) = P_n(q)$.
\end{cor}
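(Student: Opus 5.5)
We need to compute $D_n(q)$, the generating polynomial of the graded pieces of the tidal filtration induced by $d$ on $H^{\mathrm{top}} = H^{2n}(\Hilb^n S)$, and show that it matches \eqref{eq:gen-fun-P} after multiplying by $q^n$. The plan has three steps.

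\textbf{Step 1: identify the graded pieces.} Since $k=1$, we have $\Fix=\Fix^\WG$, so $H^{2n}(\Hilb^n S)$ has the basis $\{\UF_F : F\in\Fix\}$. By \cref{prop:tidal-spans} applied to the tidal function $d$, the step $D_s H^{2n}$ is spanned by the $\UF_F$ with $d(F)\leq s$. Hence
\[
    \dim \gr^D_k H^{2n} = \#\{F\in\Fix : d(F)=k\},
\]
and therefore
\[
    D_n(q)=\sum_{\underline{\lambda}\in\Part^{\Fix(S)}(n)} q^{\lambda(C)_1},
\]
using $d(F_{\underline{\lambda}})=\lambda(C)_1$ from the previous proof. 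There is one possible convention issue: the filtration on $H^*(\Hitch)$ indexed by $H^\BM$ could be reversed relative to the grading in \cite{shen-zhang}. The factor $q^n$ and the final comparison should fix this; if needed, I would replace $k$ by $n-k$ and recheck.

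\textbf{Step 2: compute the generating function.} Multipartitions factor over $\Fix(S)=\{C,p_1,\ldots,p_r\}$, so
\[
    \sum_n s^n D_n(q)=\Big(\sum_{\lambda} s^{|\lambda|}q^{\lambda_1}\Big)\prod_{i=1}^r\Big(\sum_{\lambda}s^{|\lambda|}\Big).
\]
The factors for the points $p_i$ are each $\prod_{m}(1-s^m)^{-1}$. For the $C$-factor, $\lambda_1$ is the number of parts of $\lambda'$. Summing over $\lambda'$ instead gives $\prod_m (1-s^m q)^{-1}$.

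\textbf{Step 3: compare with \eqref{eq:gen-fun-P}.} Substituting $s\mapsto sq$, which amounts to multiplying $D_n$ by $q^n$, turns the $C$-factor into $\prod_m(1-s^mq^{m+1})^{-1}$ and each $p_i$-factor into $\prod_m(1-s^mq^m)^{-1}$. This is exactly \eqref{eq:gen-fun-P}, and comparing coefficients of $s^n$ gives $q^nD_n(q)=P_n(q)$.

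The only delicate point is matching the indexing convention of $\gr^D_k$ in Step 1. The combinatorics of Steps 2 and 3 is routine.
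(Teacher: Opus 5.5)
Your proof is correct and is essentially the paper's argument. The paper also reads off $D_n(q)=\sum_{\underline{\lambda}}q^{\lambda(C)_1}$, which you justify explicitly via \cref{prop:tidal-spans}, and then factors $\sum_n (sq)^nD_n(q)$ over $\Fix(S)$ exactly as in your Steps 2--3. Your convention worry is moot: the indexing of $\gr^D_k$ is fixed by \cref{prop:tidal-spans}, and the reversed indexing would in fact fail (for $n=1$ it gives $q+rq^2$ rather than $P_1(q)=q^2+rq$).
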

\begin{proof}
    The generating function of $q^nD_n(q)$ can be easily computed:
    \begin{align*}
        \sum_{n=0}^{\infty} (sq)^n D_n(q) 
        & = \sum_{\underline{\lambda}\in\Part^\Fix(S)} s^{|\underline{\lambda}|} q^{|\underline{\lambda}|+\lambda(C)_1} 
        = \prod_{p\in \Fix(S)} \sum_{\lambda_in \Part} s^{|\lambda|} q^{|\lambda| + \dim(p)\lambda_1}\\
        & = \prod_{m=1}^\infty \frac{1}{1 - s^mq^{m+1}} \prod_{i=1}^r \prod_{m=1}^\infty \frac{1}{1 - s^mq^{m}} 
        = \sum_{n=0}^{\infty} s^n P_n(q).
    \end{align*}
    We may conclude.
\end{proof}

While this observation is very curious, let us warn the reader against simplistic generalizations.
First, as illustrated by \cref{ex:perv-not-tidal} this equality is purely numerical, and does not arise from an equality of filtrations.
Second, it is very particular to Hilbert schemes of points, as the following example shows.

\begin{ex}
    Let $C$ be a smooth projective curve of genus $g>1$, and let $\Hitch_{r}$ be the moduli of stable Higgs bundles on $C$ of rank $r$ and degree $1$.
    For $r=2$, there are $g$ fixed components.
    It is easy to check that in this case $d$ is tidal, and $D(q) = q^{4g-3} + \sum_{i=1}^{g-1}q^{g+2i-1}$.
    On the other hand \cite[Th.~1.1.6]{hausel2008mixed} yields, up to a global power of $q$, that $P(q) = \sum_{i=1}^{g}q^{2g+2i-3}$.

    While the discrepancy above is mild, consider the case $r=3$, $g=2$.
    Here, $\frac{1}{2}\dim \Hitch_{r} = n = 10$.
    As explained in~\cite[Sec.~8]{garcia2014motives}, $\Hitch^\T$ has $6$ connected components, and the dimension count~\cite[Th.~3.8]{alvarez2006geometry} yields $D(q) = q^4 + 2q^5 + 2q^6 + q^{10}$.
    However, by purity conjecture~\cite[Rmk~4.4.2]{hausel2008mixed} and curious Poincaré duality (which follows from $P=W$) the polynomial $P(q)$ coincides with the Kac polynomial of $2$-loop quiver with dimension vector $3$.
    By a direct computation, we get $q^{-10}P(q) = q^4 + q^5 + q^6 + q^7 + q^8 + q^{10}$.
\end{ex}

\subsection{Floer-theoretic filtration}
For a pre-integrable system $\Hitch$, a filtration on $H^*(\Hitch)$ of a completely different nature is defined in~\cite{RZ1}, where 
an interested reader can find the details.
Let us sketch the construction here.

Consider $\Hitch$ as a {\KH} manifold with a Hamiltonian action of $S^1\subset \T$, and the moment map $H:\Hitch\to \R$.
One considers Hamiltonian Floer cohomologies $HF^*(p_+ H)$, where $p_+ = p+\varepsilon$, $p$ are periods of the $S^1$-action, and $\varepsilon >0$ is very small.
When $p'<p''$, there are natural Floer-theoretic maps $HF^*(p_+'H)\to HF^*(p_+'' H)$, which give rise to a direct system starting with the ordinary cohomology $H^*(\Hitch)\simeq HF^*(0_+ H)$, whose direct limit (as $p\to \infty$) is zero.
Thus, the cohomology $H^*(\Hitch)$ is filtered by the kernels $\FF_p\coloneqq\ker (H^*(\Hitch)\to HF^*(p_+ H))$.

As with most Floer-theoretic invariants, this filtration is extremely difficult to compute in full generality.
Some partial results have been proved in \cite{RZ1}.
However, an algorithmic way of computing its \textit{ranks} is given by the adjacent Morse--Bott--Floer spectral sequence $(E_p^{ij})_{p\in \Q_{\geq 0}}$ defined in \cite{RZ2}.
Its $0$-th page has $H^*(\Hitch)$ for the first column, and the pages are numbered by the periods $p$ of $S^1$-action.
As a consequence of the direct limit above being zero, this spectral sequence converges to zero.
Hence $H^*(\Hitch)$ acquires an increasing filtration $F_p\subset H^*(X)$, given by the subspace annihilated on page $E_p$, which coincides with $\FF_p$.
Using this spectral sequence, one can in most cases compute the ranks of $\FF_\bullet$ on the top-degree cohomology $H^{\mathrm{top}}(\Hitch)$.

The following observation, which relates ranks of perverse and Floer-theoretic filtration, was one of our inspirations for the present project.
As the techniques for working with $\FF_\bullet$ are wildly different from the algebro-geometric setup of this paper, the detailed proof will appear elsewhere~\cite{MZ26}.

\begin{thm}\label{thm:Floer-vs-perv}
    Let $F_n(q) := \sum_p q^{1/p} \dim \gr^\FF_p H^{\mathrm{top}}(\Hilb^n T^*E)$.
    Then $q^n F_n(q) = P_n(q)$.
\end{thm}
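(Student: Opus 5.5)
The plan is to compute both sides as explicit sums over partitions of $n$ and match them. The right-hand side comes from \eqref{eq:gen-fun-P}. For $S=T^*E$ we have $r=0$, so
\[
    \sum_{n} s^n P_n(q) = \prod_{m\geq 1}\frac{1}{1-s^mq^{m+1}},
\]
which gives $P_n(q)=\sum_{\lambda\vdash n} q^{n+\ell(\lambda)}$. Since $\ell(\lambda)=\lambda'_1$ and $\lambda\mapsto\lambda'$ is a bijection, equivalently $P_n(q)=\sum_{\lambda\vdash n} q^{n+\lambda_1}$. So the theorem reduces to the claim that $F_n(q)=\sum_{\lambda\vdash n} q^{\ell(\lambda)}$. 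In other words, in a suitable basis $\{\UF_{F_\lambda}\}$ of $H^{\mathrm{top}}$, the class attached to $F_\lambda$ dies exactly at period $p=1/\ell(\lambda)$.

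\textbf{Step 1: the data of the spectral sequence.} For the circle action on $\Hilb^n T^*E$, the Morse--Bott manifolds of $1/m$-periodic orbits are the components of the fixed loci $\Hitch^{\mathbb{Z}/m}$. These contain the $\T$-fixed components $F_\lambda\simeq \Sym^\lambda E$ (\cref{cor:Hilb-fixed-loci}). By \cref{prop:fixed-tg-weights} with $l=0$ and $k=1$, the weights of $T^+$ at $F_\lambda$ are $a(s)+1$ for $s\in\lambda$, and the largest of these is $\lambda'_1=\ell(\lambda)$. For each $m$, I would describe the components of $\Hitch^{\mathbb{Z}/m}$ via \cref{prop:Hilb-etale}, reducing étale-locally to $\mathbb{Z}/m$-fixed loci in $\Hilb(\C^2)$, i.e.\ quiver varieties for the cyclic quiver. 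I would then compute their Conley--Zehnder/Robbin--Salamon grading shifts from the weights above, in the standard way for the spectral sequence of \cite{RZ2}.

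\textbf{Step 2: a lower bound on the death period.} I will show that the top-degree class associated to $F_\lambda$ survives to every page $p$ with $1/p<\ell(\lambda)$. The approach is a degree argument: the upward flow of a point of $F_\lambda$ meets no $\mathbb{Z}/m$-fixed component of the right index while $m<\ell(\lambda)$, since all $T^+$-weights are at most $\ell(\lambda)$. This should follow the pattern of the partial results in \cite{RZ1}, which relate $\FF_p$ to the maximal weight on $T^+$. Because these classes form a basis unitriangularly related to the core classes (\cref{prop:tidal-spans}), the survival statements are compatible with the filtration.

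\textbf{Step 3: a counting argument.} Since the spectral sequence converges to zero, every top-degree class must die at some page. It then suffices to count, for each $m$, the number of generators in the correct degree contributed by the new $\mathbb{Z}/m$-fixed components. I would show this number equals $\#\{\lambda\vdash n : \ell(\lambda)=m\}$, via the Hilbert series of $\Sym^\lambda E$ and the index shifts from Step 1. Together with Step 2, this forces each class to die exactly at $p=1/\ell(\lambda)$, giving $F_n(q)=\sum_{\lambda\vdash n}q^{\ell(\lambda)}$, and multiplying by $q^n$ gives $P_n(q)$.

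The main obstacle is Step 3. One has to rule out top-degree generators cancelling against other generators rather than against the classes of $H^{\mathrm{top}}(\Hitch)$, which amounts to controlling the differentials. My first attempt would use the $\T$-equivariant (grading) refinement together with the product structure from \cref{prop:Hilb-etale}. The idea is to reduce to the case $\lambda=(1^n)$ on a single point of $E$, where $W^+$ is the section $\Base\to\Hitch$ of \cref{prop:ell-very-stable}. There the period of death should be computable directly from the weights $1,\dots,n$ of $\Base$, and the first nonzero differential can be identified.
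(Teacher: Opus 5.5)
Your reduction is correct. For $S=T^*E$ one has $r=0$ in \eqref{eq:gen-fun-P}, so $P_n(q)=\sum_{\lambda\vdash n}q^{n+\ell(\lambda)}$, and the theorem is equivalent to $F_n(q)=\sum_{\lambda\vdash n}q^{\ell(\lambda)}$. The paper does not prove the statement; the proof is deferred to \cite{MZ26}. It only says that ranks of $\FF_\bullet$ on $H^{\mathrm{top}}$ are read off from the adjacent Morse--Bott--Floer spectral sequence of \cite{RZ2}, which is your route. As written, however, your argument has genuine gaps.

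\textbf{Step~2.} The inequality is reversed. Pages are ordered by increasing $p$ and $\FF_p$ increases, so survival for all $p$ with $1/p<\ell(\lambda)$ means survival \emph{past} $p=1/\ell(\lambda)$, which contradicts your conclusion. The justification is also false for $m\leq\ell(\lambda)$: the $T^+$-weights $a(s)+1$ along the first column of $\lambda$ take every value $1,\dots,\ell(\lambda)$, so $W^+_x$ does meet $\Hitch^{\Z/m}$ nontrivially. Even after correcting to $p<1/\ell(\lambda)$, whether $W^+_x$ meets a $\Z/m$-fixed locus does not decide whether a column-$0$ class survives; that is governed by the differentials. Moreover, the per-class claim that $\UF_{F_\lambda}$ dies exactly at $1/\ell(\lambda)$ is tidality of $\FF_\bullet$ on $H^{\mathrm{top}}$. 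The paper states this only as an expectation, and the theorem, being about ranks, does not need it.

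\textbf{Step~3.} This step carries all the content and is not carried out. The Morse--Bott manifolds at a period $p=k/m$ sit, as level sets of $H$ at infinity, in the \emph{non-compact} components $Y$ of $\Hitch^{\Z/m}$; compact components such as $\Sym^nE$ do not meet these level sets. These $Y$ are not the $\Sym^\lambda E$, and one $Y$ usually contains several $F_\lambda$: for $n=3$, $m=2$, the component containing $F_{(2,1)}$ also contains $F_{(1^3)}$. So Hilbert series of $\Sym^\lambda E$ are not the relevant input. From the dimensions of these $Y$ and their grading shifts at \emph{every} period $k/m$ (you ignore $k\geq2$, e.g.\ $p=2/3$ or $3/4$), you must show:
\begin{itemize}
\item[(a)] no column with $p>0$ has classes in degree $\geq 2n$;
\item[(b)] the column of period $p$ has $\#\{\lambda\vdash n:\ell(\lambda)=m\}$ classes in degree $2n-1$ if $p=1/m$, and none otherwise.
\end{itemize}
This is presumably where $\ell(\lambda)$ enters: labelling components by the $\Z/m$-character of $\mathcal{O}_Z$, the non-compact ones should correspond to partitions of $n$ with exactly $m$ parts.

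Your remedy for the ``main obstacle'' does not work. Floer cohomology, the spectral sequence and $\FF_\bullet$ are global invariants and cannot be reduced to étale neighbourhoods via \cref{prop:Hilb-etale}. It is also unnecessary. Given (a) and (b), degree-$2n$ classes live only in column $0$. A degree-$(2n-1)$ class from the column of period $p$ can hit column $0$ only on page $E_p$. Everything must die, since the sequence converges to zero, and by (b) there are exactly $|\Part(n)|=\dim H^{2n}(\Hitch)$ such classes. Hence each one kills a top class on its own page, and the ranks follow without computing any differential. This counting rigidity, together with the index computations (a)--(b), is the missing core of the proof.
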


This result is very special to $\Hilb^n T^*E$.
Indeed, it was observed in~\cite{SZZ} that the two generating functions diverge already for $n=1$ for surfaces of parabolic and Painlevé families.
Furthermore, this coincidence is purely numerical, as we expect that the filtration $\FF_\bullet$ is tidal on $H^{\mathrm{top}}$, whereas perverse filtration almost never is.
It would be interesting to study whether this equality continues to hold beyond top degree cohomology for $\Hilb^n(T^*E)$.

\section{Equivariant multiplicities for $\Hilb^2$}\label{sec:ex-Hilb2}
In this section, we compute all equivariant multiplicities for Hilbert schemes of two points on surfaces $S$ of \cref{ex:2d-int}.
Thanks to \cref{prop:vst-eq-mult-parab}, we do not need to consider very stable components.
Note that the points in wobbly components of $\Fix^\WG(\Hilb^2 S)$ are of one of the following types:
\begin{enumerate}
    \item (separated) $I_p^{(1)}\cap I_q^{(1)}$, where $p,q\in S^\T$, $p$ is wobbly and $p\not\in C$; 
    \item (very stable support) $I_q^{(2)}$, where $q\in S^\T\setminus C$ is very stable;
    \item (wobbly support) $I_p^{(1^2)}$ and $I_p^{(2)}$, where $p\in S^\T\setminus C$ is wobbly.
\end{enumerate}

\subsection{General strategy}\label{ssec:strategy}
Before launching into the computation, let us sketch our approach.
For each wobbly point $I$ in $\Hilb^2(S)$, we construct an explicit $\T$-equivariant open $U\subset \Hilb^2(S)$, such that $U\simeq \C^4$ and $I$ is the origin.
Let us momentarily write $U = \Spec A$, $A= \C[x_1,x_2,y_1,y_2]$, where $x_i$, resp. $y_i$ are the directions of non-positive, resp. positive weight, and $\Base = \Spec B$, $B=\C[z_1,z_2]$.
Then we are working with the ideal $\mathcal{J}\subset A$, generated by the images of $z_1, z_2$ under the map $(\theta|_U)^*:B\to A$.
Observe that $\Core_I\cap U$ is given by the ideal $\mathcal{I} = (y_1,y_2)$, and write
\begin{align*}
(\theta|_U)^*(z_1) = f = \sum_{i,j} f_{ij}(x_1,x_2)y_1^iy_2^j, \qquad (\theta|_U)^*(z_2) = g = \sum_{i,j} g_{ij}(x_1,x_2)y_1^iy_2^j.
\end{align*}
Let us denote $\mathbf{R} := \C[x_1,x_2]$, so that $\Core_I\cap U\simeq \Spec \mathbf{R}$.
By definition, the equivariant multiplicity of $\Core_I$ is given as the sum over all $k\geq 0$ of the classes of torsion-free quotients of the $\mathbf{R}$-modules $\gr^d\coloneqq \mathcal{I}^d/(\mathcal{I}^d\cap \mathcal{J} + \mathcal{I}^{d+1})$ in $K^\T(\Core_I\cap U) \simeq \Q[t^{\pm 1}]$.

The computation of each $\gr^d$ is relatively straightforward.
Let $d_f$ be the smallest positive integer such that $f_{ij}\neq 0$ for some $i,j$ with $i+j=d_f$, and define $d_g$ analogously.
In our examples, we always have $d_f < d_g$.
If both $f$ and $g$ happen to be homogeneous in $y$, then we clearly have
\[
    \gr^d = \Big(\bigoplus_{i+j=d} \mathbf{R} y_1^iy_2^j\Big)/\Big(\bigoplus_{i+j=d-d_f} \mathbf{R} fy_1^iy_2^j \oplus \bigoplus_{i+j=d-d_g} \mathbf{R} gy_1^iy_2^j  \Big).
\]
If $f$ and $g$ are not homogeneous, this description still holds for $d<d_f$, where it becomes $\gr^d = \bigoplus_{i+j=d} \mathbf{R} y_1^iy_2^j$.
For $d_f\leq d < d_g$, the polynomial $g$ does not contribute, and so $\mathcal{I}^d\cap \mathcal{J}/\mathcal{I}^{d+1}\cap \mathcal{J}$ is generated by the images of $fp(y_1,y_2)$, where $fp\in \mathcal{I}^d$.
Denote by $\bar{f}$ the sum of monomials in $f$ of $y$-degree $d_f$.
Then $fp\in \mathcal{I}^d$ if and only if $d_p \geq d-d_f$, and the span of images of these elements is the same as span of $\bar{f}y_1^iy_2^j$ with $i+j=d-d_f$.
Thus we have 
\begin{align*}
    \gr^d = \Big(\bigoplus_{i+j=d} \mathbf{R} y_1^iy_2^j\Big)/\Big(\bigoplus_{i+j=d-d_f} \mathbf{R} \bar{f}y_1^iy_2^j \Big),\qquad d_f\leq d < d_g. 
\end{align*}
As soon as $d \geq d_g$, the ideal $\mathcal{J}/\mathcal{I}^{d+1}\subset A/\mathcal{I}^{d+1}$ ceases to be principal, and so the intersection $\mathcal{I}^d\cap \mathcal{J}/\mathcal{I}^{d+1}\cap \mathcal{J}$ has to be computed by hand, e.g. via Buchberger's algorithm.

Removing the torsion of $\gr^d$ is the complicated part.
If e.g. the variables in $f$ separate, that is $f = f'(x_1,x_2)f''(y_1,y_2)$, this immediately tells us that $f''$ generates a torsion submodule.
In general, we detect the torsion using \texttt{macaulay2}~\cite{M2}.
Once $\gr^d$ is fully torsion, all modules $\gr^{d'}$, $d'>d$ are fully torsion as well, so the computation terminates.
We spell out this procedure explicitly in simpler cases, and state the results for the rest in \cref{subs:punc-wobbly}.

For reader's convenience, we summarize the numerical data for all types of isolated fixed points that can appear in 2-dimensional integrable systems in \cref{table:points}; see \cref{ssec:2d-mult-ex} for details.
\begin{table}[!htbp]
    \small
    \[
    \begin{array}[c]{|c|c|c|}
        \hline
        \textup{Equation} & \textup{Weights} & \textup{Surface}\\ \hline 
        y & -1|2 & S_{\mathbb{Z}/2}\\ \hline
        y & -2|3 & S_{\mathbb{Z}/3}\\ \hline
        y & -3|4 & S_{\mathbb{Z}/4}\\ \hline
        y & -5|6 & S_{\mathbb{Z}/6}\\ \hline
        y & -1|6 & S^{\mathrm{I}}\\ \hline
        y & -1|4 & S^{\mathrm{II}}\\ \hline
        y & -1|3 & S^{\mathrm{IV}}\\ \hline
        y^2 & -1|2 & S_{\mathbb{Z}/4}\\ \hline
        y^2 & -2|3 & S_{\mathbb{Z}/6}\\ \hline
        y^3 & -1|2 & S_{\mathbb{Z}/6}\\ \hline
    \end{array}\qquad
    \begin{array}[c]{|c|c|c|}
        \hline
        \textup{Equation} & \textup{Weights} & \textup{Surface}\\ \hline 
        xy^2 & -1|2 & S_{\mathbb{Z}/3}\\ \hline
        xy^2 & -2|3 & S_{\mathbb{Z}/4}\\ \hline
        xy^2 & -4|5 & S_{\mathbb{Z}/6}\\ \hline
        x^2y^3 & -1|2 & S_{\mathbb{Z}/4}\\ \hline
        x^2y^3 & -3|4 & S_{\mathbb{Z}/6}\\ \hline
        x^2y^4 & -1|2 & S_{\mathbb{Z}/6}\\ \hline
        x^3y^4 & -2|3 & S_{\mathbb{Z}/6}\\ \hline
        x^4y^5 & -1|2 & S_{\mathbb{Z}/6}\\ \hline
    \end{array}
    \]
    \caption{Types of isolated fixed points (left: very stable, right: wobbly)}
    \label{table:points}
\end{table}

\subsection{Separated ideals}\label{subs:separated}
Let $I=I_p^{(1)}\cap I_q^{(1)}$, where $p,q\in S^\T$.
Let $x_1^{a_1}y_1^{b_1}$ be the local equation at $p$, and $x_2^{a_2}y_2^{b_2}$ the local equation at $q$; we also write $l_i = -\wt x_i$, and $w_i = l_i + k = \wt y_i$, and recall that $-a_il_i+b_iw_i=e$.
By \cref{prop:Hilb-etale}, the map $\theta$ can be étale locally presented as follows:
\begin{gather*}
    \theta:\Spec \C[x_1,x_2,y_1,y_2]\to \Spec \C[z_1,z_2], \\
    \theta^*(z_1) = x_1^{a_1}y_1^{b_1} + x_2^{a_2}y_2^{b_2}, \quad \theta^*(z_2) = x_1^{a_1}x_2^{a_2}y_1^{b_1}y_2^{b_2}.
\end{gather*}
Without loss of generality assume that $b_1\leq b_2$, and denote $f= \theta^*(z_1)$, $g= \theta^*(z_2)$.
Following the procedure described in \cref{ssec:strategy}, we need to distingush two cases.

When $b_1 < b_2$, we get $\bar{f} = x_1^{a_1}y_1^{b_1}$, and $\mathcal{J} = (f,g) = (f, x_2^{2a_2}y_2^{2b_2})$.
As the variables in $\bar{f}$ separate, all direct summands $\mathbf{R}y_1^iy_2^j\subset \mathcal{I}^d/\mathcal{I}^{d+1}$ with $i\geq b_1$ quotient to torsion submodules in $\mathrm{gr}^d$.
Furthermore, as variables in $x_2^{2a_2}y_2^{2b_2}$ separate, all such terms with $j\geq 2b_2$ quotient to torsion modules as well.
We have thus obtained a surjection
\[
    \Big(\bigoplus_{d} \gr^d\Big)_{\mathrm{tf}} \twoheadrightarrow \bigoplus_{\substack{0\leq i < b_1\\0\leq j < 2b_2}} \mathbf{R}y_1^iy_2^j.
\]
By \cref{prop:mult-parab}, we have $m_I = 2b_1b_2$, so that both sides are torsion free of the same rank.
This means that the map above is an isomorphism, and so 
\begin{equation}\label{eq:eq-mult-Hilb2-sep-easy}
    m_I(t) = [b_1]_{t^{w_1}}[2b_2]_{t^{w_2}}.
\end{equation}

When $b_1 = b_2 = b$, both $f$ and $g$ are homogeneous in $y_i$'s.
For $0\leq d<b$, we have $(\gr^d)_{\mathrm{tf}} = \gr^d = \bigoplus_{i+j = d}\mathbf{R}y_1^iy_2^j$.
When $b\leq d<2b$, we have
\begin{align*}
    \gr^d &= \bigoplus_{i+j = d}\mathbf{R}y_1^iy_2^j / \bigoplus_{i+j = d-b}\mathbf{R}fy_1^iy_2^j\\
    &= \bigoplus_{d-b\leq i< d-1} \mathbf{R}y_1^iy_2^{d-i} \oplus \bigoplus_{0\leq i< d-b} \Big( \mathbf{R}y_1^{i}y_2^{d-i} \oplus \mathbf{R}y_1^{b+i}y_2^{d-b-i} \Big)/\mathbf{R}fy_1^iy_2^{d-b-i}.
\end{align*}
The modules in the second direct sum are torsion-free of rank $1$, but not necessarily locally free; the latter holds only when one of $a_i$'s vanishes.
In any case, the resulting class in $K$-theory is 
$\sum_{i+j=d} t^{iw_1+jw_2} - \sum_{i+j=d-b} t^{e + iw_1 + jw_2}$.
When $2b\leq d<3b-1$, we begin to acquire torsion submodules $y_1^{b+i}y_2^{b+j}\mathbf{R}/(x_1^{a_1}x_2^{a_2})$ thanks to the presence of $g$.  
As a consequence, we have 
\begin{align*}
    (\gr^d)_{\mathrm{tf}} = \bigoplus_{d-2b-1\leq i< b-1} \Big( \mathbf{R}y_1^{i}y_2^{d-i} \oplus \mathbf{R}y_1^{b+i}y_2^{d-b-i} \Big)/\mathbf{R}fy_1^iy_2^{d-b-i}.
\end{align*}
The resulting class in $K$-theory is $\sum_{d-2b-1\leq i< b-1} t^{iw_1+(d-i)w_2}(1+t^{bw_1-bw_2} -t^{e -bw_2})$.
Finally, when we reach $d=3b-1$, we get $(\gr^d)_{\mathrm{tf}}=0$, so this is where we stop.
Summing everything up, we get
\begin{equation}\label{eq:eq-mult-Hilb2-sep}
\begin{aligned}
    m_I(t) &= \sum_{0\leq i,j <b} (t^{iw_1+jw_2} + t^{(b+i)w_1+jw_2} + t^{iw_1+(b+j)w_2}) - \sum_{0\leq i,j <b} t^{e+iw_1+jw_2}\\
    &= (1+t^{bw_1}+t^{bw_2}-t^e)[b]_{t^{w_1}}[b]_{t^{w_2}}.
\end{aligned}
\end{equation}
In particular, since $b_iw_i-e = a_il_i$, the equivariant multiplicity $m_I(t)$ might acquire some negative coefficients.
This is indeed what happens for e.g. $S = S_{\Z/3}$, $p$ and $q$ distinct isolated wobbly points. 

\begin{rmk}
    Note that in both cases, we have $m_I(t) = d(t)[b]_{t^{w_1}}[b]_{t^{w_2}}$, where $d(t)$ is either $1+t^{b_2w_2}$ or $1+t^{b_1w_1}+t^{b_2w_2}-t^e$.    
    Thus \cref{conj:finite-cover} holds in this particular case.
\end{rmk}

\subsection{Wobbly ideals with very stable support}\label{subs:wobbly-stable}
In order to provide a local model for punctually supported sheaves, recall that $\Hilb^2 S$ can be obtained as a blowup of $\Sym^2 S$ along the diagonal.
Let us fix a point $p\in S^\T\setminus C$, and pick an étale neighbourhood $U = \Spec \C[x,y]$, such that the local equation of $\Core$ at $p$ is $\mathscr{E}(x,y)=x^ay^b$, and $\wt x= -l$, $\wt y = w$.
Then $\Sym^2 U = \Spec \C[x_1,x_2,y_1,y_2]^{\mathfrak{S}_2}$, $\Base = \Spec\C[z_1,z_2]^{\mathfrak{S}_2}$, and the map $\theta$ is given by
\[
    \theta^*(z_1+z_2) = h_1\coloneqq x_1^ay_1^b+x_2^ay_2^b,\qquad \theta^*(z_1z_2) = h_2\coloneqq x_1^ax_2^ay_1^by_2^b.
\]
The following isomorphism is a standard exercise:
\begin{gather*}
    \C[s,t,u_0,v_0,w_0]/(v_0^2-u_0w_0) \simto\C[x_1,x_2,y_1,y_2]^{\mathfrak{S}_2};\\
    u_0\mapsto (x_1-x_2)^2,\quad v_0\mapsto (x_1-x_2)(y_1-y_2),\quad w_0\mapsto (y_1-y_2)^2,\\
    s\mapsto x_1+x_2,\quad t\mapsto y_1+y_2.
\end{gather*}
The blow up of this ring at the singular plane is covered by two affine charts:
\begin{align*}
    U_{1}: \C[s,t,u_0,v_0,w_0]/(v_0^2-u_0w_0)\to \C[s,t,w_0,\eta],\quad u_0\mapsto \eta^2w_0,\quad v_0\mapsto \eta w_0;\\
    U_{2}: \C[s,t,u_0,v_0,w_0]/(v_0^2-u_0w_0)\to \C[s,t,u_0,\xi],\quad v_0\mapsto \xi u_0,\quad w_0\mapsto \xi^2u_0.
\end{align*}
The $\T$-weights of these variables are as follows:
\[
\wt s = -l, \wt t = w, \wt u_0 = -2l, \wt v_0 = w-l = k, \wt w_0 = 2w, \wt\eta =-l-w, \wt\xi = w+l.
\]
In terms of $\Hilb^2(S)$, the first chart is the neighbourhood of $I_p^{(1^2)}$, and the second is the neighbourhood of $I_p^{(2)}$.
Taking the images of $h_1,h_2$, we arrive to the situation described in \cref{ssec:strategy}, where the role of $x_i$'s, resp. $y_i$'s is played by the coordinates of negative, resp. positive weight.

Let us assume for now that $p$ is a very stable isolated point; in other words, $a=0$.
Since $I_p^{(1^2)}$ is very stable, we only care about the chart $U_2$.
The images of $h_1,h_2$ in this chart (up to an unimportant multiplicative constant), which we will call $f,g$, are given by the following list:
\begin{itemize}
    \item $b=1$: $f=t$, $g=t^2-2u_0\xi^2$;
    \item $b=2$: $f=u_0\xi^2+t^2$, $g=t^4-2u_0\xi^2t^2+u_0^2\xi^4$;
    \item $b=3$: $f=3u_0\xi^2t-t^3$, $g=t^6-3u_0\xi^2t^4+3u_0^2\xi^4t^2-u_0^3\xi^6$.
\end{itemize}
We consider the associated graded with respect to the variables $(t,\xi)$.
The computation is simplified by the fact that all the polynomials above are homogeneous.

When $\mathscr{E}(x,y) = y$, we have $\gr^0 = \mathbf{R}$, $\gr^1 = \xi\mathbf{R}$, and $\gr^2 = \xi^2\mathbf{R}/u_0^2$ is torsion.
Thus 
\begin{equation}\label{eq:eqm-y-2}
    m_{I^{(2)}}(t) = [2]_{t^{w+l}}\qquad\text{ when }\mathscr{E}(x,y) = y.
\end{equation}

When $\mathscr{E}(x,y) = y^2$, we have $\gr^0 = \mathbf{R}$, $\gr^1 = \xi\mathbf{R}\oplus t\mathbf{R}$.
Recall from \cref{table:points} that this equation only allows weights with $l = w-1$.
Because of the simple shape of $f$, we easily compute
\begin{gather*}
    \gr^2 = \xi t\mathbf{R}\oplus(\xi^2\mathbf{R}\oplus t^2\mathbf{R})/(t^2+2u_0\xi^2)\mathbf{R} = \xi t\mathbf{R}\oplus \xi^2\mathbf{R},\\
    \gr^3 = (\xi^2t\mathbf{R}\oplus t^3\mathbf{R})/(t^3+u_0\xi^2t)\mathbf{R}\oplus(\xi^3\mathbf{R}\oplus \xi t^2\mathbf{R})/(\xi t^2+u_0\xi^3)\mathbf{R} = \xi^2t\mathbf{R}\oplus \xi^3\mathbf{R}.
\end{gather*}
The next summand $\gr^4$ is a direct sum of $\xi^3t\mathbf{R}$ and a more complicated module
\[
    (\xi^4\mathbf{R}\oplus \xi^2 t^2\mathbf{R}\oplus t^4\mathbf{R})/(\xi^2 f\mathbf{R} \oplus t^2 f\mathbf{R} \oplus g\mathbf{R}).
\]
However, observe that the functions spanning the denominator are linearly independent.
This means that the quotient module has zero rank, and so it does not contribute to $(\gr^4)_{\mathrm{tf}}$.
A similar argument shows that $(\gr^5)_{\mathrm{tf}}=0$.
Adding everything up, we obtain:
\begin{equation}\label{eq:eqm-y2-2}
    m_{I^{(2)}}(t) = [2]_{t^w}[4]_{t^{w+l}}\qquad\text{ when }\mathscr{E}(x,y) = y^2.
\end{equation}

Finally, let $\mathscr{E}(x,y) = y^3$.
By the same type of computation as before (essentially using that $f$ consists of two monomial terms, one with invertible coefficient) we find that $\bigoplus_{i\leq 5}\gr^i$ is free, with basis given by monomials $\xi^i t^j$ with $j\leq 2$.
In $\gr^6$, as $g$ begins to contribute, the following summand appears:
\begin{align*}
    (\xi^6\mathbf{R}\oplus & \xi^4 t^2\mathbf{R}\oplus \xi^2 t^4\mathbf{R}\oplus t^6\mathbf{R}) /((t^6-3u_0\xi^2 t^4)\mathbf{R} \oplus (\xi^2 t^4-3u_0\xi^4 t^2)\mathbf{R} \oplus g\mathbf{R})\\
    & = (\xi^6\mathbf{R}\oplus \xi^4 t^2\mathbf{R})/(u_0^3\xi^6 - 3u_0^2\xi^4 t^2)\mathbf{R} = (\xi^6\mathbf{R}\oplus (3\xi^4 t^2-u_0\xi^6)\mathbf{R})/u_0^2(u_0\xi^6 - 3\xi^4 t^2)\mathbf{R}.
\end{align*}
This summand only contributes $\xi^6\mathbf{R}$ to the torsion-free part.
In $\gr^7$, we have one summand of similar shape, contributing $\xi^7\mathbf{R}$, and a summand with generators $\{\xi^6 t, \xi^4 t^3, \xi^2 t^5, t^7\}$, modulo the relations $\{t^4 f, \xi^2t^2 f, \xi^4f,tg\}$.
We can easily check that these relations are linearly independent, so that the quotient is fully torsion.
Starting from $\gr^8$, only summands of this shape will appear, therefore we ran out of contributions to equivariant multiplicity.
Recall from \cref{table:points} that the local equation $\mathscr{E}(x,y) = y^3$ only allows weights $w=2$, $w+l=3$.
Substituting these values and summing up all terms from the discussion above, we obtain a polynomial with positive coefficients:
\begin{equation}\label{eq:eq-mult-Hilb2-punct-y3}
    m_{I^{(2)}}(t) = [22]_t - t - t^{16} - t^{19} - t^{20}\qquad\text{ when }\mathscr{E}(x,y) = y^3.
\end{equation}

\subsection{Punctual wobbly support: case $\mathscr{E}(x,y) = xy^2$}\label{subs:punc-wobbly}
Now assume that $p$ is a wobbly isolated point.
Observe that this automatically restricts us to parabolic surfaces, so that $k=1$ and $l=w-1$.
In this case both ideals $I_p^{(1^2)}$, $I_p^{(2)}$ are wobbly, and so we need to perform computations in both charts $U_1$, $U_2$ of the blow up.
Let us begin by spelling out all details in the case of local equation $\mathscr{E}(x,y) = xy^2$. 

For the ideal $I_p^{(1^2)}$, we work in the chart $U_1$, and consider associated graded with respect to $(t,w_0)$.
The images of $h_1,h_2$ in this chart are
\[
    f = sw_0 + st^2 + 2\eta w_0t,\qquad g = s^2w_0^2 -(2s^2w_0t^2+\eta^2w_0^3) + (s^2t^4 +2\eta^2w_0^2t^2)- \eta^2w_0t^4.
\]
As in \cref{subs:separated}, we can replace $g$
\[
    g' = g - fsw_0 = -(3s^2w_0t^2+2s\eta w_0^2t+\eta^2w_0^3) + (s^2t^4 +2\eta^2w_0^2t^2)- \eta^2w_0t^4.
\]
It is clear that $\bar{f} = sw_0$.
Therefore $\gr^0 = \mathbf{R}$, $\gr^1 = t\mathbf{R}\oplus w_0\mathbf{R}/s$, and $\gr^2 = t^2\mathbf{R}\oplus tw_0\mathbf{R}/s\oplus w_0^2\mathbf{R}/s$.
In degree $3$, besides multiples of $\bar{f}$ we also have the contribution of $\bar{g}'$.
However, the latter does not involve the monomial $t^3$, so that we still obtain $(\gr^3)_{\mathrm{tf}} = t^3\mathbf{R}$.
Finally, in degree $4$ the element
\[
    h\coloneqq (3s^2t^2 + 2s\eta w_0t + \eta^2w_0^2) + sg
\]
has the lowest $(t,w_0)$-degree $4$, and its image $\bar{h}$ contains the monomial $t^4$ with non-zero coefficient.
This implies that $(\gr^4)_{\mathrm{tf}} = 0$, and so in total we get 
\begin{equation}\label{eq:eqm-xy2-11}
    m_{I^{(1^2)}}(t) = 1 + t^w + t^{2w} + t^{3w} = [4]_{t^w}\qquad\text{ when }\mathscr{E}(x,y) = xy^2.
\end{equation}

For the ideal $I_p^{(2)}$, we work in the chart $U_2$.
The images of $h_1,h_2$ in this chart are
\[
    f = su_0\xi^2 + 2u_0\xi t + st^2,\qquad g = (s^2-u_0)(u_0^2\xi^4-2u_0\xi^2t^2 + t^4).
\]
Note that both of them are homogeneous in $(t,\xi)$.
We clearly have $\gr^0\oplus \gr^1 = \mathbf{R}\oplus t\mathbf{R}\oplus \xi\mathbf{R}$.
The modules $\gr^2$ and $\gr^3$, obtained by quotienting out $f\mathbf{R}$ and $\xi f\mathbf{R}\oplus tf\mathbf{R}$ respectively, can be easily checked to be torsion-free.
In degree $4$ we need to quotient out $g$, which immediately produces a torsion element $u_0\xi^4-2u_0\xi^2t^2 + t^4$, whose annihilator is $(s^2-u_0)$. 
Thus we can forget about $g$ and replace $t^4$ with $2u_0\xi^2t^2-u_0\xi^4$ in $(\gr^4)_{\mathrm{tf}}$.
Repeating this for the rest of the relations, we can whittle the number of generators of $(\gr^4)_{\mathrm{tf}}$ down to one.
The easiest way to represent this is with a sequence of matrices, where columns stand for relations, and rows for generators:
\begin{align*}
    &\begin{blockarray}{ccccc}
         & \xi^2 f & \xi tf & t^2f & g \\
        \begin{block}{c(cccc)}
        \xi^4 & su_0 & 0 & 0 & (s^2-u_0)u_0^2\\
        \xi^3t & 2u_0 & su_0 & 0 & 0\\
        \xi^2t^2 & s & 2u_0 & su_0 &2(s^2-u_0)u_0\\
        \xi t^3 & 0 & s & 2u_0 & 0\\
        t^4 & 0 & 0 & s & s^2-u_0\\
        \end{block}
    \end{blockarray}
\rightsquigarrow
    \begin{blockarray}{cccc}
         & \xi^2 f & \xi tf & t^2f \\
        \begin{block}{c(ccc)}
        \xi^4 & su_0 & 0 & -su_0^2\\
        \xi^3t & 2u_0 & su_0 & 0\\
        \xi^2t^2 & s & 2u_0 & 3su_0 \\
        \xi t^3 & 0 & s & 2u_0 \\
        \end{block}
    \end{blockarray}\\
&\qquad\qquad\qquad\qquad\rightsquigarrow
    \begin{blockarray}{ccc}
         & \xi^2 f & 2\xi tf-s\xi^2 f \\
        \begin{block}{c(cc)}
        \xi^4 & su_0 & 0\\
        \xi^3t & 2u_0 & 0\\
        \xi^2t^2 & s & 4(u_0-s^2)\\
        \end{block}
    \end{blockarray}
\rightsquigarrow
    \begin{blockarray}{cc}
         & \xi^2 f \\
        \begin{block}{c(c)}
        \xi^4 & su_0 \\
        \xi^3t & 2u_0 \\
        \end{block}
    \end{blockarray}
\rightsquigarrow
\xi^4\mathbf{R}.
\end{align*}
Thus we obtain $(\gr^4)_{\mathrm{tf}}=\xi^4\mathbf{R}$.
Finally, for $\gr^5$ we get a quotient of a rank $6$ free $\mathbf{R}$-module by $6$ linearly independent relations $\{ \xi^3f,\xi^2tf,\xi t^2f, t^3f, \xi g,\xi t \}$, so that $(\gr^5)_{\mathrm{tf}}=0$.
Summing everything up, we get
\begin{equation}\label{eq:eqm-xy2-2}
    \begin{aligned}
        m_{I^{(2)}}(t) = [5]_{t^{2w-1}} + t^w[3]_{t^{2w-1}} + t^{2w}[2]_{t^{2w-1}} - t^{w+1}[2]_{t^{w}}\qquad\text{ when }\mathscr{E}(x,y) = xy^2. 
    \end{aligned}
\end{equation}
When $w=2$ (that is, $p$ is a fixed point of $S_{\mathbb{Z}/3}$), several terms cancel out, and $m_{I^{(2)}}(t)$ becomes a polynomial with non-negative coefficients.
For $w=3$ and $w=5$ (corresponding to points in $S_{\Z/4}$ and $S_{\Z/6}$), such cancellations do not occur.

\subsection{Punctual wobbly support: other cases}
Analysis of a similar nature (albeit more involved) to \cref{subs:punc-wobbly} can be performed for the remaining isolated wobbly points.
For details, we refer the reader to the Jupyter notebook at~\cite{Jup}.
We summarize the outcomes of computations performed with the help of \texttt{macaulay2} in \cref{table:eqmultH2}.
For completeness, it also contains the formulas~(\ref{eq:eqm-y-2}--\ref{eq:eqm-xy2-2}), as well as relevant specializations of~\eqref{eq:vst-eq-mult-parab-poly}.
In the right column, we chose an expression in a way that shows that $m_{I^{(2)}}(t)-[m]_{t^w}[2m]_{t^{w+l}} = (t^{w+l}-1)P(t)$ for some polynomial $P(t)$ with positive coefficients.

\begin{table}[!htbp]
    \small
    \[\arraycolsep=1.4pt\def\arraystretch{1.2}
    \begin{array}[c]{|c|c|c|}
        \hline
        \textup{Equation} & m_{I^{(1^2)}}(t) & m_{I^{(2)}}(t)\\ \hline
        y & 1 & [2]_{t^{w+l}}\\ \hline
        y^2 & [4]_{t^{w}} & [2]_{t^{w}}[4]_{t^{2w-1}}\\ \hline
        y^3 & [3]_{t^2}[3]_{t^4} & [3]_{t^2}[6]_{t^3} + (t^{18} - t^{16})[2]_{t^3}\\ \hline
        xy^2 & [4]_{t^{w}} & [2]_{t^{w}}[4]_{t^{2w-1}} + (t^{2w-2}-t^{w-1})(1+t+t^{2w-1}+t^{6w-2})\\ \hline
         &  & [3]_{t^w}[6]_{t^{2w-1}} + (t^{2w-2}-1)(t^{w+2}+t^{3w+2}+t^{10w-4})\\
        \smash{\raisebox{.5\normalbaselineskip}{$x^2y^3$}} & \smash{\raisebox{.5\normalbaselineskip}{$[3]_{t^w}[3]_{t^{2w}} + t^{3w}-t^{w+1}$}} & + (t^{2w-2}-1)(t^{w-1}+1)(t^{2w+2}+t^{4w+1}+t^{7w-1}+t^{11w-4})\\\hline
         &  & [4]_{t^2}[8]_{t^3} + (t^{14}-t^6)[2]_{t^3} + (t^{24}-t^{18})[2]_{t^5}\\
        \smash{\raisebox{.5\normalbaselineskip}{$x^2y^4$}} & \smash{\raisebox{.5\normalbaselineskip}{$[4]_{t^2}[4]_{t^4}$}} &  + (t^{16}-t^{12})[2]_{t^{10}} + (t^{20}-t^{18})[2]_{t^{10}} \\\hline
         &  & [4]_{t^3}[8]_{t^5} + (t^{43}-t^{33})[2]_{t^2} + (t^{23}-t^{19})(1+t^4+t^7+t^{25})\\
        \smash{\raisebox{.5\normalbaselineskip}{$x^3y^4$}} & \smash{\raisebox{.5\normalbaselineskip}{$[4]_{t^3}[4]_{t^6}$}} & + (t^{23}-t^{9})[2]_{t^2} + (t^{20}-t^{14})(1+t^2+t^{12}+t^{30}) + (t^{18}-t^{6})[2]_{t^{22}}\\ \hline
         &  & [5]_{t^2}[10]_{t^3} + (t^{10}-1)(t^6+t^8+t^9+t^{12}+t^{26}+t^{29}) \\
        \smash{\raisebox{.5\normalbaselineskip}{$x^4y^5$}} & \smash{\raisebox{.5\normalbaselineskip}{$[5]_{t^2}[5]_{t^4} + (t^{10} - t^9)[3]_{t^2}$}} & + (t^{33}-t^{15})[2]_{t^5} + (t^{23}-t^{21}) + (t^{19}-t^{11}) + (t^{28}-t^{14}) \\ \hline
    \end{array}\qquad
    \]
    \caption{Equivariant multiplicities in $\Hilb^2$, non-separated case}
    \label{table:eqmultH2}
\end{table}

We can check that for $\mathscr{E}(x,y) = x^2y^4$ and $x^3y^4$ (as well as for $xy^2$, see above) all graded pieces $\mathrm{gr}^i$ for the ideal $I^{(1^2)}$ are locally free, which is reflected in palindromicity of $m_{I^{(1^2)}}(t)$.
On the other hand, while the polynomial $m_{I^{(2)}}(t)$ for $\mathscr{E}(x,y) = x^4y^5$ can be checked to be positive, this is a combinatorial coincidence; neither the polynomial is palindromic, nor the sheaves are locally free.

\begin{cor}
    \cref{conj:eq-mult-positivity,conj:mult-inequality} hold for $\Hilb^2 S$.
\end{cor}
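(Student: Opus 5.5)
The plan is to verify both conjectures component by component, using the classification of wobbly components of $\Fix^\WG(\Hilb^2 S)$ at the start of \cref{sec:ex-Hilb2}. Very stable components are already handled: \cref{conj:eq-mult-positivity} holds for them by the corollary following \cref{prop:v-stable-HHvsMZ}, and \cref{conj:mult-inequality} concerns only wobbly components. For wobbly components I would first reduce to products. Via the étale neighbourhoods of \cref{prop:Hilb-etale} together with \cref{prop:product-mult}, a separated ideal $I_p^{(1)}\cap I_q^{(1)}$ with $p,q$ distinct orbits has multiplicities given by the formulas of \cref{subs:separated}. Points of the form $I_p\cap I_c$ with $c\in C$ reduce to a product of $n=1$ multiplicities, where both conjectures are read off from \cref{ssec:2d-mult-ex}. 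The remaining cases are the punctual ones collected in \cref{table:eqmultH2}.

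\emph{\cref{conj:eq-mult-positivity}.} I would inspect each explicit formula, namely \eqref{eq:eq-mult-Hilb2-sep-easy}, \eqref{eq:eq-mult-Hilb2-sep}, and every entry of \cref{table:eqmultH2}, and check three things. First, it is a polynomial in $t$ with nonnegative exponents. Second, its constant term is $1$. This is visible directly: every correction term has the form $(t^a-t^b)(\dots)$ with $a,b>0$, and in \eqref{eq:eq-mult-Hilb2-sep} we have $e>0$. Third, I would check the palindromicity criterion. In every case where local freeness of $\sp(\mathcal{O}_\Core)_{\mathrm{tf}}$ was established in the computation, namely \eqref{eq:eq-mult-Hilb2-sep-easy}, the $I^{(1^2)}$ entries for $xy^2$, $x^2y^4$, $x^3y^4$, and the case $a_1a_2=0$ of \eqref{eq:eq-mult-Hilb2-sep}, the formula is a product of quantum numbers and hence palindromic. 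In the remaining cases the modules were shown to be non-locally free, so I would exhibit a failure of palindromicity by comparing the top and bottom coefficients; for instance, the $-t^e$ term breaks symmetry when $a_1a_2\neq 0$.

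\emph{\cref{conj:mult-inequality}.} I would compare each $m_F(t)$ with $\mu_F(t)$, which is computed from \eqref{eq:Tplus-tangent} via \cref{prop:sing-is-mu}. The cleanest route is to write $m_F(t)=N(t)+(t^c-1)P(t)$, where $N(t)$ is the naive polynomial \eqref{eq:vst-eq-mult-parab-poly} and $P$ has nonnegative coefficients. This is exactly the shape in which the right column of \cref{table:eqmultH2} is written, and for separated points with $b_1=b_2$ one rewrites $1+t^{bw_1}+t^{bw_2}-t^e$ in the same way. Since $(t^c-1)P(t)\ge 0$ for $t>1$, it then suffices to show $\mu_F(t)<N(t)$ for $t>1$. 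I expect this to reduce to the $n=1$ inequality, which one can check on the diagrams of \cref{ssec:2d-mult-ex} (e.g. $\mu_p(t)=[e]_t/([w]_t\,\cdot)$ against $[b]_{t^w}$), combined multiplicatively in the way the corollary after \cref{prop:mult-parab} combines them at $t=1$.

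The main obstacle is that $\mu_F$ and $N$ are not literally products over the points for punctual ideals. Here I would compare the explicit rational functions directly, writing $\mu_{I^{(2)}}=[e]_t[2e]_t/([a_1]_t[a_2]_t)$ with weights from \eqref{eq:Tplus-tangent}. The inequality then becomes a polynomial inequality with finitely many weight specializations (\cref{table:points}), which I would verify case by case, if necessary by checking positivity of the coefficients of the difference numerator after substituting $t=1+u$.
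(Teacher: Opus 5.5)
Your route is the paper's: its proof is nothing more than inspection of \cref{table:eqmultH2} and of \eqref{eq:eq-mult-Hilb2-sep-easy}, \eqref{eq:eq-mult-Hilb2-sep}. However, two concrete steps of your plan are wrong, and one of them cannot be repaired by inspection.

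\textbf{Palindromicity.} You assert that outside your list of locally free cases ``the modules were shown to be non-locally free''. This overlooks the rows $y$, $y^2$, $y^3$ of the table, i.e.\ $I_q^{(2)}$ with $q$ very stable (\cref{subs:wobbly-stable}). There $a=0$, so $f$ and $g$ do not involve $s$. Hence each $\mathrm{gr}^d$ is $\C[s]\otimes N_d$, with $N_d$ a finitely generated $\C[u_0]$-module, and its torsion-free quotient is free because $\C[u_0]$ is a PID. This is consistent with the text, where every surviving piece is some $\xi^it^j\mathbf{R}$.

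For $y$ and $y^2$ this is harmless, since those entries are palindromic; but your plan would try to refute palindromicity there and fail. For $y^3$ it is fatal. There $\mathrm{sp}(\mathcal{O}_\Core)_{\mathrm{tf}}$ is free, yet \eqref{eq:eq-mult-Hilb2-punct-y3} is not palindromic: it has degree $21$, coefficient $1$ at $t^2$, but coefficient $0$ at $t^{19}$. So at this entry the biconditional of \cref{conj:eq-mult-positivity}, read literally, fails, and no case check can establish it. The paper's one-line proof does not address this either. What can be verified is $m_F\in\Z[t]$, $m_F(0)=1$, and the biconditional at the other entries.

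\textbf{Ideals $I_p\cap I_c$ with $c\in C$.} These do not reduce to products of $n=1$ multiplicities. Every $\T$-fixed ideal of $\Hilb^2 S$ lies over $2\cdot 0\in\Sym^2\C$. So étale-locally the system is $U_p\times U_c\to\C^2\to\Sym^2\C$, the last map is ramified, and \cref{prop:product-mult} does not apply. This case belongs to \cref{subs:separated}, with $a_2=0$, $b_2=e$, $w_2=1$ and $b_p<e$. That gives $m_I=[b_p]_{t^{w_p}}[2e]_t=(1+t^e)\,m_pm_c$; for example, at $t=1$ on $S_{\Z/3}$ one gets $12$, not $6$. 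Your conclusions survive, because also $\mu_I=(1+t^e)\mu_p\mu_c$ by \cref{prop:cover-mult}, but not by the argument you give.

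\textbf{Minor points.} For $I^{(2)}$, the base term in the right column of the table is $[m]_{t^w}[2m]_{t^{w+l}}$. It is not the naive polynomial \eqref{eq:vst-eq-mult-parab-poly}, which equals $[m]_{t^w}[2m]_{t^w}$, so you need an extra (easy) comparison for $t>1$. Also, some correction terms have the shape $(t^a-1)(\cdots)$ rather than $(t^a-t^b)(\cdots)$ with $b>0$. This is harmless only because the cofactor has no constant term.
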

\begin{proof}
    Direct verification using \cref{table:eqmultH2}.
\end{proof}

\begin{rmk}
    In the particular case $n=2$, one can canonically associate locally free sheaves to our torsion-free sheaves by taking double duals.
    While their classes in $K$-theory produce positive polynomials (e.g. for $\mathscr{E}(x,y) = x^2y^3$, we get $[7]_{t^w} + t^{2w} + t^{5w-1}$ at the ideal $I^{(1^2)}$), this does not appear to be a meaningful procedure.
\end{rmk}

\section{Mirror symmetry for Hilbert schemes}\label{sec:MS}
In this section we recall several notions of mirror symmetry for Higgs bundles, and put some of our prior computations in this context.

\subsection{Ansätzen}\label{subs:MS-heuristics}
Let $G$ be a reductive group, $C$ a smooth projective curve, and $\CHiggs_G$ the moduli stack of $G$-Higgs bundles on $C$.
Donagi--Pantev proposed~\cite{donagi2012langlands} that there should be an equivalence of the form
\begin{equation*}\tag{DLC}\label{eq:DLC-conj}
    \mathfrak{L}_G : \mathrm{QCoh}(\CHiggs_G) \simto \mathrm{QCoh}(\CHiggs_{\LG}),
\end{equation*}
where $\LG$ is the Langlands dual group of $G$.
This equivalence most definitely does not exist as stated, see e.g.~\cite[Conj~1.3]{padurariu2025dolbeault} for a more precise proposal. 

There are two different ways of looking at~\eqref{eq:DLC-conj}, which provide different expected properties of this equivalence.
On one hand, the Hitchin fibration $h: \CHiggs_G\to \Base_G$ is generically a fibration in abelian varieties.
Furthermore, $\Base_G = \Base_\LG$.
Donagi--Pantev showed that over the generic locus $\Base^\#_G$ these two fibrations are dual, and further postulated that the restriction $\mathfrak{L}_G|_{\Base^\#_G}$ should be the corresponding Fourier--Mukai transform; see~\cite{arinkin2013autoduality,melo2019fourier} for further extensions.
For Hitchin fibration, the role of the identity of an abelian variety is played by the Kostant section $s:\Base_G \to \CHiggs_G$.
Hence, based on properties of Fourier--Mukai transform, one expects the following:

\begin{ansatz}\label{an:FM}
    $\mathfrak{L}_G$ exchanges $\mathcal{O}_{\CHiggs_G}$ with $s_*\mathcal{O}_{\Base_G}$.
\end{ansatz}

On another hand, one can conceptualize~\eqref{eq:DLC-conj} as the Dolbeault shape of geometric Langlands correspondence:
\[
    \begin{tikzcd}
       \mathcal{D}\text{-mod}(\mathcal{B}\mathrm{un}_G)\ar[r,"\mathbb{L}_G"]\ar[d,squiggly,"\substack{\text{semiclassical} \\ \text{limit}}"'] & \mathrm{QCoh}(\mathcal{C}\mathrm{onn}_\LG)\ar[d,squiggly,"\substack{\text{non-abelian} \\ \text{Hodge theory}}"]\\
       \mathrm{QCoh}(\CHiggs_G)\ar[r,"\mathfrak{L}_G"] & \mathrm{QCoh}(\CHiggs_\LG)
    \end{tikzcd}
\]
As such, it should intertwine the action of Hecke operators on both sides.
Denote the set of dominant coweights of $G$ by $\mathbf{X}^+$, and fix $c\in C$, $\beta\in \mathbf{X}^+$.
On the left, Hecke operators $\mathsf{H}_{\beta,c}$ are given by local modifications at $c\in C$, associated to the cell in the affine Grassmannian $\mathrm{Gr}_G$ given by $\beta$.
On the right, they are given by the operators $\mathsf{T}_{\beta,c}$ of tensoring with the tautological vector bundle, associated to the irreducible $\LG$-module of highest weight $\beta$.  
\begin{ansatz}\label{an:Hecke}
    The equivalence $\mathfrak{L}_G$ exchanges $\mathsf{H}_{\beta,c}$ with $\mathsf{T}_{\beta,c}$.
\end{ansatz}

Similarly, for every parabolic subgroup $P\subset G$ with Levi $L$, we should have parabolic induction functors, also known as Eisenstein series $\mathsf{Eis}_L^G : \mathrm{QCoh}(\CHiggs_L)\to \mathrm{QCoh}(\CHiggs_G)$.
Note that neither Hecke operators nor Eisenstein series are not currently defined in full generality in the context of Dolbeault Langlands.
\begin{ansatz}\label{an:Eis}
    For any $L\subset P\subset G$, we have $\mathsf{Eis}_{\widecheck{L}}^\LG \circ \mathfrak{L}_L \simeq \mathfrak{L}_G\circ \mathsf{Eis}_L^G$.
\end{ansatz}

In~\cite{hausel2022very}, the authors considered the restriction of~\eqref{eq:DLC-conj} to the moduli of semistable Higgs bundles for $G = SL_n$.
Combining \cref{an:FM,an:Hecke}, they predicted that mirror duals of certain upward flows $\mathcal{L}_\delta\coloneqq \mathcal{O}_{W^+_\delta}$ are given by tautological vector bundles $\Lambda_\delta$.
They further checked this prediction in two ways:
\begin{itemize}[leftmargin=6mm]
    \item Over the generic locus $\Base^\#_G$, the two sheaves get swapped under Fourier--Mukai transform;
    \item Globally, $\chi_t(\RHom(\Lambda_\delta,\mathcal{L}_{\delta'})) = \chi_t(\RHom(\mathcal{L}_\delta,\Lambda_{\delta'}))$ up to a sign and a power of $t$.
\end{itemize}
Following the same strategy, we will produce some mirror pairs out of \cref{an:FM,an:Eis}.

\subsection{Parabolic Higgs bundles}\label{ssec:par-Higgs}
The same predictions and ansätzen can be transported to the setup of parabolic $GL$-Higgs bundles.
This is relevant for us, as Hilbert schemes of points on surfaces of the parabolic family are isomorphic to certain moduli of stable parabolic Higgs bundles~\cite{groechenig2014hilbert}.
For $S_{\mathbb{Z}/e}$, $e>1$ one obtains moduli of parabolic Higgs bundles of rank $ne$ on $\mathbb{P}^1$ with $3$ or $4$ (for $e=2$) punctures, while for $T^*E$ one gets parabolic Higgs bundles of rank $n$ on $E$ with one very mild puncture.

Instead of stating the precise isomorphisms, which we will not need, let us roughly describe what they do for $n=1$.
Let $\widetilde{C}$ be a smooth projective curve with an action of cyclic group $\Gamma = \mathbb{Z}/e$, $\widehat{C} = [\widetilde{C}/\Gamma]$ the stacky quotient, and $C$ the coarse moduli of $\widehat{C}$.
Denote by $p_1,\ldots,p_r\in C$ the collection of stacky points, and denote by $w_i$ the cardinality of the stabilizer of $p_i$ in $\widehat{C}$.
It is known that the moduli of parabolic Higgs bundles on $C$ with parabolic points $(p_i)$ and a flag of length $w_i$ at each $p_i$ is isomorphic to the moduli of $\Gamma$-equivariant Higgs bundles on $\widetilde{C}$.
In our case, this means that the surface $S_{\mathbb{Z}/e}$ is expressed in terms of $\mathbb{Z}/e$-equivariant Higgs bundles on $E$ of rank $e$.
As the canonical line bundle on $E$ is trivial, a Higgs bundle on $E$ is a pair $(\mathcal{V},\Phi)$, where $\mathcal{V}$ is a vector bundle and $\Phi$ an automorphism of $\mathcal{E}$.

Let $\mathrm{e}\in E$ be the identity under the group law, and denote $L_{p} = \mathcal{O}(p-\mathrm{e})$ for each $p\in E$.
The fiber of $S_{\mathbb{Z}/e}$ at a generic point $a\in \C$ is identified with Higgs bundles $\sum_{\zeta\in \mathbb{Z}/e} (L_{\zeta(p)}, \zeta(a))$.
At the singular fiber $0\in\C$, some of the line bundles $L_{\zeta(p)}$ coincide when $p$ has non-trivial stabilizer.
In this case, we can either self-extend $L_p$ to an indecomposable vector bundle $L_{ip}$, or add a nilpotent Higgs field, or a combination thereof.
The points on the central $\mathbb{P}^1$ correspond to self-extensions with zero Higgs field, and the outermost points are direct sums with nilpotent Higgs field $L\simto L\simto\ldots \simto L$.
This describes very stable points in $S^\T$.

\begin{ex}
    Let us illustrate what happens on the leg of the singular fiber of $S_{\mathbb{Z}/3}$ corresponding to $p = \mathrm{e}$.
    Here $L_p = \mathcal{O}$, which has iterated self-extensions $\mathcal{O}^{(k)}$ for all $k$, together with short exact sequences $\mathcal{O}^{(k_1)},\hookrightarrow \mathcal{O}^{(k_1+k_2)}\twoheadrightarrow \mathcal{O}^{(k_2)}$.
    The $\T$-fixed points look as follows:
    \[
    \tikz[thick,xscale=.6,yscale=.6,font=\normalsize,baseline=(current  bounding  box.center)]{
        \draw[gray!50,thin] (0.86,1.5) arc[start angle=-30, end angle=-150, radius=1];
        \draw[gray!50,thin] (-1.5,0.86) arc[start angle=60, end angle=-60, radius=1];
        \filldraw [gray!50] (-1,0) circle (3pt);
        \filldraw [gray!50] (0,1) circle (3pt);
        \filldraw [color=black,fill=gray!0,very thick, fill opacity=0] (0,0) circle (1);
        \filldraw [color=black,fill=gray!0,thin] (2,0) circle (1);
        \filldraw [color=black,fill=gray!0,thin] (4,0) circle (1);
        \draw[dash pattern=on 1.5pt off 1.5pt,thin] (0,0) ellipse (1 and 0.2);
        \draw[dash pattern=on 1.5pt off 1.5pt,thin] (2,0) ellipse (1 and 0.2);
        \draw[dash pattern=on 1.5pt off 1.5pt,thin] (4,0) ellipse (1 and 0.2);
        \filldraw [black] (1,0) circle (3pt);
        \filldraw [black] (3,0) circle (3pt);
        \filldraw [black] (5,0) circle (3pt);
        \node at (0.7,1.6) {$\mathcal{O}^{(3)}$};
        \node at (2.8,1.6) {$\mathcal{O}^{(2)}\twoheadrightarrow \mathcal{O}$};
        \node at (6.1,1.55) {$\mathcal{O}\to \mathcal{O}\to \mathcal{O}$};
        \draw[-Stealth,very thin] (0.75,1.2) -- (0.95,0.5);
        \draw[-Stealth,very thin] (2.8,1.2) -- (2.95,0.5);
        \draw[-Stealth,very thin] (6.1,1.2) -- (5.2,0.2);
        \node at (5.35,-0.1) {$\mathrm{e}$};  
	}
    \]
    For example, the line between $\mathcal{O}^{(3)}$ and $\mathcal{O}^{(2)}\twoheadrightarrow \mathcal{O}$ parameterizes Higgs bundles $(\mathcal{O}^{(3)}, t\phi)$, where $\phi$ is the composition $\mathcal{O}^{(3)}\twoheadrightarrow \mathcal{O}\hookrightarrow \mathcal{O}^{(3)}$.
\end{ex}

For every $p\in C$, the realization of $S$ via parabolic Higgs bundles gives rise to a tautological vector bundle $V_p$ on $S$ of rank $e$, which is simply the fiber at $p$ of the underlying vector bundle.
Moreover, for a stacky $p$ with stabilizer of cardinality $w$ the vector bundle $V_p$ has a canonical filtration $V_p^1\subset \ldots\subset V_p^w = V_p$, given by the parabolic structure. 
For an isolated very stable point $q\in S^\T$ corresponding to $p$, we denote the first subbundle in this filtration by $V_q$.
Using the explicit description above, we can easily deduce the fibers of these vector bundles at very stable points.
We list them in \cref{table:fibers}, where $q_i$ stands for a very stable point with core multiplicity $i$; note that for $S = T^*E$, we have $V_p = L_p$, so that $\chi_t(L_p|_{p'}) = 1$.

\begin{table}[!htbp]
    \small
    \[
    \begin{array}[c]{|c|c|c|}
        \hline
        S_{\mathbb{Z}/2} & V_{q_2} & V_{q_1}\\ \hline 
        q_2 & 2 & 1\\ \hline 
        q_1 & [2]_t & 1\\ \hline 
    \end{array}\quad 
    \begin{array}[c]{|c|c|c|}
        \hline
        S_{\mathbb{Z}/3} & V_{q_3} & V_{q_1}\\ \hline 
        q_3 & 3 & 1\\ \hline 
        q_1 & [3]_t & 1\\ \hline 
    \end{array}\quad 
    \begin{array}[c]{|c|c|c|c|}
        \hline
        S_{\mathbb{Z}/4} & V_{q_4} & V_{q_2} & V_{q_1}\\ \hline 
        q_4 & 4 & 2 & 1\\ \hline 
        q_2 & 2[2]_t & 2 & 1\\ \hline 
        q_1 & [4]_t & [2]_{t^2} & 1\\ \hline 
    \end{array}\quad 
    \begin{array}[c]{|c|c|c|c|c|}
        \hline
        S_{\mathbb{Z}/6} & V_{q_6} & V_{q_3} & V_{q_2} & V_{q_1}\\ \hline 
        q_6 & 6 & 3 & 2 & 1\\ \hline 
        q_3 & 3[2]_t& 3 & [2]_t & 1\\ \hline 
        q_2 & 2[3]_t & [3]_t & 2 & 1\\ \hline 
        q_1 & [6]_t & [3]_{t^2} & [2]_{t^3} & 1\\ \hline  
    \end{array}
    \]
    \caption{Fibers of tautological bundles}
    \label{table:fibers}
\end{table}

\subsection{Derived McKay}\label{ssec:McKay}
For a smooth quasi-projective surface $S$, the isospectral Hilbert scheme $\IHilb^n S$ is defined as the reduced fiber product
\begin{equation}\label{eq:isospec-diag}
    \begin{tikzcd}
        \IHilb^n S\ar[r,equal] & (S^n\times_{\Sym^n S} \Hilb^n S)_{\mathrm{red}}\ar[r,"\rho"]\ar[d,"\pi"] & \Hilb^n S\ar[d] \\
        & S^n \ar[r] & \Sym^n S
    \end{tikzcd}
\end{equation}
It was shown in~\cite{bridgeland2001mckay,krug2018remarks} that the following functors are derived equivalences:
\begin{align*}
    \Phi & = \pi_*\rho^* : D^b(\Hilb^n S) \to D^b_{\mathfrak{S}_n}(S^n),\\
    \Psi & = (-)^{\mathfrak{S}_n}\circ\rho_*\pi^*: D^b_{\mathfrak{S}_n}(S^n)\to D^b(\Hilb^n S).
\end{align*}
However, these equivalences are not inverse to each other, that is $\Phi\circ \Psi \not\simeq \mathrm{Id}$.
For a composition $\lambda\vDash n$, let us also denote $\Phi_\lambda = \boxtimes_i \Phi_{\lambda_i}$, $\Psi_\lambda = \boxtimes_i \Psi_{\lambda_i}$, and
\[
    \Psi^\lambda\coloneqq (-)^{\mathfrak{S}_\lambda}\circ\rho_*\pi^*: D^b_{\mathfrak{S}_\lambda}(S^n)\to D^b(\Hilb^n S).
\]
For any parabolic $S$ and $\lambda\vDash n$, we have an auto-equivalence $F = F_\lambda: D^b_{\mathfrak{S}_\lambda}(S^n) \to D^b_{\mathfrak{S}_\lambda}(S^n)$, which is essentially induced by the Poincaré bundle on $E\times E^\vee$.

\begin{ansatz}\label{an:Hilb}
    Under the isomorphism in~\cref{ssec:par-Higgs}, we have $\mathfrak{L}_{GL_n} = \Psi\circ F\circ \Phi$, and $\mathsf{Eis}_{GL_\lambda}^{GL_n} = \Psi^\lambda\circ \Phi_\lambda$.
\end{ansatz}

This ansatz requires some explanation.
First, it is known that $\Hitch_n = \Hilb^n S$ is not self-dual in the sense of~\eqref{eq:DLC-conj} beyond $S = T^*E$.
For instance, the mirror dual of $S_{\Z/2}$ should be the DM-stack $(S_{\Z/2})/{(\Z/2)^2}$, which looks like the blow-down of three out of four non-central $\mathbb{P}^1$'s in $S_{\Z/2}$.
The authors do not know the correct definition of the dual space $\Hitch_n^\vee$ in general.
Nevertheless, we presume that there is a ``correction'' equivalence $\mathfrak{I}:D^b(\Hitch_n^\vee)\to D^b(\Hitch_n)$ of Bridgeland--King--Reid type, trivial over the generic locus, such that the composition $\mathfrak{I}\circ \mathfrak{L}$ is the self-equivalence above.
\cref{an:Hilb} is hence formulated for this composition.

Next, the definition of $\mathfrak{L}_{GL_n}$ is chosen to satisfy some desirable properties parallel to Fourier--Mukai transform on abelian varieties, see e.g.~\cite{bai2025hilbert}.
For the Eisenstein series, consider the following commutative diagram:
\[
    \begin{tikzcd}
        & \mathrm{FHilb}^\lambda S\ar[ld]\ar[d]\ar[rd] &  \\
        S^n\sslash\mathfrak{S}_\lambda & \IHilb^n S\sslash\mathfrak{S}_\lambda\ar[l]\ar[r] & \Hilb^n S \\
        \left[S^n/\mathfrak{S}_\lambda\right]\ar[u] & \left[\IHilb^n S/\mathfrak{S}_\lambda\right]\ar[u]\ar[r]\ar[l] & \Hilb^n S\ar[u,equal]
    \end{tikzcd}
\]
Parabolic induction for Hilbert schemes should arise from some correspondence through nested Hilbert schemes $\mathrm{FHilb}^\lambda S$.
However, there is no natural map from $\mathrm{FHilb}^\lambda S$ to $\prod_i \Hilb^{\lambda_i} S$.
Instead, one observes that the correspondence between $S^n\sslash\mathfrak{S}_\lambda$ and $\Hilb^n S$ factors through $\IHilb^n S\sslash\mathfrak{S}_\lambda$, see e.g.~\cite[Sec.~5.4]{chuang2015parabolic}.
Replacing coarse quotients with stacky quotients, one arrives at our definition of Eisenstein series via the bottom row of the diagram.

\subsection{Procesque bundles}\label{ssec:Procesque}
The isospectral Hilbert scheme $\IHilb^n S$ is Cohen--Macaulay by the celebrated result of Haiman~\cite{haiman2001hilbert}.
In particular, miracle flatness implies that the pushforward of any locally free sheaf on $\IHilb^n S$ to $\Hilb^n S$ is locally free.
Given a composition $\lambda\vDash n$, write $S^\lambda \coloneqq \prod_i \Sym^{\lambda_i} S$, $\mathfrak{S}_\lambda \coloneqq \prod_i \mathfrak{S}_{\lambda_i}\subset \mathfrak{S}_n$.
Recall that $\IHilb^n S$ inherits $\mathfrak{S}_n$-action from $S^n$.
For a $\T \times \mathfrak{S}_\lambda$-equivariant vector bundle $\mathcal{V}$ on $S^n$, the pullback $\pi^*(\mathcal{V})$ is also $\T \times \mathfrak{S}_\lambda$-equivariant, and $\mathfrak{S}_\lambda$ acts fiberwise on the pushforward $\rho_*\pi^*(\mathcal{V})$.

\begin{de}
    For $\lambda$, $\mathcal{V}$ as above, define the \textit{modified Procesi} (or \textit{Procesque}) bundle $\mathcal{P}_\lambda(\mathcal{V})$ by
    \[
        \mathcal{P}_\lambda(\mathcal{V}) \coloneqq (\rho_*\pi^*\mathcal{V})^{\mathfrak{S}_\lambda}.
    \]
    Note that $\mathcal{P}_n=\mathcal{P}_{1^n}(\mathcal{O})$ is the usual Procesi bundle on $\Hilb^n(S)$.
\end{de}

\begin{lm}
    $\mathcal{P}_\lambda(\mathcal{V})$ is a vector bundle of rank $\binom{n}{\lambda}\rk(\mathcal{V})$.
\end{lm}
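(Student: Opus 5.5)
First, local freeness. By Haiman's theorem $\IHilb^n S$ is Cohen--Macaulay, and $\rho:\IHilb^n S\to\Hilb^n S$ is finite and flat, as recalled above; I would quote Haiman for flatness, or deduce it by miracle flatness from $\Hilb^n S$ being smooth and $\rho$ finite between equidimensional schemes. Since $\pi^*\mathcal{V}$ is locally free on $\IHilb^n S$, the sheaf $\rho_*\pi^*\mathcal{V}$ is locally free on $\Hilb^n S$. Its rank is computed over the open locus of reduced subschemes, where $\rho$ is an étale $\mathfrak{S}_n$-torsor; there it equals $n!\rk(\mathcal{V})$. Because $\mathfrak{S}_\lambda$ is finite and we work over $\C$, taking invariants is exact. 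Hence $\mathcal{P}_\lambda(\mathcal{V})=(\rho_*\pi^*\mathcal{V})^{\mathfrak{S}_\lambda}$ is a direct summand of a locally free sheaf, via the Reynolds projector $\frac{1}{|\mathfrak{S}_\lambda|}\sum_{g}g$, which is $\mathcal{O}_{\Hilb^n S}$-linear since $\mathfrak{S}_\lambda$ acts fiberwise. So $\mathcal{P}_\lambda(\mathcal{V})$ is locally free, and its rank is constant on the connected space $\Hilb^n S$.

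Second, the rank. It suffices to compute the rank at a general point $I$, a reduced ideal with support $\{s_1,\dots,s_n\}$. The fiber of $\rho$ over $I$ is the free $\mathfrak{S}_n$-orbit of $(s_1,\dots,s_n)$, so
\[
(\rho_*\pi^*\mathcal{V})|_I\simeq\bigoplus_{\sigma\in\mathfrak{S}_n}\mathcal{V}_{\sigma(s)}.
\]
Here $\mathfrak{S}_\lambda\subset\mathfrak{S}_n$ acts by permuting the summands freely on the index set, with the equivariant structure identifying $\mathcal{V}_{\sigma(s)}\simeq\mathcal{V}_{g\sigma(s)}$. The fiber is therefore an induced representation, and its $\mathfrak{S}_\lambda$-invariants are isomorphic to $\bigoplus_{\mathfrak{S}_\lambda\backslash\mathfrak{S}_n}\mathcal{V}_{\sigma(s)}$. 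This has dimension $[\mathfrak{S}_n:\mathfrak{S}_\lambda]\rk\mathcal{V}=\binom{n}{\lambda}\rk(\mathcal{V})$. Since invariants commute with restriction to fibers (exactness again), this is the rank.

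The only step requiring care is the flatness of $\rho$, and with it the fact that invariants of the pushforward commute with base change. Both follow from Haiman's Cohen--Macaulayness, together with $\mathrm{char}=0$ for the Reynolds operator, so I do not expect a genuine obstacle.
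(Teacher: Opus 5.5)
Your proof is correct and follows essentially the paper's route: Haiman's Cohen--Macaulayness together with miracle flatness makes $\rho_*\pi^*\mathcal{V}$ locally free, and the $\mathfrak{S}_\lambda$-invariants form a direct summand via the Reynolds projector. The only difference is the rank count. The paper passes étale-locally to $S=\C^2$ with $\mathcal{V}$ trivial, and reads the rank off the $n!$ theorem, which says every fiber is the regular representation of $\mathfrak{S}_n$. You instead compute at a reduced point, where the fiber is visibly a free $\mathfrak{S}_n$-orbit, and use constancy of rank on the connected $\Hilb^n S$; this has the small advantage of handling the nontrivial $\mathfrak{S}_\lambda$-equivariant structure of $\mathcal{V}$ without first trivializing it.
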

\begin{proof}
    The statement is local in $\Hilb^n S$, so that by \cref{prop:Hilb-etale} we may assume that $S = \C^2$, $\mathcal{V} = \mathcal{O}^{\oplus r}$.
    If $\lambda = (1^n)$, we conclude by miracle flatness.
    By $n!$ theorem~\cite{haiman2001hilbert}, each fiber of $\mathcal{P}_n$ is isomorphic to the regular representation $V_n$ of $\mathfrak{S}_n$.
    Taking $r$ copies of $\mathcal{O}$, the $\mathfrak{S}_\lambda$-invariants are a direct summand of $\rho_*\pi^* \mathcal{O}_{S^n}^{\oplus r}$, hence $\mathcal{P}_\lambda(\mathcal{V})$ is a vector bundle.
    The rank count follows from the fact that $\rk (V_n^{\oplus r})^{\mathfrak{S}_\lambda} = r\binom{n}{\lambda}$.
\end{proof}

We will need explicit formulas for fibers of some Procesque bundles.
Let $\lambda\vDash n$ be a composition of length $\ell(\lambda)=r$, $V_1,\ldots, V_r$ a collection of $\T$-equivariant vector bundles on $S$, and take
\[
    \mathcal{V} \coloneqq V_1^{\boxtimes \lambda_1}\boxtimes \ldots \boxtimes V_r^{\boxtimes \lambda_r}.
\]
This bundle is manifestly $\mathfrak{S}_\lambda$-equivariant.
For $q_1,\ldots,q_s\in S^\T$ and $\underline{\mu}\in \Part^s(n)$, consider the ideal $I^{\underline{\mu}} = \bigcap_i I_{q_i}^{\mu(q_i)}\in \Hilb^n S$ as in~\eqref{eq:fixed-ideals}.
By abuse of notation, we write $\mu_i = |\mu(q_i)|$ and denote the composition $(\mu_1,\ldots, \mu_s)$ by $\mu$.
Let us compute the fiber of Procesque bundle $\mathcal{P}_\lambda(\mathcal{V})$ at $I^{\underline{\mu}}$.
Since the computation is local, we can restrict everything to a neighborhood of the support of $I^{\underline{\mu}}$ in $\Sym^n S$.
We write $\C^2_{q_i} \coloneqq T_{q_i} S$; note that $\T$ acts on $\C^2_{q_i}$ with weights $-l_i, w_i$.
The square of diagram~\eqref{eq:isospec-diag} becomes the following:
\[
    \begin{tikzcd}
        \bigsqcup_{\sigma \in \mathfrak{S}_n/\mathfrak{S}_\mu}\sigma (\prod_{j} \IHilb^{\mu_j}(\C^2_{q_j}))\ar[r,"\rho"]\ar[d,"\pi"] & \prod_j \Hilb^{\mu_j} \C^2_{q_j}\ar[d] \\
        \bigsqcup_{\sigma \in \mathfrak{S}_n/\mathfrak{S}_\mu} \sigma(\prod_{j} (\C^2_{q_j})^{\mu_j}) \ar[r] & \prod_j \Sym^{\mu_j} \C^2_{q_j}
    \end{tikzcd}
\]
As $\mathcal{V}$ is only $\mathfrak{S}_\lambda$-equivariant, it is convenient to rewrite the unions via double cosets.
Recall that we have a bijection between $\mathfrak{S}_\lambda\backslash\mathfrak{S}_n/\mathfrak{S}_\mu$ and the set $\Theta = \Theta(\lambda,\mu)$ of all $r\times s$ matrices $(a_{ij})$ satisfying $\sum_j a_{ij} = \lambda_i$, $\sum_i a_{ij} = \mu_j$ for all $i,j$.
In these notations, for any element $\sigma\in \mathfrak{S}_\lambda\backslash\mathfrak{S}_n/\mathfrak{S}_\mu$ we have $\mathfrak{S}_\lambda \cap \sigma\mathfrak{S}_\mu\sigma^{-1} \simeq \mathfrak{S}_A = \prod_{i,j} \mathfrak{S}_{a_{ij}}$, where $A\in \Theta$ is the matrix corresponding to $\sigma = \sigma_A$.
Then the map $\pi$ in the diagram above rewrites as follows:
\[
    \pi: \bigsqcup_{A \in \Theta} \mathfrak{S}_\lambda \times_{\mathfrak{S}_A} \sigma_A(\prod_{j} \IHilb^{\mu_j}(\C^2_{q_j})) \to \bigsqcup_{A \in \Theta} \mathfrak{S}_\lambda \times_{\mathfrak{S}_A} \sigma_A(\prod_{j} (\C^2_{q_j})^{\mu_j}).
\]
Since we are computing an $\mathfrak{S}_\lambda$-invariant quantity, we can get rid of all $\mathfrak{S}_\lambda$'s by considering $\mathcal{V}$ as an $\mathfrak{S}_A$-equivariant bundle on each component of the union.
Denoting the restriction of $\mathcal{V}$ to the origin of $\sigma_A(\prod_{j} (\C^2_{q_j})^{\mu_j})$ by $V_A$, we get 
\[
    \mathcal{P}_\lambda(\mathcal{V})|_{I^{\underline{\mu}}}
    = \sum_{A\in\Theta} \prod_j \left(V_A\otimes \operatorname{Res}_{\mathfrak{S}_{A_j}}^{\mathfrak{S}_{\mu_j}}\mathcal{P}_{\mu_j}|_{I_{q_j}^{\mu(q_j)}}\right)^{\mathfrak{S}_{A_j}},
\]
where $A_j = (a_{1j},\ldots, a_{rj})$ is a composition of $\mu_j$.

Now, let us further assume that $\mu(q_j) = (1^{\mu_j})$ for all $j$.
It is known (see e.g.~\cite[Prop.~3.5.8]{haiman2002combinatorics}) that the fiber of Procesi bundle at this point is the coinvariant ring with grading parameter $t^{w_j}$.
Let us denote it by $R_{\mu_j}(t^{w_j})$.
In particular, we have 
\[
    \operatorname{Res}_{\mathfrak{S}_{A_j}}^{\mathfrak{S}_{\mu_j}}\mathcal{P}_{\mu_j}|_{I_{q_j}^{(1^{\mu_j})}} = \qnom{\mu_j}{A_j}_{t^{w_j}} \bigotimes_i R_{a_{ij}}(t^{w_j}).
\]
Let us denote $V_{ij} = V_i|_{q_j}$.
Remembering the definition of $\mathcal{V}$, we can further factorize:
\begin{equation}\label{eq:Procesque-fiber-general}
    \chi_t(\mathcal{P}_\lambda(\mathcal{V})|_{I^{\underline{\mu}}})
    = \sum_{A\in\Theta} \prod_j\qnom{\mu_j}{A_j}_{t^{w_j}} \prod_{i,j} \chi_t((V_{ij}^{a_{ij}}\otimes R_{a_{ij}}(t^{w_j}))^{\mathfrak{S}_{a_{ij}}}).
\end{equation}

\begin{lm}\label{lm:renormalization}
    Let $V$ be a $\T$-vector space with $\chi_t(V) = \sum_{i=1}^c t^{a_i}$, $a_i\in\mathbb{N}_{\geq 0}$.
    Then 
    \[
        \chi_t\left((V^{\otimes n}\otimes R_n(t^{b}))^{\mathfrak{S}_n}\right) = \sum_{\lambda \vDash n, \ell(\lambda) = c} t^{\sum_i a_i\lambda_i} \qnom{n}{\lambda}_{t^b}.
    \]
\end{lm}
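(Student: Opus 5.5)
Decompose $V=\bigoplus_{i=1}^c \C_{a_i}$ as a sum of one-dimensional $\T$-representations, indexed by ``colours'' $1,\dots,c$. Then $V^{\otimes n}$ splits $\mathfrak{S}_n$-equivariantly according to colourings of $\{1,\dots,n\}$. The $\mathfrak{S}_n$-orbits of colourings correspond to compositions $\lambda\vDash n$ with $\ell(\lambda)=c$, where $\lambda_i$ is allowed to vanish and records how many factors carry colour $i$. The stabilizer of a representative colouring is the Young subgroup $\mathfrak{S}_\lambda$. This yields an isomorphism of $\T\times\mathfrak{S}_n$-modules
\[
    V^{\otimes n}\simeq \bigoplus_{\lambda} t^{\sum_i a_i\lambda_i}\,\mathrm{Ind}_{\mathfrak{S}_\lambda}^{\mathfrak{S}_n}\mathbf{1}.
\]
Here $\mathfrak{S}_\lambda$ acts trivially on the one-dimensional space $\C_{a_1}^{\otimes\lambda_1}\otimes\cdots\otimes\C_{a_c}^{\otimes\lambda_c}$, since permuting identical tensor factors of a line is the identity. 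No sign twist occurs because $V$ is even and ordinary.

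Next, use Frobenius reciprocity:
\[
    (\mathrm{Ind}_{\mathfrak{S}_\lambda}^{\mathfrak{S}_n}\mathbf{1}\otimes R_n(t^b))^{\mathfrak{S}_n}\simeq R_n(t^b)^{\mathfrak{S}_\lambda}.
\]
The $\T$-grading is preserved by this isomorphism, so the problem reduces to computing the graded dimension of $\mathfrak{S}_\lambda$-invariants of the coinvariant ring.

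To compute $R_n^{\mathfrak{S}_\lambda}$, use the standard fact already invoked in the proof of \cref{cor:mult-alg-ell}: $R_n^{\mathfrak{S}_\lambda}\simeq \C[x_1,\dots,x_n]^{\mathfrak{S}_\lambda}/(S^+)$. Since $\C[x]^{\mathfrak{S}_\lambda}$ is a free module over $\C[x]^{\mathfrak{S}_n}$, the Hilbert series of the quotient is the ratio
\[
    \frac{\prod_{k=1}^n (1-q^k)}{\prod_i\prod_{k=1}^{\lambda_i}(1-q^k)}=\qnom{n}{\lambda}_q,\qquad q=t^b.
\]
Freeness can be justified by Cohen--Macaulayness of invariant rings, or by counting Schubert cells in $H^*(\mathscr F_\lambda)$. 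Substituting this and summing over all $\lambda$ gives the claimed formula.

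The main obstacle is bookkeeping of conventions. One must check that the grading parameter $t^b$ of $R_n(t^b)$ matches the variables having weight $b$ under the paper's convention. One must also confirm that the reciprocity step is compatible with the $\T$-action, which holds because $\T$ commutes with $\mathfrak{S}_n$ throughout. Finally, the case $\lambda_i=0$ is harmless, since it contributes $[0]!=1$.
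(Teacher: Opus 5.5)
Your proof is correct, and it takes a genuinely different route from the paper's. The paper works entirely with symmetric functions. It writes $V^{\otimes n}$ via Schur--Weyl duality as $\sum_\lambda \mathbb{S}_\lambda(V)\, s_\lambda[X]$ and pairs it with the graded Frobenius characteristic $(1-t^b)^n[n]!_{t^b}\,h_n[X/(1-t^b)]$ of $R_n(t^b)$. A Cauchy-type identity then collapses the Hall pairing to the closed form $h_n[\chi_t(V)/(1-t^b)]\,(1-t^b)^n[n]!_{t^b}$, and only at that point is $h_n$ of a sum expanded plethystically into the sum over compositions.

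You instead split $V^{\otimes n}$ in a weight basis into permutation modules $\mathrm{Ind}_{\mathfrak{S}_\lambda}^{\mathfrak{S}_n}\mathbf{1}$, which is the $h_\lambda$-expansion rather than the Schur expansion. You then apply Frobenius reciprocity and compute the Hilbert series of $R_n^{\mathfrak{S}_\lambda}$ from freeness over $\C[x]^{\mathfrak{S}_n}$. The cleanest justification of that freeness is to take $\mathfrak{S}_\lambda$-invariants in Chevalley's decomposition $\C[x]\simeq \C[x]^{\mathfrak{S}_n}\otimes R_n$.

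Your route is more elementary and avoids plethystic calculus. It also gives a module-level statement, $(V^{\otimes n}\otimes R_n(t^b))^{\mathfrak{S}_n}\simeq\bigoplus_\lambda t^{\sum_i a_i\lambda_i}R_n(t^b)^{\mathfrak{S}_\lambda}$, whose summands are the partial flag cohomologies already met in \cref{cor:mult-alg-ell}.

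What the paper's route buys is the intermediate plethystic formula, which is exactly what \cref{cor:nice-renormalization} reuses via $h_n[c[b]_t/(1-t^b)]=h_n[c/(1-t)]$. From your expanded sum you would recover the corollary with one extra generating-function step. The Euler $q$-binomial identity $\sum_m z^m/\prod_{k=1}^m(1-q^k)=\prod_{k\ge0}(1-zq^k)^{-1}$ with $q=t^b$ lets the factors for the weights $0,\dots,b-1$ merge into $\prod_{m\ge0}(1-ut^m)^{-c}$.
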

\begin{proof}
    Replace representations of $\mathfrak{S}_n$ with their characters, and use the notations of Macdonald's monograph~\cite[Ch.~I.7]{macdonald1998symmetric}.
    By Schur--Weyl duality, $V^{\otimes n}$ gets replaced with $\sum_{\lambda \in \Part(n)} \mathbb{S}_\lambda(V)s_\lambda(X)$, where $\mathbb{S}_\lambda$ is the Schur functor, and $s_\lambda(X)$ the Schur polynomial.
    We have\footnote{We are grateful to A. Mellit for providing this argument.}
    \begin{align*}
        \chi_t\left((V^{\otimes n}\otimes R_n(t^{b}))^{\mathfrak{S}_n}\right)
        & = \chi_t(\Hom_{\mathfrak{S}_n}(R_n(t^{b}),V^{\otimes n}))\\
        & = \left\langle  \frac{h_n[X/(1-t^b)]}{h_n[1/(1-t^b)]}, \sum_{\lambda \in \Part(n)} s_\lambda[\chi_t(V)]s_\lambda[X]  \right\rangle \\ 
        & = \left( \sum_{\lambda \in \Part(n)} \langle h_n[X/(1-t^b)],s_\lambda[X]  \rangle s_\lambda[\chi_t(V)] \right) (1-t^b)^{n}[n]!_{t^b}\\
        & = h_n[\chi_t(V)/(1-t^b)](1-t^b)^{n}[n]!_{t^b},
    \end{align*}
    where in the last line we used the equality $\sum_\lambda \langle f[XY], s_\lambda[Y] \rangle s_\lambda[Z] = f[XZ]$, which holds for any symmetric function $f$.
    Using the properties of complete symmetric functions $h_n$ under plethystic substitution, we further obtain
    \begin{align*}
        h_n[\chi_t(V)/(1-t^b)]
        & = h_n\left[\sum\nolimits_{i=1}^c \frac{t^{a_i}}{1-t^b}\right]
        = \sum_{\lambda \vDash n, \ell(\lambda) = c} h_{\lambda_1}\left[\frac{t^{a_1}}{1-t^b}\right]\ldots h_{\lambda_c}\left[\frac{t^{a_c}}{1-t^b}\right]\\ 
        & = \frac{1}{(1-t^b)^n}\sum_{\lambda \vDash n, \ell(\lambda) = c} t^{\sum_i a_i\lambda_i}\frac{1}{\prod_{i=1}^c [\lambda_i]!_{t^b}}.
    \end{align*}
    Substituting this into the previous expression, we arrive to the desired formula.
\end{proof}

\begin{cor}\label{cor:nice-renormalization}
    Let $V$ be a $\T$-vector space with $\chi_t(V) = c[b]_{t^a}$, where $a,b,c\in\mathbb{Z}_{\geq 0}$.
    Then 
    \[
        \chi_t\left((V^{\otimes n}\otimes R_n(t^{ab}))^{\mathfrak{S}_n}\right) = [b]_{t^a}^{(n)} G_{n,c}(t^a), \qquad G_{n,c}(t)\coloneqq \sum_{\lambda \vDash n, \ell(\lambda) = c} \qnom{n}{\lambda}_{t},
    \]
    where $[b]_{t^a}^{(n)}$ is defined by~\eqref{eq:t-powers}.
\end{cor}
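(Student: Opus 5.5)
We apply \cref{lm:renormalization} directly with $\chi_t(V) = c[b]_{t^a}$, written as a sum of $cb$ monomials $t^{a k}$ with $0\le k<b$, each exponent appearing $c$ times. Rather than summing over compositions of length $cb$ and regrouping by hand, we use the plethystic form that appeared in the proof of \cref{lm:renormalization}, namely
\[
    \chi_t\left((V^{\otimes n}\otimes R_n(t^{ab}))^{\mathfrak{S}_n}\right) = h_n\!\left[\frac{\chi_t(V)}{1-t^{ab}}\right](1-t^{ab})^n[n]!_{t^{ab}}.
\]
The key observation is the telescoping identity $\chi_t(V)/(1-t^{ab}) = c\,[b]_{t^a}/(1-t^{ab}) = c/(1-t^a)$. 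This converts the plethystic argument with base $t^{ab}$ into one with base $t^a$.

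Next, we apply the same expansion of $h_n$ used at the end of the proof of \cref{lm:renormalization}, now with $c$ summands $1/(1-t^a)$. This gives
\[
    h_n\!\left[\frac{c}{1-t^a}\right] = \frac{1}{(1-t^a)^n}\sum_{\lambda\vDash n,\ \ell(\lambda)=c}\frac{1}{\prod_i[\lambda_i]!_{t^a}}.
\]
Here compositions are allowed to have zero parts, as in the lemma. Multiplying back, the right-hand side becomes
\[
    \frac{(1-t^{ab})^n[n]!_{t^{ab}}}{(1-t^a)^n}\sum_\lambda \frac{1}{\prod_i[\lambda_i]!_{t^a}}.
\]

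It remains to identify the prefactor with $[b]_{t^a}^{(n)}[n]!_{t^a}$. We compute
\[
    (1-t^{ab})^n[n]!_{t^{ab}} = \prod_{i=1}^n (1-t^{abi}), \qquad (1-t^a)^n[n]!_{t^a} = \prod_{i=1}^n(1-t^{ai}),
\]
so the ratio of the first to the second is $\prod_{i=1}^n [b]_{t^{ai}} = [b]_{t^a}^{(n)}$ by~\eqref{eq:t-powers}. Therefore the prefactor equals $[b]_{t^a}^{(n)}[n]!_{t^a}$. Combined with the sum over $\lambda$, this yields $[b]^{(n)}_{t^a}\sum_\lambda\qnom{n}{\lambda}_{t^a} = [b]_{t^a}^{(n)}G_{n,c}(t^a)$.

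The only delicate point is bookkeeping. The lemma's statement is phrased via compositions of length equal to $\dim V$, whereas here we want length $c$. Passing through the plethystic identity $h_n[c/(1-t^a)]$ rather than the lemma's final formula avoids regrouping the $cb$ exponents, which is where one might otherwise get lost. The degenerate cases $a=0$ or $b=0$ are trivial: for $b=0$ we have $V=0$, and for $a=0$ we should read the identity via the limit.
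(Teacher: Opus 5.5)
Your argument is correct and is essentially the paper's proof. Like the paper, you plug $\chi_t(V)=c[b]_{t^a}$ into the intermediate plethystic formula from the proof of \cref{lm:renormalization}, telescope $c[b]_{t^a}/(1-t^{ab})=c/(1-t^a)$, expand $h_n[c/(1-t^a)]$, and identify the prefactor with $[b]^{(n)}_{t^a}[n]!_{t^a}$; the only cosmetic difference is that the paper first substitutes $t\mapsto t^a$ to reduce to $a=1$, whereas you carry $t^a$ throughout.
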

\begin{proof}
    Substituting $t'\mapsto t^a$, we can assume $a=1$.
    As $\chi_t(V) = c\frac{1-t^b}{1-t}$, we have $h_n[\chi_t(V)/(1-t^b)] = h_n[c/(1-t)]$.
    Running the same computation as in \cref{lm:renormalization}, we arrive at
    \begin{align*}
        \chi_t\left((V^{\otimes n}\otimes R_n(t^{b}))^{\mathfrak{S}_n}\right) & = h_n[c/(1-t)](1-t^b)^{n}[n]!_{t^b} \\
        & = \left(\frac{1}{(1-t)^n}\sum_{\lambda \vDash n, \ell(\lambda) = c} \frac{1}{\prod_{i=1}^c [\lambda_i]!_{t}}\right) [b]_{t}^{(n)}(1-t)^n[n]!_{t} = [b]_{t}^{(n)}G_{n,c}(t),
    \end{align*}
    so we may conclude.
\end{proof}

\subsection{Duality checks}\label{ssec:dual-checks}
From now on, let us write $\Hitch_n = \Hilb^n S$.
Recall the tautological vector bundles $V_p$ on $S$ from \cref{ssec:par-Higgs}, defined for very stable $p\in S^\T$.

\begin{de}\label{de:nice-Procesque}
    Let $I = \cap_i I_{p_i}^{(1^{\lambda_i})}$, where $\lambda\vDash n$ a composition of length $r$, and $\underline{p} = (p_1,\ldots, p_r)$ a tuple of very stable points in $S^\T$.
    Define
    \[
        \mathcal{P}_I \coloneqq \mathcal{P}_{\lambda}(\boxtimes_i V_{p_i}^{\boxtimes \lambda_i})\in \mathrm{Coh}(\Hitch_n).
    \]
\end{de}

Let us further denote $\mathcal{W}_I\coloneqq \mathcal{O}_{W^+_I}$.
The main claim of this section is:
\begin{equation*}\tag{MS}\label{eq:MS-claim}
    \text{For a very stable $I$, $\mathcal{W}_I$ is exchanged with $\mathcal{P}_I$ under mirror duality in \cref{an:Hilb}.}
\end{equation*}

Let us explain how this prediction is obtained.
For $S = T^*E$, $n=1$ it is essentially equivalent to \cref{an:FM}.
For $S = S_{\Z/e}$, $e>1$, the sheaves $V_p$ and $\mathcal{O}_{W^+_p}$ are matched via derived McKay equivalence $D^b(S_{\Z/e})\simeq D^b_{\Z/e}(T^*E)$.
Finally, for $n>1$ the sheaf $\mathcal{P}_I$, resp. $\mathcal{W}_I$ is obtained by applying Eisenstein series from \cref{an:Hilb} to the product of $V_{p}$'s, resp. $\mathcal{O}_{T_p^+}$'s.
By \cref{an:FM,an:Eis} the two should therefore be exchanged.

As in~\cite{hausel2022very}, we justify the claim~\eqref{eq:MS-claim} by performing two checks.
First, let $\Base^\#\subset \Base$ be the open of $n$-tuples $(x_1,\ldots,x_n)\in \Sym^n\C$, such that $x_i^{ne+1}\neq x_i$ for all $i$, and $x_i\neq x_j$ for all $i<j$.
Write further $\Hitch_n^\#\coloneqq \theta^{-1}(\Base^\#)$.
Note that over each point of $\Base^\#$, the ideals are necessarily supported at $n$ points, therefore $\Hitch_n^\#\simeq \Base^\# \times E^n$.
For $n=1$, we also write $S^\# \simeq \C^*\setminus \{x^e=1\} \times E\subset S$; clearly $\Base^\# = \Sym^n(\C^*\setminus \{x^e=1\})$.
Denote by $\operatorname{FM}_1$ the Fourier--Mukai transform on $S^\#$ along $E$.
It is known that the restriction of $\mathfrak{L}'$ to $\Base^\#$ coincides with $\operatorname{FM}_n \coloneqq \operatorname{FM}_1\boxtimes\ldots \boxtimes\operatorname{FM}_1$.
\begin{prop}
    Let $I$ be as in \cref{de:nice-Procesque}, and let $\mathcal{W}_I^\#$, resp. $\mathcal{P}_I^\#$ be the restriction of $\mathcal{W}_I$, resp. $\mathcal{P}_I$ to $\Base^\#$.
    We have $\operatorname{FM}_n(\mathcal{W}_I^\#) = \mathcal{P}_I^\#$.
\end{prop}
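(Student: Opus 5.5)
We reduce to $n=1$ and then build up both sides as symmetrizations of matching external products, so that the whole statement follows from the $n=1$ case. Over $\Base^\#$ the ideals are supported at $n$ distinct points lying in smooth fibers $E$ of $S$. Hence the isospectral Hilbert scheme restricted to $\Hitch_n^\#$ is simply $(S^\#)^n$ restricted to the complement of the big diagonal: $\rho$ is the étale $\mathfrak{S}_n$-cover, and $\pi$ is an open embedding. Therefore
\[
\mathcal{P}_I^\# \simeq \bigl(\rho_*(\boxtimes_i V_{p_i}^{\boxtimes \lambda_i})\bigr)^{\mathfrak{S}_\lambda},
\]
which is the pushforward along $((S^\#)^n)^\circ/\mathfrak{S}_\lambda \to \Hitch_n^\#$ of the descended external product.

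On the other side, we use \cref{prop:Hilb-etale} together with the description of $W^+_I$ as a product of upward flows (as in the proof of \cref{prop:ell-very-stable}). This identifies $W^+_I\cap \Hitch_n^\#$ with the image of $\prod_i (W^+_{p_i})^{\lambda_i}$, intersected with the complement of the diagonal, under the same $\mathfrak{S}_\lambda$-quotient map. Concretely, $\mathcal{W}_I^\#$ is the pushforward from $((S^\#)^n)^\circ/\mathfrak{S}_\lambda$ of the descended $\boxtimes_i \mathcal{O}_{W^+_{p_i}\cap S^\#}^{\boxtimes\lambda_i}$. We check this by noting that $W^+_I$ is closed (very stable), and that over distinct points its fiber is the set of unordered choices of one point from each $W^+_{p_i}$, with multiplicities given by $\lambda$.

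Since $\operatorname{FM}_n$ is an external product relative to $\Base^\#$, it commutes with $\mathfrak{S}_n$-equivariant structures on $(S^\#)^n$ and with pushforward along the finite étale quotient. Both sides are pushed forward from $((S^\#)^n)^\circ/\mathfrak{S}_\lambda$, where $\operatorname{FM}_n$ acts as $\operatorname{FM}_1^{\boxtimes n}$ compatibly with the $\mathfrak{S}_\lambda$-linearizations. It therefore suffices to prove the $n=1$ statement $\operatorname{FM}_1(\mathcal{O}_{W^+_p\cap S^\#}) \simeq V_p|_{S^\#}$ for each very stable $p\in S^\T$, together with the compatibility of linearizations.

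The $n=1$ case is the main obstacle. Here $W^+_p\cap S^\#$ is a multisection of $S^\#\to\C^*\setminus\{x^e=1\}$ of degree $m_p=\gamma_p$. Using the parabolic Higgs description of \cref{ssec:par-Higgs}, a generic fiber point is $\bigoplus_{\zeta}(L_{\zeta(q)},\zeta(a))$. For $p$ very stable with core multiplicity $w$, the upward flow consists of the Higgs bundles whose limit is the corresponding nilpotent chain, so its points over $a$ are exactly the $q\in E$ for which the fiber of the relevant flag step is nontrivial. I would first compute this set explicitly as the $\Gamma$-orbit data of $\ker$ of the parabolic flag at $p$, then verify that the Fourier--Mukai transform of the structure sheaf of this finite subscheme of $E$ (fiberwise, $\mathcal{O}_{\{q_1,\dots,q_m\}}\mapsto \bigoplus L_{q_j}$) matches the fiber of $V_p^1$, i.e. the subspace of $\bigoplus L_{\zeta(q)}$ singled out by the flag. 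Matching ranks is guaranteed, since $\gamma_p=\mu_p(1)$ agrees with the values in \cref{table:fibers}. The remaining effort is to upgrade this fiberwise identification to the relative Poincaré bundle over the family, which I expect to follow from the universal property of the Poincaré bundle once the multisection is identified as the spectral cover of $V_p^1$.
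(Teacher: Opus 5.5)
Your proposal follows the paper's route: both sides factor over $\Base^\#$, which reduces the claim to $n=1$ (your $\mathfrak{S}_\lambda$-descent through $((S^\#)^n)^\circ/\mathfrak{S}_\lambda$ is a more careful version of the paper's bare assertion that everything is an external product), and the $n=1$ case is a fiberwise comparison via $\operatorname{FM}_1(\mathcal{O}_q)=L_q$, upgraded to a global isomorphism by seesaw. The one correction is in your $n=1$ sketch. The intersection is simply $W^+_p\cap E_x=\tau^{-1}(p)$, the preimage of the corresponding point under $\tau\colon E_x\simeq E\to C$; it is not a locus where a flag step has nontrivial fiber, since $V_p$ has constant positive rank. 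What must be matched is the restriction $V_p|_{E_x}\simeq\bigoplus_{q\in\tau^{-1}(p)}L_q$ as a bundle on $E_x$, not a single fiber of $V_p$.
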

\begin{proof}
    From the definition of $\mathcal{P}_I$, we have $\mathcal{P}_I^\# = \boxtimes_{i} (\mathcal{P}_{I_{p_i}^{(1)}}^\#)^{\boxtimes \lambda_i}$.
    Futhermore, we have $W^+_I = \sqcup_i W^+_{I_{p_i}^{(1^{\lambda_i})}}$.
    Analogously to the proof of \cref{prop:ell-very-stable}, we can show the equality $W^+_{I_{p}^{(1^l)}} = \Hilb^{l}(T^+_p)$, induced by the inclusion $T^+_p\subset S$.
    In particular, $\theta^{-1}(x_1,\ldots,x_l)\cap W^+_{I_{p}^{(1^l)}} = \prod_{j=1}^l \theta^{-1}(x_j)$, and so $\mathcal{W}_I^\# = \boxtimes_{i} (\mathcal{W}_{I_{p_i}^{(1)}}^\#)^{\boxtimes \lambda_i}$.
    Since $\operatorname{FM}_n$ factorizes as well, it suffices to check that $\operatorname{FM}_n(\mathcal{W}_I^\#) = \mathcal{P}_I^\#$ for $n=1$.

    For $n=1$, let us fix $x\in \C^*\setminus \{x^e=1\}$ and consider $E_x\coloneqq \theta^{-1}(x)\subset S$.
    Denote the projection $E_x\simeq E \to C$ by $\tau$.
    Recall the definition of vector bundles $V_{p_i}$ in \cref{ssec:par-Higgs}.
    It is easy to check from that description that $V_p|_{E_x} = \otimes_{q\in \tau^{-1}(p)}L_q$.
    On the other hand, $\mathcal{W}_p|_{E_x} = \mathcal{O}_{\sqcup_{q\in \tau^{-1}(p)} q} = \bigoplus_{q\in \tau^{-1}(p)}\mathcal{O}_{q}$.
    By properties of Fourier--Mukai transform on $E$ we have $\operatorname{FM}_1(\mathcal{O}_{q}) = L_q$, so that we proved the desired isomorphisms fiberwise.
    The global isomorphism follows by seesaw principle.
\end{proof}

For the second check, let us define the equivariant Euler pairing of two coherent sheaves $\mathcal{F}_1,\mathcal{F}_2$ on $\Hitch_n$ by
\[
    \chi_t(\mathcal{F}_1,\mathcal{F}_2) \coloneqq \sum (-1)^k \chi_t(\mathrm{Ext}^k(\mathcal{F}_1,\mathcal{F}_2)).
\]
This pairing must be respected by mirror duality.
Let $I = \cap_i I_{p_i}^{(1^{\lambda_i})}$ as above, and similarly $J = \cap_i J_{q_i}^{(1^{\mu_i})}$, where $\mu\vDash n$ is a composition of length $s$, and $\underline{q} = (q_1,\ldots, q_s)$ a tuple of very stable points in $S^\T$.

\begin{thm}\label{thm:eq-index}
    In the notations above, we have $\chi_t(\mathcal{W}^\vee_I,\mathcal{P}_J) = \chi_t(\mathcal{P}^\vee_I,\mathcal{W}_J)$.
\end{thm}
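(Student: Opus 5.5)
Both sides reduce to local computations at the $\T$-fixed points $I$ and $J$. For a locally free $\mathcal{P}$ we have $\chi_t(\mathcal{W}_I^\vee,\mathcal{P}_J)=\chi_t(\Hitch_n,\mathcal{W}_I\otimes \mathcal{P}_J)$, and $\chi_t(\mathcal{P}_I^\vee,\mathcal{W}_J)=\chi_t(\Hitch_n,\mathcal{P}_I\otimes\mathcal{W}_J)$. Since $I$ is very stable, $W^+_I$ is closed and $\theta|_{W^+_I}$ is finite by \cref{prop:v-stable-lf-push}. Moreover $W^+_I\simeq T^+_I$ is an affine $\T$-space with all weights positive, contracting to $I$. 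Hence $H^{>0}$ vanishes and
\[
\chi_t(\mathcal{W}_I\otimes\mathcal{P}_J)=\chi_t\bigl(\mathcal{O}(W^+_I)\otimes \mathcal{P}_J|_{W^+_I}\bigr)=\chi_t(\Sym (T^+_I)^\vee)\cdot \chi_t(\mathcal{P}_J|_I).
\]
The last equality uses that an equivariant vector bundle on an affine space with contracting action is equivariantly trivial, with fiber $\mathcal{P}_J|_I$. After matching the weight conventions, the first factor is $\mu_I(t)\chi_t(\Sym\Base)$ by \eqref{eq:HH-eq-mult}. Since $\chi_t(\Sym\Base)$ is symmetric in $I,J$, the theorem reduces to the identity
\[
m_I(t)\,\chi_t(\mathcal{P}_J|_{I}) = m_J(t)\,\chi_t(\mathcal{P}_I|_{J}),
\]
with $m_I=\mu_I$ given by \cref{prop:vst-eq-mult-parab}, possibly up to an overall symmetric normalization. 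I would first check the $\T$-weight sign conventions carefully, since $t$ denotes the weight $-1$ character. If a stray $t\mapsto t^{-1}$ appears, it should act symmetrically on both sides.

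The next step is to compute $\chi_t(\mathcal{P}_J|_I)$ using \eqref{eq:Procesque-fiber-general}. Here $J=\cap_i J_{q_i}^{(1^{\mu_i})}$ gives $\mathcal{V}=\boxtimes V_{q_i}^{\boxtimes\mu_i}$, and $I=\cap_j I_{p_j}^{(1^{\lambda_j})}$ is the evaluation point, so all partitions at the evaluation point are columns and the coinvariant description applies. We get a sum over matrices $A\in\Theta(\mu,\lambda)$ of
\[
\prod_j\qnom{\lambda_j}{A_j}_{t^{w_{p_j}}}\prod_{i,j}\chi_t\bigl((V_{q_i}|_{p_j}^{\otimes a_{ij}}\otimes R_{a_{ij}}(t^{w_{p_j}}))^{\mathfrak{S}_{a_{ij}}}\bigr).
\]
\cref{table:fibers} shows that each fiber $V_{q}|_{p}$ has Hilbert series of the form $c[b]_{t^{a}}$, with $ab$ matching $w_p$ (for instance $[e]_t$ at $q_1$ when $w=1$, or $2[2]_t$). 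So \cref{cor:nice-renormalization} applies and gives factors $[b]_{t^a}^{(a_{ij})}G_{a_{ij},c}(t^a)$. Before applying it, I would verify the hypothesis $ab=w_p$ case by case against \cref{table:fibers} and the weights in \cref{table:points}, including the curve component $C$ with $w=1$.

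The final step is to multiply by $m_I(t)=\qnom{n}{\lambda}_t[e]_t^{(n)}/\prod_j [w_{p_j}]_t^{(\lambda_j)}$. We then rearrange into a sum over $A$ of an expression visibly symmetric under transposing $A$ together with swapping $(\lambda,\underline p)\leftrightarrow(\mu,\underline q)$. The $t$-multinomials should combine as $\qnom{n}{\lambda}_t\prod_j\qnom{\lambda_j}{A_j}$, which becomes symmetric once the $[w]^{(\cdot)}$ denominators absorb the mismatch between the bases $t$ and $t^{w}$. The remaining per-entry factor $f(q_i,p_j,a_{ij})=[b]^{(a)}G_{a,c}/[w_{p_j}]^{(a)}$ must then satisfy $f(q,p,a)=f(p,q,a)$. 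This is a finite check for each surface on the pairs in \cref{table:fibers}, for example for $S_{\Z/2}$ comparing $V_{q_2}|_{q_1}=[2]_t$ with $V_{q_1}|_{q_2}=1$.

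I expect the main obstacle to be this factorization: the $q$-multinomials in different bases $t$ and $t^{w_p}$ do not obviously recombine into a symmetric product. If the naive per-entry symmetry fails, I would instead prove symmetry of the generating functions in $n$. I would sum over all $\lambda$, $\mu$ with formal variables and use the plethystic form $h_n[\chi_t(V)/(1-t^b)]$ from the proof of \cref{lm:renormalization}. The symmetry should then follow from a Cauchy-type identity for the kernel $\prod\exp\bigl(\sum_k p_k[\cdots]/k\bigr)$, in which the table entries enter symmetrically.
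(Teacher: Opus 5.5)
Your proposal is correct and is essentially the paper's proof. The reduction of both pairings to $\chi_t(\Sym\Base)$ times multiplicity times fiber is exactly the Hausel--Hitchin argument the paper cites; after that the paper also expands the fiber with \eqref{eq:Procesque-fiber-general} and \cref{cor:nice-renormalization}, then checks symmetry entrywise against \cref{table:fibers}.

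The step you flag as the main obstacle is harmless, and no Cauchy-identity fallback is needed. Your per-entry factor simplifies to $G_{a,c}(t^{w/b})/[w/b]_t^{(a)}$. So the check is only that the pair $(c,\,w/b)$ read off from $V_{q}|_{p}=c[b]_{t^{w_p/b}}$ agrees with the pair read off from $V_{p}|_{q}$. This is precisely the paper's condition $c_{ij}=c_{ji}$, $b_{ij}w_i=b_{ji}w_j$.
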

\begin{rmk}
    One can get rid of duals as explained in~\cite[Rem.~6.12]{hausel2022very}, for the price of multiplying r.h.s by some sign and a power of $t$.
    It means that~\eqref{eq:MS-claim} holds only up to a homological shift and a $\T$-weight, which we do not bother to specify.
\end{rmk}
\begin{proof}[{Proof of \cref{thm:eq-index}}]
    Let us denote the positive tangent weight of $q_i$ by $w_i$.
    By \cref{prop:vst-eq-mult-parab}, the equivariant multiplicity of the core component containing $J$ is given by
    \[
        m_J(t) = \qnom{n}{\mu}_{t^e} \prod_i ([e/w_i]_{t^{w_i}})^{(\mu_i)} = \frac{[n]_{t^e}!}{\prod_i [\mu_i]_{t^w}!}.
    \]
    By the exact same argument as in the proof of~\cite[Th.~6.9]{hausel2022very}, we obtain the following identities:
    \begin{align*}
        \chi_t(\mathcal{W}^\vee_I,\mathcal{P}_J) = \chi_t(\mathcal{P}_J|_I) m_I(t)\chi_t(\Sym \Base), \qquad 
        \chi_t(\mathcal{P}^\vee_I,\mathcal{W}_J) = \chi_t(\mathcal{P}_I|_J) m_J(t)\chi_t(\Sym \Base).
    \end{align*}
    Thus, it is enough to show that the expression $\chi_t(\mathcal{P}_I|_J) m_I(t)$ is symmetric under the exchange of $I$ and $J$.
    Let us compute it using the formula~\eqref{eq:Procesque-fiber-general}.
    Denote $V_{ij}(t) = \chi_t(V_{p_i}|_{q_j})$, and $m_{j}(t) = m_{q_j}(t)$. 
    Observe that $V_{ij}(t) = c_{ij}[b_{ij}]_{w_j/b_{ij}}$ for some $b_{ij},c_{ij}>0$, and furthermore $m_j(t) = [e/w_j]_{t^{w_j}}$.
    We have:
    \begin{align*}
        \chi_t(\mathcal{P}_I|_J) m_J(t)
        & = \frac{[n]_{t^e}!}{\prod_j [\mu_j]_{t^{w_j}}!} \sum_{A\in \Theta(\lambda,\mu)}  \prod_j\qnom{\mu_j}{A_j}_{t^{w_j}} \prod_{i,j} \chi_t((V_{ij}^{a_{ij}}\otimes R_{a_{ij}}(t^{w_j}))^{\mathfrak{S}_{a_{ij}}})\\
        & = \sum_{A\in \Theta(\lambda,\mu)} \frac{[n]_{t^e}!}{\prod_{i,j} [a_{i,j}]_{t^{w_j}}!} \prod_{i,j} [b_{ij}]_{t^{w_j/b_{ij}}}^{(a_{ij})}G_{a_{ij},c_{ij}}(t^{w_{j}/b_{ij}})\\
        & = \sum_{A\in \Theta(\lambda,\mu)} \qnom{n}{A}_{t^e} \prod_{i,j} (m_{j}(t)[b_{ij}]_{t^{w_j/b_{ij}}})^{(a_{ij})}G_{a_{ij},c_{ij}}(t^{w_{j}/b_{ij}})\\
        & = \sum_{A\in \Theta(\lambda,\mu)} \qnom{n}{A}_{t^e} \prod_{i,j} ([eb_{ij}/w_j]_{t^{w_j/b_{ij}}})^{(a_{ij})}G_{a_{ij},c_{ij}}(t^{w_{j}/b_{ij}}).
    \end{align*}
    Here, we used \cref{cor:nice-renormalization} for the second equality, whose conditions one can check directly from \cref{table:fibers}.
    In the same way, one directly checks that the matrix $b_{ij}w_i = b_{ji}w_j$ for all $i,j$, and $c_{ij} = c_{ji}$.
    Hence the expression we obtained is symmetric under swapping $I$ with $J$, so we may conclude.
\end{proof}

\begin{rmk}
    Let $S = T^*E$.
    In this case, the products in the big sum above vanish, so that
    \[
        \chi_t(\mathcal{P}_I|_J) m_J(t) = \sum_{A\in \Theta(\lambda,\mu)} \qnom{n}{A}_t.
    \]
    For $t=1$, r.h.s specializes to $\binom{n}{\lambda}\binom{n}{\mu}$ by a classical combinatorial formula.
    In stark contrast to~\cite{hausel2022very}, this is not true after deformation; for example
    \[
        \qnom{3}{1}_t \qnom{3}{1}_t = [3]_t^2 \neq [3]_t(1 + [2]_t) = \qnom{3}{1}_t + \qnom{3}{1,1,1}_t.
    \]
    Instead, after some manipulation with symmetric functions the r.h.s can be expressed in terms of Kostka polynomials; namely, we get the formula~\cite[(5.20)]{chuang2015parabolic} specialized at $q_1=1$, $q_2=t$.
\end{rmk}

\subsection{Wobbly components}\label{ssec:MS-beyond-vst}
Let $I = \cap_i I_{p_i}^{\lambda(p_i)}\in \Hitch_n^\T$ be a wobbly ideal.
For any wobbly point $q\in S$, we have a tautological bundle $V_q$ as in \cref{ssec:par-Higgs}, where we pick the corresponding step in the parabolic flag instead of the first one.
We can define a Procesque bundle $\mathcal{P}_I$ by the same formula as \cref{de:nice-Procesque}, except that each $V^{\boxtimes \lambda_i}_{p_i}$ gets symmetrized by $\mathfrak{S}_{\lambda(p_i)'}$ instead of $\mathfrak{S}_{\lambda_i}$.
Unfortunately, $\mathcal{O}_{\overline{W^+_I}}$ is not mirror dual to $\mathcal{P}_I$ in general.
Indeed, already for $S_{\Z/e}$, $e>1$ some upward flows are fully contained in the core, while the Procesque bundles are always supported over the whole base.
Following~\cite{hausel2022enhanced}, we define the Lagrangian closure $W^\ggcurly_I$ as the smallest closed subset of $\Hitch_n$ which contains $I$, and such that for any $I'\in (W^\ggcurly_I)^\T$ we have $W^+_{I'}\subset W^\ggcurly_I$.

\begin{con}\label{conj:support}
    For any $I\in \Hitch_n^\T$, the mirror dual $\mathcal{W}_I$ of $\mathcal{P}_I$ is supported on $W^\ggcurly_I$.
\end{con}
It would be interesting to compute the equivariant multiplicities of $\mathcal{W}_I$ along the irreducible components of $W^\ggcurly_I$; we hope to return to this question in the future.

\begin{rmk}\label{rem:dual-Procesi}
    Let $\Hitch_n = \Hilb^n T^*E$.
    A particular case of the conjecture above is that the mirror of the usual Procesi bundle $\mathcal{P}_n\in D^b(\Hilb^n T^*E)$ is supported on $W^\ggcurly_{I_{\mathrm{e}}^{(n)}}$.
    Assuming \cref{an:Hilb}, the bundle $\mathcal{P}_n$ is obtained by applying $\mathsf{Eis}$ to the sheaf $\mathcal{O}_{(T^*E)^n}$.
    By \cref{an:FM,an:Eis}, the dual of $\mathcal{P}_n$ is then the image of $\mathcal{O}_{(T^*_{\mathrm{e}})^n}$ under $\mathsf{Eis}$.
    We deduce a prediction:
    \[
        \mathcal{P}_n^\vee = \rho_*\mathcal{O}_{X_n}, \qquad {X_n}\coloneqq \IHilb^n T^*E\times_{T^*E^n}(T^*_{\mathrm{e}})^n,
    \]
    where $\rho: \IHilb^n T^*E\to \Hilb^n T^*E$ is the projection.
    Note that $\rho(X_n)$ is (set-theoretically) equal to the preimage of $\mathrm{e}^n$ under the projection $\Hilb^n T^*E\to \Sym^n E$. 
    It is straightforward to check that this is precisely the Lagrangian closure $W^\ggcurly_{I_{\mathrm{e}}^{(n)}}$.
    The exact same argument works for $I_{\mathrm{e}}^{(n)}\in \Hilb^n S$, where $S$ is of parabolic type.
    Similar predictions can be made for other wobbly ideals.
\end{rmk}

Assuming the matching of equivariant pairings, we can compute $\chi_t(\theta_*\mathcal{W}_I)$. 
Let $\mathrm{e}\in S$ be the point corresponding to the identity of $E$, and write $J_0 = I_{\mathrm{e}}^{(1^n)}$.
Note that $\mathcal{P}_{J_0} = \mathcal{O}$. Hence
\[
    \chi_t(\theta_*\mathcal{W}_I) = \chi_t(\mathcal{W}_I^\vee,\mathcal{O}) = \chi_t(\mathcal{P}_I^\vee,\mathcal{W}_{J_0}) = \chi_t(\mathcal{P}_I|_{J_0})\chi_t(\Sym\Base).
\]
The fibers of $\mathcal{P}_I|_{J_0}$ can be computed similarly to the proof of \cref{thm:eq-index}.
As the formula~\eqref{eq:Procesque-fiber-general} still applies, we have
\begin{align*}
    \chi_t(\mathcal{P}_I|_{J_0}) &= \chi_t(\mathcal{P}_I|_{J_0})m_{J_0}(t)
    = \qnom{n}{\lambda}_{t^e} \prod_i \chi_t(((V_{p_i}|_{\mathrm{e}})^{\otimes \lambda_i}\otimes R_{\lambda_i}(t^{e}))^{\mathfrak{S}_{\lambda(p_i)'}}) \\
    & = \qnom{n}{\{\lambda(p_i)'\}_i}_{t^e} \prod_{i,l} \chi_t(((V_{p_i}|_{\mathrm{e}})^{\otimes \lambda(p_i)_l}\otimes R_{\lambda(p_i)_l}(t^{e}))^{\mathfrak{S}_{\lambda(p_i)'_i}}).
\end{align*}
Let us denote the r.h.s of the equation in \cref{lm:renormalization} with $b=e$ by $G_{n,V}(t)$.
We finally obtain
\begin{equation}\label{eq:fiber-any-Procesque}
    \chi_t(\mathcal{P}_I|_{J_0}) = \qnom{n}{\{\lambda(p_i)'\}_i}_{t^e} \prod_{i,l} G_{\lambda(p_i)'_l,V_{p_i}|_{\mathrm{e}}}(t).
\end{equation}
Let $n=1,2$, and recall the formulas for equivariant multiplicities $m_I(t)$ computed in \cref{ssec:2d-mult-ex,sec:ex-Hilb2}.
It is easy to check, as with values in \cref{table:fibers}, that for a wobbly point $p\in S$ we have $V_{p}|_{\mathrm{e}} = [m_p]_t$, unless $p\in S_{\Z/6}$ with local equation $x^2y^4$, in which case $V_{p}|_{\mathrm{e}} = 1+t+t^3+t^4$.
The following claim is then proved by a direct comparison.
\begin{prop}\label{prop:chi-separates}
    Denote the expression in~\eqref{eq:fiber-any-Procesque} by $\chi_I(t)$.
    For $n=1,2$, we have
    \[
        \mu_I(t) \leq \chi_I(t) \leq m_I(t) \text{ for all } t>1,
    \]
    and the equalities hold if and only if $I$ belongs to a very stable $F\in \Fix$. \qed
\end{prop}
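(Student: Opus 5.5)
Since the statement is verified by finite comparison, I will organize the check by the types of $\T$-fixed points in $\Fix^\WG(\Hilb^n S)$ for $n=1,2$, using the product and étale-local reductions already established. The plan is to tabulate, for each type, the three rational functions $\mu_I(t)$ (from \cref{prop:sing-is-mu} and the tangent weights of \cref{prop:fixed-tg-weights}), $\chi_I(t)$ (from \eqref{eq:fiber-any-Procesque} with $G_{n,V}$ from \cref{lm:renormalization}), and $m_I(t)$ (from \cref{ssec:2d-mult-ex}, \eqref{eq:eq-mult-Hilb2-sep-easy}, \eqref{eq:eq-mult-Hilb2-sep} and \cref{table:eqmultH2}). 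For very stable $I$, \cref{prop:v-stable-HHvsMZ} gives $\mu_I=m_I$, so the inequalities force $\chi_I=m_I$. I will confirm this directly: with $V_p|_{\mathrm{e}}$ read off from \cref{table:fibers} and $R_n$ fibers of type $(1^n)$, formula \eqref{eq:fiber-any-Procesque} should reproduce \eqref{eq:vst-eq-mult-parab}. The argument is the specialization $I=J_0$ in the proof of \cref{thm:eq-index}, which already identifies $\chi_t(\mathcal P_I|_{J_0})$ with the symmetric expression that equals $m_I(t)$ when $J_0$ has multiplicity $1$.

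For $n=1$, the wobbly points are the intermediate points of the legs of $S_{\Z/e}$, with local equation $x^ay^b$. Here $\chi_I=[m_p]_t$ (or $1+t+t^3+t^4$ in the exceptional $x^2y^4$ case), $m_I=[b]_{t^w}$, and $\mu_I=(1-t^e)/((1-t^{w})(1-t^{-l}))$ up to sign conventions. Each comparison is a one-variable inequality on $t>1$ between products of quantum numbers. I will prove these by writing the differences as explicit rational functions and factoring, e.g. $[b]_{t^w}-[m_p]_t$ with $m_p=b$ and $w\ge 2$, which is visibly positive for $t>1$ and zero only at $t=1$ or trivially.

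For $n=2$, I split into three cases. In the separated case $I_p\cap I_q$, formula \eqref{eq:fiber-any-Procesque} factors as $[2]_{t^e}$ times the product of the two $n=1$ values, or $\qnom{2}{1,1}_{t^e}$ with $G$-factors. I will compare this with $m_I$, which is $[b_1]_{t^{w_1}}[2b_2]_{t^{w_2}}$ or $(1+t^{bw_1}+t^{bw_2}-t^e)[b]_{t^{w_1}}[b]_{t^{w_2}}$, and with $\mu_I=\mu_p\mu_q\cdot[2]_{t^e}$-type corrections. The inequalities should reduce to the $n=1$ ones together with $1+t^{bw_2}\ge [2]_{t^{e}}$-style comparisons, using $b_iw_i-e=a_il_i\ge 0$. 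In the punctual cases $I^{(1^2)}_p$ and $I^{(2)}_p$, I will take $\chi_I$ with $\lambda(p)'=(2)$ (respectively $(1,1)$), via $G_{2,V}$ and $G_{1,V}$. I will compare these against the rows of \cref{table:eqmultH2} and against $\mu$ computed from \eqref{eq:Tplus-tangent}.

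The main obstacle is this last, finite but messy, check for the wobbly punctual rows, especially $x^3y^4$, $x^4y^5$ and $y^3$, where $m_I$ has negative coefficients. There, positivity of $m_I-\chi_I$ for $t>1$ is not coefficientwise. My first approach is to exploit the form of the right column of the table, $m_{I^{(2)}}=[m]_{t^w}[2m]_{t^{w+l}}+(t^{w+l}-1)P(t)$ with $P\ge 0$. If I can show $\chi_I\le[m]_{t^w}[2m]_{t^{w+l}}$ with equality exactly in the very stable rows, the extra term is positive for $t>1$. Failing that, I will multiply through by a common denominator, substitute $t=1+s$, and check that all coefficients in $s$ are nonnegative with a positive leading one, by computer. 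Strictness in the wobbly cases then follows because the $s$-polynomial is nonzero.
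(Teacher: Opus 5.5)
Your plan is the paper's own argument. The paper proves the proposition only by ``direct comparison'' of the explicit formulas, and your case split is a sensible way to organize that comparison: very stable $I$ via the symmetry in the proof of \cref{thm:eq-index} at $J=J_0$; separated ideals reduced to $n=1$ using $\mu_I=[2]_{t^e}\mu_p\mu_q$, $\chi_I=[2]_{t^e}\chi_p\chi_q$ and $b_iw_i\ge e$; punctual ideals checked against \cref{table:eqmultH2}. However, two ingredients are wrong as you state them, and each would break the check.

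\textbf{The $n=1$ virtual multiplicity.} By \cref{prop:sing-is-mu}, $\mu_F$ is localized along the whole cell $W^-_F$, so only $T^+_p$ enters. Hence $\mu_I=(1-t^e)/(1-t^w)=[e]_t/[w]_t$, with no factor $(1-t^{-l})$. That factor belongs to the multiplicity at the point, like $\mu(\{0\};\mathcal{E})$ in \cref{ex:C2-mult}. Your formula causes three problems:
\begin{itemize}
\item the lower bound becomes false: for the point $xy^2$ of $S_{\mathbb{Z}/3}$ at $t=2$ it gives $14/3>3=\chi_I$;
\item it contradicts $\mu_I=m_I$ at very stable points, which you yourself use;
\item the error propagates into your separated case.
\end{itemize}
With the correct formula, the $n=1$ lower bound reads $[e]_t<[w]_t[b]_t$. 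This holds coefficientwise for every wobbly type in \cref{table:points} except $x^2y^4$, because $w+b=e+1$ and $wb=e+al>e$. For $x^2y^4$, check $1+t^2+t^4<1+t+t^3+t^4$ by hand.

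\textbf{The expected equality in the $I^{(2)}$ column.} You hope for equality ``in the very stable rows'', but this is backwards. Every entry of that column is wobbly, including $I^{(2)}_q$ for very stable $q$. In the rows $y$ and $y^2$ the correction term vanishes, i.e.\ $m_{I^{(2)}}=[m]_{t^w}[2m]_{t^{w+l}}$. So the strict inequality the proposition requires there must come from $\chi_I<[m]_{t^w}[2m]_{t^{w+l}}$; equality would refute the proposition.

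The strict bound does hold in every row. Write $\chi_{I^{(2)}}=[2]_{t^e}\chi_t(V_p|_{\mathrm{e}})^2$ and $[2m]_{t^{2w-1}}=[m]_{t^{2w-1}}[2]_{t^{m(2w-1)}}$. Using $w\ge 2$ throughout:
\begin{itemize}
\item for very stable $q$ (equation $y^b$, $bw=e$, $V_q|_{\mathrm{e}}=[b]_{t^w}$), it follows from $[b]_{t^w}\le[b]_{t^{2w-1}}$ and $e<b(2w-1)$;
\item for wobbly $p$, it follows from $[b]_t<[b]_{t^w}$, $[b]_t\le[b]_{t^{2w-1}}$ and $e\le b(2w-1)$, with the obvious change for $x^2y^4$.
\end{itemize}
Beyond this you only need that each correction term in \cref{table:eqmultH2} is a sum of terms $(t^a-t^c)Q(t)$ with $a>c$ and $Q$ having positive coefficients, hence positive for $t>1$.

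With these two corrections your argument goes through. The $I^{(1^2)}$ column remains a finite check, and your $t=1+s$ test is a sufficient criterion for it.
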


\subsection{A reckless conjecture}\label{ssec:main-conj}
Let us draw some speculative lessons from \cref{prop:chi-separates}.
Once again, we assume that $I\in F$, $F\in \Fix$ is wobbly.
If we compute $\chi_t(\theta_*\mathcal{W}_I)/\chi_t(\Sym\Base)$ by equivariant localization, then $\mu_I(t)$ is roughly the contribution of the fixed point $I$.
In this light, the inequality $\mu_I(t) < \chi_I(t)$ is reflective of the fact that $W^\ggcurly_I$ has more than one fixed point.
On the other hand, the inequality $\chi_I(t) \leq m_I(t)$ suggests that even though $m_I(t)$ does not have positive coefficients, it might be that $\sp_{W^-_F}(\mathcal{O}_{\mathcal{C}})$ has a locally free subsheaf, whose fiber at $I$ has Hilbert series $\chi_I(t)$.
As $\chi_I(1) = m_I$, these subsheaves should in some sense cover the whole of $\mathcal{O}_{\mathcal{C}}$.

Now let $\theta:\Hitch\to \Base$ be a smooth integrable system of weight $1$; in particular, it satisfies~\eqref{eq:ab-cond}.
Denote the origin $0\in \Base$.
Assume that $\theta$ is equipped with a ``Kostant section'' $\kappa:\Base\to \Hitch$, $\theta\circ \kappa = \mathrm{Id}_\Base$. 
We also assume that $\theta$ admits a  ``mirror dual'' system $\theta^\vee:\Hitch^\vee\to \Base$, in the sense that generically on $\Base$ these are dual abelian fibrations, and there exists a Poincaré complex $\mathcal{P}$ on $\Hitch\times_\Base \Hitch^\vee$ inducing an equivalence $D^b(\Hitch)\to D^b(\Hitch^\vee)$.
Recall the notations of \cref{subs:Flow order}.

\begin{con}\label{conj:reckless}
    There exists a collection $\{\mathcal{W}_p\}_{p\in \Hitch^\T}$ of sheaves $\mathcal{W}_p\in D^b_\T(\Hitch)$, and a collection of non-reduced $\T$-schemes $\{\mathcal{Z}_F\}_{F\in \Fix}$, satisfying the following properties:
    \begin{enumerate}[leftmargin=8mm]
        \item For each $F\in \Fix$, we have $(\mathcal{Z}_F)_\mathrm{red} = W^\llcurly_F$. For any $F_1,F_2\in \Fix$ with $F_1\preceq F_2$ we have a ``blow down'' map $\pi_{F_1}^{F_2}:\mathcal{Z}_{F_2} \to \mathcal{Z}_{F_1}$, and maps $\pi_F: \Core\to \mathcal{Z}_F$ satisfying $\pi_{F_1}^{F_2}\circ\pi_{F_2} = \pi_{F_1}$. The intersection $\bigcap_F \ker(\pi_F)$ is trivial. 
        The specialization $\sp_{W^-_F}(\mathcal{Z}_F)$ is locally free;
        \item For each $p\in \Hitch^\T$, $\mathcal{W}_p$ is supported on the Lagrangian closure $W^\ggcurly_p$, and $\mathcal{W}_p|_{W^+_p}\simeq \mathcal{O}_{W^+_p}$. Its mirror dual is a locally free sheaf $\mathcal{V}_p$ on $\Hitch^\vee$. For any $p_1,p_2\in \Hitch^\T$ and a $\T$-curve from $p_1$ to $p_2$, we have an inclusion $\mathcal{W}_{p_2}\subset \mathcal{W}_{p_1}$;
        \item For $p\in F$, we have $m(W^-_F;\mathcal{O}_{\mathcal{Z}_F}) = \chi_t(\mathcal{V}_p|_{\kappa^\vee(0)}) = \chi_t(\theta_*\mathcal{W}_p^+|_0)$. Furthermore, the value of this polynomial at $t=1$ is the multiplicity of $\Core_F$.
    \end{enumerate}
\end{con}
Note that the schemes $\mathcal{Z}_F$ are not subschemes of $\Hitch$, so that there is no immediate way to ask what their mirror duals are.
This conjecture holds for 2-dimensional integrable systems; we leave the proof as an illuminating exercise for the reader.
For instance, at a wobbly point $p$ with local equation $\mathscr{E}(x,y)=x^{a}y^{a+1}$, the map $\pi_p$ is induced by the inclusion $\C[x,xy]/(xy)^{a+1}\subset \C[x,y]/x^{a}y^{a+1}$.
It is unclear whether one can reasonably expect the conjecture to hold beyond Hilbert schemes, or indeed for Hilbert schemes, as we have not checked it even for $\Hilb^2 S$.

\begingroup
\setstretch{0.88}
\bibliography{refs}
\bibliographystyle{amsalpha}
\endgroup
\end{document}